\numberwithin{equation}{section}
\numberwithin{figure}{section}
\theoremstyle{plain}
\newtheorem{thm}{\protect\theoremname}[section]
  \theoremstyle{remark}
  \newtheorem{rem}[thm]{\protect\remarkname}
  \theoremstyle{plain}
  \newtheorem{lem}[thm]{\protect\lemmaname}
  \theoremstyle{plain}
  \newtheorem{prop}[thm]{\protect\propositionname}
  \theoremstyle{plain}
  \newtheorem{cor}[thm]{\protect\corollaryname}
  \theoremstyle{definition}
  \newtheorem{defn}[thm]{\protect\definitionname}
  \providecommand{\corollaryname}{Corollary}
  \providecommand{\definitionname}{Definition}
  \providecommand{\lemmaname}{Lemma}
  \providecommand{\propositionname}{Proposition}
  \providecommand{\remarkname}{Remark}
\providecommand{\theoremname}{Theorem}
\begin{document}

\title{Large Deviation Principle for Interacting Brownian Motions}

\author{Insuk Seo}

\address{Courant Institute of Mathematical Sciences\\
New York University}

\email{insuk@cims.nyu.edu }

\keywords{Interacting particle system, large deviation, empirical process,
tagged particle, interacting diffusion. }

\subjclass[2000]{82C22, 60F10 }
\begin{abstract}
We prove the Large Deviation Principle for the empirical process in
a system of interacting Brownian motions with singular interactions
in the nonequilibrium dynamic. Such a phenomenon has been proven only
for two lattice systems: the symmetric simple exclusion process and
zero-range process. Therefore, we have achieved the third result in
this context and moreover the first result for the diffusion-type
interacting particle system. 
\end{abstract}

\maketitle
\tableofcontents

\section{Introduction and Outline}

\subsection{Introduction}

The large scale behavior of tagged particles is a primary concern
in interacting particle systems. The first breakthrough was accomplished
by Kipnis and Varadhan \cite{KV}, whose seminal paper introduced
a general invariance principle for additive functionals of reversible
Markov processes. Furthermore, these authors derived the equilibrium
central limit theorem (CLT) for the tagged particle in the symmetric
simple exclusion process (SSEP) as an application of the general theory.
This equilibrium result has been extended to various models, e.g.,
interacting diffusions \cite{O}, the mean-zero asymmetric simple
exclusion process (ASEP) by Varadhan \cite{V4} and the ASEP for $d\ge3$
by Sethuraman, Varadhan and Yau \cite{SVY}. 

The study of tagged particles in the nonequilibrium is a field with
many untapped possibilities. Recently, several researchers have developed
nonequilibrium CLTs for tagged particles especially for 1D interacting
particle systems. Jara and Landim \cite{JL} proved such a result
for a 1D nearest neighbor SSEP where the equilibrium CLT had been
established by Rost and Vares \cite{RV}. Sethuraman and Varadhan
\cite{SV2} investigated the corresponding large deviation principle
(LDP). The nonequilibrium CLT for the 1D systems has also been proven
for the nearest neighbor ASEP by Goncalves \cite{G3}, simple exclusion
process with long jumps by Jara \cite{J}, zero-range process (ZRP)
by Jara, Landim and Sethuraman \cite{JLS} and locally interacting
Brownian motions by Grigorescu \cite{G1}. 

Another approach is to study the \textit{empirical process} $R_{N}=\frac{1}{N}\sum_{i=1}^{L}\delta_{x_{i}(\cdot)}$
which can be regarded as the averaged tagged particle. In the 1990s,
Quastel, Rezakhanlou and Varadhan found a systematic approach to study
the limit theory and large deviation theory for the empirical process,
which resulted in a series of published work \cite{Q,QRV,R,V2}. Despite
the robustness of their methodology, the LDP for this context is only
known for two models: the SSEP in the case where $d\ge2$ and the
ZRP. The limited applicability of their general method arises from
the lack of intermediate large deviation theory for the \textit{empirical
density of colors} and such a theory was only available for the SSEP
\cite{QRV} and ZRP \cite{GJL}. We refer to the survey paper by Varadhan
\cite{V3} for a comprehensive discussion of this research. 

The main purpose of the current work is to explore the third result
for this context. Our interacting particle system is the locally interacting
Brownian motions on the one-dimensional torus $\mathbb{T}$ that were
introduced by Grigorescu \cite{G1,G2}. The main result of \cite{G1}
is the nonequilibrium CLT and asymptotic independence of two tagged
particles for an interacting Brownian system and together these fulfill
the law of large numbers (LLN) for the empirical process. However,
the approach that was followed in \cite{G1} does not rely on the
empirical density of colors, but on the hydrodynamical analysis of
the local time and hence the LDP of the kind proposed in \cite{QRV}
was unavailable. In this study, we have established the LDP by analyzing
the empirical density of colors.

\subsection{\label{sub:Interacting-diffusion-with}Interacting Diffusion with
Local Interaction }

We start by introducing the interacting Brownian motions on $\mathbb{T}$
with local interactions.\footnote{For the detailed definition of model, we refer \cite{G1,G2}.}
In this model, we assume that $N$ Brownian particles\footnote{We also able to assume that the number of particle is $a_{N}$ such
that $\lim_{N\rightarrow\infty}\frac{a_{N}}{N}=\bar{\rho}$ for some
$\bar{\rho}>0$. } $x_{1}^{N}(\cdot),\,\cdots,\,x_{N}^{N}(\cdot)$ are moving on $\mathbb{T}$
with partial reflections. To state this succinctly, two particles
reflect each other when they collide, but sometimes they change their
labels. We can measure the collision time between two particles $x_{i}^{N}(\cdot)$
and $x_{j}^{N}(\cdot)$ up to time $t$ by the local time and then
the switching of labels occurs as a Poisson process with the constant
rate $\lambda N$$(\lambda>0)$ along this canonical local time. We
now provide a rigorous definition of this informally described interacting
particle system and introduce the notion of empirical density of colors.

\subsubsection{Definition of Particle System}

Our particle system $x^{N}(t)=(x_{1}^{N}(t),\,x_{2}^{N}(t),\,\cdots,\,x_{N}^{N}(t))$
can be regarded as a diffusion process on $\mathbb{T}^{N}$. We can
define this diffusion in three equivalent ways: \textit{generator},\textit{
martingale problem formulation,} and \textit{Dirichlet form}. All
of them are of course useful during our excursion. 

We first introduce an $N$-manifold 
\[
G_{N}=\{x=(x_{1},\,x_{2},\,\cdots,\,x_{N})\in\mathbb{T}^{N}\,:\,x_{i}\neq x_{j}\,\,\text{for all }i\neq j\}
\]
and consider the boundary $\partial G_{N}=\cup_{i<j}\{x_{i}=x_{j}\}$.
Note that each face $\{x:x_{i}=x_{j}\}$ consists of two sides. We
will follow the convention that $F_{ij}$ is the side at which $x_{j}$
approaches $x_{i}$ from the clockwise direction so that with the
usual orientation of $\mathbb{T}$, $x_{j}=x_{i}+0$ on $F_{ij}$
whereas $x_{j}=x_{i}-0$ on $F_{ji}$. We only consider piecewise
smooth functions on $G_{N}$ and that are smooth up to the boundary
such that we can define
\begin{align*}
f_{ij}(x) & =f(\cdots,\,x_{i-1},\,x_{i}+0,\,x_{i+1},\cdots,\,x_{j-1},\,x_{j}-0,\,x_{j+1},\,\cdots)\\
D_{ij}f(x) & =(\nabla_{i}-\nabla_{j})f(\cdots,\,x_{i-1}\,x_{i}+0,\,x_{i+1},\cdots,\,x_{j-1},\,x_{j}-0,\,x_{j+1},\,\cdots)
\end{align*}
for $x\in F_{ij}$ for all $i,\,j$. Let us denote this class of functions
by $\bar{C}(G_{N})$. We are now in a position to define the process
$x^{N}(t)$. 
\begin{enumerate}
\item \textit{Generator}: The generator for the process is $\mathscr{L}_{N}f=\frac{1}{2}\Delta f$
and the domain $\mathcal{D}(\mathscr{L}_{N})$ consists of functions
$f\in\bar{C}(G_{N})$ that satisfies the boundary condition $\mathfrak{U}_{ij}^{\lambda}f(x)=0$
on $F_{ij}$ for each $i\neq j$ where
\begin{equation}
\mathfrak{U}_{ij}^{\lambda}f(x)=D_{ij}f(x)-\lambda N(f_{ij}(x)-f_{ji}(x)).\label{eq:Uij}
\end{equation}
This infinitesimal generator $(\mathscr{L}_{N},\,\mathcal{D}(\mathscr{L}_{N}))$
defines the process.
\item \textit{Martingale problem formulation}: We define the process on
$\mathbb{T}^{N}$ by the measure $\mathbb{P}_{N}$ on $C([0,\,T],\,\mathbb{T}^{N})$
for some fixed final time $T$. Under this measure, we have $N(N-1)$
local times $\bigl\{ A_{ij}^{N}(t)\bigr\}_{1\le i\neq j\le N}$ and
the filtration $\{\mathscr{F}_{t}:t\in[0,\,T]\}$, such that for any
$f\in\bar{C}\left(G_{N}\right)$,
\begin{align}
M_{f}(t)= & f(x^{N}(t))-f(x^{N}(0))\nonumber \\
 & -\left[\frac{1}{2}\int_{0}^{t}\Delta f(x^{N}(s))ds+\sum_{i\neq j}\int_{0}^{t}\mathfrak{U}_{ij}^{\lambda}f(x^{N}(s))dA_{ij}^{N}(s)\right]\label{eq:tanaka}
\end{align}
is a martingale with respect to $\{\mathscr{F}_{t}:0\le t\le T\}$.
The martingale $M_{f}(t)$ can be represented in another way such
that 
\begin{equation}
M_{f}(t)=\sum_{k=1}^{N}\int_{0}^{t}\nabla_{k}f(x^{N}(s))d\beta_{k}(s)+\sum_{i\neq j}\int_{0}^{t}(f_{ji}-f_{ij})(x^{N}(s))dM_{ij}^{N}(s)\label{eq:martingale}
\end{equation}
where $\beta_{k}(t),\,1\le k\le N$ is a family of independent Brownian
motions and 
\[
M_{ij}^{N}(t)=J_{ij}^{N}(t)-\lambda NA_{ij}^{N}(t)
\]
 where $J_{ij}^{N},\,1\le i\neq j\le N$ is a family of pairwise orthogonal
Poisson jump processes with rates $\lambda NA_{ij}^{N}(t)$; hence,
$M_{ij}^{N}(t)$ as well as $M_{ij}^{N}(t)^{2}-\lambda NA_{ij}^{N}(t)$
are martingales for each $i\neq j$. 
\item \textit{Dirichlet Form}: For each $f\in\bar{C}(G_{N})$, the Dirichlet
form is given by
\begin{equation}
\mathbb{D}_{N}(f)=\frac{1}{2}\int_{G_{N}}\left|\nabla f(x)\right|{}^{2}dx+\frac{\lambda N}{2}\sum_{i\neq j}\int_{F_{ij}}(f_{ij}(x)-f_{ji}(x))^{2}dS_{ij}(x)\label{eq:dir}
\end{equation}
where $dS_{ij}(x)$ is the Lebesgue measure on $F_{ij}$ normalized
to have the total measure to be $1$.\footnote{Note that the usual normalization for the Lebesgue measure on this
diagonal face is $\sqrt{2}$} It should be noted that the first part of (\ref{eq:dir}) corresponds
to the Brownian movement of particles, whereas the second part takes
into account the Poisson jump type of interaction. 
\end{enumerate}
Now, we have the process $x^{N}(t)$ on $\mathbb{T}^{N}$ for $t\in[0,\,T]$
which can be regarded as a system of diffusion processes with local
interactions. 
\begin{rem}
Since $\mathbb{R}^{N}$ is a covering space of $\mathbb{T}^{N}$,
we can lift any continuous trajectory in $\mathbb{T}^{N}$ to the
one in $\mathbb{R}^{N}$ in a unique fashion. Therefore, we implicitly
regard $x^{N}(t)$ as a process on $\mathbb{R}^{N}$, sometimes. This
does not cause any technical issues, because we usually work with
the density field of the form $\frac{1}{N}\sum_{i\in I}f(x_{i}^{N}(t))$
with a periodic function $f$ on $\mathbb{R}$. For the detailed explanation,
see Section 1.4 of \cite{G1}. 
\end{rem}
To understand the large scale behavior of our interacting particle
system, we start by considering the \textit{empirical density} given
by
\begin{equation}
\mu^{N}(t)=\frac{1}{N}\sum_{i=1}^{N}\delta_{x_{i}^{N}(t)}\label{emp_den}
\end{equation}
for $0\le t\le T$ which induces a measure $\mathbb{Q}_{N}$ on $C([0,\,T],\,\mathscr{M}_{1}(\mathbb{T}))$.
For the level of the empirical density, our process is equivalent
to the non-interacting case, because our interactions essentially
involve the switching of labels which does not affect (\ref{emp_den}).
The limit theory and large deviation theory of the empirical density
in the non-interacting system is well known. To state such results
in a concrete form, we need the following assumption. \\
\\
\textbf{Assumption 1.} \textit{The initial empirical density $\left\{ \mu^{N}(0)\right\} _{N=1}^{\infty}$
satisfies the LLN in the sense that $\mu^{N}(0)\rightharpoonup\rho^{0}(dx)$
weakly in $\mathscr{M}(\mathbb{T})$ for some non-negative measure
$\rho^{0}(dx)$ on $\mathbb{T}$ with a total mass of $1$. Moreover,
$\left\{ \mu^{N}(0)\right\} _{N=1}^{\infty}$ also satisfies the LDP
with the rate function $I_{init}(\cdot)$ and scale $N$. }\\

Then the LLN and related LDP of the empirical density can be formulated
as follows. 
\begin{thm}
\label{Heat} Under Assumption 1, $\left\{ \mathbb{Q}_{N}\right\} _{N=1}^{\infty}$
converges weakly to the Dirac mass on the trajectory $\{\rho(t,\,x)dx\,:\,0\le t\le T\}$
where $\rho(t,\,x)$ is the solution of the heat equation $\partial_{t}\rho=\frac{1}{2}\Delta\rho$
with initial condition $\rho^{0}(dx)$. Moreover $\left\{ \mathbb{Q}_{N}\right\} _{N=1}^{\infty}$
satisfies the LDP with the rate function 
\begin{equation}
I_{init}(\gamma(0,\,\cdot))+\frac{1}{2}\int_{0}^{T}\left\Vert \partial_{t}\gamma-\frac{1}{2}\Delta\gamma\right\Vert _{-1,\gamma}^{2}dt\label{eq: uncolaex}
\end{equation}
and scale $N$ where the $H^{-1}$ norm is defined in the standard
way.\end{thm}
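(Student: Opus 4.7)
The plan is to reduce Theorem \ref{Heat} to the classical LLN and LDP for the empirical measure of $N$ independent Brownian motions on $\mathbb{T}$. The crucial observation is that $\mu^{N}(\cdot)$ is a symmetric functional of the particle positions and is therefore blind to the label-switching mechanism. Indeed, if $F\in\bar{C}(G_{N})$ is symmetric in its arguments, then on any face $F_{ij}$ we have $F_{ij}=F_{ji}$ (the two boundary values coincide once the two particles are identified) and $D_{ij}F=0$ (the two partial derivatives are equal by symmetry), so $\mathfrak{U}_{ij}^{\lambda}F\equiv0$; hence $F\in\mathcal{D}(\mathscr{L}_{N})$ with $\mathscr{L}_{N}F=\tfrac{1}{2}\Delta F$. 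The restriction of $\mathscr{L}_{N}$ to symmetric observables therefore coincides with the generator of $N$ independent Brownian motions, and the law of $\mu^{N}(\cdot)$ under $\mathbb{P}_{N}$ equals its law when the particles are instead evolved as independent Brownian motions from the same exchangeable initial configuration.

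For the LLN, we apply the martingale problem (\ref{eq:tanaka}) with $F(x)=\tfrac{1}{N}\sum_{i}\varphi(x_{i})$ for $\varphi\in C^{2}(\mathbb{T})$ to obtain
\begin{equation*}
\langle\mu^{N}(t),\varphi\rangle=\langle\mu^{N}(0),\varphi\rangle+\tfrac{1}{2}\int_{0}^{t}\langle\mu^{N}(s),\Delta\varphi\rangle\,ds+M_{\varphi}^{N}(t),
\end{equation*}
with $\mathbb{E}[M_{\varphi}^{N}(T)^{2}]=O(1/N)$. Standard tightness in $C([0,T],\mathscr{M}_{1}(\mathbb{T}))$ combined with Assumption~1 and uniqueness of the weak solution of $\partial_{t}\rho=\tfrac{1}{2}\Delta\rho$ with initial datum $\rho^{0}$ then identifies the limit as the Dirac mass on $\{\rho(t,\cdot)dx\}$.

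For the LDP, working with $N$ independent Brownian motions, we introduce the Girsanov exponential
\begin{equation*}
Z_{N}^{\phi}=\exp\Bigl(\sum_{k=1}^{N}\int_{0}^{T}\nabla\phi(s,x_{k}^{N}(s))\,d\beta_{k}(s)-\tfrac{1}{2}\sum_{k=1}^{N}\int_{0}^{T}|\nabla\phi(s,x_{k}^{N}(s))|^{2}\,ds\Bigr)
\end{equation*}
for smooth $\phi:[0,T]\times\mathbb{T}\to\mathbb{R}$. Applying It\^{o}'s formula to $\phi(s,x_{k}^{N}(s))$ expresses $N^{-1}\log Z_{N}^{\phi}$ entirely in terms of $\mu^{N}(\cdot)$, while the tilted measure corresponds to independent diffusions with drift $\nabla\phi$ whose hydrodynamic limit is $\partial_{t}\gamma=\tfrac{1}{2}\Delta\gamma-\nabla\!\cdot(\gamma\nabla\phi)$. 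A Varadhan-type exponential-tilting argument yields the LDP upper bound with dynamic cost
\begin{equation*}
\sup_{\phi}\Bigl\{\int_{0}^{T}\langle\partial_{t}\gamma-\tfrac{1}{2}\Delta\gamma,\phi\rangle\,dt-\tfrac{1}{2}\int_{0}^{T}\!\!\int_{\mathbb{T}}\gamma\,|\nabla\phi|^{2}\,dx\,dt\Bigr\}=\tfrac{1}{2}\int_{0}^{T}\bigl\|\partial_{t}\gamma-\tfrac{1}{2}\Delta\gamma\bigr\|_{-1,\gamma}^{2}\,dt,
\end{equation*}
while the matching lower bound follows by changing measure with $Z_{N}^{\phi}$ on a shrinking neighborhood of $\gamma$. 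Decoupling the initial cost from the dynamic cost via conditioning on $\mu^{N}(0)$ and Assumption~1 then produces (\ref{eq: uncolaex}).

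The technical heart of this argument---though entirely classical in the present i.i.d.\ setting---is the LDP lower bound: one must approximate any finite-rate trajectory $\gamma$ in the $H^{-1}$-sense by one produced by a smooth tilt $\nabla\phi$, a step that requires care when $\gamma$ degenerates on parts of $\mathbb{T}$. Exponential tightness on $C([0,T],\mathscr{M}_{1}(\mathbb{T}))$ is obtained from sub-Gaussian tail estimates for independent Brownian increments, and the same estimates support the truncation procedure that handles the possible vanishing of $\gamma$.
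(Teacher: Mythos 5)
Your proposal is correct and takes essentially the same route as the paper, which disposes of this theorem by citing Grigorescu for the LLN and Kipnis--Olla for the LDP after noting that the empirical density is blind to label switching; you make that reduction explicit (checking $\mathfrak{U}_{ij}^{\lambda}F=0$ for symmetric $F$) and then sketch the standard Kipnis--Olla tilting argument, which is exactly the content of the cited reference.
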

\begin{proof}
See \cite{G1,KO} for the LLN and LDP, respectively. \end{proof}
\begin{rem}
The LLN and LDP for the interacting system of diffusive particles
have been developed at the level of the empirical density for several
models. The first results were published in the classic papers \cite{G4,DG}
for the LLN and LDP for weakly interacting Brownian motions on $\mathbb{R}$,
respectively. In this model, the interaction comes into play through
the drift coefficient of the form $b(x_{i}(t),\,\mu^{N}(t))$. Recently,
\cite{Shk,DSVZ} developed corresponding theories for case in which
particles interact through their ranks and hence the diffusion coefficients
also depend on $\mu^{N}(t)$. In both models, the limiting dynamics
are governed by McKean-Vlasov type of equations. Another type of result
was presented in \cite{V1} for the two-body model
\begin{equation}
dx_{i}^{N}(t)=-N\sum_{j:j\neq i}V'(N(x_{i}^{N}(t)-x_{j}^{N}(t))dt+d\beta_{i}(t)\label{eq: sde}
\end{equation}
on $\mathbb{T}$, where $V(\cdot)$ is a compactly supported, even
and smooth potential. In this model, the limiting particle density
is given by the unique solution of $\partial_{t}\rho(t,\,x)=\frac{1}{2}[P(\rho(t,\,x))]_{xx}$
where $P(\cdot)$ is the pressure functional depending on $V$. Even
if the author did not explicitly establish the LDP, estimates therein
are sufficient to establish the LDP through standard methodology of
\cite{DV}. It was already pointed out in \cite{G1} that our local
interaction model can be regarded as a limit of the two-body interaction
model. More precisely, if we consider the sequence of the potential
$\{V_{\epsilon}\}_{\epsilon>0}$ satisfying 
\[
\lim_{\epsilon\rightarrow0}\int_{\mathbb{T}}\exp\{2V_{\epsilon}(x)-1\}dx=\frac{1}{\lambda}\delta_{0}
\]
in the sense of distribution, then the corresponding diffusion $\mathbb{P}_{N}^{\epsilon}$
given by (\ref{eq: sde}) with $V_{\epsilon}(\cdot)$ converges to
our locally interacting diffusion $\mathbb{P}_{N}$ with parameter
$\lambda$. We refer the readers to \cite{G1} for details. We also
remark here that the totally asymmetric counterpart of our model has
been studied in \cite{FSW1,FSW2}.
\end{rem}

\subsubsection{Empirical Density of Colors\label{sub:Colored-Process}}

We now introduce the notion of the empirical density of colors, which
is an intermediate object toward the empirical process. 

Let $\{I_{1}^{N},\,I_{2}^{N},\cdots,\,I_{m}^{N}\}$ be a (non-random)
partition of $\{1,\,2,\,\cdots,\,N\}$ such that $I_{c}^{N}$ satisfies
\begin{equation}
\lim_{N\rightarrow\infty}\frac{\left|I_{c}^{N}\right|}{N}=\bar{\rho}_{c}>0\label{eq:aver col c}
\end{equation}
for each $1\le c\le m$ where $\bar{\rho}_{c}$ is the average density
of color $c$. Then, $\{x_{i}^{N}(\cdot):i\in I_{c}^{N}\}$ denotes
the set of particles of color $c$ and empirical density of color
$c$ is defined as 
\begin{equation}
\mu_{c}^{N}(t)=\frac{1}{N}\sum_{i\in I_{c}^{N}}\delta_{x_{i}^{N}(t)}\label{eq:emp color c}
\end{equation}
for $t\in[0,\,T]$. Finally, the empirical density of colors is defined
by 
\begin{equation}
\tilde{\mu}^{N}(t)=(\mu_{1}^{N}(t),\,\mu_{2}^{N}(t),\,\cdots,\,\mu_{m}^{N}(t))^{\dagger}\label{eq:emp color}
\end{equation}
which induces a probability measure $\widetilde{\mathbb{Q}}_{N}$
on $C([0,\,T],\,\mathscr{M}(\mathbb{T})^{m})$. In contrast to the
uncolored empirical density (\ref{emp_den}), we have to take the
interaction into account, because the switching of labels between
particles of different colors affects $\tilde{\mu}^{N}(t)$ in a complex
manner. Similar to the uncolored empirical density, we need an assumption
on $\tilde{\mu}^{N}(0)$ to obtain the limit theory and large deviation
theory. \\
\\
\textbf{Assumption 2.} \textit{The initial empirical density of colors
satisfies the LLN in the sense that 
\begin{equation}
\tilde{\mu}^{N}(0)\rightharpoonup\tilde{\rho}^{0}(dx)=\left(\rho_{1}^{0}(dx),\,\rho_{2}^{0}(dx),\,\cdots,\,\rho_{m}^{0}(dx)\right)^{\dagger}\label{eq: ini conv color}
\end{equation}
weakly in $\ensuremath{\ensuremath{\mathscr{M}}(\mathbb{T})^{m}}$
where the non-negative measure $\rho_{c}^{0}(dx)$ has a total mass
of $\bar{\rho}_{c}$ for each $c$. Moreover, $\left\{ \tilde{\mu}^{N}(0)\right\} _{N=1}^{\infty}$
also satisfies the LDP with the rate function $I_{init}^{m}(\cdot)$
and scale $N$. }\\

Note that Assumption 2 implies Assumption 1 with $\rho^{0}=\sum_{c=1}^{m}\rho_{c}^{0}$.

\subsection{\label{sub:Large-Deviation-Results}Main Results}

Based on the precise description of the model in the previous section,
we now summarize our main results.

\subsubsection{Large Deviation Theory for Empirical Density of Colors}

The hydrodynamic limit theory for $\{\widetilde{\mathbb{Q}}_{N}\}_{N=1}^{\infty}$
is implied by the results of \cite{G1}. It is well known \cite{R,Spo}
that if the scaling limit of the tagged particle is the diffusion
with the generator $\mathscr{L_{\rho}}$, which may depend on the
limiting particle density $\rho$, and any two tagged particles are
asymptotically independent, then the limiting particle density $\rho_{c}$
of each color is the unique weak solution of the PDE $\partial_{t}\rho_{c}=\mathscr{L}_{\rho}^{*}\rho_{c}$
with the initial condition $\rho_{c}^{0}(dx)$ for each color $c$. 

The scaling limit as well as the asymptotic independence of the tagged
particle of our system has been studied \cite{G1}. The scaling limit
turned out to be the diffusion with the time-dependent generator 
\begin{equation}
\mathscr{A}_{\rho}=\frac{\lambda}{2(\lambda+\rho(t,\,x))}\Delta-\frac{(2\lambda+\rho(t,\,x))\nabla\rho(t,\,x)}{2(\lambda+\rho(t,\,x))^{2}}\nabla\label{eq: th generator}
\end{equation}
where $\rho(t,\,x)$ is the solution of the heat equation as in Theorem
\ref{Heat}. Consequently, under Assumption 2, $\rho_{c}$ is the
solution of the parabolic equation
\begin{equation}
\frac{\partial\rho_{c}}{\partial t}=\mathscr{A}_{\rho}^{*}\rho_{c}=\frac{1}{2}\nabla\left[\frac{\lambda}{\lambda+\rho}\nabla\rho_{c}+\frac{\nabla\rho}{\lambda+\rho}\rho_{c}\right]\label{eq:single color PDE}
\end{equation}
with the initial condition $\rho_{c}^{0}(dx)$ for each color $c$.
We are also able to reorganize these equations in the form of a matrix
with the notation $\tilde{\rho}=(\rho_{1},\,\rho_{2},\,\cdots,\,\rho_{m})^{\dagger}$
as 
\begin{equation}
\frac{\partial\tilde{\rho}}{\partial t}=\frac{1}{2}\nabla\cdot\left[D(\tilde{\rho})\nabla\tilde{\rho}\right]\label{eq: lln color}
\end{equation}
with initial condition $\tilde{\rho}^{0}(dx)$. The $m\times m$ diffusion
matrix $D(\tilde{\rho})$ is explicitly given by
\begin{equation}
D(\tilde{\rho})_{ij}=\frac{\delta_{ij}\lambda+\rho_{i}}{\lambda+\rho}.\label{eq:diffusion mat}
\end{equation}
In other words, $\widetilde{\mathbb{Q}}_{N}$ converges weakly to
a Dirac mass on the unique solution of (\ref{eq: lln color}). We
explain the details with another proof of this result in Section 4.

We can decompose the diffusion matrix by $D(\tilde{\rho})=A(\tilde{\rho})\chi(\tilde{\rho})$
where $A(\tilde{\rho})$ and $\chi(\tilde{\rho})$ are defined by
\begin{equation}
A(\tilde{\rho})_{ij}=\frac{\delta_{ij}\lambda\rho_{j}+\rho_{i}\rho_{j}}{\lambda+\rho},\,\,\chi(\tilde{\rho})=\text{diag}\left(\frac{1}{\rho_{1}},\,\frac{1}{\rho_{2}},\,\cdots,\,\frac{1}{\rho_{m}}\right).\label{eq:A and chi}
\end{equation}
Here, $\chi(\tilde{\rho})$ is the Hessian of the entropy functional
$h(\tilde{\rho})=\sum_{i=1}^{m}\rho_{i}\log\rho_{i}$ and $A(\tilde{\rho})=D(\tilde{\rho})\chi(\tilde{\rho})^{-1}$.
Note that this matrix $A(\tilde{\rho})$ is symmetric, which is not
a coincidence; rather, it is a so-called Onsager reciprocity(see \cite{GJL}
for details).

Under Assumption 2 and an additional technical assumption related
to the uncolored initial profile (Assumption 3 in Section 4.1), the
LDP for $\{\widetilde{\mathbb{Q}}_{N}\}_{N=1}^{\infty}$ can be established
with the rate function $I_{color}^{m}(\pi_{\cdot})$ for $\pi_{\cdot}\in C([0,\,T],\,\mathscr{M}(\mathbb{T}^{m}))$.
More precisely, $I_{color}^{m}(\pi_{\cdot})<\infty$ only if $\pi_{t}$
is absolutely continuous with respect to the Lebesgue measure for
each $t$, and if we write such $\pi_{\cdot}$ as $\tilde{\rho}(\cdot,\,x)dx$
then 
\[
I_{color}^{m}(\tilde{\rho}(\cdot,\,x)dx)=I_{init}^{m}(\tilde{\rho}(0,\,x)dx)+I_{dyn}^{m}(\tilde{\rho}(\cdot,\,x)dx)
\]
where $I_{init}^{m}(\cdot)$ and $I_{dyn}^{m}(\cdot)$ explain the
large deviation rates of the initial configuration and the dynamic
evolution of the system, respectively. The initial rate function $I_{init}^{m}(\cdot)$
is just a part of Assumption 2. The dynamical rate function $I_{dyn}^{m}(\cdot)$
is our primary concern and is given by
\begin{equation}
I_{dyn}^{m}(\tilde{\rho}(\cdot,\,x)dx)=\frac{1}{2}\int_{0}^{T}\left\Vert \frac{\partial\tilde{\rho}}{\partial t}-\frac{1}{2}\nabla\cdot\left[D(\tilde{\rho})\nabla\tilde{\rho}\right]\right\Vert _{-1,\,A(\tilde{\rho})}^{2}dt.\label{eq:ravs}
\end{equation}
The rigorous meaning of this expression is carefully explained in
Section 4.2. This large deviation result is the main contribution
of the current work and is explained throughout Sections 2, 3 and
4. In Section 2, we establish some super-exponential estimates which
essentially mollify the local times into the local densities. Section
3 provides the exponential tightness of $\bigl\{\widetilde{\mathbb{Q}}_{N}\bigr\}{}_{N=1}^{\infty}$,
which compactify the upper bound problem. Relying on these preliminary
results, we compute the exact upper and lower bounds in Section 4.

\subsubsection{Large Deviation Theory of Empirical Process}

The empirical process is defined by $\frac{1}{N}\sum_{i=1}^{N}\delta_{x_{i}^{N}(\cdot)}$
which induces a measure $P_{N}$ on $\mathscr{M}_{1}(C([0,\,T],\,\mathbb{T}))$.
As we mentioned earlier, $P_{N}$ converges weakly to the delta mass
on a diffusion $P\in\mathscr{M}_{1}(C([0,\,T],\,\mathbb{T}))$ of
which the generator is given by $\mathscr{A}_{\rho}$ defined in (\ref{eq: th generator}).
Our main concern is the LDP corresponding to this result and developed
in Section 5. 

We now explain the main result. For a probability measure $Q$ on
$C([0,\,T],\,\mathbb{T})$, we can explain the LDP via the rate function
$\mathscr{I}(Q)$, which is finite only if $Q$ has marginal densities
at any time $t\in[0,\,T]$. Let us denote this marginal density by
$q(t,\,x)$, then $\mathscr{I}(Q)<\infty$ only if $q$ is weakly
differentiable in $x$ and satisfies
\[
\int_{\mathbb{T}}q(0,\,x)\log q(0,\,x)dx<\infty\,\,\,\,\,\text{and \,\,\,\,\,}\int_{0}^{T}\int_{\mathbb{T}}\frac{\left|\nabla q\right|^{2}}{q}dxdt<\infty.
\]
For such $Q$, we consider a class of function on $[0,\,T]\times\mathbb{T}$
given by 
\[
\mathscr{B}_{q}=\left\{ b(t,\,x):\partial_{t}q=\frac{1}{2}\Delta q-\nabla\left[bq\right],\,\,\int_{0}^{T}\int_{\mathbb{T}}b^{2}qdxdt<\infty\right\} 
\]
where the sense of PDE is weak. Then we can find the unique diffusion
process $P^{b}$ with the generator $\mathscr{A}_{q,b}=\mathscr{A}_{q}+b\nabla$
and starting measure $q(0,\,x)$ for each $b\in\mathscr{B}_{q}$.
Note that $P^{b}$ also has the marginal density $q(t,\,x)$. Then
we can prove that the relative entropy $H(b):=H[Q|P^{b}]$ is either
finite for all $b\in\mathscr{B}_{q}$ or identically infinite. We
set $\mathscr{I}(Q)=\infty$ for the latter case. For the former case,
we can find a $b_{Q}\in\mathscr{B}_{q}$ which minimizes $H(\cdot)$
on $\mathscr{B}_{q}$, and then the rate function is given by 
\begin{equation}
\mathscr{I}(Q)=I_{init}(q(0,\,\cdot))+\frac{1}{2}\int_{0}^{T}\int_{\mathbb{T}}b_{Q}^{2}(t,\,x)q(t,\,x)dxdt+H\left[Q\left|P^{b_{Q}}\right.\right].\label{eq: rate function final}
\end{equation}
The last two terms measure large deviation rates from $P$ to $P^{b_{Q}}$
and $P^{b_{Q}}$ to $Q$, respectively. 

The final important remark is that this LDP result is a direct consequence
of that of the empirical density of colors. The LDP rate for the finite
dimensional projection of the empirical process can be understood
as the one for the empirical density of colors. By doing so, we can
obtain the full LDP by using Dawson-G\"artner's projective limit
theory. This profound relationship has been revealed by Quastel, Rezakhanlou
and Varadhan in \cite{QRV} and, therefore, our work not only verifies
the robustness of their methodology, but also the universality of
their large deviation result. This is because our rate function (\ref{eq: rate function final})
is quite similar to that of the SSEP, whereas the model dynamics are
seemingly unrelated. We will explain the robustness and universality
in Section 5.

\section{Super-exponential Estimates}

In the LDP theory in the context of interacting particle systems,
the core step is to establish the \textit{replacement lemma}, named
after Guo, Papanicolau and Varadhan's seminal work \cite{GPV}, which
is a super-exponential type of estimate. We introduce an appropriate
form of the replacement lemma and related concepts in Section \ref{sub: REP Lem}.
We prove this by first providing some preliminary estimates in Section
\ref{sub:Preliminary-Estimate}, following which the proof of the
main replacement lemma will be given in Section \ref{sub:Proof-of-Superexponential}.

\subsection{Replacement Lemma\label{sub: REP Lem}}

In general, the empirical density is studied via its corresponding
density field. In the context of the hydrodynamic limit theory as
well as the large deviation theory for the empirical density, the
main difficulty is the replacement of the current, which appeared
at the computation of density field as a result of interactions, by
gradients. Due to the nature of our dynamic, the current is highly
related to the local time as we can see from (\ref{eq:tanaka}). Consequently,
the replacement lemma should replace the local times by the appropriate
gradient type, which we achieved by first introducing the averaged
local times and local densities. 

The basic local times in our process are 
\[
A_{ij}^{N}(t)=\lim_{\epsilon\rightarrow0}\int_{0}^{t}\frac{\mathds{1}{}_{[0,\,\epsilon]}(x_{j}^{N}(t)-x_{i}^{N}(t))}{2\epsilon}=\int_{0}^{t}\delta_{+}(x_{j}^{N}(t)-x_{i}^{N}(t))dt
\]
for $i\neq j$ where $\delta_{+}$ is a delta-type distribution on
$[0,\,1]$ such that $\int_{0}^{1}f(x)\delta_{+}(x)=\frac{1}{2}f(0)$.
We can understand $A_{ij}^{N}(t)$ as the amount of the collision
time between two particles $x_{j}^{N}(\cdot)$ and $x_{i}^{N}(\cdot)$
up to the time $t$ at which the particle $x_{i}^{N}(\cdot)$ approaches
to $x_{j}^{N}(\cdot)$ from the clockwise direction. Of course, each
local time $A_{ij}^{N}(t)$ is quite noisy and impossible to estimate
alone. Fortunately, these noises can be controlled by taking average
among them. Two such examples are 
\begin{align}
A^{N}(t) & =\frac{1}{N^{2}}\sum_{i\neq j}\left[A_{ij}^{N}(t)+A_{ji}^{N}(t)\right]\label{eq:Loc2}\\
A_{i}^{N}(t) & =\frac{1}{N}\sum_{j:j\neq i}\left[A_{ij}^{N}(t)+A_{ji}^{N}(t)\right]\label{eq:Loc3}
\end{align}
for each $i$. Note that $A_{i}^{N}(t)$ is the average collision
time of the particle $x_{i}^{N}(\cdot)$ against all the other particles
up to time $t$ and $A^{N}(t)$ is the total average of local times.
The behavior of these averaged local times has been studied extensively
in \cite{G1} and is also important for our work. However, as our
focus is on the density field of the empirical density of colors $\tilde{\mu}^{N}(\cdot)$,
the main object to be estimated is 
\begin{equation}
A_{i,c}^{N}(t)=\frac{1}{N}\sum_{j\in I_{c}^{N}}\left[A_{ij}^{N}(t)+A_{ji}^{N}(t)\right]\label{eq:Loc4}
\end{equation}
which measures the average collision time of the particle $x_{i}^{N}(\cdot)$
against the particles of color $c$. 

Now, we define the notion of local densities. For $x=(x_{1},\,x_{2},\,\cdots,\,x_{N})\in\mathbb{T}^{N}$,
the local density function of color $c$ around the particle $x_{i}$
is defined by
\[
\rho_{\epsilon,i}^{(c)}(x)=\frac{1}{2N\epsilon}\sum_{j\in I_{c}^{N}}\chi_{[-\epsilon,\epsilon]}(x_{j}-x_{i}).
\]
Here, the function $\chi_{[-\epsilon,\epsilon]}(\cdot)$ is the usual
indicator function on $\mathbb{T}$ and we henceforth simply denote
this function by $\chi_{\epsilon}(\cdot)$. The essence of the replacement
lemma for our model is the replacement of the integral with respect
to the average local time of the form

\[
dA_{i,c}^{N}(t)=\frac{1}{N}\sum_{j\in I_{c}^{N}}\left(dA_{ij}^{N}(t)+dA_{ji}^{N}(t)\right)
\]
by the usual integral of the form $\rho_{\epsilon,i}^{(c)}(x^{N}(t))dt$.
Formally, this replacement can be stated as the following theorem. 
\begin{thm}[Replacement Lemma]
\label{thm: REPLACEMENT_colored} For $\epsilon,\,\delta>0$, $0\le t_{1}<t_{2}\le T$
and two colors $c_{1},\,c_{2}$, let $\mathbf{C}_{N}^{c_{1},\,c_{2}}(t_{1},\,t_{2};\epsilon,\,\delta)$
be the event defined by 
\[
\left\{ x^{N}(\cdot)\,:\,\frac{1}{N}\sum_{i\in I_{c_{1}}^{N}}\left|\int_{t_{1}}^{t_{2}}\rho_{\epsilon,i}^{(c_{2})}(x^{N}(t))dt-\left[A_{i,c_{2}}^{N}(t_{2})-A_{i,c_{2}}^{N}(t_{1})\right]\right|>\delta\right\} .
\]
Then, we have 
\begin{equation}
\limsup_{\epsilon\rightarrow0}\limsup_{N\rightarrow\infty}\frac{1}{N}\log\mathbb{P}^{N}\left[\mathbf{C}_{N}^{c_{1},\,c_{2}}(t_{1},\,t_{2};\epsilon,\,\delta)\right]=-\infty.\label{eq:super exponential replacement lemma_colored}
\end{equation}

\end{thm}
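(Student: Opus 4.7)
The argument follows the super-exponential methodology of Guo--Papanicolau--Varadhan adapted to the present continuous setting. Writing $y_i^\epsilon:=\int_{t_1}^{t_2}\rho_{\epsilon,i}^{(c_2)}(x^N(s))\,ds-[A_{i,c_2}^N(t_2)-A_{i,c_2}^N(t_1)]$, the identity $\sum_i|y_i^\epsilon|=\max_{\sigma\in\{\pm1\}^{I_{c_1}^N}}\sum_i\sigma_i y_i^\epsilon$ together with a union bound over the $2^{|I_{c_1}^N|}$ sign patterns and the exponential Chebyshev inequality yield, for every $\gamma>0$,
$$\frac{1}{N}\log\mathbb{P}[\mathbf{C}_N^{c_1,c_2}(t_1,t_2;\epsilon,\delta)]\le\bar{\rho}_{c_1}\log 2-\gamma\delta+\max_\sigma\frac{1}{N}\log\mathbb{E}\!\left[\exp\Big(\gamma\sum_{i\in I_{c_1}^N}\sigma_i\, y_i^\epsilon\Big)\right].$$
Thus it suffices to show, uniformly in the sign pattern $\sigma$, that $\limsup_{\epsilon\to 0}\limsup_N N^{-1}\log\mathbb{E}[\exp(\gamma\sum_i\sigma_i y_i^\epsilon)]\le 0$ for each fixed $\gamma>0$, and then to let $\gamma\to\infty$.

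\textbf{It\^o--Tanaka identity.} To transform the signed sum into a martingale plus boundary and terminal contributions, I apply the martingale formulation \eqref{eq:tanaka} to
$$F_\sigma(x):=\sum_{i\in I_{c_1}^N}\sigma_i\sum_{j\in I_{c_2}^N}\psi_\epsilon(x_j-x_i),$$
where $\psi_\epsilon\in\bar{C}(\mathbb{T})$ is a continuous periodic function whose second derivative, in the classical sense on $\mathbb{T}\setminus\{0\}$, equals $\chi_\epsilon/\epsilon$ minus the normalizing constant that forces $\psi_\epsilon'$ to carry a prescribed unit jump at $0$. A direct computation then gives $\tfrac{1}{2}\Delta F_\sigma=2N\sum_i\sigma_i\rho_{\epsilon,i}^{(c_2)}+c\,|I_{c_2}^N|\sum_i\sigma_i$ in the bulk, and the continuity of $\psi_\epsilon$ at $0$ annihilates the $\lambda N$-part of $\mathfrak{U}_{kl}^\lambda F_\sigma$ on every face. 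The only non-trivial boundary contributions therefore come from the operator $D_{kl}$ on the faces $F_{ij},F_{ji}$ with $i\in I_{c_1}^N,j\in I_{c_2}^N$, and, after using $\sum_j(dA_{ij}^N+dA_{ji}^N)=N\,dA_{i,c_2}^N$, reproduce $-2N\sum_i\sigma_i\,dA_{i,c_2}^N$. The net result is an identity of the shape
$$2N\sum_i\sigma_i y_i^\epsilon=\bigl[F_\sigma(x^N(t_2))-F_\sigma(x^N(t_1))\bigr]-M^\sigma_{t_1,t_2}-\int_{t_1}^{t_2} R_\sigma(x^N(s))\,ds,$$
with $M^\sigma$ the martingale from \eqref{eq:martingale} and $R_\sigma$ a bounded smooth correction.

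\textbf{Exponential moment estimates.} After dividing the identity by $2N$, exponentiating, and splitting by H\"older, three contributions must be bounded. The endpoint values satisfy $\|F_\sigma\|_\infty\le|I_{c_1}^N||I_{c_2}^N|\|\psi_\epsilon\|_\infty=O(N^2)$ with $\|\psi_\epsilon\|_\infty=O(1)$ uniformly in $\epsilon$ (visible from its explicit double-integral representation), so the $1/(2N)$ normalization leaves an $O(1)$ contribution per unit $\gamma$ in $N^{-1}\log$. For the martingale piece, the exponential martingale inequality reduces matters to controlling $\langle M^\sigma\rangle$, whose Brownian component is $\int\sum_k|\nabla_k F_\sigma|^2\,ds$ with $\|\psi_\epsilon'\|_\infty=O(1)$, and whose Poisson component features intensities $\lambda N\,dA_{ij}^N$ to be controlled via preliminary local-time moment bounds developed in Section~\ref{sub:Preliminary-Estimate}. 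Finally, the integrated correction $R_\sigma$ is handled through the Feynman--Kac variational formula against the Dirichlet form \eqref{eq:dir}:
$$\frac{1}{N}\log\mathbb{E}\!\left[\exp\!\left(\gamma\int_{t_1}^{t_2}V_\sigma(x^N(s))\,ds\right)\right]\le T\sup_{f}\Big\{\tfrac{\gamma}{N}\langle V_\sigma,f^2\rangle-\mathbb{D}_N(f)\Big\}+\tfrac{1}{N}H[\mu^N(0)\,|\,\pi_N],$$
with $\pi_N$ the uniform measure on $G_N$.

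\textbf{Main obstacle.} The crux of the proof is the joint $N,\epsilon$-control of the variational supremum above. While $\|\psi_\epsilon\|_\infty$ and $\|\psi_\epsilon'\|_\infty$ are uniformly bounded, the concentration $\|\psi_\epsilon''\|_\infty\sim 1/\epsilon$ and the coupling of the jump of $\psi_\epsilon'$ to the $\lambda N$-scale Poisson mechanism in \eqref{eq:dir} make the bound delicate. Overcoming this requires one-block and two-block equilibrium estimates adapted to the two-color canonical ensemble for the locally interacting Brownian system, and this is precisely where the preliminary estimates of Section~\ref{sub:Preliminary-Estimate} become decisive. Once those inputs are available, sending $N\to\infty$, then $\epsilon\to 0$, and finally $\gamma\to\infty$ in the first display produces the claimed $-\infty$.
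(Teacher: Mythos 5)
Your outline correctly identifies the standard super-exponential scaffolding (sign decomposition, Chebyshev, Feynman--Kac, variational bound against the Dirichlet form) that also appears in the paper, but it is missing the one ingredient that makes the argument close in this non-equilibrium setting, and it mislabels where the actual technical work lies.

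The gap is the entropy term. You bound the Feynman--Kac expectation against the Dirichlet form at the cost of $N^{-1}H[\mu^N(0)\,|\,\pi_N]$, with $\pi_N$ the equilibrium (Lebesgue) measure on $G_N$. For this to produce a finite, $\gamma$-independent constant that can be killed by $\gamma\to\infty$, one needs the initial configuration to have $O(N)$ relative entropy --- or, in the Donsker--Varadhan form the paper actually cites, $\bigl\|\log\tfrac{d\mathbb{P}_N}{d\mathbb{P}_N^{eq}}\bigr\|_\infty=O(N)$. The paper explicitly observes in Section~\ref{sub:Proof-of-Superexponential} that this fails here: for a deterministic start the process is even orthogonal to the equilibrium one. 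The paper's fix is the symmetrization lemma (Lemma~\ref{lem: symmetrization lemma}): since the event $\mathbf{C}_N^{c_1,c_2}$ is invariant under same-color label permutations, and provided it only sees the path on $[\eta,T]$, one can replace $\mathbb{P}_N$ by $\mathbb{P}_N^{eq}$ at cost $(Cm/\sqrt{\eta})^N$, a harmless additive $\log(Cm/\sqrt{\eta})$ on the $N^{-1}\log$ scale. This forces a dichotomy between a \emph{normal time regime} $[\eta,T]$ (Proposition~\ref{prop: normal time regime}), handled exactly as you sketch but on $\mathbb{P}_N^{eq}$, and a \emph{small time regime} $[0,\eta]$ (Proposition~\ref{prop: smal time regime}) which needs an entirely different argument via Tanaka's formula and Girsanov. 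Your write-up contains neither the symmetrization step nor the time-splitting, so the argument as written does not close under the paper's hypotheses.

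A secondary inaccuracy: you describe the ``crux'' as one-block/two-block estimates for a two-color canonical ensemble. The paper's actual key input to the variational bound is Proposition~\ref{Main Replacement Lemma}, established through Green's-formula manipulations on the manifold $G_N$ together with Dirichlet-form estimates on the boundary faces $F_{ij}$ (Lemmas~\ref{Lemma1}--\ref{Lemma7}). There are no one-block/two-block lemmas in the lattice sense here; the Brownian-with-local-interaction geometry calls for a genuinely different set of preliminary estimates, which is where most of Section~\ref{sub:Preliminary-Estimate} is spent. Your It\^o--Tanaka construction of $F_\sigma$ via $\psi_\epsilon$ is closer in spirit to the small-time-regime argument (Proposition~\ref{prop: smal time regime-1}) than to the normal-time-regime one, where the paper keeps the potential $\chi_\epsilon/2\epsilon - \delta^+ - \delta^+$ directly inside the variational supremum rather than integrating it up into a single test function.
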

The main object of the remaining part of current section is to prove
this theorem.

\subsection{Preliminary Estimates\label{sub:Preliminary-Estimate}}

\subsubsection{Green's Formula for $G_{N}$}

We frequently use Green's formula on the $N$-manifold $G_{N}$. As
a calculation of this nature is not conventional, we briefly explain
our philosophy in this short subsection. 

Let us denote the $i$th standard unit vector by $e_{i}$. Then, we
can apply Green's formula for $f\in\bar{C}(G_{N})$ and vector field
$\mathbf{V}(x)=\sum_{i=1}^{N}V_{i}(x)e_{i}$ with $V_{i}\in\bar{C}(G_{N})$
for each $i$, so that 
\begin{align}
 & \int_{G_{N}}\left\langle \nabla f(x),\,\mathbf{V}(x)\right\rangle dx\label{eq:green}\\
 & =-\int_{G_{N}}f(x)\left(\nabla\cdot\mathbf{V}\right)(x)dx+\int_{\partial G_{N}}f(x)\left\langle \mathbf{V}(x),\,n(x)\right\rangle dS(x)\nonumber \\
 & =-\int_{G_{N}}f(x)\left(\nabla\cdot\mathbf{V}\right)(x)dx+\sum_{i\neq j}\int_{F_{ij}}f_{ij}(x)\left\langle \mathbf{V}(x),\,e_{i}-e_{j}\right\rangle dS_{ij}(x).\nonumber 
\end{align}

\begin{rem}
Recall from (\ref{eq:dir}) that $dS_{ij}(x)$ is the Lebesgue measure
on $F_{ij}$ normalized to have a total measure of $1$. The unit
normal vector of the boundary $F_{ij}$ is $\frac{1}{\sqrt{2}}(e_{i}-e_{j})$,
but $\frac{1}{\sqrt{2}}$ is eliminated from (\ref{eq:green}) because
of this renormalization. 
\end{rem}
In particular, some special forms of the vector fields provide us
useful results. We summarize such results by the following lemma.
\begin{lem}
\label{lem: GREEN THM}For $f\in\bar{C}(G_{N})$, $h\in C^{1}([0,\,1])$
and $V_{i}(x)=\sum_{j:j\neq i}h(x_{j}-x_{i})$, the vector field
\[
\mathbf{V}(x)=\sum_{i=1}^{N}V_{i}(x)e_{i}.
\]
satisfies
\begin{align*}
\int_{G_{N}}\left\langle \nabla f(x),\,\mathbf{V}(x)\right\rangle dx= & -\int_{G_{N}}f(x)\left(\nabla\cdot\mathbf{V}\right)(x)dx\\
 & -\frac{h(1)-h(0)}{2}\sum_{i\neq j}\int_{F_{ij}}\left[f_{ij}(x)+f_{ji}(x)\right]dS_{ij}(x).
\end{align*}
In addition, suppose that $U_{1}(x),\,\cdots,\,U_{N}(x)\in C(\mathbb{T}^{N})$
satisfy $U_{i}(x)=U_{j}(x)$ whenever $x_{i}=x_{j}$, for all $i\ne j$.
Then, the vector field
\[
\mathbf{W}(x)=\sum_{i=1}^{N}U_{i}(x)V_{i}(x)e_{i}
\]
satisfies
\begin{align*}
\int_{G_{N}}\left\langle \nabla f(x),\,\mathbf{W}(x)\right\rangle dx= & -\int_{G_{N}}f(x)\left(\nabla\cdot\mathbf{W}\right)(x)dx\\
 & -\frac{h(1)-h(0)}{2}\sum_{i\neq j}\int_{F_{ij}}U_{i}(x)\left[f_{ij}(x)+f_{ji}(x)\right]dS_{ij}(x).
\end{align*}
\end{lem}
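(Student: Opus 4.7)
The plan is to apply the general Green's identity (\ref{eq:green}) to $\mathbf{V}$ and then evaluate the boundary inner product $\langle \mathbf{V}(x),\,e_i - e_j\rangle = V_i(x) - V_j(x)$ on each face $F_{ij}$. Writing
\[
V_i(x) - V_j(x) = \sum_{k \neq i} h(x_k - x_i) - \sum_{k \neq j} h(x_k - x_j),
\]
the terms with $k \notin \{i, j\}$ cancel pairwise because $x_i = x_j$ on the face. Only the $(k = j)$ contribution to $V_i$ and the $(k = i)$ contribution to $V_j$ survive, leaving $h(x_j - x_i) - h(x_i - x_j)$. By the convention preceding (\ref{eq:Uij}), $x_j = x_i + 0$ on $F_{ij}$, so on $\mathbb{T}$ the one-sided limits evaluate to $h(0)$ and $h(1)$ respectively. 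Hence $V_i - V_j = h(0) - h(1)$ is a \emph{constant} along $F_{ij}$.

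Substituting into (\ref{eq:green}) gives
\[
\int_{G_N} \langle \nabla f, \mathbf{V}\rangle\,dx = -\int_{G_N} f\,(\nabla \cdot \mathbf{V})\,dx + (h(0) - h(1)) \sum_{i \neq j} \int_{F_{ij}} f_{ij}(x)\,dS_{ij}(x).
\]
To recover the symmetric form in the statement, I would then symmetrize the boundary sum: since $F_{ij}$ and $F_{ji}$ are two sides of the same geometric face $\{x_i = x_j\}$ and $dS_{ij} = dS_{ji}$ as measures on that face, relabeling $i \leftrightarrow j$ yields
\[
\sum_{i \neq j}\int_{F_{ij}} f_{ij}\,dS_{ij} = \sum_{i \neq j}\int_{F_{ij}} f_{ji}\,dS_{ij} = \frac{1}{2}\sum_{i \neq j}\int_{F_{ij}}(f_{ij} + f_{ji})\,dS_{ij},
\]
and combining with $h(0) - h(1) = -(h(1) - h(0))$ produces the first identity.

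For the second identity with $\mathbf{W}$, the computation is essentially identical: on $F_{ij}$ one has $\langle \mathbf{W},\,e_i - e_j\rangle = U_i V_i - U_j V_j$, and the hypothesis $U_i(x) = U_j(x)$ whenever $x_i = x_j$ lets me factor $U_i V_i - U_j V_j = U_i(V_i - V_j) = U_i(h(0) - h(1))$ on $F_{ij}$. The symmetrization step goes through without change because the equality $U_i = U_j$ on the face is precisely what ensures that relabeling $i \leftrightarrow j$ preserves the weight $U_i$ alongside $f_{ji}$. The only mildly subtle point in the whole argument is tracking the asymmetric one-sided limits on the torus---namely $h(x_j - x_i) \to h(0)$ but $h(x_i - x_j) \to h(1)$ due to the wraparound---and the entire boundary contribution is driven by this gap, which is also why no periodicity of $h$ is required.
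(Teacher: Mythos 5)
Your argument is correct and follows the same route as the paper's proof: apply the Green's formula (\ref{eq:green}), observe that on $F_{ij}$ the off-diagonal contributions to $V_i - V_j$ cancel while the $(i,j)$ pair leaves $h(0) - h(1)$ by the torus wraparound, and then symmetrize the boundary sum; the $\mathbf{W}$ case is handled identically by factoring out $U_i = U_j$ on the face. Your explicit remarks on the one-sided limits and the role of the condition $U_i = U_j$ on $\{x_i = x_j\}$ fill in details the paper leaves terse, but there is no difference in substance.
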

\begin{proof}
For the first part, it is enough to check boundary terms. Note that
\[
\left\langle \mathbf{V}(x),\,e_{i}-e_{j}\right\rangle =h(x_{j}-x_{i})-h(x_{i}-x_{j})+\sum_{k:k\neq i,j}\left[h(x_{k}-x_{i})-h(x_{k}-x_{j})\right]
\]
and $h(x_{k}-x_{i})-h(x_{k}-x_{j})=0$ on $F_{ij}$ for $k\neq i,\,j$.
Moreover, $h(x_{j}-x_{i})=h(0)$ and $h(x_{i}-x_{j})=h(1)$ on $F_{ij}$
and hence
\begin{align*}
 & \sum_{i=1}^{N}\int_{G_{N}}\left\langle \nabla f(x),\,\mathbf{V}(x)\right\rangle dx\\
 & =-\int_{G_{N}}f(x)\left(\nabla\cdot\mathbf{V}\right)(x)dx-(h(1)-h(0))\sum_{i\neq j}\int_{F_{ij}}f_{ij}(x)dS_{ij}(x)
\end{align*}
by (\ref{eq:green}). Obviously, we can symmetrize the last term as
\[
\sum_{i\neq j}\int_{F_{ij}}f_{ij}(x)dS_{ij}(x)=\frac{1}{2}\sum_{i\neq j}\int_{F_{ij}}\left(f_{ij}(x)+f_{ji}(x)\right)dS_{ij}(x)
\]
and we are done. The proof of the second part is similar.
\end{proof}

\subsubsection{Estimates Based on Dirichlet Form. }

The proof of Theorem \ref{thm: REPLACEMENT_colored} heavily relies
on the Dirichlet form. In particular, we frequently use $\mathbb{D}_{N}(\sqrt{f})$
for some $f\ge0$ and we denote this by $\mathcal{D}_{N}(f)$. By
(\ref{eq:dir}), 
\begin{equation}
\mathcal{D}_{N}(f)=\frac{1}{8}\int_{G_{N}}\frac{|\nabla f(x)|^{2}}{f(x)}dx+\frac{\lambda N}{2}\sum_{i\neq j}\int_{F_{ij}}\left[\sqrt{f_{ij}(x)}-\sqrt{f_{ji}(x)}\right]^{2}dS_{ij}(x).\label{eq: ROOT DIRICHLET FORM}
\end{equation}
In addition, let $\mbox{\ensuremath{\mathscr{P}}}_{N}$ be the class
of non-negative functions $f\in\bar{C}(G_{N})$ which also satisfies
$\int_{\mathbb{T}}f(x)dx=1$. 
\begin{lem}
\label{Lemma1}For any $f\in\mbox{\ensuremath{\mathscr{P}}}_{N}$,
we have 
\begin{equation}
\frac{1}{N}\sum_{i=1}^{N}\int_{G_{N}}\left|\nabla_{i}f(x)\right|dx\le\sqrt{\frac{8\mathcal{D}_{N}(f)}{N}}.\label{eq:diff}
\end{equation}
\end{lem}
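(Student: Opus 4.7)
The plan is to prove (\ref{eq:diff}) by two applications of the Cauchy--Schwarz inequality. The only properties I will use are the probability normalization $\int_{G_N} f\,dx=1$ built into $\mathscr{P}_N$ and the fact that the boundary sum in the representation (\ref{eq: ROOT DIRICHLET FORM}) of $\mathcal{D}_N(f)$ is non-negative, so that $\int_{G_N}|\nabla f|^2/f\,dx \le 8\,\mathcal{D}_N(f)$.

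First, for each fixed $i$ I would factor $|\nabla_i f| = (|\nabla_i f|/\sqrt{f})\cdot\sqrt{f}$ and apply Cauchy--Schwarz in $L^2(G_N, dx)$. Using the normalization $\int_{G_N} f\,dx = 1$ this immediately gives
$$\int_{G_N}|\nabla_i f(x)|\,dx \le \left(\int_{G_N}\frac{|\nabla_i f(x)|^{2}}{f(x)}\,dx\right)^{1/2}.$$
On the null set $\{f=0\}$ the ratio is read as zero by the usual convention, which is justified because $f\ge 0$ attains its minimum there and so $\nabla_i f$ must vanish as well. Next, summing over $i=1,\ldots,N$ and applying the discrete Cauchy--Schwarz produces a $\sqrt{N}$:
$$\sum_{i=1}^{N}\int_{G_N}|\nabla_i f|\,dx \le \sqrt{N}\left(\sum_{i=1}^{N}\int_{G_N}\frac{|\nabla_i f|^{2}}{f}\,dx\right)^{1/2}=\sqrt{N}\left(\int_{G_N}\frac{|\nabla f|^{2}}{f}\,dx\right)^{1/2}.$$
Bounding the right-hand side by $\sqrt{8N\,\mathcal{D}_N(f)}$ through the observation above and dividing by $N$ delivers (\ref{eq:diff}).

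The argument has no substantive obstacle; it is simply Cauchy--Schwarz twice combined with the discard of a non-negative boundary contribution to $\mathcal{D}_N(f)$. The only minor points worth noting are the $\{f=0\}$ convention mentioned above and the fact that the constant $8$ in (\ref{eq:diff}) matches exactly the reciprocal of the prefactor $1/8$ attached to the bulk term in (\ref{eq: ROOT DIRICHLET FORM}), so no loss is incurred beyond the Cauchy--Schwarz step.
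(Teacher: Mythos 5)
Your proof is correct and is essentially the same two-Cauchy--Schwarz argument the paper gives; the only difference is the order of the two applications (the paper applies the pointwise discrete inequality $\sum_i|\nabla_i f|\le\sqrt{N}\,|\nabla f|$ first and then Cauchy--Schwarz on the integral, whereas you apply Cauchy--Schwarz on each integral $\int|\nabla_i f|\,dx$ first and then sum over $i$), which yields the identical bound.
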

\begin{proof}
This bound can be proven by 
\begin{align*}
\frac{1}{N}\int\sum_{i=1}^{N}\left|\nabla_{i}f(x)\right|dx & \le\frac{1}{\sqrt{N}}\int\left|\nabla f(x)\right|dx\le\frac{1}{\sqrt{N}}\left[\int\frac{\left|\nabla f(x)\right|^{2}}{f(x)}dx\,\int f(x)dx\right]^{\frac{1}{2}}.
\end{align*}
\end{proof}
\begin{lem}
\label{Lemma2}For any $f\in\mbox{\ensuremath{\mathscr{P}}}_{N}$,
we have 
\begin{equation}
\frac{1}{N^{2}}\sum_{i\neq j}\int_{F_{ij}}\left(f_{ij}(x)+f_{ji}(x)\right)dS_{ij}(x)\le2+\sqrt{\frac{8\mathcal{D}_{N}(f)}{N}}.\label{eq:local}
\end{equation}
\end{lem}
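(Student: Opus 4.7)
The plan is to apply the first part of Lemma \ref{lem: GREEN THM} with a cleverly chosen $h\in C^1([0,1])$ to rewrite the boundary sum appearing in the inequality as a bulk integral plus a term proportional to $\int_{G_N} f\,dx=1$. For the constant on the right-hand side to come out to exactly $2$, the function $h$ must satisfy $h(1)-h(0)=1$ (so the boundary sum gets isolated) while simultaneously having a small sup-norm (so the remaining bulk term can be absorbed into $\sqrt{\mathcal{D}_N(f)/N}$ via Lemma \ref{Lemma1}).

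Concretely, I take $h(x)=x-\tfrac{1}{2}$, so that $h(1)-h(0)=1$ and $|h|\le\tfrac{1}{2}$. Then $V_i(x)=\sum_{j\ne i}(x_j-x_i-\tfrac{1}{2})$ satisfies $|V_i(x)|\le(N-1)/2$, while $\nabla_iV_i(x)\equiv-(N-1)$, which gives $(\nabla\cdot\mathbf{V})(x)=-N(N-1)$. Substituting into Lemma \ref{lem: GREEN THM} and using $\int_{G_N}f\,dx=1$, I obtain the identity
\[
\sum_{i\ne j}\int_{F_{ij}}\bigl(f_{ij}(x)+f_{ji}(x)\bigr)\,dS_{ij}(x)\;=\;2N(N-1)\;-\;2\int_{G_N}\langle\nabla f(x),\mathbf{V}(x)\rangle\,dx.
\]

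For the remaining bulk term, I use the pointwise bound $|V_i|\le(N-1)/2$ and then invoke Lemma \ref{Lemma1}:
\[
\left|\int_{G_N}\langle\nabla f,\mathbf{V}\rangle\,dx\right|\;\le\;\frac{N-1}{2}\sum_{i=1}^N\int_{G_N}|\nabla_i f(x)|\,dx\;\le\;\frac{N(N-1)}{2}\sqrt{\frac{8\mathcal{D}_N(f)}{N}}.
\]
Dividing the previous identity by $N^2$ and using $(N-1)/N\le 1$ then yields
\[
\frac{1}{N^2}\sum_{i\ne j}\int_{F_{ij}}\bigl(f_{ij}+f_{ji}\bigr)\,dS_{ij}\;\le\;\frac{2(N-1)}{N}+\frac{N-1}{N}\sqrt{\frac{8\mathcal{D}_N(f)}{N}}\;\le\;2+\sqrt{\frac{8\mathcal{D}_N(f)}{N}},
\]
as required.

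There is no substantial obstacle here; the only real point of care is the choice of $h$. The naive choice $h(x)=x$ also satisfies $h(1)-h(0)=1$ and gives the same divergence term, but only produces the bound $2+2\sqrt{8\mathcal{D}_N(f)/N}$. Centering $h$ to minimize $\|h\|_\infty$ subject to $h(1)-h(0)=1$ is what yields the sharp constant $1$ in front of the square root.
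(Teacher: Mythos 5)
Your proof is correct and follows essentially the same route as the paper: the same choice $h(x)=x-\tfrac{1}{2}$, the same vector field $V_i(x)=\sum_{j\neq i}h(x_j-x_i)$ fed into Lemma \ref{lem: GREEN THM}, the same pointwise bound $|V_i|\le(N-1)/2$, and the same final invocation of Lemma \ref{Lemma1}. The closing remark about centering $h$ to minimize $\|h\|_\infty$ under the constraint $h(1)-h(0)=1$ is a nice observation about why the constant works out, but it does not change the argument.
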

\begin{proof}
Define a function $h(x)=x-\frac{1}{2}$ on $[0,\,1]$, then $V_{i}(x)=\sum_{j:j\neq i}h(x_{j}-x_{i})$
satisfies $\nabla_{i}V_{i}(x)=-(N-1)$ and $|V_{i}(x)|\le(N-1)/2$.
Hence, by Lemma \ref{lem: GREEN THM}, 
\begin{align*}
 & \frac{1}{2}\sum_{i\neq j}\int_{F_{ij}}\left(f_{ij}(x)+f_{ji}(x)\right)dS_{ij}(x)\\
 & =N(N-1)\int_{\mathbb{T}^{N}}f(x)dx+\sum_{i=1}^{N}\int_{\mathbb{T}^{N}}\left[\nabla_{i}f(x)\right]V_{i}(x)dx\\
 & \le N(N-1)+\frac{N-1}{2}\sum_{i=1}^{N}\int_{\mathbb{T}^{N}}\left|\nabla_{i}f(x)\right|dx.
\end{align*}
We can complete the proof by Lemma \ref{Lemma1}.
\end{proof}
Let us define $M_{\epsilon,i}(x)=\sum_{j:j\neq i}\chi_{\epsilon}(x_{j}-x_{i})$
which counts the number of particles around $x_{i}$. The following
series of lemmas provides estimates related to $M_{\epsilon,i}(\cdot)$.
We also remark here that we shall write $C$ for a constant and as
usual different occurrences of $C$ may denote different constants. 
\begin{lem}
\label{Lemma3}For any $f\in\mbox{\ensuremath{\mathscr{P}}}_{N}$
and $\epsilon>0$, we have 
\begin{equation}
\frac{1}{N^{2}}\int_{G_{N}}f(x)\sum_{i=1}^{N}M_{\epsilon,i}(x)dx\le C\left(1+\sqrt{\frac{\mathcal{D}_{N}(f)}{N}}\right)\epsilon.\label{eq:total}
\end{equation}
\end{lem}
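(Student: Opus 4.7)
The plan is to rewrite the left-hand side as a divergence and appeal to Green's formula (Lemma~\ref{lem: GREEN THM}), reducing everything to Lemmas~\ref{Lemma1} and~\ref{Lemma2}. Observe first that
\[
\frac{1}{N^{2}}\int_{G_{N}}f(x)\sum_{i=1}^{N}M_{\epsilon,i}(x)\,dx=\frac{1}{N^{2}}\sum_{i\neq j}\int_{G_{N}}f(x)\chi_{\epsilon}(x_{j}-x_{i})\,dx,
\]
so the issue is to absorb $\chi_{\epsilon}$ into a gradient. Choose a function $h_{\epsilon}:[0,1]\to\mathbb{R}$ (after a routine mollification of $\chi_{\epsilon}$, which I suppress below) satisfying $h_{\epsilon}(0)=0$ and $h_{\epsilon}'=\chi_{\epsilon}$; then $\|h_{\epsilon}\|_{\infty}\le 2\epsilon$ and $h_{\epsilon}(1)-h_{\epsilon}(0)=2\epsilon$. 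Set
\[
V_{i}(x)=\sum_{j:j\neq i}h_{\epsilon}(x_{j}-x_{i}),\qquad \mathbf{V}(x)=\sum_{i=1}^{N}V_{i}(x)e_{i},
\]
so that $\nabla_{i}V_{i}(x)=-\sum_{j:j\neq i}h_{\epsilon}'(x_{j}-x_{i})=-M_{\epsilon,i}(x)$ and hence $\nabla\cdot\mathbf{V}=-\sum_{i}M_{\epsilon,i}$.

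Plugging this $\mathbf{V}$ into Lemma~\ref{lem: GREEN THM} (noting $h_{\epsilon}(1)-h_{\epsilon}(0)=2\epsilon$) and rearranging gives
\[
\int_{G_{N}}f(x)\sum_{i}M_{\epsilon,i}(x)\,dx=\int_{G_{N}}\langle\nabla f(x),\mathbf{V}(x)\rangle\,dx+\epsilon\sum_{i\neq j}\int_{F_{ij}}\bigl[f_{ij}(x)+f_{ji}(x)\bigr]dS_{ij}(x).
\]
The boundary sum is bounded by Lemma~\ref{Lemma2}, which yields $N^{2}\epsilon\bigl(2+\sqrt{8\mathcal{D}_{N}(f)/N}\bigr)$ after multiplication by $\epsilon$. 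For the bulk integral, the pointwise estimate $|V_{i}(x)|\le 2\epsilon(N-1)\le 2\epsilon N$ combined with Lemma~\ref{Lemma1} gives
\[
\Bigl|\int_{G_{N}}\langle\nabla f,\mathbf{V}\rangle\,dx\Bigr|\le 2\epsilon N\sum_{i=1}^{N}\int_{G_{N}}|\nabla_{i}f(x)|\,dx\le 2\epsilon N^{2}\sqrt{\frac{8\mathcal{D}_{N}(f)}{N}}.
\]
Dividing through by $N^{2}$, both contributions have the form $C\epsilon\bigl(1+\sqrt{\mathcal{D}_{N}(f)/N}\bigr)$, which is exactly~(\ref{eq:total}).

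The only nontrivial point is that the natural $h_{\epsilon}$ is Lipschitz but not $C^{1}$, whereas Lemma~\ref{lem: GREEN THM} is stated for $h\in C^{1}([0,1])$. This is handled by a standard smoothing: replace $\chi_{\epsilon}$ with a smooth approximation $\chi_{\epsilon}^{\eta}\ge 0$ having $\int_{\mathbb{T}}\chi_{\epsilon}^{\eta}=2\epsilon$ and $\|\chi_{\epsilon}^{\eta}\|_{\infty}$ uniformly bounded as $\eta\to 0$; the argument above uses only these two quantities, and one passes to the limit $\eta\to 0$ by dominated convergence. The conceptual content is simply that $\chi_{\epsilon}$ is the derivative of a function of amplitude $O(\epsilon)$, so Green's formula automatically produces the factor $\epsilon$ on both the bulk and boundary sides; I expect no genuine obstacle beyond this bookkeeping.
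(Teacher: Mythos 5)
Your proof is correct and follows essentially the same route as the paper: the paper's $h_{\epsilon}$ is precisely your antiderivative of $\chi_{\epsilon}$ shifted by the constant $-\epsilon$, so the vector field $\mathbf{V}$, the Green's-formula identity, and the reduction to Lemmas~\ref{Lemma1} and~\ref{Lemma2} are all the same. (Your remark about $h_{\epsilon}$ being only Lipschitz applies equally to the paper's piecewise-linear choice, so flagging the mollification step is if anything slightly more careful than the original.)
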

\begin{proof}
We take an auxiliary function $h_{\epsilon}$ on $[0,\,1]$ such that
\[
h_{\epsilon}(x)=\begin{cases}
x-\epsilon & \text{for\,\,}0\le x\le\epsilon\\
0 & \text{for\,\,}\epsilon\le x\le1-\epsilon\\
x-(1-\epsilon) & \text{for\,\,}1-\epsilon\le x\le1
\end{cases}
\]
Note that $V_{i}(x)=\sum_{j:j\neq i}h_{\epsilon}(x_{j}-x_{i})$ satisfies
$|V_{i}(x)|\le(N-1)\epsilon$ and $\nabla_{i}V_{i}(x)=-M_{\epsilon,i}(x)$.
Thus, by Lemma \ref{lem: GREEN THM}, 
\begin{align*}
 & \int_{G_{N}}f(x)\sum_{i=1}^{N}M_{\epsilon,i}(x)dx\\
 & =\sum_{i=1}^{N}\int_{G_{N}}\left[\nabla_{i}f(x)\right]V_{i}(x)dx+\epsilon\sum_{i\neq j}\int_{F_{ij}}\left(f_{ij}(x)+f_{ji}(x)\right)dS_{ij}(x)\\
 & \le(N-1)\epsilon\sum_{i=1}^{N}\int_{G_{N}}\left|\nabla_{i}f(x)\right|dx+\epsilon\sum_{i\neq j}\int_{F_{ij}}\left(f_{ij}(x)+f_{ji}(x)\right)dS_{ij}(x)
\end{align*}
and therefore we can complete the proof by applying Lemma \ref{Lemma1}
and \ref{Lemma2}. \end{proof}
\begin{lem}
\label{Lemma4}For any $f\in\mbox{\ensuremath{\mathscr{P}}}_{N}$
and $\epsilon>0$, we have
\begin{equation}
\frac{1}{N^{2}}\int_{G_{N}}\sum_{i=1}^{N}\left|\nabla_{i}f(x)\right|M_{\epsilon,i}(x)dx\le C\left[1+\left(\frac{\mathcal{D}_{N}(f)}{N}\right)^{\frac{3}{4}}\right]\epsilon^{\frac{1}{2}}.\label{eq:total-1}
\end{equation}
\end{lem}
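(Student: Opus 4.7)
The plan is to estimate $|\nabla_i f|\,M_{\epsilon,i}$ by a Cauchy--Schwarz splitting that isolates a Dirichlet-type factor from a factor that can be handled by the previous lemma. Specifically, I would write
\[
|\nabla_i f(x)|\,M_{\epsilon,i}(x)=\frac{|\nabla_i f(x)|}{\sqrt{f(x)}}\,\cdot\,\sqrt{f(x)}\,M_{\epsilon,i}(x)
\]
on $\{f>0\}$ and apply Cauchy--Schwarz jointly in the sum over $i$ and the integral over $G_N$, obtaining
\[
\int_{G_N}\sum_{i=1}^{N}|\nabla_i f|\,M_{\epsilon,i}\,dx
\;\le\;\left(\int_{G_N}\sum_{i=1}^{N}\frac{|\nabla_i f|^{2}}{f}\,dx\right)^{1/2}\!\left(\int_{G_N}f\sum_{i=1}^{N}M_{\epsilon,i}^{2}\,dx\right)^{1/2}.
\]

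The first factor is the easy one: by the explicit formula \eqref{eq: ROOT DIRICHLET FORM} for $\mathcal{D}_{N}(f)$, it is at most $(8\mathcal{D}_{N}(f))^{1/2}$. For the second factor, the key observation is the trivial pointwise bound $M_{\epsilon,i}(x)\le N$, which reduces the quadratic expression to a linear one amenable to Lemma \ref{Lemma3}:
\[
\int_{G_N}f\sum_{i=1}^{N}M_{\epsilon,i}^{2}\,dx\;\le\;N\int_{G_N}f\sum_{i=1}^{N}M_{\epsilon,i}\,dx\;\le\;CN^{3}\!\left(1+\sqrt{\mathcal{D}_{N}(f)/N}\right)\epsilon.
\]

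Combining the two estimates and dividing by $N^{2}$, I get, with $D:=\mathcal{D}_{N}(f)/N$,
\[
\frac{1}{N^{2}}\int_{G_N}\sum_{i=1}^{N}|\nabla_i f|\,M_{\epsilon,i}\,dx\;\le\;C\sqrt{D(1+D^{1/2})}\,\epsilon^{1/2}.
\]
To reach the form stated in the lemma, I would finish with the elementary inequalities $(a+b)^{1/2}\le a^{1/2}+b^{1/2}$ (giving $D^{1/2}(1+D^{1/2})^{1/2}\le D^{1/2}+D^{3/4}$) and $D^{1/2}\le 1+D^{3/4}$, which yields the desired $C[1+(\mathcal{D}_{N}(f)/N)^{3/4}]\epsilon^{1/2}$.

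There is no substantive obstacle in this argument; the only real decision is the choice of Cauchy--Schwarz splitting. An alternative $\frac{|\nabla_i f|}{\sqrt f}\sqrt{M_{\epsilon,i}}\cdot\sqrt{f M_{\epsilon,i}}$ gives the same final bound by symmetric bookkeeping, but the splitting used above is preferable because it requires only the crudest pointwise bound $M_{\epsilon,i}\le N$ in the $M_{\epsilon,i}^{2}$-factor and avoids introducing a weighted Dirichlet form. The $3/4$ exponent and the additive $1$ emerge in a controlled way once the square roots are carefully disentangled, and that arithmetic is the only subtlety worth flagging in the write-up.
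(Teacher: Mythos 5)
Your proof is correct and follows essentially the same route as the paper: Cauchy--Schwarz with the splitting $|\nabla_i f|/\sqrt f\cdot\sqrt f\,M_{\epsilon,i}$, the crude bound $M_{\epsilon,i}\le N$ to reduce $M_{\epsilon,i}^2$ to $NM_{\epsilon,i}$, and then Lemma \ref{Lemma3}. The paper's version is terser but carries out the identical computation, including the same appearance of $D^{1/2}(1+D^{1/2})^{1/2}$ which is absorbed into $C(1+D^{3/4})$.
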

\begin{proof}
By Cauchy-Schwarz's inequality,
\begin{align*}
\frac{1}{N^{2}}\int_{G_{N}}\sum_{i=1}^{N}\left|\nabla_{i}f(x)\right|M_{\epsilon,i}(x)dx & \le\frac{1}{N^{2}}\left(\int_{G_{N}}\frac{\left|\nabla f(x)\right|^{2}}{f(x)}dx\int_{G_{N}}f(x)\sum_{i=1}^{N}M_{\epsilon,i}^{2}(x)dx\right)^{\frac{1}{2}}\\
 & \le\left(\frac{8\mathcal{D}_{N}(f)}{N}\cdot\frac{1}{N^{2}}\int_{G_{N}}f(x)\sum_{i=1}^{N}M_{\epsilon,i}(x)dx\right)^{\frac{1}{2}}
\end{align*}
since $M_{\epsilon,i}(x)\le N$. Thus, (\ref{eq:total-1}) is direct
from Lemma \ref{Lemma3}. \end{proof}
\begin{lem}
\label{Lemma5}For any $f\in\mbox{\ensuremath{\mathscr{P}}}_{N}$
and $0<\epsilon<\frac{1}{4}$, we have 
\begin{equation}
\frac{1}{N^{3}}\int_{G_{N}}f(x)\sum_{i=1}^{N}M_{\epsilon,i}^{2}(x)dx\le C\left[1+\left(\frac{\mathcal{D}_{N}(f)}{N}\right)^{\frac{3}{4}}\right]\left(\epsilon^{\frac{3}{2}}+\frac{\epsilon}{N}\right).\label{eq:sec}
\end{equation}
\end{lem}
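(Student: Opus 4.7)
The strategy parallels the proofs of Lemmas~\ref{Lemma2} and \ref{Lemma3}: apply Green's formula to a vector field that now carries an extra factor of $M_{\epsilon,i}$ as a weight. Using $\chi_\epsilon^2 = \chi_\epsilon$, I would first split
\[
\sum_i M_{\epsilon,i}(x)^2 = \sum_i M_{\epsilon,i}(x) + \sum_i \sum_{\substack{j,k\neq i\\ j\neq k}}\chi_\epsilon(x_j - x_i)\chi_\epsilon(x_k - x_i).
\]
The diagonal sum is handled by Lemma~\ref{Lemma3}, contributing the $\epsilon/N$ term after division by $N^3$. For the triple sum, I would apply the second version of Lemma~\ref{lem: GREEN THM} with $h = h_\epsilon$ (so that $V_i(x) := \sum_{j\neq i}h_\epsilon(x_j - x_i)$ satisfies $\nabla_i V_i = -M_{\epsilon,i}$) and weight $U_i(x) = M_{\epsilon,i}(x)$; symmetry of $\chi_\epsilon$ guarantees $U_i = U_j$ on $F_{ij}$. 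Since $\chi_\epsilon$ is not smooth, I would work throughout with an even $C^\infty$ mollification $\chi_\epsilon^\delta$ and take $\delta\downarrow 0$ at the end. Rearrangement then yields
\[
\int f\sum_i M_{\epsilon,i}^2\,dx - \int f\sum_i M_{\epsilon,i}\,dx = T_1 + T_2 + T_3,
\]
where $T_1 = \sum_i\int (\nabla_i f)U_iV_i\,dx$, $T_2 = \int f\sum_i(\nabla_i U_i)V_i\,dx$, and $T_3 = \epsilon\sum_{i\neq j}\int_{F_{ij}}U_i(f_{ij}+f_{ji})\,dS_{ij}$.

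The term $T_1$ is bounded directly by Lemma~\ref{Lemma4}: using $|V_i|\le N\epsilon$, one has $|T_1|/N^3 \le \epsilon\cdot\frac{1}{N^2}\sum_i\int|\nabla_i f|M_{\epsilon,i}\,dx \le C(1+(\mathcal{D}_N(f)/N)^{3/4})\epsilon^{3/2}$. The boundary term $T_3$ is controlled by a weighted analog of Lemma~\ref{Lemma2} -- applying Green's formula with $h(y)=y-\tfrac12$ and the same weight $U_i = M_{\epsilon,i}$, then bounding the resulting terms by Lemmas~\ref{Lemma1}, \ref{Lemma3}, \ref{Lemma4} -- which gives $\sum_{i\neq j}\int_{F_{ij}}M_{\epsilon,i}(f_{ij}+f_{ji})\,dS_{ij}\le CN^3(1+(\mathcal{D}_N(f)/N)^{3/4})\epsilon$, hence $|T_3|/N^3\le C(1+(\mathcal{D}_N(f)/N)^{3/4})\epsilon^2$, which is absorbed into the claimed $\epsilon^{3/2}$ estimate.

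The main obstacle is $T_2$: its integrand $\nabla_i U_i^\delta = -\sum_{k\neq i}(\chi_\epsilon^\delta)'(x_k - x_i)$ concentrates as $\delta\downarrow 0$ on the shells $\{x_k - x_i = \pm\epsilon\}$. Choosing $\chi_\epsilon^\delta$ even makes $(\chi_\epsilon^\delta)'$ odd, so symmetrizing the double sum under $i \leftrightarrow k$ produces
\[
T_2 = -\tfrac12\int f\sum_{i\neq k}(\chi_\epsilon^\delta)'(x_k - x_i)\bigl[V_i(x) - V_k(x)\bigr]\,dx.
\]
A direct Lipschitz calculation, exploiting that $h_\epsilon$ is $1$-Lipschitz on $[-\epsilon,\epsilon]$ and vanishes at $\pm\epsilon$, gives $|V_i(x) - V_k(x)| \le 2\epsilon\bigl[M_{\epsilon,i}(x) + M_{\epsilon,k}(x)\bigr]$ on the support of $(\chi_\epsilon^\delta)'(x_k - x_i)$. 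Substituting and passing to the limit $\delta\downarrow 0$ reduces $|T_2|/N^3$ to an $\epsilon$-multiple of a weighted shell local time $\sum_{i\neq k}\int_{\{|x_k - x_i|=\epsilon\}}f\,M_{\epsilon,k}\,dS$; by the translation $x_k \mapsto x_k - \epsilon$, which preserves the bulk part of $\mathcal{D}_N(f)$, this is in turn controlled by a Green's-formula argument applied to the shifted density, entirely analogous to the proofs of Lemmas~\ref{Lemma2} and \ref{Lemma3}. Deriving this weighted shell local time estimate -- a codimension-one analog of Lemma~\ref{Lemma2} off the diagonal $\{x_i = x_k\}$ -- is the core technical step and ultimately yields $|T_2|/N^3 \le C(1+(\mathcal{D}_N(f)/N)^{3/4})\epsilon^{3/2}$, completing the proof.
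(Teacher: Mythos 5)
Your decomposition via a weighted Green's formula with $U_i=M_{\epsilon,i}$ is a genuinely different route from the paper's. The paper never introduces the weight $M_{\epsilon,i}$ directly; instead, starting from $M_{\epsilon,i}^2=\sum_{j,k\neq i}\chi_\epsilon(x_j-x_i)\chi_\epsilon(x_k-x_i)$, it constructs an explicit \emph{continuous} function $k_\epsilon\in C(\mathbb{T}^2)$ satisfying $0\le k_\epsilon(x,y)\le 2\epsilon\chi_{2\epsilon}(x-y)$ and $\chi_\epsilon(x)\chi_\epsilon(y)\le 2\epsilon\chi_{2\epsilon}(x-y)+\nabla_xk_\epsilon+\nabla_yk_\epsilon$, so that $M_{\epsilon,i}^2\le M_{\epsilon,i}+\epsilon\sum_lM_{2\epsilon,l}-\nabla_iK_{\epsilon,i}$ with $K_{\epsilon,i}(x)=\sum_{p,q\neq i,\,p\neq q}k_\epsilon(x_p-x_i,x_q-x_i)$. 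Because $k_\epsilon$ is continuous on $\mathbb{T}^2$, the Green's formula applied to the vector field $\sum_iK_{\epsilon,i}e_i$ produces \emph{no boundary terms at all}, and the remaining pieces are bounded directly by Lemma~\ref{Lemma3} and a Cauchy--Schwarz with the trivial bound $K_{\epsilon,i}\le 2\epsilon N^2$. In other words, the paper's construction performs the ``smooth translation'' you are reaching for, but in a way compatible with the boundary structure of $G_N$.

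The core gap in your proposal is precisely the piece you flag as ``the core technical step''. Both $T_2$ and (contrary to your claim) the proposed weighted Green's treatment of $T_3$ generate a term of the form $\int f\sum_i(\nabla_iM_{\epsilon,i})V_i\,dx$, whose distributional limit lives on the shells $\{|x_k-x_i|=\epsilon\}$. The translation $x_k\mapsto x_k-\epsilon$ does not reduce this to a diagonal local time: the shifted density $\tilde f$ acquires jumps along the \emph{off-diagonal} hyperplanes $\{x_k=x_q-\epsilon\}$ and becomes smooth across the diagonals $F_{kq}$, so $\tilde f\notin\bar{C}(G_N)$ and $\tilde f\notin\mathscr{P}_N$. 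Green's formula as stated in Lemma~\ref{lem: GREEN THM}, and the bounds of Lemmas~\ref{Lemma1}--\ref{Lemma4}, are formulated for $f\in\mathscr{P}_N$ and only account for boundary contributions on the diagonal faces; applying them to $\tilde f$ is not legitimate, and tracking the new off-diagonal boundary terms would require reproving the entire apparatus for a shifted geometry. (There is also a quantitative slip: your asserted bound $\sum_{i\neq j}\int_{F_{ij}}M_{\epsilon,i}(f_{ij}+f_{ji})\,dS_{ij}\lesssim N^3\epsilon$ is stronger than what is attainable -- the paper's Lemma~\ref{Lemma6} gives only $\epsilon^{1/2}+N^{-1}$, which still suffices for $T_3$ but whose proof in turn already invokes Lemma~\ref{Lemma5}, so it cannot be borrowed here.) Until the shell local time estimate is actually established, the argument is incomplete; the mechanism the paper uses to close exactly this gap is the explicit $k_\epsilon$ construction, which eliminates shell contributions at the source rather than trying to estimate them.
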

\begin{proof}
Since $M_{\epsilon,i}^{2}(x)=\sum_{k,\,j\neq i}\chi_{\epsilon}(x_{k}-x_{i})\chi_{\epsilon}(x_{j}-x_{i})$,
we need to bound $\chi_{\epsilon}(x)\chi_{\epsilon}(y)$ by a more
tractable object. To this end, we define a function $k_{\epsilon}(\cdot,\,\cdot)\in C(\mathbb{T}^{2})$
for $\epsilon<\frac{1}{4}$. Firstly, along the line $y-x=c$ with
$0\le c\le2\epsilon$, $k_{\epsilon}(x,\,y)$ is defined by 
\[
k_{\epsilon}(x,\,y)=\begin{cases}
\frac{1-2\epsilon}{1-c}(x+\epsilon) & \,\text{if\,\,}-\epsilon\le x\le\epsilon-c\\
\frac{2\epsilon-c}{1-c}(1-\epsilon-c-x) & \text{\,\ if\,\,}\epsilon-c\le x\le1-\epsilon-c\\
0 & \,\text{if\,\,}1-\epsilon-c\le x\le1-\epsilon.
\end{cases}
\]
For $y-x=c$, $-2\epsilon\le c\le0$, we set $k_{\epsilon}(x,\,y):=k_{\epsilon}(y,\,x)$.
Finally, $k_{\epsilon}(x,\,y)=0$ for all the other $x,\,y$. It is
easy to see that $k_{\epsilon}\in C(\mathbb{T}^{2})$ and moreover
$k_{\epsilon}$ satisfies 
\begin{eqnarray}
 & 0\le k_{\epsilon}(x,\,y)\le2\epsilon\chi_{2\epsilon}(x-y)\label{eq: pro2}\\
 & \chi_{\epsilon}(x)\chi_{\epsilon}(y)\le2\epsilon\chi_{2\epsilon}(x-y)+\nabla_{x}k_{\epsilon}(x,\,y)+\nabla_{y}k_{\epsilon}(x,\,y).\label{eq:pro3}
\end{eqnarray}
In particular, (\ref{eq:pro3}) enables us to bound $M_{\epsilon,i}^{2}(x)$
such a way that 
\begin{align*}
M_{\epsilon,i}^{2}(x) & \le M_{\epsilon,i}(x)+\sum_{p,\,q:p,\,q\neq i,\,p\neq q}\left[2\epsilon\chi_{2\epsilon}(x_{p}-x_{q})-\nabla_{i}k_{\epsilon}\left(x_{p}-x_{i},\,x_{q}-x_{i}\right)\right]\\
 & \le M_{\epsilon,i}(x)+\epsilon\sum_{l=1}^{N}M_{2\epsilon,l}(x)-\nabla_{i}K_{\epsilon,i}(x)
\end{align*}
where 
\[
K_{\epsilon,i}(x)=\sum_{p,\,q:p,\,q\neq i,\,p\neq q}k_{\epsilon}(x_{p}-x_{i},\,x_{q}-x_{i}).
\]
Therefore, we have
\begin{align}
\sum_{i=1}^{N}\int_{G_{N}}f(x)M_{\epsilon,i}^{2}(x)dx\le & \sum_{i=1}^{N}\int_{G_{N}}f(x)M_{\epsilon,i}(x)dx+\epsilon N\int_{G_{N}}f(x)\sum_{l=1}^{N}M_{2\epsilon,l}(x)dx\nonumber \\
 & -\sum_{i=1}^{N}\int_{G_{N}}f(x)\nabla_{i}K_{\epsilon,i}(x)dx.\label{eq:stm}
\end{align}
We can bound the first two terms of the RHS by Lemma \ref{Lemma3}.
For the last term, we can apply Green's formula with the vector field
$\mathbf{K}(x)=\sum_{i=1}^{N}K_{\epsilon,i}(x)e_{i}$ so that
\begin{equation}
-\sum_{i=1}^{N}\int_{G_{N}}f(x)\nabla_{i}K_{i,\epsilon}(x)dx=\sum_{i=1}^{N}\int_{G_{N}}\nabla_{i}f(x)K_{i,\epsilon}(x)dx.\label{eq:mogl}
\end{equation}
Note that boundary terms are disappeared since $k_{\epsilon}$ is
continuous on $\mathbb{T}^{2}$. Moreover, by (\ref{eq: pro2}),
\begin{equation}
K_{i,\epsilon}(x)\le2\epsilon\sum_{\substack{p,\,q\neq i\\
p\neq q
}
}\chi_{2\epsilon}(x_{p}-x_{q})\le\epsilon\sum_{l=1}^{N}M_{2\epsilon,l}(x).\label{eq: molg}
\end{equation}
Consequently, we can bound the last term of (\ref{eq:stm}) by (\ref{eq:mogl})
and (\ref{eq: molg}), such that 
\begin{align*}
\left|\sum_{i=1}^{N}\int_{G_{N}}\nabla_{i}f(x)K_{i,\epsilon}(x)dx\right| & \le\left(\int_{G_{N}}\frac{\left|\nabla f(x)\right|^{2}}{f(x)}dx\int_{G_{N}}f(x)\sum_{i=1}^{N}K_{\epsilon,i}^{2}(x)dx\right)^{\frac{1}{2}}\\
 & \le\left(16\epsilon N^{2}\mathcal{D}_{N}(f)\int_{G_{N}}f(x)\sum_{i=1}^{N}K_{\epsilon,i}(x)dx\right)^{\frac{1}{2}}\\
 & \le\left(16\epsilon N^{2}\mathcal{D}_{N}(f)\int_{G_{N}}f(x)\left[N\epsilon\sum_{l=1}^{N}M_{2\epsilon,l}(x)\right]dx\right)^{\frac{1}{2}}
\end{align*}
where we used the trivial bound $K_{\epsilon,i}(x)\le2\epsilon N^{2}$
at the second inequality. Now, we can complete the proof by applying
Lemma \ref{Lemma3}. \end{proof}
\begin{lem}
\label{Lemma6}For any $f\in\mbox{\ensuremath{\mathscr{P}}}_{N}$
and $0<\epsilon<\frac{1}{4}$, we have \textup{
\[
\frac{1}{N^{3}}\sum_{i\neq j}\int_{F_{ij}}M_{\epsilon,i}(x)(f_{ij}+f_{ji})(x)dS_{ij}(x)\le C\left[1+\left(\frac{\mathcal{D}_{N}(f)}{N}\right)^{\frac{3}{4}}\right]\left(\epsilon^{\frac{1}{2}}+\frac{1}{N}\right).
\]
}\end{lem}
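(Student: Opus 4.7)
\bigskip

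The plan is to mimic the strategy of Lemmas \ref{Lemma3}--\ref{Lemma5}: use the second part of Lemma \ref{lem: GREEN THM} to convert the boundary integral into bulk integrals that can be controlled via the estimates already proved. The obstacle compared with Lemma \ref{Lemma2} is that $M_{\epsilon,i}$ is not continuous on $\mathbb{T}^N$, so it cannot play the role of $U_i$ directly; I will therefore first regularize.

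First, choose a Lipschitz cutoff $\phi_\epsilon:\mathbb{T}\to[0,1]$ with $\chi_\epsilon\leq\phi_\epsilon\leq\chi_{2\epsilon}$ and $\|\phi_\epsilon'\|_\infty\leq C/\epsilon$, and set $\widetilde{M}_{\epsilon,i}(x):=\sum_{j\neq i}\phi_\epsilon(x_j-x_i)$. Then $\widetilde{M}_{\epsilon,i}$ is Lipschitz on $\mathbb{T}^N$, satisfies $\widetilde{M}_{\epsilon,i}=\widetilde{M}_{\epsilon,j}$ whenever $x_i=x_j$ (by relabeling the summation), and obeys the pointwise bounds $M_{\epsilon,i}\leq\widetilde{M}_{\epsilon,i}\leq M_{2\epsilon,i}$ and $|\nabla_i\widetilde{M}_{\epsilon,i}|\leq(C/\epsilon)M_{2\epsilon,i}$. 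Since $M_{\epsilon,i}\leq\widetilde{M}_{\epsilon,i}$, it suffices to establish the desired inequality with $\widetilde{M}_{\epsilon,i}$ in place of $M_{\epsilon,i}$.

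Now apply Lemma \ref{lem: GREEN THM} (second part) with $U_i=\widetilde{M}_{\epsilon,i}$ and the auxiliary function $h=h_\epsilon$ introduced in Lemma \ref{Lemma3}, so that $h_\epsilon(1)-h_\epsilon(0)=2\epsilon$ and $V_i(x):=\sum_{j\neq i}h_\epsilon(x_j-x_i)$ satisfies $\nabla_iV_i=-M_{\epsilon,i}$. The crucial extra information is that since $h_\epsilon$ is supported in $\{|x|\leq\epsilon\}$ with $|h_\epsilon|\leq\epsilon$, one has the sharper bound $|V_i|\leq\epsilon M_{\epsilon,i}$ (rather than the cruder $|V_i|\leq(N-1)\epsilon$). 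Setting $\mathbf{W}(x)=\sum_i\widetilde{M}_{\epsilon,i}(x)V_i(x)e_i$, the Green identity gives
\begin{align*}
\epsilon\sum_{i\neq j}\int_{F_{ij}}\widetilde{M}_{\epsilon,i}(f_{ij}+f_{ji})\,dS_{ij}
&=\int_{G_N}f\sum_i\widetilde{M}_{\epsilon,i}M_{\epsilon,i}\,dx \\
&\quad-\int_{G_N}f\sum_iV_i\,\nabla_i\widetilde{M}_{\epsilon,i}\,dx-\int_{G_N}\sum_i\widetilde{M}_{\epsilon,i}V_i\,\nabla_if\,dx,
\end{align*}
which I label as terms (A), (B), (C) respectively.

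It remains to estimate the three bulk terms. For (A), the pointwise bound $\widetilde{M}_{\epsilon,i}M_{\epsilon,i}\leq M_{2\epsilon,i}^2$ together with Lemma \ref{Lemma5} applied at scale $2\epsilon$ gives $\text{(A)}\leq CN^3[1+(\mathcal{D}_N(f)/N)^{3/4}](\epsilon^{3/2}+\epsilon/N)$. For (B), combining $|V_i|\leq\epsilon M_{\epsilon,i}$ with $|\nabla_i\widetilde{M}_{\epsilon,i}|\leq(C/\epsilon)M_{2\epsilon,i}$ yields $|V_i\,\nabla_i\widetilde{M}_{\epsilon,i}|\leq CM_{2\epsilon,i}^2$, and Lemma \ref{Lemma5} produces the same bound. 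For (C), using $\widetilde{M}_{\epsilon,i}\leq N$ and $|V_i|\leq\epsilon M_{\epsilon,i}\leq\epsilon M_{2\epsilon,i}$ we get $|\widetilde{M}_{\epsilon,i}V_i\,\nabla_if|\leq N\epsilon\,M_{2\epsilon,i}|\nabla_if|$, and Lemma \ref{Lemma4} at scale $2\epsilon$ gives $\text{(C)}\leq CN^3[1+(\mathcal{D}_N(f)/N)^{3/4}]\epsilon^{3/2}$. Adding the three estimates and dividing by $\epsilon N^3$ yields the claim. The delicate point—where everything is paid for—is step (C): without the sharpened bound $|V_i|\leq\epsilon M_{\epsilon,i}$ coming from the support of $h_\epsilon$, the naive estimate $|V_i|\leq N\epsilon$ would leave a non-vanishing remainder after dividing by $\epsilon$; this is the main thing one has to get right.
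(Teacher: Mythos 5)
Your proof is correct, and it is built on the same skeleton as the paper's: apply the second part of Lemma \ref{lem: GREEN THM} to a vector field of the form $\mathbf{W}=\sum_i U_i(x)V_i(x)e_i$, where $V_i$ is built from a Lipschitz $h$ with $h(1)-h(0)\neq 0$ and $U_i$ is a continuous function that agrees on diagonals and dominates $M_{\epsilon,i}$, and then bound the three resulting bulk integrals with Lemmas \ref{Lemma4} and \ref{Lemma5}. The genuine difference is the choice of $U_i$: the paper takes $U_{\epsilon,i}=\sum_{k\neq i}u_\epsilon(x_k-x_i)$ where $u_\epsilon$ is a second antiderivative of $\chi_\epsilon$, so that $U_{\epsilon,i}\asymp\epsilon^2 M_{\epsilon,i}$ and the boundary term comes with the prefactor $(v_\epsilon(1)-v_\epsilon(0))\cdot U_{\epsilon,i}\sim\epsilon^3 M_{\epsilon/2,i}$; you instead mollify the counting function directly to a Lipschitz $\widetilde M_{\epsilon,i}\in[M_{\epsilon,i},M_{2\epsilon,i}]$, so the boundary term carries only the factor $\epsilon=h_\epsilon(1)-h_\epsilon(0)$ and already dominates $\epsilon\, M_{\epsilon,i}$ with no rescaling of the window. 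You are also right that the crux is the sharp support bound $|V_i|\le\epsilon M_{\epsilon,i}$; this is exactly the role played by $|v_\epsilon|\le\tfrac{\epsilon}{2}\chi_\epsilon$ in the paper's version. Your organization is arguably a bit cleaner — it produces a bound on $M_{\epsilon,i}$ directly rather than on $M_{\epsilon/2,i}$, and the three terms (A), (B), (C) separate the contributions more transparently than the paper's $P_2$, which lumps two of them together — at the small cost of having to verify the Lipschitz $\widetilde M_{\epsilon,i}$ is admissible in Lemma \ref{lem: GREEN THM} (it is, since $\phi_\epsilon(0)=\phi_\epsilon(1)=1$ forces $\widetilde M_{\epsilon,i}=\widetilde M_{\epsilon,j}$ on $\{x_i=x_j\}$) and of invoking Lemmas \ref{Lemma4}, \ref{Lemma5} at scale $2\epsilon$, which just requires $\epsilon<\tfrac{1}{8}$.
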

\begin{proof}
Let us define two auxiliary functions $v_{\epsilon},\,u_{\epsilon}$
on $[0,\,1]$ by $v_{\epsilon}(x)=\int_{\frac{1}{2}}^{x}\chi_{\epsilon}(y)dy$
and $u_{\epsilon}(x)=\int_{\frac{1}{2}}^{x}v_{\epsilon}(y)dy$. They
enjoy the following properties:
\begin{eqnarray}
u_{\epsilon}'(x)=v_{\epsilon}(x) & \text{\ and}\, & v_{\epsilon}'(x)=\chi_{\epsilon}(x)\label{eq:pprop1}\\
\frac{\epsilon^{2}}{8}\chi_{\frac{\epsilon}{2}}(x)\le u_{\epsilon}(x)\le\frac{\epsilon^{2}}{2}\chi_{\epsilon}(x) & \text{\ and}\, & \left|v_{\epsilon}(x)\right|\le\frac{\epsilon}{2}\chi_{\epsilon}(x)\label{eq:pprop2}\\
u_{\epsilon}(0)=u_{\epsilon}(1)=\frac{\epsilon^{2}}{2} & \text{\ and}\, & v_{\epsilon}(0)=-\frac{\epsilon}{2}\ensuremath{,}v_{\epsilon}(1)=\frac{\epsilon}{2}.\label{eq:pprop3}
\end{eqnarray}

\noindent Let us denote $V_{\epsilon,i}(x)=\sum_{k:k\neq i}v_{\epsilon}(x_{k}-x_{i})$
and $U_{\epsilon,i}(x)=\sum_{k:k\neq i}u_{\epsilon}(x_{k}-x_{i})$.
Then the vector field $\mathbf{W}(x)=\sum_{i=1}^{N}U_{\epsilon,i}(x)V_{\epsilon,i}(x)e_{i}$
satisfies conditions of the second part of Lemma \ref{lem: GREEN THM}
and therefore,
\begin{equation}
P_{1}=P_{2}+P_{3}\label{eq: pppp1}
\end{equation}
where
\begin{align*}
P_{1} & =\frac{v_{\epsilon}(1)-v_{\epsilon}(0)}{2}\sum_{i\neq j}\int_{F_{ij}}U_{\epsilon,i}(x)\left(f_{ij}(x)+f_{ji}(x)\right)dS_{ij}(x)\\
P_{2} & =-\sum_{i=1}^{N}\int_{G_{N}}f(x)\nabla_{i}\left[U_{\epsilon,i}(x)V_{\epsilon,i}(x)\right]dx\\
P_{3} & =-\sum_{i=1}^{N}\int_{G_{N}}U_{\epsilon,i}(x)V_{\epsilon,i}(x)\nabla_{i}f(x)dx.
\end{align*}

We first estimate $P_{1}$. Notice that $v_{\epsilon}(1)-v_{\epsilon}(0)=\epsilon$
by (\ref{eq:pprop3}) and $U_{\epsilon,i}(x)\ge\frac{\epsilon^{2}}{8}M_{\frac{\epsilon}{2},i}(x)$
by (\ref{eq:pprop2}). These together give us
\begin{equation}
P_{1}\ge\frac{\epsilon^{3}}{16}\sum_{i\neq j}\int_{F_{ij}}M_{\frac{\epsilon}{2},i}(x)\left(f_{ij}(x)+f_{ji}(x)\right)dS_{ij}(x).\label{eq: p1p1}
\end{equation}

Now, let us consider $P_{2}$. By (\ref{eq:pprop1}), 
\[
-\nabla_{i}\left[U_{\epsilon,i}(x)V_{\epsilon,i}(x)\right]=U_{\epsilon,i}(x)M_{\epsilon,i}(x)+V_{\epsilon,i}^{2}(x),
\]
which is bounded above by $\frac{3}{4}\epsilon^{2}M_{\epsilon,i}^{2}(x)$
due to (\ref{eq:pprop2}). Thus, we can bound $P_{2}$ as
\begin{equation}
P_{2}\le CN^{3}\left[1+\left(\frac{\mathcal{D}_{N}(f)}{N}\right)^{\frac{3}{4}}\right]\left(\epsilon^{\frac{7}{2}}+\frac{\epsilon^{3}}{N}\right)\label{eq:p2p2}
\end{equation}
by Lemma \ref{Lemma5}. 

We now bound $P_{3}.$ Since $U_{\epsilon,i}(x)V_{\epsilon,i}(x)\le\frac{\epsilon^{3}}{4}M_{\epsilon,i}^{2}(x)$
by (\ref{eq:pprop2}), 
\begin{equation}
\left|P_{3}\right|\le\frac{\epsilon^{3}}{4}\sum_{i=1}^{N}\int_{G_{N}}\left|\nabla_{i}f(x)\right|M_{\epsilon,i}^{2}(x)dx\le CN^{3}\left[1+\left(\frac{\mathcal{D}_{N}(f)}{N}\right)^{\frac{3}{4}}\right]\epsilon^{\frac{7}{2}}\label{eq: p3p3}
\end{equation}
by Lemma \ref{Lemma4}. Now, (\ref{eq: pppp1}), (\ref{eq: p1p1}),
(\ref{eq:p2p2}) and (\ref{eq: p3p3}) implies the desired bound.
\end{proof}
The next and last preliminary estimate controls the discontinuity
of $f$ along the boundary, joint with $M_{i,\epsilon}(x)$. 
\begin{lem}
\label{Lemma7}For any $f\in\mbox{\ensuremath{\mathscr{P}}}_{N}$
and $0<\epsilon<\frac{1}{4}$, we have \textup{
\[
\frac{1}{N^{2}}\sum_{i\neq j}\int_{F_{ij}}|f_{ij}(x)-f_{ji}(x)|M_{i,\epsilon}(x)dS_{ij}(x)\le C\left[1+\left(\frac{\mathcal{D}_{N}(f)}{N}\right)^{\frac{7}{8}}\right]\left(\epsilon^{\frac{1}{4}}+\frac{1}{\sqrt{N}}\right).
\]
}\end{lem}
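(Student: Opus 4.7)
The plan is to exploit the factorization $|f_{ij} - f_{ji}| = |\sqrt{f_{ij}} - \sqrt{f_{ji}}| \cdot (\sqrt{f_{ij}} + \sqrt{f_{ji}})$ so that one factor is controlled by the boundary piece of the Dirichlet form $\mathcal{D}_N(f)$ from (\ref{eq: ROOT DIRICHLET FORM}), and the remaining mass-type factor, together with $M_{\epsilon,i}$, is handled by Lemma \ref{Lemma6}. Concretely, I would apply the Cauchy-Schwarz inequality asymmetrically, putting all powers of $M_{\epsilon,i}$ into the mass factor:
\begin{align*}
\sum_{i\neq j}\int_{F_{ij}} |f_{ij} - f_{ji}|\,M_{\epsilon,i}\,dS_{ij} \le \mathcal{I}_1^{1/2}\,\mathcal{I}_2^{1/2},
\end{align*}
where
\begin{align*}
\mathcal{I}_1 = \sum_{i\neq j}\int_{F_{ij}}\bigl(\sqrt{f_{ij}} - \sqrt{f_{ji}}\bigr)^2 dS_{ij}, \qquad \mathcal{I}_2 = \sum_{i\neq j}\int_{F_{ij}}\bigl(\sqrt{f_{ij}} + \sqrt{f_{ji}}\bigr)^2 M_{\epsilon,i}^{2}\,dS_{ij}.
\end{align*}

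The first factor is read off directly from the second term of (\ref{eq: ROOT DIRICHLET FORM}): $\mathcal{I}_1 \le 2\mathcal{D}_N(f)/(\lambda N)$, so $\mathcal{I}_1^{1/2} \le C(\mathcal{D}_N(f)/N)^{1/2}$. For $\mathcal{I}_2$, I would use the elementary inequality $(\sqrt{a} + \sqrt{b})^2 \le 2(a+b)$ together with the trivial bound $M_{\epsilon,i} \le N$ to trade one power of $M_{\epsilon,i}$ for a factor of $N$:
\begin{align*}
\mathcal{I}_2 \le 2N\sum_{i\neq j}\int_{F_{ij}}(f_{ij} + f_{ji})M_{\epsilon,i}\,dS_{ij}.
\end{align*}
Lemma \ref{Lemma6} then gives $\mathcal{I}_2 \le CN^{4}\bigl[1 + (\mathcal{D}_N(f)/N)^{3/4}\bigr]\bigl(\epsilon^{1/2} + 1/N\bigr)$, and using $\sqrt{a+b}\le\sqrt{a}+\sqrt{b}$ and $\sqrt{1+t^{3/4}}\le 1 + t^{3/8}$, we get $\mathcal{I}_2^{1/2} \le CN^{2}\bigl[1 + (\mathcal{D}_N(f)/N)^{3/8}\bigr]\bigl(\epsilon^{1/4} + 1/\sqrt{N}\bigr)$.

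Multiplying the two bounds and dividing by $N^{2}$ produces
\begin{align*}
\frac{1}{N^{2}}\sum_{i\neq j}\int_{F_{ij}}|f_{ij} - f_{ji}|M_{\epsilon,i}\,dS_{ij} \le C\Bigl[(\mathcal{D}_N(f)/N)^{1/2} + (\mathcal{D}_N(f)/N)^{7/8}\Bigr]\bigl(\epsilon^{1/4} + 1/\sqrt{N}\bigr),
\end{align*}
and it remains to observe that $t^{1/2} + t^{7/8} \le 2(1 + t^{7/8})$ for all $t \ge 0$ (split into $t \le 1$ and $t \ge 1$, the latter using $t^{1/2} \le t^{7/8}$). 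This yields the claimed bound.

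The proof is essentially a chain of elementary inequalities once the right splitting is chosen. The only nontrivial decision is the asymmetric distribution of weights in Cauchy-Schwarz: loading both copies of $M_{\epsilon,i}$ onto the mass factor (rather than placing $M_{\epsilon,i}^{1/2}$ on each side) is what allows Lemma \ref{Lemma6} to be invoked after one crude application of $M_{\epsilon,i} \le N$, and the small loss $N^{1/2}$ incurred by this crude bound is harmless because the Dirichlet factor $\mathcal{I}_1^{1/2}$ already carries the key $N^{-1/2}$ gain from the $\lambda N$ in the boundary Dirichlet form.
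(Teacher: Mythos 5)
Your proof is correct and follows essentially the same route as the paper: Cauchy--Schwarz splitting $|f_{ij}-f_{ji}|$ into $(\sqrt{f_{ij}}-\sqrt{f_{ji}})(\sqrt{f_{ij}}+\sqrt{f_{ji}})$ with both copies of $M_{\epsilon,i}$ loaded onto the mass factor, then the boundary Dirichlet form for the first factor and Lemma \ref{Lemma6} (after $M_{\epsilon,i}^2\le N M_{\epsilon,i}$) for the second. You simply make explicit the $M_{\epsilon,i}\le N$ step and the power arithmetic that the paper compresses into ``the proof is completed by Lemma \ref{Lemma6}.''
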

\begin{proof}
By Cauchy-Schwarz's inequality, 
\begin{align*}
 & \frac{1}{N^{2}}\sum_{i\neq j}\int_{F_{ij}}|f_{ij}(x)-f_{ji}(x)|M_{i,\epsilon}(x)dS_{ij}(x)\\
 & \le\frac{1}{N^{2}}\left[\sum_{i\neq j}\int_{F_{ij}}\left(\sqrt{f_{ij}(x)}-\sqrt{f_{ji}(x)}\right)^{2}dS_{ij}(x)\right]^{\frac{1}{2}}\\
 & \,\,\,\,\,\,\,\,\,\,\,\,\,\,\,\,\,\,\,\times\left[\sum_{i\neq j}\int_{F_{ij}}\left(\sqrt{f_{ij}(x})+\sqrt{f_{ji}(x)}\right)^{2}M_{i,\epsilon}^{2}(x)dS_{ij}(x)\right]^{\frac{1}{2}}\\
 & \le\frac{1}{N^{2}}\left(\frac{4}{\lambda}\cdot\frac{\mathcal{D}_{N}(f)}{N}\left[\sum_{i\neq j}\int_{F_{ij}}\left(f_{ij}(x)+f_{ji}(x)\right)M_{i,\epsilon}^{2}(x)dS_{ij}(x)\right]\right)^{\frac{1}{2}}.
\end{align*}
and the proof is completed by Lemma \ref{Lemma6}. 
\end{proof}

\subsubsection{Main Estimate\label{sub:Main-estimates}}
\begin{prop}
\label{Main Replacement Lemma} For any $f\in\mbox{\ensuremath{\mathscr{P}}}_{N}$
and $0<\epsilon<\frac{1}{4}$, we have 
\begin{align}
\frac{1}{N^{2}}\sum_{i\neq j}\left|\int_{G_{N}}f(x)\frac{\chi_{\epsilon}(x_{j}-x_{i})}{2\epsilon}dx-\frac{1}{2}\left(\int_{F_{ij}}f_{ij}(x)dS_{ij}(x)+f_{ji}(x)dS_{ji}(x)\right)\right|\nonumber \\
\le C\left[1+\left(\frac{\mathcal{D}_{N}(f)}{N}\right)^{\frac{7}{8}}\right]\left(\epsilon^{\frac{1}{4}}+\frac{1}{\sqrt{N}}\right).\label{eq:ttyas}
\end{align}
\end{prop}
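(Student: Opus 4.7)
The key feature of this estimate is that the absolute value sits inside the sum over $(i,j)$, so a single Green's-formula identity (such as one obtained from Lemma \ref{lem: GREEN THM} applied to $h_\epsilon(r)=\int_0^r\chi_\epsilon(s)/(2\epsilon)\,ds$) is insufficient: that approach only bounds $|\sum_{i\ne j}[\cdots]|$, not $\sum_{i\ne j}|[\cdots]|$. The plan is therefore to obtain a genuine per-pair estimate by a slice analysis in the variable $x_j$, and then sum.

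Fix $(i,j)$ with $i\ne j$ and freeze every coordinate except $x_j$. The map $x_j\mapsto f(\ldots,x_j,\ldots)$ is smooth on $[x_i-\epsilon,x_i+\epsilon]$ off the finite collection $\{x_k:k\ne i,j\}$, at which $f$ has one-sided limits; and at $x_j=x_i$ it has one-sided limits $f_{ij}$ (from $x_j>x_i$) and $f_{ji}$ (from $x_j<x_i$). Split the interval at $x_i$ and apply the fundamental theorem of calculus with jumps on each half; since the resulting linear weights $(\epsilon-|r|)/(2\epsilon)$ are bounded by $1/2$, we obtain the per-slice estimate
\[
\left|\int_{x_i-\epsilon}^{x_i+\epsilon}\frac{f(x_j)}{2\epsilon}\,dx_j-\tfrac12 f_{ij}-\tfrac12 f_{ji}\right|\le\tfrac12\int_{x_i-\epsilon}^{x_i+\epsilon}|\partial_{x_j}f|\,dx_j+\tfrac12\sum_{k\in K}|f_{jk}-f_{kj}|,
\]
where $K=\{k\ne i,j:|x_k-x_i|\le\epsilon\}$ and the jumps are the traces of $f$ at $x_j=x_k$. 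Integrating this slice-wise inequality against the remaining $N-1$ variables, the left-hand side becomes the absolute value in (\ref{eq:ttyas}); the bulk remainder becomes $\tfrac12\int\chi_\epsilon(x_j-x_i)|\nabla_jf(x)|\,dx$; and each jump contribution becomes an integral of $|f_{jk}-f_{kj}|$ over the face $F_{jk}$ restricted to $\{|x_k-x_i|\le\epsilon\}$, as the delta at $x_j=x_k$ parametrises $F_{jk}$.

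Summing over $i\ne j$ and exchanging orders, using $\sum_{i\ne j}\chi_\epsilon(x_j-x_i)=M_{\epsilon,j}(x)$ and $\sum_{i\ne j,k}\chi_\epsilon(x_k-x_i)=M_{\epsilon,k}(x)-1$ on $F_{jk}$ (since $x_j=x_k$ there), we arrive at
\[
\tfrac{1}{N^2}\sum_{i\ne j}|\cdots|\le\tfrac{1}{2N^2}\int_{G_N}\sum_j|\nabla_jf|\,M_{\epsilon,j}\,dx+\tfrac{1}{2N^2}\sum_{j\ne k}\int_{F_{jk}}|f_{jk}-f_{kj}|\,M_{\epsilon,k}\,dS_{jk}.
\]
The bulk sum is bounded by $C[1+(\mathcal{D}_N(f)/N)^{3/4}]\epsilon^{1/2}$ via Lemma \ref{Lemma4}. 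The jump sum, after relabeling $(j,k)\leftrightarrow(i,j)$ (which preserves both $|f_{ij}-f_{ji}|$ and $dS_{ij}$ by the symmetry of the face), reduces to the left-hand side of Lemma \ref{Lemma7} and is bounded by $C[1+(\mathcal{D}_N(f)/N)^{7/8}](\epsilon^{1/4}+1/\sqrt{N})$. Since $\epsilon^{1/2}\le\epsilon^{1/4}$ for $\epsilon\le 1$ and $3/4\le 7/8$, the jump contribution dominates and yields (\ref{eq:ttyas}).

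The only delicate point is the jump bookkeeping: at every crossing $x_j=x_k$ inside the slice, the FTC generates a boundary-trace contribution which must ultimately be re-expressed as an integral over $F_{jk}$ cut off by $\chi_\epsilon(x_k-x_i)$. Matching the resulting triple sum over distinct $i,j,k$ to the form required by Lemma \ref{Lemma7}, and correctly absorbing the $-1$ produced by the missing $i\in\{j,k\}$ indices, is the one nonroutine step; everything else is a direct invocation of the preliminary estimates above.
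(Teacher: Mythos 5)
Your argument is correct and is essentially the paper's proof in different language: the paper derives the same per-pair identity by applying Green's formula to the one-directional vector field $g_{\epsilon}(x_i-x_j)\,e_j$ with $g_{\epsilon}'=\chi_{\epsilon}/(2\epsilon)$, which is precisely integration by parts in the $x_j$-slice, i.e.\ your FTC-with-jumps computation, and both routes produce the identical split into a bulk term $|\nabla_j f|\,\chi_{\epsilon}(x_j-x_i)$ (closed by Lemma \ref{Lemma4}) and a boundary-trace term $|f_{jk}-f_{kj}|\,\chi_{\epsilon}(x_k-x_i)$ on $F_{jk}$ (closed by Lemma \ref{Lemma7}). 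Your bookkeeping — $M_{\epsilon,j}=M_{\epsilon,k}$ on $F_{jk}$, discarding the harmless $-1$, and the relabeling into the exact form of Lemma \ref{Lemma7} — is also correct.
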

\begin{proof}
Note first that the presence of absolute values in the summation prevents
us from applying Green's formula in the form of Lemma \ref{lem: GREEN THM}.
Instead, we define a function $g_{\epsilon}$ on $[0,\,1]$ as
\[
g_{\epsilon}(x)=\begin{cases}
\frac{x}{2\epsilon}-\frac{1}{2} & \text{for\,\,\,}0\le x\le\epsilon\\
0 & \text{for\,\,}\,\epsilon\le x\le1-\epsilon\\
\frac{x-1}{2\epsilon}+\frac{1}{2} & \text{for\,\,}\,1-\epsilon\le x\le1.
\end{cases}
\]
and then consider a vector field $\mathbf{V}(x)=g_{\epsilon}(x_{i}-x_{j})e_{j}$
consisting of only one direction. Green's formula for this vector
field is 
\begin{equation}
\int_{G_{N}}\nabla_{j}f(x)g_{\epsilon}(x_{i}-x_{j})dx=\int_{G_{N}}f(x)\frac{1}{2\epsilon}\chi_{\epsilon}(x_{i}-x_{j})dx+J_{1}+J_{2}\label{eq:1st imsi}
\end{equation}
because $g_{\epsilon}'(x)=\frac{1}{2\epsilon}\chi_{\epsilon}(x)$
where $J_{1}$ and $J_{2}$ are the boundary terms to be explained
below. The boundary terms for this particular Green's formula are
\begin{equation}
\sum_{p\neq q}\left[\int_{F_{pq}}f_{pq}(x)\left\langle \mathbf{V}(x),\,e_{p}-e_{q}\right\rangle dS_{pq}(x)\right]\label{eq: kcc}
\end{equation}
for which the summands are non-zero only if $p$ or $q$ is $j$.
Now, let $J_{1}$ be the sum of two summands in (\ref{eq: kcc}) with
$(p,\,q)=(i,\,j)$ or $(j,\,i)$ and $J_{2}$ be the sum of all the
others. Namely, 
\begin{align*}
J_{1} & =\int_{F_{ji}}f_{ji}(x)g_{\epsilon}(x_{i}-x_{j})dS_{ji}(x)-\int_{F_{ij}}f_{ij}(x)g_{\epsilon}(x_{i}-x_{j})dS_{ij}(x)\\
J_{2} & =\sum_{k:k\neq i,\,j}\left[\int_{F_{jk}}f_{jk}(x)g_{\epsilon}(x_{i}-x_{j})dS_{jk}(x)-\int_{F_{kj}}f_{kj}(x)g_{\epsilon}(x_{i}-x_{j})dS_{kj}(x)\right]
\end{align*}
Note that $g_{\epsilon}(x_{i}-x_{j})=\frac{1}{2}$ on $F_{ij}$ and
$-\frac{1}{2}$ on $F_{ji}$ and hence 
\begin{align}
J_{1} & =-\frac{1}{2}\int_{F_{ji}}f_{ji}(x)dS_{ji}(x)-\frac{1}{2}\int_{F_{ij}}f_{ij}(x)dS_{ij}(x).\nonumber \\
 & =-\frac{1}{2}\left(\int_{F_{ij}}f_{ij}(x)dS_{ij}(x)+f_{ji}(x)dS_{ji}(x)\right).\label{eq:J111}
\end{align}
For $J_{2}$, we know that $g_{\epsilon}(x_{i}-x_{j})$ has same value
on $F_{jk}$ and $F_{kj}$ and therefore 
\begin{equation}
J_{2}=\sum_{k:k\neq i,\,j}\left[\int_{F_{jk}}(f_{jk}(x)-f_{kj}(x))g_{\epsilon}(x_{i}-x_{j})dS_{jk}(x)\right].\label{eq:J222}
\end{equation}

By combining (\ref{eq:J111}), (\ref{eq:J222}) with (\ref{eq:1st imsi})
we can bound the LHS of (\ref{eq:ttyas}) by $J_{3}+J_{4}$ where
\begin{align*}
J_{3} & =\frac{1}{N^{2}}\sum_{i\neq j}\int_{G_{N}}\left|\nabla_{j}f(x)\right|\left|g_{\epsilon}(x_{i}-x_{j})\right|dx\\
J_{4} & =\frac{1}{N^{2}}\sum_{i\neq j}\sum_{k:k\neq i,\,j}\int_{F_{jk}}\left|f_{jk}(x)-f_{kj}(x)\right|\left|g_{\epsilon}(x_{i}-x_{j})\right|dS_{jk}(x)
\end{align*}
Since $\left|g_{\epsilon}(\cdot)\right|\le\frac{1}{2}\chi_{\epsilon}(\cdot)$,
we have 
\begin{align}
J_{3} & \le\frac{1}{2N^{2}}\sum_{i=1}^{N}\int_{G_{N}}\left|\nabla_{i}f(x)\right|M_{\epsilon,i}(x)dx\label{eq:ttya1}\\
J_{4} & \le\frac{1}{2N^{2}}\sum_{i\neq j}\int_{F_{jk}}\left|f_{ij}(x)-f_{ij}(x)\right|M_{\epsilon,i}(x)dS_{ij}(x).
\end{align}
The proof is completed by Lemmas \ref{Lemma4} and \ref{Lemma7}. 
\end{proof}

\subsection{Proof of Replacement Lemma\label{sub:Proof-of-Superexponential}}

In this subsection, we provide the proof of Theorem \ref{thm: REPLACEMENT_colored}
based on Proposition \ref{Main Replacement Lemma} and the classic
technique developed by Donsker and Varadhan \cite{DV}. Their method
is only available for the process $\mathbb{P}_{N}$ which is sufficiently
close to the equilibrium process $\mathbb{P}_{N}^{eq}$ in the sense
that $\left\Vert \log\frac{d\mathbb{P}_{N}}{d\mathbb{P}_{N}^{eq}}\right\Vert _{L^{\infty}(\mathbb{T})}=O(N)$.
Unfortunately, this condition does not hold not only for our model,
but also for the general interacting particle system of diffusion
type. For example, if we start deterministically, our process $\mathbb{P}_{N}$
is even \textit{orthogonal} to $\mathbb{P}_{N}^{eq}$ starting from
the invariant measure $dx$. We solve this issue by using a symmetrization
procedure. However, this procedure is only possible for the time slot
$[\eta,\,T]$ for some $\eta>0$. Thus, we have to establish the replacement
lemma on the interval $[0,\,\eta]$ in an independent manner. Let
us examine this procedure more closely, by dividing the Theorem \ref{thm: REPLACEMENT_colored}
into the following two propositions. 
\begin{prop}
\label{prop: normal time regime}For any $\eta,\,\delta>0$, $\eta\le t_{1}<t_{2}\le T$
and two colors $c_{1}\neq c_{2}$,
\begin{equation}
\limsup_{\epsilon\rightarrow0}\limsup_{N\rightarrow\infty}\frac{1}{N}\log\mathbb{P}_{N}\left[\mathbf{C}_{N}^{c_{1},\,c_{2}}(t_{1},\,t_{2};\epsilon,\,\delta)\right]=-\infty.\label{eq: ntrg}
\end{equation}

\end{prop}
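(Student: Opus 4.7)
The plan is to follow the Donsker--Varadhan exponential Chebyshev strategy with Proposition \ref{Main Replacement Lemma} as the key static input. The restriction $t_1 \ge \eta > 0$ is what makes the argument work: under $\mathbb{P}_N$ the law $\nu_\eta$ of $x^N(\eta)$ has a smooth density $f_\eta$ with respect to the (uniform) invariant measure $\nu^{eq}$ on $\mathbb{T}^N$, and by standard heat-kernel bounds one has $H(\nu_\eta \mid \nu^{eq}) = O(N)$ with a constant depending only on $\eta$. This estimate is absent at $t=0$ for deterministic initial data, which is why the regime $[0,\eta]$ must be treated separately.

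First I rewrite the quantity inside the absolute value: applying Tanaka's formula (\ref{eq:tanaka}) with the test function $g_\epsilon(x_j-x_i)$ used in the proof of Proposition \ref{Main Replacement Lemma}, summed over $i \in I_{c_1}^N$ and $j \in I_{c_2}^N$, I identify
\begin{equation*}
\frac{1}{N}\sum_{i \in I_{c_1}^N}\!\Bigl(\int_{t_1}^{t_2}\!\rho_{\epsilon,i}^{(c_2)}(x^N(s))\,ds - \bigl[A_{i,c_2}^N(t_2)-A_{i,c_2}^N(t_1)\bigr]\Bigr)
\end{equation*}
with $\int_{t_1}^{t_2} V_\epsilon(x^N(s))\,ds$ modulo a martingale $M_\epsilon(t_2)-M_\epsilon(t_1)$, where for any density $f \in \mathscr{P}_N$ the spatial average $\int V_\epsilon f\,dx$ is precisely the left-hand side of (\ref{eq:ttyas}). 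The martingale has standard sub-Gaussian and compensated-Poisson exponential moments and is discarded at negligible super-exponential cost. By exponential Chebyshev together with the restart at $\eta$ and the entropy inequality $\log \mathbb{E}_{f_\eta \nu^{eq}}[e^G] \le H(\nu_\eta \mid \nu^{eq}) + \log \mathbb{E}_{\nu^{eq}}[e^G]$, one obtains for every $\theta > 0$
\begin{equation*}
\frac{1}{N}\log \mathbb{P}_N[\mathbf{C}_N^{c_1,c_2}] \le -\theta\delta + C_\eta + \frac{1}{N}\log \mathbb{E}_{\nu^{eq}}\!\Bigl[\exp\!\Bigl(N\theta\!\int_{t_1}^{t_2}\!V_\epsilon(x^N(s))\,ds\Bigr)\Bigr].
\end{equation*}
Since the equilibrium process is reversible under $\nu^{eq}$ with Dirichlet form $\mathbb{D}_N$ (and (\ref{eq: ROOT DIRICHLET FORM}) identifies $\mathcal{D}_N(f) = \mathbb{D}_N(\sqrt{f})$), the Rayleigh--Ritz principle bounds the last logarithm by $(t_2-t_1)\sup_{f \in \mathscr{P}_N}\{\theta \int V_\epsilon f\,dx - \mathcal{D}_N(f)/N\}$. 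Proposition \ref{Main Replacement Lemma} bounds $\int V_\epsilon f\,dx$ by $C[1 + (\mathcal{D}_N(f)/N)^{7/8}](\epsilon^{1/4} + N^{-1/2})$; since the exponent $7/8$ is strictly less than $1$, Young's inequality absorbs $(\mathcal{D}_N/N)^{7/8}$ into the subtracted $\mathcal{D}_N/N$, producing a supremum of order $C\theta(\epsilon^{1/4}+N^{-1/2}) + C\theta^{8}(\epsilon^{1/4}+N^{-1/2})^{8}$. Sending $N\to\infty$, then $\epsilon\to 0$, and finally $\theta\to\infty$ yields (\ref{eq: ntrg}).

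The main obstacle is the reduction-to-equilibrium step. The entropy inequality I have written only works cleanly because $\nu_\eta \ll \nu^{eq}$ with controlled entropy for $\eta>0$; verifying $H(\nu_\eta \mid \nu^{eq}) = O(N)$ uniformly in $\eta \in [\eta_0, T]$ requires careful use of the martingale problem (\ref{eq:tanaka}), in particular handling the singular local-time contribution to the entropy production. This is presumably where the ``symmetrization procedure'' alluded to by the author enters: averaging $\mathbb{P}_N$ against its time reversal produces a measure whose Radon--Nikodym derivative with respect to $\mathbb{P}_N^{eq}$ is controlled in a weaker, but still super-exponentially useful, sense. Once this comparison is in place, everything else is a routine combination of Lemmas \ref{Lemma1}--\ref{Lemma7} with exponential Chebyshev, reducing the entire dynamic estimate to the static inequality of Proposition \ref{Main Replacement Lemma}.
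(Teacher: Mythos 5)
The overall strategy you outline --- exponential Chebyshev, Feynman--Kac with the variational formula for the largest eigenvalue, Proposition \ref{Main Replacement Lemma}, and Young's inequality to absorb the $(\mathcal{D}_N(f)/N)^{7/8}$ term --- does match the paper's equilibrium estimate (Proposition \ref{prop: eq_normal time regime}). Your preliminary Tanaka step, converting the local-time increment into a drift integral plus a martingale before invoking Feynman--Kac, is a legitimate variant of the paper's more direct route of taking the Feynman--Kac potential to include the singular $\delta^{+}$ terms and reading off the variational formula for the resulting eigenvalue; both routes reduce to the same static inequality (\ref{eq:ttyas}).

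The genuine gap is in the reduction to equilibrium, which you flag but do not close correctly. You assert that ``standard heat-kernel bounds'' give $H(\nu_\eta\mid\nu^{eq})=O(N)$. This does not follow, because the forward equation (\ref{eq:forward}) for the one-time density under $\mathbb{P}_N$ is \emph{not} the heat equation: it carries the singular boundary production $\sum_{i\neq j}\mathfrak{U}_{ij}^{\lambda}f_N\,\delta^{+}(x_j-x_i)$, which does not vanish for a generic initial profile, and the resulting density is discontinuous across the faces $F_{ij}$. Indeed the paper stresses that for a deterministic start $\mathbb{P}_N$ can be orthogonal to $\mathbb{P}_N^{eq}$, and the unsymmetrized density at time $\eta$ may carry a factor as large as $N!$ from concentration on a single ordering of the $N$ labels, so its entropy is \emph{a priori} $O(N\log N)$, not $O(N)$; that is not good enough for the entropy inequality.

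The paper's fix (Lemma \ref{lem: symmetrization lemma}) is not a time-reversal symmetrization, as you guessed. It is a spatial average over particle \emph{relabelings}, and it works for precisely two reasons that your proposal does not invoke: first, the event $\mathbf{C}_N^{c_1,c_2}(t_1,t_2;\epsilon,\delta)$ is invariant under any permutation of labels preserving each color class $I_c^N$, so one may freely replace the initial profile by its average over $\mathfrak{C}_N$ without changing the probability; second, the \emph{fully} symmetrized density $\bar{f}_N$ has both $D_{ij}\bar{f}_N=0$ and equal one-sided limits across each $F_{ij}$, so the entire boundary term $\mathfrak{U}_{ij}^{\lambda}\bar{f}_N$ vanishes and $\bar{f}_N$ solves the honest heat equation, giving $\Vert\bar{f}_N(\eta,\cdot)\Vert_\infty\le (C/\sqrt{\eta})^N$. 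The color-symmetrized density is within a multinomial factor $N!/(N_1!\cdots N_m!)\le m^N$ of the fully symmetrized one, producing the uniform $L^\infty$ bound $(Cm/\sqrt{\eta})^N$ and hence (\ref{eq:symm_color}). This is a genuine $L^\infty$ domination --- stronger than the entropy control you were aiming for --- and it hinges entirely on the permutation invariance of the event, an input your proposal neither identifies nor uses.
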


\begin{prop}
\label{prop: smal time regime}For any $\delta>0$ and two colors
$c_{1}\neq c_{2}$,
\begin{align}
 & \limsup_{\eta\rightarrow0}\limsup_{\epsilon\rightarrow0}\limsup_{N\rightarrow\infty}\label{eq:strg}\\
 & \frac{1}{N}\log\mathbb{P}_{N}\left[\frac{1}{N}\sum_{i\in I_{c_{1}}^{N}}\left|\int_{0}^{\eta}\rho_{\epsilon,i}^{(c_{2})}(x^{N}(t))dt-A_{i,c_{2}}^{N}(\eta)\right|>\delta\right]=-\infty.\nonumber 
\end{align}

\end{prop}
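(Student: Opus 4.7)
Since both $\int_0^\eta \rho_{\epsilon,i}^{(c_2)}(x^N(t))\,dt$ and $A_{i,c_2}^N(\eta)$ are non-negative, the triangle inequality $|a-b|\le a+b$ reduces the claim to super-exponential decay (in the stated order of limits) of
\[
\mathbb{P}_N\!\left[\int_0^\eta F_\epsilon(x^N(t))\,dt > \tfrac{\delta}{2}\right] \quad\text{and}\quad \mathbb{P}_N\bigl[A^N(\eta) > \tfrac{\delta}{2}\bigr],
\]
where $F_\epsilon(x) := \frac{1}{N}\sum_{i\in I_{c_1}^N}\rho_{\epsilon,i}^{(c_2)}(x)$, and we used $\frac{1}{N}\sum_{i\in I_{c_1}^N}A_{i,c_2}^N(\eta)\le A^N(\eta)$, which is immediate from (\ref{eq:Loc2})--(\ref{eq:Loc4}).

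The unifying tool is exponential Chebyshev together with a Feynman--Kac inequality of the form
\[
\mathbb{E}_{\mathbb{P}_N}\!\Bigl[e^{N\beta\int_0^\eta V(x^N(s))\,ds}\Bigr] \le \exp\!\Bigl(\eta\,\sup_{g\in\mathscr{P}_N}\bigl[N\beta\langle V,g\rangle - \mathcal{D}_N(g)\bigr]\Bigr),
\]
whose right-hand side is uniform in the initial distribution. The key input is that for $V=F_\epsilon$ and for $V=H_\epsilon := \frac{1}{2\epsilon N^2}\sum_i M_{\epsilon,i}$ (whose time integral $\int_0^\eta H_\epsilon(x^N(s))\,ds$ converges almost surely to $A^N(\eta)$ as $\epsilon\to 0$, by the defining property of the local time), Lemma~\ref{Lemma3} yields the uniform-in-$\epsilon$ bound
\[
\int_{G_N} V(x)\,g(x)\,dx \le C\bigl(1+\sqrt{\mathcal{D}_N(g)/N}\bigr)\qquad\text{for every } g\in\mathscr{P}_N.
\]
Substituting into the variational expression, optimizing over $\mathcal{D}_N(g)\ge 0$, and combining with Chebyshev gives
\[
\frac{1}{N}\log\mathbb{P}_N[\,\cdots\,]\le -\tfrac{\beta\delta}{2} + C_1\beta\eta + C_2\beta^2\eta,
\]
with the analogous bound for $A^N(\eta)$ obtained from the $V=H_\epsilon$ case after passing $\epsilon\to 0$ via Fatou's lemma. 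Sending $N\to\infty$, $\epsilon\to 0$, $\eta\to 0$, and finally $\beta\to\infty$ drives the right-hand side to $-\infty$.

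The main obstacle is to justify the displayed Feynman--Kac bound \emph{uniformly in the starting distribution}: $\mathbb{P}_N$ may be orthogonal to the invariant (Lebesgue) measure, and the classical derivation requires $L^\infty$-closeness to equilibrium, which is exactly the obstruction flagged in the discussion preceding the statement. A workable remedy is to split $[0,\eta]=[0,s_0]\cup[s_0,\eta]$ with $s_0$ chosen much smaller than $\epsilon$: on the initial piece the deterministic bound $F_\epsilon,\,H_\epsilon\le 1/(2\epsilon)$ makes the contribution vanish as $s_0/\epsilon\to 0$, while the hypoelliptic regularization of the Brownian component renders the law of $x^N(s_0)$ absolutely continuous with controllable density relative to Lebesgue, so the Markov property reduces the contribution on $[s_0,\eta]$ to an equilibrium-style Feynman--Kac estimate at the price of an $O(1)$ relative-entropy correction after dividing by $N$.
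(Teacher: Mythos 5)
Your reduction via $|a-b|\le a+b$ and the observation that the two resulting quantities are nonnegative and symmetric in the particle labels are correct, and they match the paper's starting point. However, the rest of your argument contains a gap that I do not see how to close.

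The Feynman--Kac/Dirichlet-form variational bound you write down is \emph{not} uniform in the initial distribution: its standard derivation requires $\bigl\|d\mathbb{P}_N/d\mathbb{P}^{eq}_N\bigr\|_\infty$ to be controlled, and in the nonequilibrium setting (e.g. deterministic start) this quantity is infinite. You acknowledge this, but the fix you propose --- splitting $[0,\eta]$ at a time $s_0\ll\epsilon$, using the deterministic bound $F_\epsilon,H_\epsilon\le 1/(2\epsilon)$ on $[0,s_0]$, and then running an equilibrium-style estimate from time $s_0$ --- does not close the gap. The density of the law of $x^N(s_0)$ with respect to Lebesgue on $\mathbb{T}^N$ is bounded only by $(C/\sqrt{s_0})^N$, so after dividing by $N$ the correction is $\log(C/\sqrt{s_0})$, \emph{not} an $O(1)$ constant. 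Since you need $s_0/\epsilon\to 0$ to kill the $[0,s_0]$ contribution and $\epsilon\to 0$ in the stated order of limits, $s_0$ must tend to $0$, and $\log(C/\sqrt{s_0})$ then diverges. There is no choice of $s_0$ that simultaneously tames the $[0,s_0]$ contribution (which requires $s_0=o(\epsilon)$) and the density/entropy correction (which requires $s_0$ bounded away from $0$). This is not a technical slip that a sharper estimate would repair; it is the same obstruction that makes the symmetrization lemma inapplicable for small times.

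The paper circumvents the issue by a qualitatively different mechanism. It uses the label-symmetry of the observable to replace the interacting system by $N$ independent Brownian motions, and then controls $\int_0^\eta\sum_i\rho_{\epsilon,i}\,dt$ and $NA^N(\eta)$ explicitly via Tanaka's formula applied to carefully chosen two-body potentials $G_N$ and $H_N$ (built from $g(x)=x(1-x)/2$ and the function $u_\epsilon$ of Lemma~\ref{Lemma6}). This converts the singular time-integral into (i) a boundary term $G_N(x^N(\eta))-G_N(x^N(0))$, controlled by the pathwise modulus of $N$ Brownian motions, and (ii) a stochastic integral, controlled by the exponential-martingale (Girsanov) identity. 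Neither of these estimates requires any regularity of the initial law, which is precisely what makes the small-time regime tractable. If you want to repair your proposal, you would need to abandon the variational/Dirichlet-form route for this regime and incorporate a pathwise argument of that kind.
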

In this paper, estimates in the form of Propositions \ref{prop: normal time regime}
and \ref{prop: smal time regime} are referred to as by \textit{normal
time regime} and \textit{small time regime,} respectively. These respective
regimes require different approaches. Dichotomies of this nature frequently
occur in our work.

\subsubsection{Normal Time Regime}

The key procedure on which to base the proof of Proposition \ref{prop: normal time regime}
is symmetrization. Our interacting particle system is defined as a
probability measure $\mathbb{P}_{N}$ on $C([0,\,T],\,\mathbb{T}^{N})$
with the initial profile $f_{N}^{0}(dx)$; then, the density profile
at time $t>0$ denoted by $f_{N}(t,\,x)$ satisfies the forward equation
\begin{equation}
\frac{\partial f_{N}}{\partial t}(t,\,x)=\frac{1}{2}\Delta f_{N}(t,\,x)+\sum_{i\neq j}\mathfrak{U}_{ij}^{\lambda}f_{N}(t,\,x)\delta^{+}(x_{j}-x_{i})\label{eq:forward}
\end{equation}
where $\mathfrak{U}_{ij}^{\lambda}f$ is as defined in (\ref{eq:Uij}).
The process with the initial density $dx$ is the equilibrium process,
which we denote by $\mathbb{P}_{N}^{eq}$. 

Now, we define some intermediate processes. Let $\mathfrak{P}_{N}$
be the set of all permutations of $[N]$ and let $\sigma(x)=(x_{\sigma(1)},\,x_{\sigma(2)},\,\cdots,\,x_{\sigma(N)})$
for $\sigma\in\mathfrak{P}_{N}$ and $x\in\mathbb{T}^{N}$. Then we
can consider a process starting from
\begin{equation}
\bar{f}_{N}^{0}(dx)=\frac{1}{N!}\sum_{\sigma\in\mathfrak{P}_{N}}f_{N}^{0}(d\sigma(x))\label{eq:init_unc}
\end{equation}
with the same interacting mechanism. We denote this process by $\bar{\mathbb{P}}_{N}$.
Finally, we define another initial profile 
\begin{equation}
\bar{f}_{N}^{0,\,color}(dx)=\frac{1}{|\mathfrak{C}_{N}|}\sum_{\sigma\in\mathfrak{C}_{N}}f_{N}^{0}(\sigma(x))\label{eq:init_col}
\end{equation}
where $\mathfrak{C}_{N}\subset\mathfrak{P}_{N}$ is the set of all
permutations with $I_{1}^{N},\,I_{2}^{N},\,\cdots,\,I_{m}^{N}$ (cf.
Section 1.1.2) as their invariant sets. Then, let $\mathbb{\bar{P}}_{N}^{color}$
be the process with the initial profile $\bar{f}_{N}^{0,\,color}(dx)$
with the same type of interactions. 
\begin{lem}
\label{lem: symmetrization lemma}Let $\mathbf{E}_{N}$ be an event
on $C([0,\,T],\,\mathbb{T}^{N})$ which only depends on sub-path $\{x(s):\eta\le s\le T\}$.
Furthermore, if the event $\mathbf{E}_{N}$ is invariant under permutations
in the sense that $\left\{ x(\cdot)\in\mathbf{E}_{N}\right\} =\left\{ \sigma(x(\cdot))\in\mathbf{E}_{N}\right\} $
for all $\sigma\in\mathfrak{P}_{N}$, then we have
\begin{equation}
\mathbb{P}_{N}\left[\mathbf{E}_{N}\right]\le\left(\frac{C}{\sqrt{\eta}}\right)^{N}\mathbb{P}_{N}^{eq}\left[\mathbf{E}_{N}\right]\label{eq: symm_uncolr}
\end{equation}
for some universal constant $C$. Furthermore, if the event $\mathbf{E}_{N}$
is only invariant under the permutations among the same color in the
sense that $\left\{ x(\cdot)\in\mathbf{E}_{N}\right\} =\left\{ \sigma(x(\cdot))\in\mathbf{E}_{N}\right\} $
for all $\sigma\in\mathfrak{C}_{N}$, then 
\begin{equation}
\mathbb{P}_{N}\left[\mathbf{E}_{N}\right]\le\left(\frac{Cm}{\sqrt{\eta}}\right)^{N}\mathbb{P}_{N}^{eq}\left[\mathbf{E}_{N}\right]\label{eq:symm_color}
\end{equation}
where $m$ is the number of colors. \end{lem}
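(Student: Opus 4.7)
The plan is to Markov-condition at time $\eta$, symmetrize the time-$\eta$ density over the subgroup of $\mathfrak{P}_N$ that preserves $\mathbf{E}_N$, and bound the result by a product heat-kernel estimate. Let $f_N(\eta,\cdot)$ denote the density of $x^N(\eta)$ under $\mathbb{P}_N$ and $\mathbb{P}_N^y$ the process started at $y\in\mathbb{T}^N$. Since $\mathbf{E}_N$ depends only on $\{x(s):s\ge\eta\}$, the Markov property yields
\begin{equation*}
\mathbb{P}_N[\mathbf{E}_N]=\int_{\mathbb{T}^N}\mathbb{P}_N^y[\mathbf{E}_N]\,f_N(\eta,y)\,dy,\qquad \mathbb{P}_N^{eq}[\mathbf{E}_N]=\int_{\mathbb{T}^N}\mathbb{P}_N^y[\mathbf{E}_N]\,dy,
\end{equation*}
the second identity using that $\mathbb{P}_N^{eq}$ starts from the uniform density $1$. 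The generator and boundary operators commute with every relabeling $\sigma\in\mathfrak{P}_N$, so $\mathbb{P}_N^{\sigma(y)}$ is the push-forward of $\mathbb{P}_N^y$ under $\sigma$; combined with the stated symmetry of $\mathbf{E}_N$, this forces $y\mapsto\mathbb{P}_N^y[\mathbf{E}_N]$ to be invariant under every $\sigma$ in the relevant group. Averaging the first display over that group then gives
\begin{equation*}
\mathbb{P}_N[\mathbf{E}_N]=\int\mathbb{P}_N^y[\mathbf{E}_N]\,\bar f_N^{\mathrm{sym}}(\eta,y)\,dy\le\|\bar f_N^{\mathrm{sym}}(\eta,\cdot)\|_\infty\,\mathbb{P}_N^{eq}[\mathbf{E}_N],
\end{equation*}
with $\bar f_N^{\mathrm{sym}}=\bar f_N$ in the first part and $\bar f_N^{\mathrm{sym}}=\bar f_N^{color}$ in the second.

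For the uncolored estimate on $\|\bar f_N(\eta,\cdot)\|_\infty$, I would argue that the fully symmetric initial density $\bar f_N^0$ forces the entire evolution to remain symmetric and to coincide with $N$ independent Brownian motions on $\mathbb{T}$. For symmetric $f$ one has $f_{ij}=f_{ji}$ on $F_{ij}$, so the switching term in $\mathfrak{U}_{ij}^{\lambda}f=D_{ij}f-\lambda N(f_{ij}-f_{ji})$ drops out, and the remaining Neumann condition $D_{ij}f=0$ holds automatically for any $C^1$ function on $\mathbb{T}^N$ invariant under the swap $x_i\leftrightarrow x_j$. Hence on the symmetric subspace the forward equation (\ref{eq:forward}) reduces to the pure heat equation on $\mathbb{T}^N$, giving
\begin{equation*}
\bar f_N(\eta,y)=\int_{\mathbb{T}^N}\prod_{i=1}^{N}p_\eta(x_i,y_i)\,\bar f_N^0(dx)\le\left(\frac{C}{\sqrt{\eta}}\right)^N
\end{equation*}
from the uniform bound $p_\eta\le C/\sqrt{\eta}$ on the torus heat kernel; combined with the preceding display this proves (\ref{eq: symm_uncolr}).

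For the colored case, $\mathfrak{C}_N$ is a subgroup of $\mathfrak{P}_N$ with index $[\mathfrak{P}_N:\mathfrak{C}_N]=N!/\prod_c|I_c^N|!\le m^N$ by the multinomial inequality. Choosing coset representatives $\{\tau\}$ of $\mathfrak{P}_N/\mathfrak{C}_N$, a direct computation from (\ref{eq:init_unc}) and (\ref{eq:init_col}) gives $\bar f_N^0=\frac{1}{[\mathfrak{P}_N:\mathfrak{C}_N]}\sum_\tau\bar f_N^{0,color}\circ\tau$; since the semigroup commutes with every permutation, this identity propagates to time $\eta$,
\begin{equation*}
\bar f_N(\eta,y)=\frac{1}{[\mathfrak{P}_N:\mathfrak{C}_N]}\sum_\tau\bar f_N^{color}(\eta,\tau(y)).
\end{equation*}
All summands are non-negative, so $\bar f_N^{color}(\eta,y)\le[\mathfrak{P}_N:\mathfrak{C}_N]\,\bar f_N(\eta,y)\le m^N(C/\sqrt{\eta})^N$, and the Markov/symmetrization display from the first paragraph yields (\ref{eq:symm_color}).

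The main obstacle is the heat-kernel identification: verifying that the symmetric subspace of (\ref{eq:forward}) is truly invariant under the full interacting dynamics and that the restricted semigroup is exactly $N$-fold independent Brownian motion on $\mathbb{T}$. The algebraic core is the vanishing $f_{ij}-f_{ji}=0$, which eliminates the label-switching contribution, leaving only a Neumann reflection at the diagonal that is automatic for symmetric smooth functions and classically equivalent, after symmetrization, to non-interacting Brownian particles on $\mathbb{T}$. Once this identification is in place, the remaining ingredients (the Markov decomposition, the sup bound on the product heat kernel, and the multinomial index estimate) are routine.
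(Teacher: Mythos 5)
Your proof follows essentially the same route as the paper's: symmetrize the time-$\eta$ density over the appropriate permutation group, recognize that the symmetrized density evolves by the pure heat equation, bound it by the $N$-fold product of the $\mathbb{T}$ heat kernel $\le (C/\sqrt\eta)^N$, and pick up the extra $m^N$ from the multinomial count $[\mathfrak{P}_N:\mathfrak{C}_N]=N!/\prod_c|I_c^N|!$ in the colored case; the Markov decomposition at time $\eta$ and the coset rearrangement are just explicit restatements of what the paper does with the intermediate processes $\bar{\mathbb{P}}_N$ and $\bar{\mathbb{P}}_N^{color}$. One logical slip to tighten: you derive $D_{ij}\bar f_N=0$ from "symmetry plus $C^1$," but $\bar f_N$ is a priori only piecewise smooth (a symmetric piecewise-$C^1$ function such as $|x_2-x_1|$ has $D_{12}\ne0$), so the $C^1$ regularity you invoke is precisely what needs proving; the correct order is to use the adjoint boundary condition $\mathfrak{U}_{ij}^{\lambda}\bar f_N=0$ satisfied by the density for $t>0$, combine it with the symmetry-induced continuity $\bar f_{N,ij}=\bar f_{N,ji}$ to conclude $D_{ij}\bar f_N=0$, and thereby obtain the $C^1$ matching as a consequence rather than an input.
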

\begin{proof}
First of all, the marginal density of the process $\bar{\mathbb{P}}_{N}$
is
\[
\bar{f}_{N}(t,\,x)=\frac{1}{N!}\sum_{\sigma\in\mathfrak{P}_{N}}f_{N}(t,\,\sigma(x))
\]
and hence, we can deduce from (\ref{eq:forward}) that $\bar{f}_{N}(t,\,x)$
is the solution of the heat equation $\partial_{t}\bar{f}_{N}=\frac{1}{2}\Delta\bar{f}_{N}$
with initial condition (\ref{eq:init_unc}). Therefore, we have bound
of the form 
\begin{equation}
\left\Vert \bar{f}_{N}(t,\,\cdot)\right\Vert _{\infty}\le\left(\frac{C}{\sqrt{t}}\right)^{N}\le\left(\frac{C}{\sqrt{\eta}}\right)^{N}\label{eq:univ bound}
\end{equation}
for a $t\ge\eta$ with a (universal) constant $C$.\footnote{It is easy to check since $\bar{f}_{N}(t,\,x)=\bar{f}_{N}^{0}*p_{N}^{t}(x)$
where $p_{N}^{t}(x)$, the heat kernel on $\mathbb{T}$, is given
by $\left(\frac{1}{\sqrt{2\pi t}}\right)^{N}\sum_{n\in\mathbb{Z}}\exp\left\{ \frac{(x+n)^{2}}{2t}\right\} $. } Note that if $\mathbf{E}_{N}$ is invariant under all permutations
then $\mathbb{P}_{N}\left[\mathbf{E}_{N}\right]=\bar{\mathbb{P}}_{N}\left[\mathbf{E}_{N}\right]$.
Moreover, since $\mathbf{E}_{N}$ only depends on the path after time
$\eta$, we have 
\[
\bar{\mathbb{P}}_{N}\left[\mathbf{E}_{N}\right]\le\left(\frac{C}{\sqrt{\eta}}\right)^{N}\mathbb{P}_{N}^{eq}\left[\mathbf{E}_{N}\right]
\]
by (\ref{eq:univ bound}) and therefore we can derive (\ref{eq: symm_uncolr}). 

For (\ref{eq:symm_color}), note first that the marginal density profile
of $\mathbb{\bar{P}}_{N}^{color}$ at time $t$ is
\[
\bar{f}_{N}^{color}(t,\,x)=\frac{1}{|\mathfrak{C}_{N}|}\sum_{\sigma\in\mathfrak{C}_{N}}f_{N}(t,\,\sigma(x)).
\]
Since $|\mathfrak{C}_{N}|=N_{1}!N_{2}!\cdots N_{m}!$ where $N_{c}=\left|I_{c}^{N}\right|$,
we can obtain
\[
\frac{\bar{f}_{N}^{color}(t,\,x)}{\bar{f}_{N}(t,\,x)}=\frac{\frac{1}{|\mathfrak{C}_{N}|}\sum_{\sigma\in\mathfrak{C}_{N}}f_{N}(t,\,\sigma(x))}{\frac{1}{N!}\sum_{\sigma\in\mathfrak{P}_{N}}f_{N}(t,\,\sigma(x))}\le\frac{N!}{N_{1}!N_{2}!\cdots N_{m}!}\le m^{N}
\]
and thus $\left\Vert \bar{f}_{N}^{color}(t,\,\cdot)\right\Vert _{\infty}\le\left(Cm/\sqrt{\eta}\right)^{N}$
for $t\ge\eta$ from (\ref{eq:univ bound}). Therefore, we can derive
(\ref{eq:symm_color}) in a similar way. 
\end{proof}
If $\eta\le t_{1}<t_{2}\le T$, then the event $\mathbf{C}_{N}^{c_{1},\,c_{2}}(t_{1},\,t_{2};\epsilon,\,\delta)$
satisfies the conditions of second part of previous lemma and thus
\begin{equation}
\mathbb{P}{}_{N}\left[\mathbf{C}_{N}^{c_{1},\,c_{2}}(t_{1},\,t_{2};\epsilon,\,\delta)\right]\le\left(\frac{Cm}{\sqrt{\eta}}\right)^{N}\mathbb{P}_{N}^{eq}\left[\mathbf{C}_{N}^{c_{1},\,c_{2}}(t_{1},\,t_{2};\epsilon,\,\delta)\right].\label{eq:symmetrization2-1}
\end{equation}
Consequently, we can reduce Proposition \ref{prop: normal time regime}
into the following equilibrium estimate. 
\begin{prop}
\label{prop: eq_normal time regime}For any $\eta,\,\delta>0$, $\eta\le t_{1}<t_{2}\le T$
and two colors $c_{1}\neq c_{2}$, 
\begin{equation}
\limsup_{\epsilon\rightarrow0}\limsup_{N\rightarrow\infty}\frac{1}{N}\log\mathbb{P}_{N}^{eq}\left[\mathbf{C}_{N}^{c_{1},\,c_{2}}(t_{1},\,t_{2};\epsilon,\,\delta)\right]=-\infty.\label{eq:eq esss}
\end{equation}
\end{prop}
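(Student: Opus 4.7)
The plan is to apply the classical Donsker--Varadhan super-exponential estimate in equilibrium, with Proposition \ref{Main Replacement Lemma} supplying the essential bulk-to-boundary bound. Setting
\[
Y_{i}=\int_{t_{1}}^{t_{2}}\rho_{\epsilon,i}^{(c_{2})}(x^{N}(t))\,dt-[A_{i,c_{2}}^{N}(t_{2})-A_{i,c_{2}}^{N}(t_{1})],
\]
I first use the identity $\sum_{i\in I_{c_{1}}^{N}}|Y_{i}|=\sup_{\sigma\in\{\pm1\}^{I_{c_{1}}^{N}}}\sum_{i}\sigma_{i}Y_{i}$ and a union bound to reduce the problem to estimating $\mathbb{P}_{N}^{eq}[\tfrac{1}{N}\sum_{i}\sigma_{i}Y_{i}>\delta]$ uniformly over the at most $2^{N}$ deterministic sign patterns; these contribute only $\log 2$ in the $\tfrac{1}{N}\log$ scale and are thus harmless for the eventual limit $-\infty$.

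Fix $\sigma$ and $\theta>0$. Exponential Chebyshev combined with the rewriting
\[
\theta\sum_{i}\sigma_{i}Y_{i}=\int_{t_{1}}^{t_{2}}V(x^{N}(s))\,ds-\sum_{p\ne q}\int_{t_{1}}^{t_{2}}G_{pq}\,dA_{pq}^{N}(s),
\]
with $V(x)=\theta\sum_{i\in I_{c_{1}}^{N}}\sigma_{i}\rho_{\epsilon,i}^{(c_{2})}(x)$ and boundary weights $G_{pq}$ supported on pairs with one index in $I_{c_{1}}^{N}$ and the other in $I_{c_{2}}^{N}$ (with value $\theta\sigma_{i}/N$ where $i$ is the $I_{c_{1}}^{N}$-index), followed by the Feynman--Kac / Rayleigh--Ritz bound for the reversible equilibrium process with Dirichlet form $\mathbb{D}_{N}$, yields
\[
\tfrac{1}{N(t_{2}-t_{1})}\log\mathbb{E}_{N}^{eq}[\exp(\theta{\textstyle\sum_{i}}\sigma_{i}Y_{i})]\le\tfrac{1}{N}\sup_{f\in\mathscr{P}_{N}}\bigl\{\int V f\,dx-\tfrac{1}{2}{\textstyle\sum_{pq}}\!\int_{F_{pq}}G_{pq}f_{pq}\,dS_{pq}-\mathcal{D}_{N}(f)\bigr\},
\]
the factor $\tfrac{1}{2}$ reflecting the one-sided-local-time convention $\int f\,\delta_{+}=\tfrac{1}{2}f(0)$ inherent in the definition of $A_{ij}^{N}$.

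The key algebraic observation is that the bulk-minus-boundary combination inside the supremum collapses to
\[
\frac{\theta}{N}\sum_{\substack{i\in I_{c_{1}}^{N}\\ j\in I_{c_{2}}^{N}}}\sigma_{i}\left[\int_{G_{N}}f(x)\frac{\chi_{\epsilon}(x_{j}-x_{i})}{2\epsilon}\,dx-\tfrac{1}{2}\int_{F_{ij}}(f_{ij}+f_{ji})\,dS_{ij}\right],
\]
which is exactly the expression majorized term-by-term in Proposition \ref{Main Replacement Lemma} (restricted to the subset of ordered pairs $I_{c_{1}}^{N}\times I_{c_{2}}^{N}$ and weighted by $\sigma_{i}\in\{\pm1\}$). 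Writing $\alpha_{N,\epsilon}:=\epsilon^{1/4}+N^{-1/2}$ and using $|\sigma_{i}|=1$, this yields
\[
\sup_{f\in\mathscr{P}_{N}}\{\cdots\}\le\sup_{y\ge 0}\bigl\{C\theta N\alpha_{N,\epsilon}(1+(y/N)^{7/8})-y\bigr\}\le CN\bigl[\theta\alpha_{N,\epsilon}+(\theta\alpha_{N,\epsilon})^{8}\bigr],
\]
by elementary optimization in $y=\mathcal{D}_{N}(f)$. Assembling all the steps gives, for each fixed $\theta>0$,
\[
\tfrac{1}{N}\log\mathbb{P}_{N}^{eq}[\mathbf{C}_{N}^{c_{1},c_{2}}]\le-\theta\delta+C(t_{2}-t_{1})\bigl[\theta\alpha_{N,\epsilon}+(\theta\alpha_{N,\epsilon})^{8}\bigr]+\log 2,
\]
and sending $N\to\infty$, then $\epsilon\to 0$ (so $\alpha_{N,\epsilon}\to 0$), and finally $\theta\to\infty$, yields $-\infty$ as required.

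The main technical obstacle is the precise Feynman--Kac/Rayleigh--Ritz identity above for the reversible diffusion on the $N$-manifold $G_{N}$ with the partial-reflection boundary condition $\mathfrak{U}_{ij}^{\lambda}f=0$: one must track carefully how the local-time perturbation $\int G_{ij}\,dA_{ij}^{N}$ enters the variational problem through the one-sided face integrals $\int_{F_{ij}}G_{ij}f_{ij}\,dS_{ij}$ (with the numerical factor consistent with the definition of $A_{ij}^{N}$), so that the combined bulk-boundary expression aligns exactly with the left-hand side of Proposition \ref{Main Replacement Lemma}. Once this structural identity is secured, what remains is routine optimization and the standard ordering of limits.
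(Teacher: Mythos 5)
Your proposal is correct and follows essentially the same route as the paper: exponential Chebyshev, Feynman--Kac with the Rayleigh--Ritz variational formula for the top eigenvalue, application of Proposition~\ref{Main Replacement Lemma} to bound the bulk-minus-boundary expression in the supremum, optimization in $\mathcal{D}_N(f)/N$ to obtain $CN[\theta\alpha_{N,\epsilon}+(\theta\alpha_{N,\epsilon})^8]$, and finally $N\to\infty$, $\epsilon\to 0$, $\theta\to\infty$. The only cosmetic difference is where you dispatch the absolute values: you union-bound over sign patterns at the level of probabilities before applying Chebyshev, whereas the paper applies Chebyshev first and then expands $\exp\{a\sum_i|Y_i|\}\le\sum_{\sigma}\exp\{a\sum_i\sigma_i Y_i\}$ inside the expectation; both yield the same $\log 2$ cost and the same conclusion.
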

\begin{proof}
By Chebyshev's inequality, 
\begin{align}
 & \frac{1}{N}\log\mathbb{P}_{N}^{eq}\left[\mathbf{C}_{N}^{c_{1},\,c_{2}}(t_{1},\,t_{2};\epsilon,\,\delta)\right]\label{eq:sual}\\
 & \le-a\delta+\frac{1}{N}\log\mathbb{E}_{N}^{eq}\exp\left\{ a\sum_{i\in I_{c_{1}}^{N}}\left|\int_{t_{1}}^{t_{2}}\rho_{\epsilon,i}^{(c_{2})}(x^{N}(t))dt-\left(A_{i,c_{2}}^{N}(t_{2})-A_{i,c_{2}}^{N}(t_{1})\right)\right|\right\} \nonumber 
\end{align}
for any $a>0$ where $\mathbb{E}_{N}^{eq}$ is the expectation with
respect to $\mathbb{P}_{N}^{eq}$. Note that 
\begin{align}
 & \mathbb{E}_{N}^{eq}\exp\left\{ a\sum_{i\in I_{c_{1}}^{N}}\left|\int_{t_{1}}^{t_{2}}\rho_{\epsilon,i}^{(c_{2})}(x^{N}(t))dt-\left(A_{i,c_{2}}^{N}(t_{2})-A_{i,c_{2}}^{N}(t_{1})\right)\right|\right\} \label{eq:suan}\\
 & \le\sum_{\mathfrak{e}_{i}=\pm1\,\forall i}\mathbb{E}_{N}^{eq}\exp\left\{ a\sum_{i\in I_{c_{1}}^{N}}\mathfrak{e}_{i}\left[\int_{t_{1}}^{t_{2}}\rho_{\epsilon,i}^{(c_{2})}(x^{N}(t))dt-\left(A_{i,c_{2}}^{N}(t_{2})-A_{i,c_{2}}^{N}(t_{1})\right)\right]\right\} .\nonumber 
\end{align}
and let us investigate each summand of the last line. By Feynman-Kac's
formula, 
\begin{align}
 & \mathbb{E}_{N}^{eq}\exp\left\{ a\sum_{i\in I_{c_{1}}^{N}}\mathfrak{e}_{i}\left[\int_{t_{1}}^{t_{2}}\rho_{\epsilon,i}^{(c_{2})}(x^{N}(t))dt-\left(A_{i,c_{2}}^{N}(t_{2})-A_{i,c_{2}}^{N}(t_{1})\right)\right]\right\} \label{eq:suabn}\\
 & \le\exp\{(t_{2}-t_{1})\lambda_{N,\epsilon,a}\}\nonumber 
\end{align}
where $\lambda_{N,\epsilon,a}$ is the largest eigenvalue of the operator
\[
\mathscr{L}_{N}+\frac{a}{N}\sum_{i\in I_{c_{1}}^{N}}\left[\mathfrak{e}_{i}\sum_{j\in I_{c_{2}}^{N}}\left\{ \frac{\chi_{\epsilon}(x_{j}-x_{i})}{2\epsilon}-\delta^{+}(x_{j}-x_{i})-\delta^{+}(x_{i}-x_{j})\right\} \right]
\]
on $\mathcal{D}(\mathscr{L}_{N})$. The variational formula for $\lambda_{N,\epsilon,a}$
is 
\begin{align*}
\sup_{f\in\mathscr{P}_{N}\cap\mathcal{D}(\mathscr{L}_{N})}\Biggl\{\frac{a}{N}\sum_{i\in I_{c_{1}}^{N},\,j\in I_{c_{2}}^{N}}\mathfrak{e}_{i}\Biggl[ & \int_{G_{N}}f(x)\frac{\chi_{\epsilon}(x_{j}-x_{i})}{2\epsilon}dx\\
-\frac{1}{2}\Biggl(\int_{F_{ij}}f_{ij}(x) & dS_{ij}(x)+\int_{F_{ji}}f_{ji}(x)dS_{ji}(x)\Biggr)\Biggr]-\mathcal{D}_{N}(f)\Biggr\}.
\end{align*}
By Proposition \ref{Main Replacement Lemma}, the expression inside
sup (and thus $\lambda_{N,\epsilon,a}$) can be bounded by
\begin{eqnarray*}
N\left[Ca\left\{ 1+\left(\frac{\mathcal{D}_{N}(f)}{N}\right)^{\frac{7}{8}}\right\} \left(\epsilon^{\frac{1}{4}}+\frac{1}{\sqrt{N}}\right)-\frac{\mathcal{D}_{N}(f)}{N}\right]\\
\le C'N\left[a\left(\epsilon^{\frac{1}{4}}+\frac{1}{\sqrt{N}}\right)+a^{8}\left(\epsilon^{\frac{1}{4}}+\frac{1}{\sqrt{N}}\right)^{8}\right]
\end{eqnarray*}
where $C,\,C'$ are proper constants. Therefore, by (\ref{eq:suabn}),
(\ref{eq:suan}) is bounded by 
\begin{equation}
2^{N}\exp\{CN(a\epsilon^{\frac{1}{4}}+a^{8}\epsilon^{2}+o_{N}(1))(t_{2}-t_{1})\}\label{eq:traiw}
\end{equation}
Finally, by (\ref{eq:sual}) and (\ref{eq:traiw}), 
\[
\frac{1}{N}\log\mathbb{P}_{N}^{eq}\left[\mathbf{C}_{N}^{c_{1},\,c_{2}}(t_{1},\,t_{2};\epsilon,\,\delta)\right]\le-a\delta+\log2+C(a\epsilon^{\frac{1}{4}}+a^{8}\epsilon^{2})(t_{2}-t_{1})+o_{N}(1)
\]
and therefore 
\[
\limsup_{\epsilon\rightarrow0}\limsup_{N\rightarrow\infty}\frac{1}{N}\log\mathbb{P}_{N}^{eq}\left[\mathbf{C}_{N}^{c_{1},\,c_{2}}(t_{1},\,t_{2};\epsilon,\,\delta)\right]\le-a\delta+\log2.
\]
Since $a>0$ is arbitrary, we are done. \end{proof}
\begin{rem}
Previous argument combining Chebyshev's inequality, Feynman-Kac's
formula and the variational formula for the maximal eigenvalue, in
the context of interacting particle system, has been originally introduced
by \cite{DV} and also explained thoroughly in Chapter 10 of \cite{KL}.
This method will be used frequently and implicitly in the remaining
part of the current article.
\end{rem}

\subsubsection{Small Time Regime}

We now prove Proposition \ref{prop: smal time regime}. By Chebyshev's
inequality, 
\begin{align*}
 & \frac{1}{N}\log\mathbb{P}_{N}\left[\frac{1}{N}\sum_{i\in I_{c_{1}}^{N}}\left|\int_{0}^{\eta}\rho_{\epsilon,i}^{(c_{2})}(x^{N}(t))dt-A_{i,c_{2}}^{N}(\eta)\right|>\delta\right]\\
 & \le-a\delta+\frac{1}{N}\log\mathbb{E}_{N}\exp\left\{ a\sum_{i\in I_{c_{1}}^{N}}\left|\int_{0}^{\eta}\rho_{\epsilon,i}^{(c_{2})}(x^{N}(t))dt-A_{i,c_{2}}^{N}(\eta)\right|\right\} \\
 & \le-a\delta+\frac{1}{N}\log\mathbb{E}_{N}\exp\left\{ a\sum_{i=1}^{N}\int_{0}^{\eta}\rho_{\epsilon,i}(x^{N}(t))dt+aNA^{N}(\eta)\right\} 
\end{align*}
or any $a>0$ where $\mathbb{E}_{N}$ is the expectation with respect
to $\mathbb{P}_{N}$. Accordingly, we only need to establish the following
estimates. 
\begin{prop}
\label{prop: smal time regime-1}For any $a>0$,\textup{
\begin{align}
 & \limsup_{\eta\rightarrow0}\limsup_{N\rightarrow\infty}\frac{1}{N}\log\mathbb{E}_{N}\exp\left\{ aNA^{N}(\eta)\right\} \le0\label{eq:huy1-1}\\
 & \limsup_{\eta\rightarrow0}\limsup_{\epsilon\rightarrow0}\limsup_{N\rightarrow\infty}\frac{1}{N}\log\mathbb{E}_{N}\exp\left\{ a\sum_{i=1}^{N}\int_{0}^{\eta}\rho_{\epsilon,i}\left(x^{N}(t)\right)dt\right\} \le0\label{eq: huy-1}
\end{align}
} \end{prop}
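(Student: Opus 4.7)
The plan is to apply the Donsker--Varadhan machinery (Chebyshev, Feynman--Kac, variational formula for the principal eigenvalue) that the remark following Proposition \ref{prop: eq_normal time regime} promises to use throughout. Since $\eta$ may be arbitrarily small, the symmetrization device employed in the normal-time regime is unavailable, so the argument must be applied directly under the non-stationary $\mathbb{P}_N$.

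I would first establish (\ref{eq: huy-1}). By Feynman--Kac,
\[
\frac{1}{N}\log \mathbb{E}_{N}\exp\Bigl\{a\sum_{i=1}^N\int_0^\eta \rho_{\epsilon,i}(x^N(s))\,ds\Bigr\} \le \frac{\eta\Lambda_{N,\epsilon,a}}{N} + o_N(1),
\]
where $\Lambda_{N,\epsilon,a}$ is the largest eigenvalue of $\mathscr{L}_N + V_\epsilon$ on $\mathcal{D}(\mathscr{L}_N)$ with $V_\epsilon(x) = a\sum_i \rho_{\epsilon,i}(x)$, and the $o_N(1)$ term is the initial-density prefactor arising from the $L^2$-semigroup bound (mild under Assumption~1). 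The variational formula with $f = \phi^2$ reads
\[
\Lambda_{N,\epsilon,a} = \sup_{f\in\mathscr{P}_N\cap\mathcal{D}(\mathscr{L}_N)}\Bigl\{\int_{G_N}V_\epsilon(x)f(x)\,dx - \mathcal{D}_N(f)\Bigr\}.
\]
Writing $\int V_\epsilon f\,dx = \frac{a}{2N\epsilon}\int f\sum_i M_{\epsilon,i}\,dx$ and applying Lemma \ref{Lemma3}, the factor $\epsilon$ on the right-hand side of that lemma cancels the $1/\epsilon$ in front, yielding $\int V_\epsilon f\,dx \le \tfrac{1}{2}aCN(1+\sqrt{\mathcal{D}_N(f)/N})$. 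The elementary inequality $\tfrac{1}{2}aC\sqrt{N\mathcal{D}_N(f)} \le \tfrac{1}{8}(aC)^2 N + \tfrac{1}{2}\mathcal{D}_N(f)$ then absorbs the Dirichlet form into $-\mathcal{D}_N(f)$, giving $\Lambda_{N,\epsilon,a} \le K(a) N$ with $K(a) = \tfrac{1}{2}aC + \tfrac{1}{8}(aC)^2$ independent of $\epsilon$ and $N$. Taking the triple limit proves (\ref{eq: huy-1}).

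To deduce (\ref{eq:huy1-1}) I would use the $\mathbb{P}_N$-almost sure identity
\[
aNA^N(\eta) = \lim_{\epsilon\to 0}\,a\int_0^\eta\sum_{i=1}^N\rho_{\epsilon,i}(x^N(s))\,ds,
\]
which follows from $A_{ij}^N(\eta)+A_{ji}^N(\eta) = \lim_\epsilon \int_0^\eta \chi_\epsilon(x_j-x_i)/(2\epsilon)\,ds$ together with $\sum_{i\ne j}\chi_\epsilon(x_j-x_i) = 2N\epsilon\sum_i\rho_{\epsilon,i}$. Fatou's lemma combined with the $\epsilon$-uniform bound from the previous step yields $\tfrac{1}{N}\log\mathbb{E}_N\exp\{aNA^N(\eta)\} \le K(a)\eta + o_N(1)$, which vanishes as $\eta\to 0$.

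The main obstacle is the uniformity in $\epsilon$ of the eigenvalue estimate: the crude bound $\|V_\epsilon\|_\infty = O(N/\epsilon)$ would only give $K(a,\epsilon)\sim 1/\epsilon$, useless after the $\epsilon$-limit is taken. The precise cancellation in Lemma \ref{Lemma3}, whose right-hand side carries exactly one factor of $\epsilon$, is therefore the crucial analytic input; no finer boundary analysis is required beyond it. A secondary technical point is the handling of the non-stationary initial distribution in Feynman--Kac, which is standard once the $L^2$-semigroup bound $\|e^{\eta(\mathscr{L}_N+V_\epsilon)}\mathbf{1}\|_2 \le e^{\eta K(a)N}|G_N|^{1/2}$ is in hand.
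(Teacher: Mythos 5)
Your approach is genuinely different from the paper's, and it contains a gap that the paper was specifically designed to avoid. You propose to run the Donsker--Varadhan Feynman--Kac/eigenvalue machinery directly under the nonstationary law $\mathbb{P}_N$, absorbing the initial distribution into an ``$o_N(1)$ term from the $L^2$-semigroup bound.'' But this step fails. The eigenvalue inequality $\|e^{\eta(\mathscr{L}_N+V_\epsilon)}\mathbf{1}\|_{L^2}\le e^{\eta\Lambda_{N,\epsilon,a}}$ only controls $\mathbb{E}_N[\exp\{\cdot\}]=\langle f_N^0, e^{\eta(\mathscr{L}_N+V_\epsilon)}\mathbf{1}\rangle$ if $f_N^0$ can be paired against an $L^2$ function, i.e., if $f_N^0\in L^2(\mathbb{T}^N)$. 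Assumption 1 allows $f_N^0$ to be a Dirac mass on a deterministic configuration; the paper points this out explicitly when justifying the small-time regime: ``if we start deterministically, our process $\mathbb{P}_N$ is even orthogonal to $\mathbb{P}_N^{eq}$.'' Even under Assumption 3, where $f_N^0$ has density $\prod_i\rho^0(x_i)\in L^\infty$, the best you get is $\frac{1}{N}\log\mathbb{E}_N[\cdot]\le\log\|\rho^0\|_\infty+\eta K(a)$, which tends to $\log\|\rho^0\|_\infty>0$ as $\eta\to 0$ and does not establish $\le 0$. The initial-density prefactor does not vanish with $\eta$; it is a fixed cost that Feynman--Kac cannot amortize over a shrinking time window.

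This is exactly why the paper abandons the Donsker--Varadhan scheme in the small-time regime and instead uses a pathwise argument. It writes the averaged local time (resp.\ $\sum_i\int_0^\eta\rho_{\epsilon,i}\,dt$) via Tanaka's formula for the potential $g(z)=z(1-z)/2$ (resp.\ $p_\epsilon=u_\epsilon/2\epsilon$), expressing the quantity of interest, up to a bounded-variation term, as a stochastic integral against the driving Brownian motions. Girsanov's theorem then furnishes an exponential martingale with mean one, yielding an estimate that is valid $\omega$-by-$\omega$, hence for \emph{any} initial law. The paper also uses the observation that the exponent in Proposition \ref{prop: smal time regime-1} is a symmetric function of the configuration and is therefore distributed as in the non-interacting model; this removes the boundary terms and makes the Tanaka/Girsanov computation tractable. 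Your Fatou step for deducing (\ref{eq:huy1-1}) from (\ref{eq: huy-1}) is fine, and your observation that Lemma \ref{Lemma3} provides exactly one factor of $\epsilon$ to cancel $1/(2\epsilon)$ in $\rho_{\epsilon,i}$ is correct and does give an $\epsilon$-uniform variational bound --- but that bound is only useful when coupled to a stationary (or $L^2$-controlled) initial law, and that is the hypothesis you cannot supply here.
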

\begin{proof}
This proposition is a special case in which the labels of particles
(and thus the interaction) play no role. Therefore, for this proposition
we can temporarily assume that $x_{1}^{N}(t),\,x_{2}^{N}(t),\,\cdots,\,x_{N}^{N}(t)$
move by the way of independent Brownian motions. For $x\in\mathbb{T}^{N}$,
let us define $G_{N}(x)=\frac{a}{N}\sum_{i\neq j}g(x_{i}-x_{j})$
where $g(x)=\frac{x(1-x)}{2}$ is a continuous function on $\mathbb{T}$.
Then, by Tanaka's formula,
\begin{align*}
 & G_{N}(x^{N}(\eta))-G_{N}(x^{N}(0))+\frac{a\eta(N-1)}{2}-\frac{2a}{N}\sum_{i\neq j}A_{ij}^{N}(\eta)\\
 & =\frac{a}{N}\sum_{i=1}^{N}\int_{0}^{\eta}\left[\sum_{j:j\neq i}g'(x_{i}^{N}(s)-x_{j}^{N}(s))\right]dx_{i}^{N}(s)
\end{align*}
and thus by Girsanov's theorem, 
\begin{align}
 & \mathbb{E}_{N}\exp\left\{ 2aNA^{N}(\eta)-[G_{N}(x^{N}(\eta))-G_{N}(x^{N}(0))]-\Lambda_{N}(\eta)\right\} \label{eq: hhy}\\
 & =\exp\frac{a\eta(N-1)}{2}\nonumber 
\end{align}
where
\begin{equation}
\Lambda_{N}(\eta)=\frac{a^{2}}{2N^{2}}\sum_{i=1}^{N}\int_{0}^{\eta}\left(\sum_{j:j\neq i}g'(x_{i}^{N}(s)-x_{j}^{N}(s))\right)^{2}ds\le\frac{a^{2}\eta N}{8}\label{eq:hhu}
\end{equation}
since $|g'(x)|\le\frac{1}{2}$. By (\ref{eq: hhy}) and (\ref{eq:hhu})
we can obtain 
\begin{align}
 & \mathbb{E}_{N}\exp\left\{ 2aNA^{N}(\eta)-\left[G_{N}(x^{N}(\eta))-G_{N}(x^{N}(0))\right]\right\} \label{eq:c_sy1}\\
 & \le\exp\left\{ \frac{a\eta N}{2}+\frac{a^{2}\eta N}{8}\right\} .\nonumber 
\end{align}
Moreover, by the mean value theorem, 
\begin{align}
\mathbb{E}_{N}\exp\left\{ G_{N}(x^{N}(\eta))-G_{N}(x^{N}(0))\right\}  & \le\mathbb{E}_{N}\left[\exp\left\{ a\sum_{i=1}^{N}\left|x_{i}^{N}(\eta)-x_{i}^{N}(0)\right|\right\} \right]\nonumber \\
 & \le\left\{ \left(1+a\sqrt{\frac{2\eta}{\pi}}\right)\exp\frac{a^{2}\eta}{2}\right\} ^{N}\label{eq:c_sy2}
\end{align}
since we have assumed that $x_{i}^{N}(t),\,1\le i\le N$ are independent
Brownian motions. By (\ref{eq:c_sy1}), (\ref{eq:c_sy2}) and Cauchy-Schwarz's
inequality, 
\begin{equation}
\mathbb{E}_{N}\exp\left\{ aNA^{N}(\eta)\right\} \le\left(1+a\sqrt{\frac{2\eta}{\pi}}\right)^{\frac{N}{2}}\exp\left\{ \frac{a\eta N}{4}+\frac{5a^{2}\eta N}{16}\right\} \label{eq:Estimate of Total Local Time}
\end{equation}
and we proved (\ref{eq:huy1-1}). 

For (\ref{eq: huy-1}), we define $p_{\epsilon}(x)=\frac{1}{2\epsilon}u_{\epsilon}(x)$
where $u_{\epsilon}$ is the function defined in Lemma \ref{Lemma6}
and thus $p_{\epsilon}''(x)=\frac{1}{2\epsilon}\chi_{\epsilon}(x)$.
Define $H_{N}(x)=\frac{4a}{N}\sum_{i\neq j}p_{\epsilon}(x_{i}-x_{j})$
and apply Tanaka's formula such that
\begin{align*}
 & H_{N}(x^{N}(\eta))-H_{N}(x^{N}(0))-2a\left[\int_{0}^{\eta}\sum_{i=1}^{N}\rho_{\epsilon,i}\left(x^{N}(t)\right)dt-NA^{N}(\eta)\right]\\
 & =\frac{4a}{N}\sum_{i=1}^{N}\int_{0}^{\eta}\left[\sum_{j:j\neq i}p_{\epsilon}'(x_{i}^{N}(s)-x_{j}^{N}(s))\right]dx_{i}^{N}(s).
\end{align*}
Note that we have $\left|p_{\epsilon}'(x)\right|\le\frac{1}{2}$ and
therefore we can deduce
\begin{align*}
 & \limsup_{\eta\rightarrow0}\limsup_{\epsilon\rightarrow0}\limsup_{N\rightarrow\infty}\\
 & \frac{1}{N}\log\mathbb{E}_{N}\exp\left\{ a\left(\int_{0}^{\eta}\sum_{i=1}^{N}\rho_{\epsilon,i}\left(x^{N}(t)\right)dt-NA^{N}(\eta)\right)\right\} \le0
\end{align*}
for all $a>0$ by the exactly identical way to the previous step.
Thus, we can conclude (\ref{eq: huy-1}) as well. 
\end{proof}

\section{Exponential Tightness}

In this section, we establish the exponential tightness of $\bigl\{\widetilde{\mathbb{Q}}_{N}\bigr\}_{N=1}^{\infty}$,
which can be deduced from the following result. 
\begin{thm}
\label{thm: SUPER EXP TIGHTNESS}For any $\epsilon,\,\alpha>0$, 
\[
\limsup_{\delta\rightarrow0}\limsup_{N\rightarrow\infty}\frac{1}{N}\log\mathbb{P}_{N}\left[\left|\left\{ i:\sup_{\substack{0\le s,\,t\le T\\
|s-t|\le\delta
}
}\left|x_{i}^{N}(t)-x_{i}^{N}(s)\right|\ge\epsilon\right\} \right|\ge N\alpha\right]=-\infty.
\]

\end{thm}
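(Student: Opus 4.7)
The plan is to decompose each particle trajectory into an independent Brownian component plus a bounded-variation process driven by the local times, and to bound the modulus of continuity of each piece separately. Applying the martingale representation (\ref{eq:martingale}) with $f(x) = x_k$, lifted to the universal cover $\mathbb{R}^N$, and observing that the swap correction $f_{ij} - f_{ji}$ vanishes on $F_{ij}$ for $f = x_k$, each particle admits the semimartingale decomposition
\[
x_i^N(t) = x_i^N(0) + \beta_i(t) + V_i^N(t), \qquad V_i^N(t) := \sum_{j \neq i}\bigl[A_{ij}^N(t) - A_{ji}^N(t)\bigr],
\]
where $\{\beta_i\}_{i=1}^N$ are independent standard Brownian motions and $V_i^N$ has total variation on $[s, t]$ bounded by $N[A_i^N(t) - A_i^N(s)]$. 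Writing $\omega_f(\delta) := \sup_{|s-t|\le\delta}|f(t) - f(s)|$, the triangle inequality
\[
\mathbf{1}\{\omega_{x_i^N}(\delta) \ge \epsilon\} \le \mathbf{1}\{\omega_{\beta_i}(\delta) \ge \epsilon/2\} + \mathbf{1}\{\omega_{V_i^N}(\delta) \ge \epsilon/2\}
\]
reduces the theorem to the analogous super-exponential bound on the fraction of bad particles arising separately from $\beta_i$ and from $V_i^N$, with $\alpha$ replaced by $\alpha/2$.

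For the Brownian part, the variables $\mathbf{1}\{\omega_{\beta_i}(\delta) \ge \epsilon/2\}$ are i.i.d.\ Bernoulli with success probability $\phi(\delta)$ tending to $0$ super-polynomially fast as $\delta \to 0$ (by the Garsia--Rodemich--Rumsey modulus estimate for Brownian motion). Cram\'er's theorem for i.i.d.\ Bernoulli sums then produces the required super-exponential rate:
\[
\frac{1}{N}\log \mathbb{P}_N\!\left[\sum_i \mathbf{1}\{\omega_{\beta_i}(\delta) \ge \epsilon/2\} \ge N\alpha/2\right] \le -I\!\left(\alpha/2 \,\big|\, \phi(\delta)\right) \xrightarrow[\delta \to 0]{} -\infty,
\]
where $I(\cdot|\cdot)$ denotes the Bernoulli relative entropy.

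For the local-time part, I would follow the small-time / normal-time dichotomy of Section \ref{sub:Proof-of-Superexponential}, splitting the interval at some threshold $\eta > 0$. On $[0, 2\eta]$ the crude total-variation bound $\omega_{V_i^N}(\delta)|_{[0,2\eta]} \le N A_i^N(2\eta)$ combined with Markov's counting inequality gives
\[
\sum_i \mathbf{1}\{\omega_{V_i^N}(\delta)|_{[0,2\eta]} \ge \epsilon/4\} \le \tfrac{4N^2}{\epsilon}\,A^N(2\eta),
\]
so the event of interest is contained in $\{NA^N(2\eta)\ge N\alpha\epsilon/16\}$, which decays super-exponentially by exponential Chebyshev combined with the total local-time moment bound (\ref{eq:Estimate of Total Local Time}) upon letting first $N\to\infty$, then $\eta\to 0$, and finally $a\to\infty$. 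On $[\eta, T]$ the event is invariant under all label permutations, so Lemma \ref{lem: symmetrization lemma} reduces the analysis to an analogous bound under $\mathbb{P}_N^{eq}$ at the cost of an inessential factor $(C/\sqrt{\eta})^N$; in equilibrium one covers $[\eta, T]$ by $O(1/\delta)$ sub-intervals of length $\delta$ and, on each, applies the exponential-Chebyshev / Feynman--Kac / variational-formula strategy exemplified by Proposition \ref{prop: eq_normal time regime}, the relevant moment-generating-function bound being supplied by the preliminary Dirichlet-form estimates of Section \ref{sub:Preliminary-Estimate}.

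The central obstacle is the normal-time analysis of the local-time component $V_i^N$: individual per-particle local times $A_i^N$ do not admit the uniform exponential moment bounds enjoyed by the aggregate $A^N$, and the total-variation bound on short intervals is too crude for a direct attack. The remedy is to formulate the question as a bound on the counting statistic $\sum_i \mathbf{1}\{\cdots\}$, to exploit the equilibrium exchangeability granted by symmetrization, and to reduce the resulting variational problem to a form already controlled by the Dirichlet-form estimates developed for the replacement lemma.
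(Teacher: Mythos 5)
Your decomposition $x_i^N(t)=x_i^N(0)+\beta_i(t)+V_i^N(t)$, with $V_i^N(t)=\sum_{j\neq i}[A_{ij}^N(t)-A_{ji}^N(t)]$, matches the paper's (\ref{x_i as brown+local})--(\ref{eq:Loc1}); the Cram\'er/Chernoff treatment of the i.i.d.\ indicators $\mathds{1}\{\omega_{\beta_i}(\delta)\ge\epsilon/2\}$ is correct (and is what the paper means by ``(\ref{eq:(REG_BR)}) is standard''); and the normal/small time dichotomy is the paper's organizing idea. The argument you give for the small-time regime, however, contains a fatal error. From $\omega_{V_i^N}(\delta)|_{[0,2\eta]}\le NA_i^N(2\eta)$ and your Markov inequality, the event $\bigl\{\sum_i\mathds{1}\{\omega_{V_i^N}(\delta)|_{[0,2\eta]}\ge\epsilon/4\}\ge N\alpha/2\bigr\}$ implies only $\frac{4N^2}{\epsilon}A^N(2\eta)\ge N\alpha/2$, i.e.\ $NA^N(2\eta)\ge\alpha\epsilon/8$. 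Your claimed containment in $\{NA^N(2\eta)\ge N\alpha\epsilon/16\}$ has an extra factor of $N$ on the right that the arithmetic does not produce, and once that factor is removed the argument collapses: from (\ref{eq:Estimate of Total Local Time}) one reads off that $\mathbb{E}_N[NA^N(\eta)]$ is of order $N\sqrt{\eta}$, so for any fixed $\eta>0$ the probability of $\{NA^N(2\eta)\ge\alpha\epsilon/8\}$ tends to $1$ as $N\to\infty$, and the exponential Chebyshev step yields a non-negative bound, not $-\infty$. The underlying problem is that the total-variation bound destroys exactly the cancellation between the left and right collision times $A_{ij}^N$ and $A_{ji}^N$: particle $i$ collides with $O(N)$ neighbors on any fixed time window, so $A_i^N(2\eta)$ is typically of order $\sqrt{\eta}$, not $1/N$, and $NA_i^N(2\eta)=O(N\sqrt{\eta})\gg\epsilon$ for essentially every $i$; the sum of indicators is then close to $N$ regardless of $\alpha$ and $\eta$. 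You correctly identify this as the obstacle in the normal-time regime but implicitly assume the crude bound suffices on $[0,2\eta]$ --- it does not, for the same reason.

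The paper's resolution is to introduce the auxiliary process $z_i^N(t)$ of (\ref{eq:z(t)}), a $\mathbb{P}_N$-martingale whose Poisson-jump component enters the quadratic variation (\ref{eq: QV of z(t)}) through $\frac{\lambda}{(\lambda+1)^2}dA_i^N(t)$ rather than through an $O(N)$ total-variation term; the cancellation is built in. Tightness of $z_i^N$ (Proposition \ref{prop: tightness of z_i}) is then established directly from the martingale structure, and the proof of Proposition \ref{prop:(Small-time-regime)} transfers this tightness to $x_i^N$ by a stopping-time argument controlling the increment of the bounded correction $u_i^N=z_i^N-x_i^N$ via the smooth majorant $\phi_\epsilon$. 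Some cancellation-preserving device of this type is unavoidable in the small-time regime. For the normal-time regime your sketch is plausible but underspecified: covering $[\eta,T]$ by $O(1/\delta)$ sub-intervals still requires control of suprema of $\tilde{A}_i^N$ over short intervals, not merely endpoint increments, and the paper instead derives a modulus-of-continuity estimate via the Garsia--Rodemich--Rumsey inequality (Lemma \ref{lem: GRR LEMMA}) driven by the exponential-moment bound of Lemma \ref{lem: LLLEma}, which in turn requires the sharpened single-particle Dirichlet-form estimate of Lemma \ref{lem: est }.
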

Before proving Theorem \ref{thm: SUPER EXP TIGHTNESS}, we briefly
explain the reason for the exponential tightness of $\bigl\{\widetilde{\mathbb{Q}}_{N}\bigr\}_{N=1}^{\infty}$
being a corollary of this theorem. We can prove exponential tightness
of $\bigl\{\widetilde{\mathbb{Q}}_{N}\bigr\}_{N=1}^{\infty}$ by showing
\[
\lim_{\delta\rightarrow0}\limsup_{N\rightarrow\infty}\frac{1}{N}\log\mathbb{P}_{N}\left[\sup_{\substack{0\le s,\,t\le T\\
|s-t|\le\delta
}
}\frac{1}{N}\left|\sum_{c=1}^{m}\sum_{i\in I_{c}^{N}}\left(J_{c}(x_{i}^{N}(t))-J_{c}(x_{i}^{N}(s))\right)\right|\ge\epsilon\right]=-\infty
\]
for any $(J_{1},\,J_{2},\,\cdots,\,J_{m})\in C(\mathbb{T})^{m}$ and
$\epsilon>0$. It is obvious that this estimate is a direct consequence
of 
\[
\lim_{\delta\rightarrow0}\limsup_{N\rightarrow\infty}\frac{1}{N}\log\mathbb{P}_{N}\left[\sup_{\substack{0\le s,\,t\le T\\
|s-t|\le\delta
}
}\frac{1}{N}\sum_{i\in I_{c}^{N}}\left|x_{i}^{N}(t)-x_{i}^{N}(s)\right|\ge\epsilon\right]=-\infty
\]
for each $c$. We can deduce this estimate from Theorem \ref{thm: SUPER EXP TIGHTNESS}
since 
\begin{align*}
 & \mathbb{P}_{N}\left[\sup_{\substack{0\le s,\,t\le T,\,|s-t|\le\delta}
}\frac{1}{N}\sum_{i\in I_{c}^{N}}\left|x_{i}^{N}(t)-x_{i}^{N}(s)\right|\ge\epsilon\right]\\
 & \le\mathbb{P}_{N}\left[\left|\left\{ i:\sup_{\substack{0\le s,\,t\le T,\,|s-t|\le\delta}
}\left|x_{i}^{N}(t)-x_{i}^{N}(s)\right|\ge\frac{\epsilon}{2}\right\} \right|\ge\frac{N\epsilon}{2}\right].
\end{align*}

We return now to Theorem \ref{thm: SUPER EXP TIGHTNESS}. The basic
strategy is to divide the estimate into the normal and small time
regimes as before. To carry this out, we first observe that 
\[
\left\{ i:\sup_{\substack{0\le s,\,t\le T,\,|s-t|\le\delta}
}\left|x_{i}^{N}(t)-x_{i}^{N}(s)\right|\ge\epsilon\right\} \subset S_{\frac{\epsilon}{2},\delta}([0,\,\eta])\cup S_{\frac{\epsilon}{2},\delta}([\eta,\,T])
\]
where
\begin{align}
S_{\epsilon,\delta}([\eta,\,T]) & =\left\{ i:\sup_{\substack{\eta\le s,\,t\le T,\,|s-t|\le\delta}
}\left|x_{i}^{N}(t)-x_{i}^{N}(s)\right|\ge\epsilon\right\} \label{eq:Long}\\
S_{\epsilon,\delta}([0,\,\eta]) & =\left\{ i:\sup_{\substack{0\le s,\,t\le\eta,\,|s-t|\le\delta}
}\left|x_{i}^{N}(t)-x_{i}^{N}(s)\right|\ge\epsilon\right\} \label{eq:short}
\end{align}
for all $\eta>0$. Consequently, Theorem \ref{thm: SUPER EXP TIGHTNESS}
can be separated into the following propositions. The first one is
the normal time regime type of estimate. 
\begin{prop}[Normal time regime]
\label{prop: normal time reg}For any $\eta,\,\epsilon,\,\alpha>0$
\begin{equation}
\limsup_{\delta\rightarrow0}\limsup_{N\rightarrow\infty}\frac{1}{N}\log\mathbb{P}_{N}\left[\left|S_{\epsilon,\delta}([\eta,\,T])\right|\ge N\alpha\right]=-\infty.\label{eq:regular time regime}
\end{equation}

\end{prop}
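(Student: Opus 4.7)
The plan is to symmetrize to the equilibrium process and then decompose each tagged particle trajectory into a Brownian piece plus a signed local-time piece, treating the two parts separately.

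\textbf{Reduction by symmetrization.} The event $\{|S_{\epsilon,\delta}([\eta,T])|\geq N\alpha\}$ depends only on the trajectory on $[\eta,T]$ and is invariant under all permutations in $\mathfrak{P}_N$, so Lemma \ref{lem: symmetrization lemma} yields $\mathbb{P}_N[\,\cdot\,]\leq(C/\sqrt{\eta})^N\mathbb{P}_N^{eq}[\,\cdot\,]$. The prefactor contributes only $O(1)$ after $\frac{1}{N}\log$, so it suffices to prove super-exponential decay under $\mathbb{P}_N^{eq}$.

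\textbf{Tagged-particle decomposition.} Applying the martingale formulation (\ref{eq:tanaka})--(\ref{eq:martingale}) to the lifted coordinate $f(x)=x_i$ on $\mathbb{R}$, one checks that $\Delta f=0$, $f_{ij}=f_{ji}$ on every face (so the jump term in (\ref{eq:martingale}) vanishes), and the stochastic integral reduces to $\beta_i$. Thus
\[
x_i^N(t)-x_i^N(0)=\beta_i(t)+V_i(t),\qquad V_i(t):=\sum_{j\neq i}\bigl[A_{ij}^N(t)-A_{ji}^N(t)\bigr],
\]
with $\beta_1,\dots,\beta_N$ independent standard Brownian motions. Writing $\omega_\delta(g):=\sup_{s,t\in[\eta,T],\,|t-s|\leq\delta}|g(t)-g(s)|$, we have the inclusion $\{i:\omega_\delta(x_i^N)\geq\epsilon\}\subset\mathsf B_i\cup\mathsf C_i$ with $\mathsf B_i:=\{\omega_\delta(\beta_i)\geq\epsilon/2\}$ and $\mathsf C_i:=\{\omega_\delta(V_i)\geq\epsilon/2\}$; consequently $|S_{\epsilon,\delta}([\eta,T])|\leq|\{i:\mathsf B_i\}|+|\{i:\mathsf C_i\}|$, and it is enough to bound each count separately.

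\textbf{Brownian count.} The events $\mathsf B_i$ are i.i.d., since the $\beta_i$ are independent, and the Gaussian modulus-of-continuity estimate gives $p_\delta:=\mathbb{P}(\mathsf B_1)\leq C(T/\delta)e^{-c\epsilon^2/\delta}\to0$ as $\delta\to0$. The Cram\'er/Chernoff bound for Bernoulli sums then yields $\frac{1}{N}\log\mathbb{P}_N^{eq}[|\{i:\mathsf B_i\}|\geq N\alpha/2]\to-\infty$ as $\delta\to0$.

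\textbf{Local-time count, the main obstacle.} Partitioning $[\eta,T]$ into windows of length $\delta$ and applying a union bound reduces $\omega_\delta(V_i)$ to $|V_i(t_{k+1})-V_i(t_k)|$ at consecutive grid points. Exponential Chebyshev combined with a Feynman--Kac / variational argument in the spirit of Proposition \ref{prop: eq_normal time regime} controls the exponential moment of $\sum_i\mathbf{1}_{\mathsf C_i}$ by the largest eigenvalue of $\mathscr L_N$ perturbed by the current $\sum_{j\neq i}[dA_{ij}^N-dA_{ji}^N]$. The Dirichlet-form bounds from Section \ref{sub:Preliminary-Estimate} (in particular Lemma \ref{lem: GREEN THM} applied with an antisymmetric test function $h$, which captures the $A_{ij}$-vs-$A_{ji}$ cancellation) should give an $O(\sqrt\delta)$ bound on this eigenvalue at the $N$-scale, producing the required decay. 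The core difficulty lies precisely in exploiting this cancellation: the na\"ive bound $|V_i|\leq NA_i^N$ is useless since $NA_i^N(\delta)$ is typically of order $N\delta$, and one must instead leverage the martingale-like structure of the signed sum $V_i$ to extract the correct $\sqrt\delta$-scale fluctuations.
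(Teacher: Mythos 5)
Your decomposition $x_i^N(t)=\beta_i(t)+V_i(t)$ with $V_i=\tilde A_i^N$, the symmetrization to $\mathbb{P}_N^{eq}$, the Chernoff bound for the Brownian count, and the identification of the $A_{ij}-A_{ji}$ cancellation as the crux are all consistent with the paper's argument. However, the step asserting that ``partitioning $[\eta,T]$ into windows of length $\delta$ reduces $\omega_\delta(V_i)$ to $|V_i(t_{k+1})-V_i(t_k)|$ at consecutive grid points'' is a genuine gap. This reduction is simply false for a general continuous, bounded-variation path: a tent function vanishing at every grid point $t_k$ but reaching height $\delta/2$ at the midpoints has $\omega_\delta=\delta/2$ yet zero grid-point increments. $V_i$ is a difference of two increasing processes, not itself monotone, so there is no a priori control of sub-grid oscillations by grid-point increments; and as you correctly note, bounding $|V_i(t)-V_i(s)|$ by the unsigned $A_i^N(t)-A_i^N(s)$ destroys the very cancellation that makes the estimate work. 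Moreover, even granting the reduction, the indicator $\mathbf 1_{\mathsf C_i}$ of a threshold event is not a Feynman--Kac observable, so an additional translation step would still be required before the variational formula for the principal eigenvalue applies.

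The paper closes both gaps with the Garsia--Rumsey--Rodemich inequality (Lemma \ref{lem: GRR LEMMA}). GRR converts the pathwise supremum $\omega_\delta(V_i)$ into $\log\int_0^T\!\int_0^T\exp\bigl\{|V_i(t)-V_i(s)|/\sqrt[4]{t-s}\bigr\}\,dt\,ds$, a double integral of exponentials of \emph{fixed-time} increments. After a Chebyshev step and Fubini (as in (\ref{eq: Tight_FIN})), the problem reduces to bounding $\mathbb{E}_N^{eq}\exp\bigl\{\sum_i\alpha_i\bigl(\tilde A_i^N(t_i)-\tilde A_i^N(s_i)\bigr)\bigr\}$, and it is exactly this quantity that is amenable to the Feynman--Kac/variational argument with the Dirichlet form, exploiting the sign structure via the elementary inequality inside Lemma \ref{lem: step1}. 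The resulting Gaussian-type bound of Lemma \ref{lem: LLLEma}, $\exp\{C\sum_i(\alpha_i^2+\alpha_i^4)(t_i-s_i)\}$, encodes precisely the $\sqrt{\delta}$-scale fluctuations you anticipated. Without GRR (or another maximal-type inequality replacing it), the chain from $\omega_\delta(V_i)$ to a variationally tractable quantity does not close; you should insert this lemma between your grid decomposition and the Feynman--Kac step.
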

For the small time regime, we have 
\[
S_{\epsilon,\delta}([0,\,\eta])\subset\left\{ i:\sup_{\substack{0\le t\le\eta}
}\left|x_{i}^{N}(t)-x_{i}^{N}(0)\right|\ge\frac{\epsilon}{2}\right\} 
\]
 and hence it is enough to prove the following estimate. 
\begin{prop}[Small time regime]
\label{prop:(Small-time-regime)}For any $\epsilon,\,\alpha>0$ 
\[
\limsup_{\eta\rightarrow0}\limsup_{N\rightarrow\infty}\frac{1}{N}\log\mathbb{P}_{N}\left[\left|\left\{ i:\sup_{\substack{0\le t\le\eta}
}\left|x_{i}^{N}(t)-x_{i}^{N}(0)\right|\ge\epsilon\right\} \right|\ge N\alpha\right]=-\infty.
\]

\end{prop}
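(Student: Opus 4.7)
The plan is to follow the two-regime strategy used for Proposition~\ref{prop: smal time regime-1}. The preliminary observation is that (\ref{eq:martingale}) applied to $f(x)=x_k$ gives $\nabla_{l}f=\delta_{lk}$ and $f_{ij}=f_{ji}$ on each face $F_{ij}$ (both limits are the common value $x_k$ at the collision), so the jump piece of the martingale vanishes and combining with (\ref{eq:tanaka}) yields the Tanaka decomposition
\[
x_{i}^{N}(t)-x_{i}^{N}(0)=\beta_{i}(t)+D_{i}^{N}(t),\qquad D_{i}^{N}(t):=\sum_{j\neq i}\bigl[A_{ij}^{N}(t)-A_{ji}^{N}(t)\bigr],
\]
where $\beta_{1},\ldots,\beta_{N}$ are the independent standard Brownian motions of (\ref{eq:martingale}) and $D_{i}^{N}$ is of bounded variation. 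Writing $U_{N}(\eta)$ for the cardinality appearing in the proposition, one has $U_{N}(\eta)\le U_{N}^{\beta}(\eta)+U_{N}^{D}(\eta)$ with $U_{N}^{\beta}:=\#\{i:\sup_{t\le\eta}|\beta_{i}(t)|\ge\epsilon/2\}$ and $U_{N}^{D}:=\#\{i:\sup_{t\le\eta}|D_{i}^{N}(t)|\ge\epsilon/2\}$, and it suffices to bound each term.

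The piece $U_{N}^{\beta}$ is elementary. Independence of the $\beta_{i}$'s means that $U_{N}^{\beta}$ is a sum of i.i.d.\ Bernoullis whose parameter $p_{\eta}$ satisfies $p_{\eta}\le 4\exp(-\epsilon^{2}/(8\eta))$ by the reflection principle. The Cram\'er-Chernoff inequality then gives
\[
\frac{1}{N}\log\mathbb{P}_{N}\bigl[U_{N}^{\beta}\ge N\alpha/2\bigr]\le-\tfrac{\alpha}{2}\log\tfrac{\alpha}{2p_{\eta}}+\tfrac{\alpha}{2},
\]
which tends to $-\infty$ as $\eta\to 0$.

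For $U_{N}^{D}$ the naive estimate $|D_{i}^{N}(t)|\le NA_{i}^{N}(t)$ is hopeless: by Proposition~\ref{prop: smal time regime-1}, typical $NA_{i}^{N}(\eta)$ is of order $N\eta$ for fixed $\eta>0$, which dwarfs the threshold $\epsilon/2$. One must extract cancellation from the alternating sum $A_{ij}^{N}-A_{ji}^{N}$. I would apply It\^o's formula (via (\ref{eq:tanaka})) to the symmetric functional $\Phi_{N}(x)=\sum_{i}\phi(x_{i}-x_{i}^{N}(0))$ with $\phi(u)=\cosh(\theta u)-1$. Because $\phi$ is smooth, $\phi_{ij}-\phi_{ji}\equiv 0$ on $F_{ij}$ and no jump piece appears, while the local-time contribution reduces to
\[
\sum_{i\neq j}\int_{0}^{t}\theta\bigl[\sinh(\theta(x_{i}(s)-x_{i}^{N}(0)))-\sinh(\theta(x_{j}(s)-x_{j}^{N}(0)))\bigr]\,dA_{ij}^{N}(s),
\]
whose magnitude on $F_{ij}$ (where $x_{i}=x_{j}$) is controlled by the initial-position gap $|x_{j}^{N}(0)-x_{i}^{N}(0)|$ via the identity $\sinh a-\sinh b=2\cosh(\tfrac{a+b}{2})\sinh(\tfrac{a-b}{2})$. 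An exponential-martingale / Girsanov argument in the spirit of the proof of Proposition~\ref{prop: smal time regime-1}, combined with the Dirichlet-form estimates of Section~\ref{sub:Preliminary-Estimate} applied to the weighted collision functional $\sum_{i\neq j}|x_{j}^{N}(0)-x_{i}^{N}(0)|\,A_{ij}^{N}(\eta)$, should then yield $\limsup_{\eta\to 0}\limsup_{N}\frac{1}{N}\log\mathbb{E}_{N}[\exp(aU_{N}^{D}(\eta))]\le 0$ for every $a>0$, and hence, via exponential Chebyshev, the desired super-exponential bound on $U_{N}^{D}$.

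The main obstacle is precisely this last step: squeezing out the cancellation in $A_{ij}^{N}-A_{ji}^{N}$ without losing a factor of order $N\eta$. This is the per-particle analogue of the global difficulty already overcome in Proposition~\ref{prop: smal time regime-1}, and it is where the symmetric choice of $\phi$ together with the Dirichlet-form machinery of Section~\ref{sub:Preliminary-Estimate} play the essential role.
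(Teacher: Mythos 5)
Your decomposition $x_{i}^{N}(t)-x_{i}^{N}(0)=\beta_{i}(t)+\tilde{A}_{i}^{N}(t)$ is exactly (\ref{x_i as brown+local})--(\ref{eq:Loc1}) and your Cram\'er bound on the Brownian part is fine, so the whole weight of the argument falls on the drift term $D_{i}^{N}=\tilde{A}_{i}^{N}$. That is precisely the object the paper singles out as ``unacceptably noisy'' and never attacks head-on; your proposal to control it directly is where the argument breaks.

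Two concrete problems. First, the $\sinh$ cancellation does not buy what you claim. On $F_{ij}$ the local-time density of your $\Phi_{N}$ has magnitude proportional to $\sinh\!\bigl(\tfrac{\theta}{2}\lvert x_{j}^{N}(0)-x_{i}^{N}(0)\rvert\bigr)$, and the initial gap $\lvert x_{j}^{N}(0)-x_{i}^{N}(0)\rvert$ is generically of order one on $\mathbb{T}$, not small. The informal reason such pairs should not collide before time $\eta$ cannot be invoked here, because that is essentially the event you are trying to bound --- the argument would be circular. Consequently the weighted collision functional $\sum_{i\neq j}\lvert x_{j}^{N}(0)-x_{i}^{N}(0)\rvert A_{ij}^{N}(\eta)$ carries no extra smallness compared to $N^{2}A^{N}(\eta)$, which Proposition \ref{prop: smal time regime-1} controls only at scale $N$, an order of magnitude too weak. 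Second, the plan to ``combine with the Dirichlet-form estimates of Section \ref{sub:Preliminary-Estimate}'' is structurally unavailable in the small time regime. Those estimates enter only through the Feynman--Kac/variational-formula scheme, which in turn requires replacing $\mathbb{P}_{N}$ by $\mathbb{P}_{N}^{eq}$ via Lemma \ref{lem: symmetrization lemma}; that replacement costs a factor $(C/\sqrt{\eta})^{N}$ and is therefore usable only on $[\eta,T]$. This is exactly why the paper gives separate, Dirichlet-form-free proofs in the small time regime (Propositions \ref{prop: smal time regime-1} and \ref{prop:(Small-time-regime)}). Finally, your global functional $\Phi_{N}(x)=\sum_{i}\phi(x_{i}-x_{i}^{N}(0))$ never gets converted into a bound on the per-particle count $\#\{i:\sup_{t\le\eta}\lvert D_{i}^{N}(t)\rvert\ge\epsilon/2\}$; the step ``should then yield'' that you flag as the obstacle is in fact the whole proof.

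The paper sidesteps the drift $\tilde{A}_{i}^{N}$ altogether: it introduces the adjusted process $z_{i}^{N}$ of (\ref{eq:z(t)}), whose construction cancels $\tilde{A}_{i}^{N}$ algebraically so that $z_{i}^{N}$ is a martingale with an explicit Brownian-plus-small-jump representation (\ref{eq:zzz}). Tightness of $z_{i}^{N}$ (Proposition \ref{prop: tightness of z_i}) is obtained by a stopping-time and exponential-martingale argument on the compensated Poisson part, with Proposition \ref{prop: smal time regime-1} controlling the quadratic variation. One then passes from $z_{i}^{N}$ to $x_{i}^{N}$ by noting that $z_{i}^{N}-x_{i}^{N}=u_{i}^{N}$ is a density field of $\mu^{N}(\cdot)$; being permutation-symmetric, its It\^{o} expansion has no local-time term at all, and the remaining Brownian integral is handled by a time-change. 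This algebraic cancellation built into $z_{i}^{N}$ is the essential idea that replaces the direct attack on $\tilde{A}_{i}^{N}$ your proposal attempts, and it is what your argument is missing.
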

For the normal time regime, it is possible to transfer the estimate
to that of the equilibrium process by using Lemma \ref{lem: symmetrization lemma}.
Then, we can apply the well known methodology (\textit{e.g.}, \cite{R})
based on Garsia-Rumsey-Rodemich's inequality to the equilibrium estimate
by making a small adjustment. However, for the small time regime,
we cannot send the estimate to the equilibrium and therefore have
to adopt a different approach.

\subsection{Normal Time Regime}

By setting $f(x)=x_{i}$ in (\ref{eq:tanaka}) and (\ref{eq:martingale}),
we obtain 
\begin{equation}
x_{i}^{N}(t)=\beta_{i}(t)+\tilde{A}_{i}^{N}(t).\label{x_i as brown+local}
\end{equation}
where
\begin{equation}
\tilde{A}_{i}^{N}(t)=\sum_{j:j\neq i}\left[A_{ij}^{N}(t)-A_{ji}^{N}(t)\right]\label{eq:Loc1}
\end{equation}
which can be regarded as the difference between the left and right
collision times for particle $x_{i}^{N}(\cdot)$ and also measures
the deviation of the lifted particle $x_{i}^{N}(t)$ from the underlying
Brownian motion $\beta_{i}(t)$. In contrast to the averaged local
times in (\ref{eq:Loc2}), (\ref{eq:Loc3}) or (\ref{eq:Loc4}), the
behavior of $\tilde{A}_{i}^{N}(t)$ is unacceptably noisy. Thus, we
now present a way to control this object. 

By (\ref{x_i as brown+local}), the estimate (\ref{eq:regular time regime})
can be divided into
\begin{align}
 & \limsup_{\delta\rightarrow0}\limsup_{N\rightarrow\infty}\label{eq:(REG_BR)}\\
 & \frac{1}{N}\log\mathbb{P}_{N}\left[\left|\left\{ i:\sup_{\substack{\eta\le s,\,t\le T,\,|s-t|\le\delta}
}\left|\beta_{i}(t)-\beta_{i}(s)\right|\ge\epsilon\right\} \right|\ge N\alpha\right]=-\infty,\nonumber \\
 & \limsup_{\delta\rightarrow0}\limsup_{N\rightarrow\infty}\label{eq:(REG_INC)}\\
 & \frac{1}{N}\log\mathbb{P}_{N}\left[\left|\left\{ i:\sup_{\substack{\eta\le s,\,t\le T,\,|s-t|\le\delta}
}\left|\widetilde{A}_{i}^{N}(t)-\widetilde{A}_{i}^{N}(s)\right|\ge\epsilon\right\} \right|\ge N\alpha\right]=-\infty.\nonumber 
\end{align}
First of all, (\ref{eq:(REG_BR)}) is standard because $\beta_{i}$'s
are independent Brownian motion. The main challenge is (\ref{eq:(REG_INC)}).
Since the event inside the bracket of (\ref{eq:(REG_INC)}) is invariant
under the permutation of labels, we can apply Lemma \ref{lem: symmetrization lemma}
to send the estimate to the equilibrium as following proposition. 
\begin{prop}
\label{prop: normal time reg-1} For any $\epsilon,\,\alpha>0$ and
$T\ge1$,
\[
\limsup_{\delta\rightarrow0}\limsup_{N\rightarrow\infty}\frac{1}{N}\log\mathbb{P}_{N}^{eq}\left[\left|\left\{ i:\sup_{\substack{0\le s,\,t\le T\\
|s-t|\le\delta
}
}\left|\widetilde{A}_{i}^{N}(t)-\widetilde{A}_{i}^{N}(s)\right|\ge\epsilon\right\} \right|\ge N\alpha\right]=-\infty.
\]
\end{prop}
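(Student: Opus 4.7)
The plan is to reduce the super-exponential estimate to controlling a Garsia--Rodemich--Rumsey (GRR) functional per particle, and then to bound the proportion of ``bad'' particles by an exponential Chebyshev inequality applied to the sum of these functionals. For each $i$ I apply GRR to the continuous path $t\mapsto \tilde A_i^N(t)$ with suitably chosen $\Psi$ and $p$, obtaining a deterministic pathwise bound
\[
\sup_{|s-t|\le\delta}\bigl|\tilde A_i^N(t)-\tilde A_i^N(s)\bigr| \le \Phi(\delta,B_i), \qquad B_i := \int_0^T\!\!\int_0^T \Psi\!\left(\frac{|\tilde A_i^N(t)-\tilde A_i^N(s)|}{p(|t-s|)}\right) ds\,dt,
\]
where $\Phi(\delta,B)\to 0$ as $\delta\to 0$ for each fixed $B$. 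Consequently $\{i : \sup_{|s-t|\le\delta}|\tilde A_i^N(t)-\tilde A_i^N(s)|\ge\epsilon\} \subseteq \{i : B_i\ge M_\delta\}$ with a threshold $M_\delta\to\infty$ as $\delta\to 0$.

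Using $\mathbb{1}[B_i\ge M_\delta]\le B_i/M_\delta$ followed by exponential Chebyshev,
\[
\mathbb{P}_N^{eq}\!\left[|\{i : B_i\ge M_\delta\}|\ge N\alpha\right] \le e^{-\theta N\alpha M_\delta}\,\mathbb{E}_N^{eq}\!\left[\exp\!\left(\theta \sum_{i=1}^N B_i\right)\right]
\]
for any $\theta>0$. If I can exhibit $\theta>0$ and a constant $C$, both independent of $N$, such that $\frac{1}{N}\log \mathbb{E}_N^{eq}[\exp(\theta\sum_i B_i)]\le C$, then $\frac{1}{N}\log\mathbb{P}_N^{eq}[\cdot]\le C-\theta\alpha M_\delta\to -\infty$ as $\delta\to 0$, which is the claim. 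By Jensen's inequality applied to the probability measure $ds\,dt/T^2$ on $[0,T]^2$ and the stationarity of $\mathbb{P}_N^{eq}$, this exponential moment bound reduces to a uniform single-increment estimate of the form
\[
\sup_{0<u\le T}\frac{1}{N}\log \mathbb{E}_N^{eq}\!\left[\exp\!\left(\theta'\sum_{i=1}^N \Psi\!\left(\frac{\tilde A_i^N(u)}{p(u)}\right)\right)\right]\le C'
\]
for some $\theta'>0$. I plan to prove this by the Feynman--Kac / variational-formula method of Section~2.3 applied to the perturbation $\sum_i \Psi(\tilde A_i^N/p(u))$ of $\mathscr{L}_N$, using the Dirichlet-form machinery of Section~2.2 together with the exponential moment on the total local time $A^N$ from Proposition \ref{prop: smal time regime-1}.

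The principal obstacle is this final exponential-moment estimate and, more specifically, the choice of $\Psi$ and $p$ in GRR: a polynomial $\Psi(x)=x^{2k}$ with $k\ge 2$ is incompatible with the Gaussian-like equilibrium marginals of $\tilde A_i^N$ (so that $\mathbb{E}[\exp(\theta Z^{2k})]=\infty$), while an exponential or sub-Gaussian choice of $\Psi$ makes the matching of the singular weight $1/p(|t-s|)$ as $|t-s|\to 0$ delicate. Finding the right balance so that the single-increment exponential moment is uniformly bounded in $u$ and $N$, in spite of the non-trivial correlations induced by the local interactions, is the ``small adjustment'' to Rezakhanlou's classical argument alluded to in the paper; once it is in hand, the two displays above yield the super-exponential bound immediately.
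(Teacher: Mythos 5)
Your skeleton (GRR per particle, then a Chebyshev bound on the count of ``bad'' particles, with the exponential moment controlled via Feynman--Kac and the Dirichlet-form machinery) is the right family of ideas and matches the paper's outline. However, the specific reduction you propose has a genuine gap that is not a mere ``small adjustment,'' and it is worth pinpointing because the paper works around it in an essential way.

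Your Chebyshev step on $\sum_i B_i$ forces you to control $\mathbb{E}_N^{eq}\!\bigl[\exp\bigl(\theta\sum_i B_i\bigr)\bigr]$, and after the Jensen step you arrive at a \emph{single-increment} estimate of the form
\[
\sup_{0<u\le T}\frac{1}{N}\log\mathbb{E}_N^{eq}\!\left[\exp\!\left(\theta'\sum_{i=1}^N\Psi\!\left(\frac{\tilde A_i^N(u)}{p(u)}\right)\right)\right]\le C'.
\]
For GRR to deliver the modulus $\delta^{1/4}\log(1/\delta)$, one is essentially forced into $\Psi(x)=e^{|x|}-1$ and $p(u)=u^{1/4}$ (as the paper does). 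But with that $\Psi$, the quantity inside the outer $\exp$ is itself $\sum_i(e^{|\tilde A_i^N(u)|/u^{1/4}}-1)$, and the required moment is a \emph{double exponential}. Since $\tilde A_i^N(u)$ has finite exponential moments of every order (in fact $\mathbb{E}[e^{\alpha\tilde A_i^N(u)}]\le e^{C(\alpha^2+\alpha^4)u}$ by Lemma~\ref{lem: LLLEma}), the random variable $e^{|\tilde A_i^N(u)|/u^{1/4}}$ has tails roughly like $\exp\{-c(\log M)^{4/3}\}$, so $\mathbb{E}[\exp(\theta' e^{|\tilde A_i^N(u)|/u^{1/4}})]=\infty$ for every $\theta'>0$. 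Thus your displayed single-increment estimate is false, and the reduction cannot close. This is not an artifact of the wrong constant; it is structural.

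The paper circumvents this by exploiting the \emph{logarithm} in the GRR bound. With $S_T(\phi)=\sup_{\delta\le 1/2}\sup_{|s-t|\le\delta}|\phi(t)-\phi(s)|/(\delta^{1/4}\log\tfrac1\delta)$, Lemma~\ref{lem: GRR LEMMA} gives $S_T(\phi)\le C_1+C_2\log\int\!\!\int\exp\{|\phi(t)-\phi(s)|/\sqrt[4]{t-s}\}\,dt\,ds$. Chebyshev is then applied to $\sum_i S_T(\tilde A_i^N)$, not to $\sum_i B_i$. Because of the log, $\exp(S_T/C_2)\le e^{C_1/C_2}B_i$, so the required moment is $\mathbb{E}_N^{eq}[\prod_i B_i]$ --- a single exponential in the local-time increments, not a double one. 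Fubini turns $\mathbb{E}[\prod_i B_i]$ into a $2N$-fold time integral of $\mathbb{E}[\exp\{\sum_i|\tilde A_i^N(t_i)-\tilde A_i^N(s_i)|/\sqrt[4]{t_i-s_i}\}]$, and this is exactly where the \emph{joint} increment bound of Lemma~\ref{lem: LLLEma} enters: $\mathbb{E}[\exp\{\sum_i\alpha_i\Delta\tilde A_i^N\}]\le\exp\{C\sum_i(\alpha_i^2+\alpha_i^4)(t_i-s_i)\}$, and the magic of $p(u)=u^{1/4}$ is that with $\alpha_i=\pm(t_i-s_i)^{-1/4}$ both $\alpha_i^2(t_i-s_i)=\sqrt{t_i-s_i}$ and $\alpha_i^4(t_i-s_i)=1$ are uniformly bounded, so the $2N$-fold integral factorizes into $(2T^2e^{C(\sqrt T+1)})^N$. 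In short: you need to take the Chebyshev inequality in $\sum_i\log B_i$ (equivalently $\sum_i S_T$), not $\sum_i B_i$, and you need the joint moment estimate in the Fubini'd variable, not a single-increment bound obtained through Jensen.

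As a secondary point, proving Lemma~\ref{lem: LLLEma} itself is where the interaction enters and is the bulk of the work: it proceeds by writing $\tilde A_i^N=A_{ij}^N-A_{ji}^N$ sums, controlling them against the symmetrized $A_i^N$ via the boundary term of the Dirichlet form (Lemma~\ref{lem: step1}), and then bounding the per-particle boundary mass via a bespoke Green's-formula vector field with coefficients $\sigma_k^{(i)}(x)$ counting particles between $x_i$ and $x_k$ (Lemma~\ref{lem: est }). Your proposal gestures at ``Feynman--Kac plus Dirichlet form,'' which is the right toolbox, but the $\alpha_i^2+\alpha_i^4$ structure and the exact cancellation with $p(u)=u^{1/4}$ is the content of the proof, not a detail to be filled in later.
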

\begin{rem}
We expanded the time window from $[\eta,\,T]$ to $[0,\,T]$ to reduce
unnecessary notational complexity. We also assumed $T\ge1$ without
loss of generality.
\end{rem}

\subsubsection{Garcia-Rumsey-Rodemich's Inequality}

For $\phi\in C([0,\,T],\,\mathbb{R})$, let us define 
\[
S_{T}(\phi)=\sup_{0\le\delta\le\frac{1}{2}}\sup_{\substack{0\le s,\,t\le T\\
|s-t|\le\delta
}
}\frac{\left|\phi(t)-\phi(s)\right|}{\sqrt[4]{\delta}\log\frac{1}{\delta}}
\]
then we have the following result. 
\begin{lem}
\label{lem: GRR LEMMA}For $T\ge1$, we have
\[
S_{T}(\phi)\le C_{1}+C_{2}\log\int_{0}^{T}\int_{0}^{T}\exp\left\{ \left|\frac{\phi(t)-\phi(s)}{\sqrt[4]{t-s}}\right|\right\} dtds
\]
for some positive constants $C_{1},\,C_{2}$.\end{lem}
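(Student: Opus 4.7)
The lemma is a specialization of the classical Garsia--Rumsey--Rodemich inequality, which states that for any continuous $\phi\colon [0,T]\to\mathbb{R}$, any continuous strictly increasing $p$ with $p(0)=0$, and any continuous strictly increasing convex $\Psi$ with $\Psi(0)=0$ and $\lim_{x\to\infty}\Psi(x)=\infty$, one has
\[
|\phi(t)-\phi(s)|\le 8\int_{0}^{|t-s|}\Psi^{-1}\!\left(\frac{4B}{u^{2}}\right)dp(u),\qquad B:=\int_{0}^{T}\!\int_{0}^{T}\Psi\!\left(\frac{|\phi(x)-\phi(y)|}{p(|x-y|)}\right)dx\,dy.
\]
The plan is to specialize with $\Psi(x)=e^{x}-1$ and $p(u)=u^{1/4}$, so that $\Psi^{-1}(y)=\log(1+y)$ and $dp(u)=\tfrac{1}{4}u^{-3/4}du$. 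With this choice, the quantity $B$ above equals $I-T^{2}$, where $I$ denotes the double exponential integral appearing in the statement, so in particular $B\le I$.

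Under these specializations the GRR bound reduces, for $|t-s|\le\delta$, to controlling
\[
J(\delta):=\int_{0}^{\delta}\log\!\left(1+\frac{4B}{u^{2}}\right)u^{-3/4}du.
\]
Using $\log(1+x)\le\log 2+\log x$ for $x\ge 1$, the integrand splits into a piece governed by $\log B$ and a piece involving the logarithmic singularity $\log(1/u)$. The first piece contributes a multiple of $\delta^{1/4}\log B$. For the second, the substitution $v=u^{1/4}$ turns $\int_{0}^{\delta}u^{-3/4}\log(1/u)\,du$ into a constant multiple of $\int_{0}^{\delta^{1/4}}(-\log v)\,dv=\delta^{1/4}(\log(1/\delta)/4+1)$, which produces the desired factor $\delta^{1/4}\log(1/\delta)$. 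Collecting terms,
\[
|\phi(t)-\phi(s)|\le C\,\delta^{1/4}\log(1/\delta)+C'\,\delta^{1/4}\log B+C''\,\delta^{1/4}.
\]

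Dividing both sides by $\delta^{1/4}\log(1/\delta)$, and using the fact that the restriction $\delta\le 1/2$ forces $\log(1/\delta)\ge\log 2$ so that the $\delta^{1/4}$ and $(\log(1/\delta))^{-1}$ remainders are bounded by universal constants, yields $S_{T}(\phi)\le C_{1}+C_{2}\log B\le C_{1}+C_{2}\log I$, which is the desired inequality (after possibly enlarging $C_{1},C_{2}$).

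The only delicate step is the asymptotic analysis of $J(\delta)$: the coefficient of $\log(1/\delta)$ that emerges from the logarithmic singularity at $u=0$ must match the factor $\delta^{1/4}\log(1/\delta)$ in the definition of $S_{T}(\phi)$ so that, after normalization, one is left with an $O(\log B)$ bound rather than something diverging as $\delta\to 0$. It is precisely the choice $p(u)=u^{1/4}$, matched with the required regularity scale, that produces this cancellation; any other exponent would either inflate the constant or spoil the logarithmic dependence on $B$.
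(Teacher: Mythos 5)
Your proof follows essentially the same route as the paper: specialize the Garsia--Rumsey--Rodemich inequality with $\Psi(x)=e^{|x|}-1$ and $p(u)=u^{1/4}$, split the logarithm into a piece governed by $\log B$ and a piece integrating $u^{-3/4}\log(1/u)$, then normalize by $\delta^{1/4}\log(1/\delta)$. One minor imprecision: your splitting $\log(1+x)\le\log 2+\log x$ requires $x\ge1$, which can fail near $u=\delta$ when $B$ is small; the paper sidesteps this cleanly by bounding $1+4(M-T^{2})/u^{2}\le M(1+4/u^{2})$, valid since $M\ge T^{2}\ge1$, which gives the $\log M$ and $\log(1+4/u^{2})$ split without any case distinction.
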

\begin{proof}
Define $p(x)=x^{\frac{1}{4}}$, $\Psi(x)=e^{|x|}-1$ and $M=\int_{0}^{T}\int_{0}^{T}\exp\left\{ \left|\frac{\phi(t)-\phi(s)}{\sqrt[4]{t-s}}\right|\right\} dtds$
so that $M\ge T^{2}\ge1$. For $|t-s|\le\delta$, by Garcia-Rumsey-Rodemich's
inequality (\textit{cf.} Section 1.3 of \cite{V5}), 
\begin{align*}
\left|\phi(t)-\phi(s)\right| & \le8\int_{0}^{\left|t-s\right|}\log\left\{ 1+\frac{4\left(M-T^{2}\right)}{u^{2}}\right\} dp(u)\\
 & \le2\int_{0}^{\delta}u^{-\frac{3}{4}}\log\left(M+\frac{4M}{u^{2}}\right)du\\
 & =8\delta^{\frac{1}{4}}\log M+2\int_{0}^{\delta}u^{-\frac{3}{4}}\log\left(1+\frac{4}{u^{2}}\right)du.
\end{align*}
Therefore, the proof is completed since we have 
\[
\int_{0}^{\delta}u^{-\frac{3}{4}}\log\left(1+\frac{4}{u^{2}}\right)du<\int_{0}^{\delta}u^{-\frac{3}{4}}\left(2+2\log\frac{1}{u}\right)du=40\delta^{\frac{1}{4}}+8\delta^{\frac{1}{4}}\log\frac{1}{\delta}.
\]

\end{proof}

\subsubsection{Proof of Proposition \ref{prop: normal time regime}}

We return now to Proposition \ref{prop: normal time regime}. For
$\delta\le\frac{1}{2}$, we have 
\begin{align*}
 & \mathbb{P}_{N}^{eq}\left[\left|\left\{ i:\sup_{\substack{0\le s,\,t\le T,\,|s-t|\le\delta}
}\left|\widetilde{A}_{i}^{N}(t)-\widetilde{A}_{i}^{N}(s)\right|\ge\epsilon\right\} \right|\ge N\alpha\right]\\
 & \le\mathbb{P}_{N}^{eq}\left[\sum_{i=1}^{N}\sup_{\substack{0\le s,\,t\le T,\,|s-t|\le\delta}
}\left|\widetilde{A}_{i}^{N}(t)-\widetilde{A}_{i}^{N}(s)\right|\ge N\alpha\epsilon\right]\\
 & \le\mathbb{P}_{N}^{eq}\left[\sum_{i=1}^{N}S_{T}\left(\widetilde{A}_{i}^{N}\right)\ge\frac{N\alpha\epsilon}{\sqrt[4]{\delta}\log\frac{1}{\delta}}\right]\\
 & \le\exp\left\{ -\frac{N\alpha\epsilon}{C_{2}\sqrt[4]{\delta}\log\frac{1}{\delta}}\right\} \mathbb{E}_{N}^{eq}\left[\exp\left\{ \frac{1}{C_{2}}\sum_{i=1}^{N}S_{T}\left(\widetilde{A}_{i}^{N}\right)\right\} \right]
\end{align*}
where $C_{2}$ is the constant from Lemma \ref{lem: GRR LEMMA}. Therefore
it suffices to show 
\begin{equation}
\limsup_{N\rightarrow\infty}\frac{1}{N}\log\mathbb{E}_{N}^{eq}\left[\exp\left\{ \frac{1}{C_{2}}\sum_{i=1}^{N}S_{T}\left(\widetilde{A}_{i}^{N}\right)\right\} \right]\le C\label{eq:CSY grr}
\end{equation}
where $C$ is a constant which does not depend on $\delta$. By Lemma
\ref{lem: GRR LEMMA}, 
\begin{align}
 & \mathbb{E}_{N}^{eq}\exp\left\{ \frac{1}{C_{2}}\sum_{i=1}^{N}S_{T}\left(\widetilde{A}_{i}^{N}\right)\right\} \label{eq: Tight_FIN}\\
 & \le\mathbb{E}_{N}^{eq}\left[e^{\frac{C_{1}}{C_{2}}N}\prod_{i=1}^{N}\int_{0}^{T}\int_{0}^{T}\exp\left\{ \frac{\left|\widetilde{A}_{i}^{N}(t)-\widetilde{A}_{i}^{N}(s)\right|}{\sqrt[4]{t-s}}\right\} dtds\right]\nonumber \\
 & =e^{\frac{C_{1}}{C_{2}}N}\int_{0}^{T}\cdots\int_{0}^{T}\mathbb{E}_{N}^{eq}\exp\left\{ \sum_{i=1}^{N}\frac{\left|\widetilde{A}_{i}^{N}(t_{i})-\widetilde{A}_{i}^{N}(s_{i})\right|}{\sqrt[4]{t_{i}-s_{i}}}\right\} dt_{1}ds_{1}\cdots dt_{N}ds_{N}\nonumber \\
 & \le e^{\frac{C_{1}}{C_{2}}N}\sum_{\mathfrak{e}_{i}=\pm1,\,\forall i}\int_{0}^{T}\cdots\int_{0}^{T}\mathbb{E}_{N}^{eq}\exp\left\{ \sum_{i=1}^{N}\mathfrak{e}_{i}\frac{\widetilde{A}_{i}^{N}(t_{i})-\widetilde{A}_{i}^{N}(s_{i})}{\sqrt[4]{t_{i}-s_{i}}}\right\} dt_{1}\cdots ds_{N}.\nonumber 
\end{align}
We will prove the following lemma in the next subsection. 
\begin{lem}
\label{lem: LLLEma}For any $\alpha_{i}$ and $0\le s_{i}<t_{i}\le T$,
\begin{equation}
\mathbb{E}_{N}^{eq}\left[\exp\left\{ \sum_{i=1}^{N}\alpha_{i}\left(\widetilde{A}_{i}^{N}(t_{i})-\widetilde{A}_{i}^{N}(s_{i})\right)\right\} \right]\le\exp\left\{ C\sum_{i=1}^{N}(\alpha_{i}^{2}+\alpha_{i}^{4})(t_{i}-s_{i})\right\} \label{eq: TIGHT_final object}
\end{equation}
where $C$ is a constant only depending on $T$. 
\end{lem}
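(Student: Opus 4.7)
The plan is to exploit the Tanaka-type identity (\ref{x_i as brown+local}) to rewrite, on the lifted state space,
\[
\tilde A_i^N(t_i) - \tilde A_i^N(s_i) = \bigl[x_i^N(t_i) - x_i^N(s_i)\bigr] - \bigl[\beta_i(t_i) - \beta_i(s_i)\bigr],
\]
and then apply Cauchy--Schwarz to decouple the two contributions. The Brownian factor $\mathbb{E}_N^{eq}\exp\{-2\sum_i \alpha_i(\beta_i(t_i) - \beta_i(s_i))\}$ is explicit: by the martingale representation (\ref{eq:martingale}) the $\beta_i$'s are independent standard Brownian motions, so independence across $i$ reduces the expectation to a product of Gaussian moment-generating functions equal to $\exp\{2\sum_i \alpha_i^2(t_i - s_i)\}$. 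This furnishes the $\alpha_i^2$ part of the claimed bound.

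For the particle factor $\mathbb{E}_N^{eq}\exp\{2\sum_i \alpha_i(x_i^N(t_i) - x_i^N(s_i))\}$ I would apply Tanaka's formula (\ref{eq:tanaka}) to a test function $F\in\bar C(G_N)$ designed so that $\mathfrak{U}_{ij}^\lambda F \equiv -(\alpha_i - \alpha_j)$ on each face $F_{ij}$. A natural ansatz is $F(x) = \tfrac{1}{2}\sum_{i\neq j}(\alpha_i - \alpha_j) h(x_j - x_i)$ with $h\colon[0,1]\to\mathbb{R}$ smooth satisfying $h'(0+)=h'(1-)=0$ and $h(1)-h(0)=-(\lambda N)^{-1}$; a short computation in the spirit of Lemma \ref{lem: GREEN THM} verifies the boundary identity. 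Combined with the reorganisation $\sum_{i\neq j}(\alpha_i - \alpha_j)dA_{ij}^N = \sum_k \alpha_k d\tilde A_k^N$, Tanaka's formula yields
\[
\sum_k \alpha_k \tilde A_k^N(t) = F(x^N(0)) - F(x^N(t)) + \tfrac{1}{2}\int_0^t \Delta F(x^N(\tau))\,d\tau + M_F(t),
\]
with $F$ and $\Delta F$ uniformly bounded by $O\bigl((\sum_k|\alpha_k|)/\lambda\bigr)$, so that these three pieces contribute absorbable, quadratic-in-$\alpha$ terms after one more Cauchy--Schwarz.

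The principal obstacle is the exponential moment of the martingale $M_F(t)$. From (\ref{eq:martingale}) its predictable quadratic variation has a continuous part $\int_0^t \sum_k(\nabla_k F)^2\,d\tau = O\bigl((\sum_k \alpha_k^2)t\bigr)$ and a jump part $\sum_{i\neq j}\lambda N(F_{ij} - F_{ji})^2 A_{ij}^N(t)$; the boundary relation $F_{ij}-F_{ji}=2(\alpha_i-\alpha_j)/(\lambda N)$ collapses this to $(\lambda N)^{-1}\sum_{i\neq j}(\alpha_i - \alpha_j)^2 A_{ij}^N(t)$. The inequality $\mathbb{E}\exp\{M_F(t)\}\le\sqrt{\mathbb{E}\exp\{2\langle M_F\rangle_t\}}$ then leaves us with the exponential moment of a weighted sum of local times, which is controlled by the total local time estimate (\ref{eq:Estimate of Total Local Time}) of Proposition \ref{prop: smal time regime-1}; this second exponentiation is what produces the extra sub-exponential $\alpha_i^4$ tail (the $(\alpha_i-\alpha_j)^2$ weights multiplying the local time exponentiate into another factor of $\alpha^2$). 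The distinct endpoints $(s_i, t_i)$ are handled iteratively by partitioning $[0, T]$ at the finitely many breakpoints and reapplying the argument on each subinterval via the strong Markov property.
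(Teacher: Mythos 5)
Your decomposition $\tilde A_i^N(t)-\tilde A_i^N(s)=[x_i^N(t)-x_i^N(s)]-[\beta_i(t)-\beta_i(s)]$ and the Cauchy--Schwarz split are fine, and the Gaussian factor is computed correctly. The quadratic-variation analysis (jump part $\sim\lambda^{-1}N^{-1}\sum_{i\neq j}(\alpha_i-\alpha_j)^2 A_{ij}^N$, then (\ref{eq:Estimate of Total Local Time}) feeding in an $\alpha^4$ tail) is also in the right spirit. The fatal problem is the test function.

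Your ansatz $F(x)=\tfrac12\sum_{k\neq l}(\alpha_k-\alpha_l)h(x_l-x_k)$ does \emph{not} satisfy $\mathfrak U_{ij}^\lambda F\equiv -(\alpha_i-\alpha_j)$ on $F_{ij}$. The jump contribution works out: $F_{ij}-F_{ji}=(\alpha_i-\alpha_j)\bigl(h(0)-h(1)\bigr)=(\alpha_i-\alpha_j)/(\lambda N)$, so $-\lambda N(F_{ij}-F_{ji})=-(\alpha_i-\alpha_j)$. But the gradient part does not vanish, even with $h'(0{+})=h'(1{-})=0$. A direct computation gives, on $F_{ij}$ (where $x_j=x_i$),
\[
D_{ij}F=-\frac{\alpha_i-\alpha_j}{2}\sum_{k\neq i,j}\bigl[h'(x_k-x_i)+h'(x_i-x_k)\bigr],
\]
which depends on the positions of all third particles $x_k$. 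The condition $h'(0{+})=h'(1{-})=0$ only kills the $(i,j)$ pair's contribution, not these third-particle terms; and making them vanish identically would require $h'(u)+h'(1-u)\equiv0$, which forces $\int_0^1 h'=0$, i.e. $h(1)=h(0)$, contradicting the needed jump $h(1)-h(0)=-(\lambda N)^{-1}$. When you feed this residual back into the Tanaka formula, you pick up an extra term $\sum_{i\neq j}\int_0^t D_{ij}F\,dA_{ij}^N$ which is an integral of a quantity of size $O(|\alpha_i-\alpha_j|)$ against the local times $A_{ij}^N$; its exponential moment is not controlled by any of the estimates you invoke (its mean cancels by exchangeability in equilibrium, but that is far from enough), and it is of the same order as the very quantity you are trying to bound, so the argument does not close.

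The paper's proof sidesteps this entirely. Rather than constructing a test function to annihilate the boundary operator, Lemma~\ref{lem: step1} uses the Feynman--Kac formula together with the variational characterisation of the top eigenvalue to show directly that
\[
\mathbb E_N^{eq}\exp\Bigl\{\sum_i\bigl[\alpha_i(\tilde A_i^N(t_i)-\tilde A_i^N(s_i))-\tfrac{\alpha_i^2}{\lambda}(A_i^N(t_i)-A_i^N(s_i))\bigr]\Bigr\}\le1,
\]
the cancellation coming from the elementary boundary inequality $x^2(a+b)+\tfrac12 y^2(\sqrt a-\sqrt b)^2\ge xy|a-b|$ applied to the Dirichlet form rather than from any pointwise identity. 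Then Lemma~\ref{lem: step2} (built on the Green-formula estimate of Lemma~\ref{lem: est}) controls $\mathbb E_N^{eq}\exp\{\sum_i\alpha_i^2 A_i^N(\cdot)\}$, and Cauchy--Schwarz combines the two. If you want to repair your route you would need either a genuine identity $\mathfrak U_{ij}^\lambda F=-(\alpha_i-\alpha_j)$ (which the above shows is unavailable with $F$ of this two-body form) or an independent control of the residual; the variational approach of the paper is the cleaner way through.
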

By assuming this lemma, we can bound (\ref{eq: Tight_FIN}) by
\begin{eqnarray*}
 &  & e^{\frac{C_{1}}{C_{2}}N}\sum_{\mathfrak{e}_{i}=\pm1\,\forall i}\int_{0}^{T}\int_{0}^{T}\cdots\int_{0}^{T}\exp\left\{ C\sum_{i=1}^{N}\left(\sqrt{t_{i}-s_{i}}+1\right)\right\} dt_{1}ds_{1}\cdots dt_{N}ds_{N}\\
 &  & \le e^{\frac{C_{1}}{C_{2}}N}2^{N}T^{2N}e^{C(\sqrt{T}+1)N}
\end{eqnarray*}
and hence (\ref{eq:CSY grr}) is proven.

\subsubsection{Proof of Lemma \ref{lem: LLLEma}}

The final step to prove Proposition \ref{prop: normal time reg-1}
is Lemma \ref{lem: LLLEma}. We prove this lemma by a series of estimates. 
\begin{lem}
\label{lem: step1}For any \textup{$\alpha_{i}$ and $0\le s_{i}<t_{i}\le T$,}
\begin{equation}
\mathbb{E}_{N}^{eq}\exp\left\{ \sum_{i=1}^{N}\left[\alpha_{i}\left(\widetilde{A}_{i}^{N}(t_{i})-\widetilde{A}_{i}^{N}(s_{i})\right)-\frac{\alpha_{i}^{2}}{\lambda}\left(A_{i}^{N}(t_{i})-A_{i}^{N}(s_{i})\right)\right]\right\} \le1.\label{eq: TGHT1}
\end{equation}
\end{lem}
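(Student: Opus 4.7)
The inequality is a Girsanov-type exponential supermartingale estimate in which $\widetilde{A}_{i}^{N}$ plays the role of a drift whose predictable quadratic variation is, up to constants, $A_{i}^{N}/\lambda$. My strategy is to build a non-negative Dol\'eans--Dade exponential out of the compensated Poisson martingales $M_{ij}^{N}=J_{ij}^{N}-\lambda N A_{ij}^{N}$, exploiting the fact, recalled in the martingale problem formulation, that the counting processes $\{J_{ab}^{N}\}_{a\ne b}$ are pairwise orthogonal.

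Aggregate, for each $i$, the purely-discontinuous martingale $M_{i}^{N}:=\sum_{j\ne i}(M_{ij}^{N}-M_{ji}^{N})$, which has $\pm 1$ jumps with predictable quadratic variation $\langle M_{i}^{N}\rangle_{t}=\lambda N^{2}A_{i}^{N}(t)$ and satisfies $\bar{J}_{i}^{N}(t):=\sum_{j\ne i}(J_{ij}^{N}(t)-J_{ji}^{N}(t))=M_{i}^{N}(t)+\lambda N\widetilde{A}_{i}^{N}(t)$. Combining the two standard Poisson exponential martingales for the orthogonal $\pm 1$ jump counting processes of $M_{i}^{N}$ shows that, for every real parameter $\theta_{i}$, the process $\exp\{\theta_{i}\bar{J}_{i}^{N}(t)-\lambda N\sinh(\theta_{i})\widetilde{A}_{i}^{N}(t)-(\cosh(\theta_{i})-1)\lambda N^{2}A_{i}^{N}(t)\}$ is a non-negative supermartingale with expectation at most $1$.

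Choose $\theta_{i}=-\operatorname{arcsinh}(\alpha_{i}/(\lambda N))$, so that $-\lambda N\sinh(\theta_{i})=\alpha_{i}$ exactly; the elementary bound $\sqrt{1+u^{2}}-1\le u^{2}/2$ then gives $(\cosh(\theta_{i})-1)\lambda N^{2}\le\alpha_{i}^{2}/(2\lambda)$, strictly below the target coefficient $\alpha_{i}^{2}/\lambda$. This turns the supermartingale bound into $\mathbb{E}_{N}^{eq}[\exp\{\alpha_{i}\widetilde{A}_{i}^{N}(t)-(\alpha_{i}^{2}/\lambda)A_{i}^{N}(t)\}\cdot\exp\{\theta_{i}\bar{J}_{i}^{N}(t)+\delta_{i}A_{i}^{N}(t)\}]\le 1$ with $\delta_{i}:=\alpha_{i}^{2}/\lambda-(\cosh(\theta_{i})-1)\lambda N^{2}\ge\alpha_{i}^{2}/(2\lambda)\ge 0$. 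I then eliminate the residual factor $\exp\{\theta_{i}\bar{J}_{i}^{N}(t)+\delta_{i}A_{i}^{N}(t)\}$ via a Cauchy--Schwarz step against the companion Dol\'eans--Dade exponential with parameters $-\theta_{i}$. For the full statement with distinct time windows $[s_{i},t_{i}]$ and the sum over $i$, I replace $\theta_{i}$ by predictable, piecewise-constant parameters $\theta_{ab}(s)$ on the atoms of the partition of $[0,T]$ generated by $\{s_{i},t_{i}\}_{i=1}^{N}$; pairwise orthogonality of $\{J_{ab}^{N}\}_{a\ne b}$ keeps the product Dol\'eans--Dade exponential a non-negative local martingale, and the above analysis runs in parallel over $i$.

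Main obstacle: the residual $\exp\{\theta_{i}\bar{J}_{i}^{N}(t)+\delta_{i}A_{i}^{N}(t)\}$ cannot be absorbed pathwise, because $\bar{J}_{i}^{N}$ has unbounded typical fluctuations of order $\sqrt{\lambda N^{2}A_{i}^{N}(t)}$. Its removal via Cauchy--Schwarz relies crucially both on the pairwise orthogonality of the jump martingales \emph{and} on the factor-of-two slack between the stated rate $1/\lambda$ and the natural Poisson rate $1/(2\lambda)$; this slack is precisely the budget needed to close the bound at $1$.
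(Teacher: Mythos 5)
Your Dol\'eans--Dade algebra up to the supermartingale identity is sound: with $\theta_{i}=-\operatorname{arcsinh}(\alpha_{i}/(\lambda N))$ one indeed has $\mathbb{E}_{N}^{eq}[\mathcal{E}_{\theta_{i}}]\le 1$ for $\mathcal{E}_{\theta_{i}}:=\exp\{\theta_{i}\bar{J}_{i}^{N}+\alpha_{i}\widetilde{A}_{i}^{N}-\lambda N^{2}(\cosh\theta_{i}-1)A_{i}^{N}\}$, and $\lambda N^{2}(\cosh\theta_{i}-1)\le\alpha_{i}^{2}/(2\lambda)$. The gap is the final Cauchy--Schwarz step. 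Because $\sinh$ is odd, the $\widetilde{A}_{i}^{N}$ contributions in $\mathcal{E}_{\theta_{i}}$ and in the companion $\mathcal{E}_{-\theta_{i}}$ carry \emph{opposite} signs, so their Cauchy--Schwarz product cancels the $\widetilde{A}_{i}^{N}$ terms rather than the $\bar{J}_{i}^{N}$ terms and yields only the vacuous bound $\mathbb{E}_{N}^{eq}[\exp\{-\lambda N^{2}(\cosh\theta_{i}-1)A_{i}^{N}\}]\le 1$. If instead you split $\mathbb{E}[X_{i}]\le\mathbb{E}[X_{i}W_{i}]^{1/2}\mathbb{E}[X_{i}/W_{i}]^{1/2}$ with $X_{i}$ the target and $W_{i}=\exp\{\theta_{i}\bar{J}_{i}^{N}+\delta_{i}A_{i}^{N}\}$, then $X_{i}/W_{i}=\mathcal{E}_{-\theta_{i}}\exp\{2\alpha_{i}\widetilde{A}_{i}^{N}-2\delta_{i}A_{i}^{N}\}$, and the extra exponential is again unbounded, so nothing has been eliminated.

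There is a structural obstruction to repairing this inside the Dol\'eans--Dade framework. Write $a_{i}=\sum_{j\ne i}A_{ij}^{N}$ and $b_{i}=\sum_{j\ne i}A_{ji}^{N}$, so $\widetilde{A}_{i}^{N}=a_{i}-b_{i}$, $A_{i}^{N}=(a_{i}+b_{i})/N$. In any exponential supermartingale built from the pairwise-orthogonal counting processes, the coefficients on $a_{i}$ and $b_{i}$ are of the form $-(e^{\theta^{\pm}}-1)\lambda N$. For any geometric (H\"older) mixture $\prod_{k}\mathcal{E}_{k}^{p_{k}}$ with $\sum_{k}p_{k}=1$, $p_{k}>0$, whose $J_{ij}^{N}$-coefficients cancel (so $\sum_{k}p_{k}\theta_{k}^{\pm}=0$), Jensen applied to $e^{x}$ gives $\sum_{k}p_{k}(e^{\theta_{k}^{\pm}}-1)\ge 0$, hence both the $a_{i}$- and $b_{i}$-coefficients in the combined exponent are $\le 0$. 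But the target exponent is
\[
\alpha_{i}\widetilde{A}_{i}^{N}-\frac{\alpha_{i}^{2}}{\lambda}A_{i}^{N}=\Bigl(\alpha_{i}-\frac{\alpha_{i}^{2}}{\lambda N}\Bigr)a_{i}-\Bigl(\alpha_{i}+\frac{\alpha_{i}^{2}}{\lambda N}\Bigr)b_{i},
\]
whose $a_{i}$-coefficient is strictly positive throughout the regime $0<\alpha_{i}<\lambda N$, which is the only one that occurs in the application (the $\alpha_{i}$ are $O(1)$). Thus no Cauchy--Schwarz or H\"older mixture of Dol\'eans--Dade exponentials of the $J_{ij}^{N}$ can dominate the target, and the factor-of-two slack cannot be spent this way. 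The paper's proof is genuinely different: it is a Donsker--Varadhan Feynman--Kac bound that writes the exponent as $\int_{0}^{T}V(t,x^{N}(t))\,dt$ for a singular boundary potential $V$ and checks $\sup_{f\in\mathscr{P}_{N}\cap\mathcal{D}(\mathscr{L}_{N})}\{\int_{G_{N}}Vf\,dx-\mathcal{D}_{N}(f)\}\le 0$ via $x^{2}(a+b)+\tfrac{1}{2}y^{2}(\sqrt{a}-\sqrt{b})^{2}\ge xy|a-b|$ applied to the boundary traces $a=f_{ij}$, $b=f_{ji}$. This is, in effect, an exponential-martingale argument weighted by a state-dependent factor $e^{g(x^{N}(t))}$ (the Perron eigenfunction), and it is precisely that multiplicative correction that supplies the positive $a_{i}$-coefficient that the pure counting-process exponentials cannot produce.
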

\begin{proof}
Let us define $V(t,\,x)=\sum_{i=1}^{N}\mathds{1}_{[s_{i,}t_{i}]}(t)V_{i}(x)$
where
\[
V_{i}(x)=\sum_{j:j\neq i}\left[\alpha_{i}\left(\delta_{+}(x_{i}-x_{j})-\delta_{+}(x_{j}-x_{i})\right)-\frac{\alpha_{i}^{2}}{\lambda N}\left(\delta_{+}(x_{i}-x_{j})+\delta_{+}(x_{j}-x_{i})\right)\right].
\]
Note that we can rewrite (\ref{eq: TGHT1}) as
\begin{equation}
\mathbb{E}_{N}^{eq}\exp\left\{ \int_{s_{i}}^{t_{i}}V(t,\,x^{N}(t))dt\right\} \le1.\label{eq: okoo}
\end{equation}
Now, as in the proof of Proposition \ref{prop: eq_normal time regime},
we can obtain
\begin{align}
 & \mathbb{E}_{N}^{eq}\exp\left\{ \int_{s_{i}}^{t_{i}}V\left(t,\,x^{N}(t)\right)dt\right\} \label{eq:VAR}\\
 & \le\exp\left\{ \int_{s_{i}}^{t_{i}}\sup_{f\in\mathscr{P}_{N}\cap\mathcal{D}(\mathscr{L}_{N})}\left\{ \int_{G_{N}}V(t,\,x)f(x)dx-\mathcal{D}_{N}(f)\right\} \right\} \nonumber 
\end{align}
by Feynman-Kac's formula and the variational formula for the largest
eigenvalue of $\mathscr{L}_{N}+V$. Note that we can bound $\int_{G_{N}}V(t,\,x)f(x)dx-\mathcal{D}_{N}(f)$
by
\[
\sum_{i\neq j}\int_{F_{ij}}\left[\alpha_{i}\left|f_{ij}-f_{ji}\right|-\frac{\alpha_{i}^{2}}{\lambda N}(f_{ij}+f_{ji})-\frac{\lambda N}{2}\left(\sqrt{f_{ij}}-\sqrt{f_{ji}}\right)^{2}\right](x)dS_{ij}(x).
\]
It is not difficult to check the last expression is non-positive because
of the elementary inequality
\[
x^{2}(a+b)+\frac{1}{2}y^{2}\left(\sqrt{a}-\sqrt{b}\right)^{2}\ge xy\left|a-b\right|
\]
for $a,\,b\ge0.$ Thus, the RHS of (\ref{eq:VAR}) is bounded by $1$
and hence (\ref{eq: okoo}) holds.
\end{proof}
The next estimate is a stronger version of Lemma \ref{Lemma2}. 
\begin{lem}
\label{lem: est }For any $1\le i\le N$ and $f\in\mbox{\ensuremath{\mathscr{P}}}_{N}$,
we have 
\[
\sum_{j:j\neq i}\int_{F_{ij}}(f_{ij}(x)+f_{ji}(x))dS_{ij}(x)\le2N+\sqrt{32N\mathcal{D}_{N}(f)}.
\]
\end{lem}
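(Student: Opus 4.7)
The plan is to prove the lemma by a single application of Green's formula (\ref{eq:green}), in the spirit of Lemma \ref{Lemma2} but with a crucial change in the choice of test vector field. The naive single-$i$ analog (take $\mathbf{V}(x)=V_i(x)\,e_i$ with $V_i=\sum_{j\ne i}h(x_j-x_i)$, as in the proof of Lemma \ref{Lemma2}) fails here because $|V_i|=O(N)$ gives an $L^{2}(f\,dx)$-norm of size $O(N)$, producing only an $O(N\sqrt{\mathcal{D}_N(f)})$ error in Cauchy--Schwarz -- a factor of $\sqrt{N}$ worse than the target. The remedy is to redistribute the summands $h(x_j-x_i)$ across the other coordinate directions $e_j$, $j\ne i$, by choosing
\[
\mathbf{V}(x) \;:=\; -\sum_{j\ne i}h(x_j-x_i)\,e_j, \qquad h(x)\;:=\;x-\tfrac{1}{2}.
\]
This keeps $|\mathbf{V}(x)|^{2}=\sum_{j\ne i}h(x_j-x_i)^{2}\le (N-1)/4$ pointwise, so the $L^{2}(f\,dx)$-norm of $\mathbf{V}$ is only $O(\sqrt{N})$, precisely the scaling needed to produce $\sqrt{N\mathcal{D}_N(f)}$ in Cauchy--Schwarz. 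This is the main conceptual step and the only real obstacle in the proof.

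Next, I would compute the divergence $\nabla\cdot\mathbf{V}(x)=-\sum_{j\ne i}h'(x_j-x_i)=-(N-1)$, using $h'\equiv 1$ on $(0,1)$. For the boundary terms in (\ref{eq:green}), observe that on any face $F_{pq}$ with $p,q\ne i$, the equality $x_p=x_q$ forces $\langle\mathbf{V},e_p-e_q\rangle=h(x_q-x_i)-h(x_p-x_i)=0$, so such faces make no contribution. On the remaining faces, a direct evaluation gives $\langle\mathbf{V},e_i-e_q\rangle|_{F_{iq}}=h(0)=-\tfrac{1}{2}$ and $\langle\mathbf{V},e_q-e_i\rangle|_{F_{qi}}=-h(1)=-\tfrac{1}{2}$, both independent of the remaining coordinates. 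Plugging into (\ref{eq:green}) and using $\int_{G_N}f\,dx=1$ then yields the clean identity
\[
\int_{G_N}\langle\nabla f,\mathbf{V}\rangle\,dx \;=\; (N-1)\;-\;\tfrac{1}{2}\sum_{j\ne i}\int_{F_{ij}}(f_{ij}+f_{ji})\,dS_{ij}.
\]

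To conclude, Cauchy--Schwarz together with $\int|\nabla f|^{2}/f\,dx\le 8\mathcal{D}_N(f)$ (read off from (\ref{eq: ROOT DIRICHLET FORM})) gives
\[
\Bigl|\int_{G_N}\langle\nabla f,\mathbf{V}\rangle\,dx\Bigr| \;\le\; \Bigl(\!\int |\mathbf{V}|^{2}f\,dx\!\Bigr)^{\!1/2}\Bigl(\!\int\tfrac{|\nabla f|^{2}}{f}\,dx\!\Bigr)^{\!1/2} \;\le\; \sqrt{2(N-1)\mathcal{D}_N(f)}.
\]
Solving the identity for the face-integral sum yields $\sum_{j\ne i}\int_{F_{ij}}(f_{ij}+f_{ji})\,dS_{ij}\le 2(N-1)+2\sqrt{2(N-1)\mathcal{D}_N(f)}$, which is dominated by $2N+\sqrt{32N\mathcal{D}_N(f)}$ as required (with a factor of two to spare in both terms). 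Notably, no self-referential quadratic inequality in the face-integral sum is needed: the $(f_{ij}-f_{ji})$ jump contributions that would otherwise require absorption via the jump part of the Dirichlet form are killed at the source by the cancellation $\langle\mathbf{V},e_p-e_q\rangle|_{F_{pq}}=0$ for $p,q\ne i$.
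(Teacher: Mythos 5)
Your proof is correct, and it takes a genuinely different and cleaner route than the paper. The paper constructs the piecewise-constant, divergence-free vector field $\mathbf{C}_{i}(x)=\sum_{k}c_{k}^{(i)}(x)e_{k}$ built from the counting functions $\sigma_{k}^{(i)}$; on the faces $F_{pq}$ with $p,q\neq i$ this field contributes $\langle\mathbf{C}_{i},e_{p}-e_{q}\rangle=-1$, producing a residual term $\frac{1}{N}\sum_{u\neq v}\int_{F_{uv}}(f_{uv}+f_{vu})\,dS_{uv}$ that must then be absorbed via Lemma \ref{Lemma2}, while the bulk term $\sum_{k}\int|\nabla_{k}f|\,c_{k}^{(i)}$ is handled by Lemma \ref{Lemma1} using the crude bound $|c_{k}^{(i)}|<N/2$. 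Your choice $\mathbf{V}=-\sum_{j\neq i}h(x_{j}-x_{i})\,e_{j}$ instead kills the contributions of the faces $F_{pq}$ with $p,q\neq i$ at the source (since $h(x_{p}-x_{i})=h(x_{q}-x_{i})$ there), so no appeal to Lemma \ref{Lemma2} is needed; and because $|\mathbf{V}|^{2}\le(N-1)/4$ pointwise, a single Cauchy--Schwarz against $\int|\nabla f|^{2}/f\le 8\mathcal{D}_{N}(f)$ finishes the job, in fact with a slightly better constant ($2(N-1)+\sqrt{8(N-1)\mathcal{D}_{N}(f)}$) than the stated $2N+\sqrt{32N\mathcal{D}_{N}(f)}$. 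The only point worth noting explicitly is that $h$, regarded as a function on $\mathbb{T}$, jumps precisely on $\{x_{j}=x_{i}\}$, so each component $V_{j}=-h(x_{j}-x_{i})$ does lie in $\bar{C}(G_{N})$ and the one-sided traces feeding into \eqref{eq:green} are the ones you computed; you handled this correctly, getting $-1/2$ on both $F_{ij}$ and $F_{ji}$. Your approach buys a shorter, self-contained argument at the cost of a vector field whose choice is less obviously motivated than the paper's counting-function construction, which generalizes more transparently to related identities.
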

\begin{proof}
We introduce a function $\sigma_{k}^{(i)}(x)$ on $G_{N}$ for $k\neq i$
by
\[
\sigma_{k}^{(i)}(x)=\sum_{j=1}^{N}1_{[0,\,x_{k}-x_{i}]}(x_{j}-x_{i})
\]
which counts the number of particles between $x_{i}$ and $x_{k}$
in the clockwise sense. We remark here that this function also appeared
in \cite{G1} to estimate $A_{i}^{N}(t)$. We normalize $\sigma_{k}^{(i)}(x)$
by $c_{k}^{(i)}(x)=\sigma_{k}^{(i)}(x)-\frac{N+2}{2}$ and set $c_{i}^{(i)}(x)=0$
for simplicity. We can define a piecewise constant vector field $\mathbf{C}_{i}(x)=\sum_{k=1}^{N}c_{k}^{(i)}(x)e_{k}$
and apply Green's formula (\ref{eq:green}). First note that $\nabla\cdot\mathbf{\mathbf{C}}_{i}=0$
and thus we only need to concern about boundary terms. On $F_{kl}$
with $k,\,l\neq i$, we have 
\[
\left\langle \mathbf{C}_{i}(x),\,e_{k}-e_{l}\right\rangle =c_{k}^{(i)}(x)-c_{l}^{(i)}(x)=-1
\]
because $x_{l}=x_{k}+0$ on $F_{kl}$ and hence $\sigma_{l}^{(i)}(x)=\sigma_{k}^{(i)}(x)+1$.
For the boundary $F_{ji}$,
\[
\left\langle \mathbf{C}_{i}(x),\,e_{j}-e_{i}\right\rangle =c_{j}^{(i)}(x)-c_{i}^{(i)}(x)=\frac{N-2}{2}
\]
because $\sigma_{j}^{(i)}(x)=N$ on $F_{ji}$ and $c_{i}^{(i)}(x)=0$.
Similarly, on $F_{ij}$, 
\[
\left\langle \mathbf{C}_{i}(x),\,e_{i}-e_{j}\right\rangle =c_{i}^{(i)}(x)-c_{j}^{(i)}(x)=\frac{N-2}{2}.
\]
We now apply Green's formula with the vector field $\mathbf{C}_{i}(x)$:
\begin{align*}
\sum_{k=1}^{N} & \int_{G_{N}}\left[\nabla_{k}f(x)\right]c_{k}(x)dx\\
= & -\sum_{\substack{k,\,l:k\neq l,\,k,l\neq i}
}\int_{F_{kl}}f_{kl}(x)dS_{kl}(x)\\
 & +\frac{N-2}{2}\sum_{j:j\neq i}\left[\int_{F_{ij}}f_{ij}(x)dS_{ij}(x)+\int_{F_{ij}}f_{ji}(x)dS_{ji}(x)\right]\\
= & -\frac{1}{2}\sum_{u\neq v}\int_{F_{uv}}(f_{uv}+f_{vu})(x)dS_{uv}(x)+\frac{N}{2}\sum_{j:j\neq i}\int_{F_{ij}}(f_{ij}+f_{ji})(x)dS_{ij}(x).
\end{align*}
Since we have bound $|c_{k}(x)|\le\frac{N-2}{2}<\frac{N}{2}$, we
can derive 
\begin{align*}
 & \sum_{j:j\neq i}\int_{F_{ij}}(f_{ij}+f_{ji})(x)dS_{ij}(x)\\
 & \le\frac{1}{N}\sum_{u\neq v}\int_{F_{uv}}(f_{uv}+f_{vu})(x)dS_{uv}(x)+\sum_{k=1}^{N}\int_{G_{N}}\left|\nabla_{k}f(x)\right|dx.
\end{align*}
Note here that the RHS is bounded by $2N+2\sqrt{8N\mathcal{D}_{N}(f)}$
due to Lemmas \ref{Lemma1} and \ref{Lemma2}\end{proof}
\begin{lem}
\label{lem: step2}For any \textup{$\alpha_{i}\ge0$ }and $0\le s_{i}<t_{i}\le T$
\[
\mathbb{E}_{N}^{eq}\exp\left\{ \sum_{i=1}^{N}\alpha_{i}\left(A_{i}^{N}(t_{i})-A_{i}^{N}(s_{i})\right)\right\} \le\exp\left\{ \sum_{i=1}^{N}8(\alpha_{i}+\alpha_{i}^{2})(t_{i}-s_{i})\right\} .
\]
\end{lem}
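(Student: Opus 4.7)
The plan is to mirror the variational argument used in Lemma~\ref{lem: step1}. Since $A_{i}^{N}(t)=\tfrac{1}{N}\int_{0}^{t}\sum_{j\neq i}[\delta_{+}(x_{j}-x_{i})+\delta_{+}(x_{i}-x_{j})](x^{N}(s))\,ds$, I would introduce
\[
V_{i}(x)=\frac{1}{N}\sum_{j\neq i}\bigl[\delta_{+}(x_{j}-x_{i})+\delta_{+}(x_{i}-x_{j})\bigr],\qquad V(t,x)=\sum_{i=1}^{N}\alpha_{i}\mathbf{1}_{[s_{i},t_{i}]}(t)\,V_{i}(x),
\]
so that the exponent in the statement equals $\int_{0}^{T}V(t,x^{N}(t))\,dt$. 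By Feynman--Kac together with the variational characterization of the maximal eigenvalue of $\mathscr{L}_{N}+V(t,\cdot)$, exactly as in the proof of Proposition~\ref{prop: eq_normal time regime}, the expectation is bounded by $\exp\{\int_{0}^{T}\lambda(t)\,dt\}$, where
\[
\lambda(t)=\sup_{f\in\mathscr{P}_{N}\cap\mathcal{D}(\mathscr{L}_{N})}\Bigl\{\int_{G_{N}}V(t,x)f(x)\,dx-\mathcal{D}_{N}(f)\Bigr\}.
\]

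The next step is to convert $\int V(t,x)f(x)\,dx$ into boundary integrals. Since each half-delta picks up half of the corresponding face ($\int\delta_{+}(x_{j}-x_{i})f\,dx=\tfrac{1}{2}\int_{F_{ij}}f_{ij}\,dS_{ij}$, and similarly for the opposite side), one obtains
\[
\int_{G_{N}}V(t,x)f(x)\,dx=\sum_{i=1}^{N}\alpha_{i}\mathbf{1}_{[s_{i},t_{i}]}(t)\cdot\frac{1}{2N}\sum_{j\neq i}\int_{F_{ij}}(f_{ij}+f_{ji})\,dS_{ij}.
\]
The decisive input is now Lemma~\ref{lem: est}, whose bound $\sum_{j\neq i}\int_{F_{ij}}(f_{ij}+f_{ji})\,dS_{ij}\le 2N+\sqrt{32N\mathcal{D}_{N}(f)}$ is uniform in~$i$. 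Writing $\beta(t):=\sum_{i}\alpha_{i}\mathbf{1}_{[s_{i},t_{i}]}(t)$, this yields $\int Vf\,dx\le\beta(t)\bigl(1+\sqrt{8\mathcal{D}_{N}(f)/N}\bigr)$, and maximizing the ensuing quadratic in $y=\sqrt{\mathcal{D}_{N}(f)}$ gives
\[
\lambda(t)\le\beta(t)+\frac{2\beta(t)^{2}}{N}.
\]

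To conclude, I would integrate in $t$ and apply Cauchy--Schwarz in the index~$i$ to split the cross terms in $\beta(t)^{2}$:
\[
\int_{0}^{T}\beta(t)^{2}\,dt\le\int_{0}^{T}\Bigl(\sum_{i}\mathbf{1}_{[s_{i},t_{i}]}(t)\Bigr)\Bigl(\sum_{i}\alpha_{i}^{2}\mathbf{1}_{[s_{i},t_{i}]}(t)\Bigr)dt\le N\sum_{i}\alpha_{i}^{2}(t_{i}-s_{i}).
\]
Together with $\int_{0}^{T}\beta(t)\,dt=\sum_{i}\alpha_{i}(t_{i}-s_{i})$, this produces $\int_{0}^{T}\lambda(t)\,dt\le\sum_{i}\alpha_{i}(t_{i}-s_{i})+2\sum_{i}\alpha_{i}^{2}(t_{i}-s_{i})$, which is dominated by $8\sum_{i}(\alpha_{i}+\alpha_{i}^{2})(t_{i}-s_{i})$, as desired.

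The main obstacle is precisely this additive-in-$i$ character of the bound. Had one relied on the cruder Lemma~\ref{Lemma2}, which controls only the \emph{total} sum $\sum_{i\neq j}\int_{F_{ij}}$, the quadratic correction to $\lambda(t)$ would involve $(\sum_{i}\alpha_{i})^{2}/N$ instead of $\sum_{i}\alpha_{i}^{2}$, and this is too weak already when $\alpha_{i}\equiv\alpha$. The per-particle sharpening in Lemma~\ref{lem: est}, obtained via the piecewise constant vector field built from the counters $\sigma_{k}^{(i)}$, is exactly what allows the variational bound to factor through $\beta(t)$ uniformly in $i$ and then be recombined into the $\sum_{i}\alpha_{i}^{2}(t_{i}-s_{i})$ form demanded by the lemma.
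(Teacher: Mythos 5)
Your proposal is correct, and it does take a genuinely different route from the paper's, though the crucial technical input (Lemma~\ref{lem: est}) is the same. The paper reduces the lemma to a single index at the outset --- implicitly via H\"older's inequality with exponents $N$, bounding $\mathbb{E}_N^{eq}\exp\{\sum_i X_i\}\le\prod_i(\mathbb{E}_N^{eq}\exp\{NX_i\})^{1/N}$ --- and then applies Feynman--Kac plus Lemma~\ref{lem: est} to the one-particle functional $N\alpha_i(A_i^N(t_i)-A_i^N(s_i))=\int_{s_i}^{t_i}V_i\,dt$. You instead keep all indices together in the time-dependent potential, run the variational bound once, use the uniformity in $i$ of Lemma~\ref{lem: est} to factor the linear term through $\beta(t)=\sum_i\alpha_i\mathbf{1}_{[s_i,t_i]}(t)$, and only at the end separate the indices by Cauchy--Schwarz on $\int_0^T\beta(t)^2\,dt$. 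The two schemes are morally dual: the paper pays the factor $N$ up front (through the H\"older exponent applied to the intensity $\alpha_i\mapsto N\alpha_i$), you pay it at the end (through the bound $\beta(t)^2\le N\sum_i\alpha_i^2\mathbf{1}_{[s_i,t_i]}(t)$); both recombine into the additive form $\sum_i(\alpha_i+\alpha_i^2)(t_i-s_i)$ precisely because Lemma~\ref{lem: est} gives a per-particle boundary estimate uniform in $i$, as you correctly emphasize. Your route also yields the slightly sharper constant $1+2$ in place of $2+8$, though both are subsumed by the stated~$8$. Two minor points: the reduction ``it suffices to prove (\ref{eq:obtv})'' in the paper leaves the H\"older step implicit, which your version makes unnecessary; and your computation $\lambda(t)\le\beta(t)+2\beta(t)^2/N$ from maximizing $\beta(1+\sqrt{8/N}\,y)-y^2$ over $y=\sqrt{\mathcal{D}_N(f)}$ is correct as stated.
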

\begin{proof}
It suffices to prove 
\begin{equation}
\mathbb{E}_{N}^{eq}\exp\left\{ \int_{s_{i}}^{t_{i}}V_{i}(x^{N}(t))dt\right\} \le\exp\left\{ 8N(\alpha+\alpha^{2})(t_{i}-s_{i})\right\} \label{eq:obtv}
\end{equation}
for any $i$ and $\alpha\ge0$, where $V_{i}(x)=\alpha\sum_{j:j\neq i}\left[\delta_{+}(x_{i}-x_{j})+\delta_{+}(x_{j}-x_{i})\right].$
Note that the LHS of (\ref{eq:obtv}) is bounded above by
\[
\exp\left\{ (t_{i}-s_{i})\sup_{f\in\mathscr{P}_{N}\cap\mathscr{D}(\mathscr{L}_{N})}\left\{ \alpha\sum_{j:j\neq i}\int_{F_{ij}}(f_{ij}(x)+f_{ji}(x))dS_{ij}(x)-\mathcal{D}_{N}(f)\right\} \right\} 
\]
as before. Finally, by Lemma \ref{lem: est },
\begin{align*}
 & \alpha\sum_{j:j\neq i}\int_{F_{ij}}(f_{ij}(x)+f_{ji}(x))dS_{ij}(x)-\mathcal{D}_{N}(f)\\
 & \le\alpha\left(2N+\sqrt{32N\mathcal{D}_{N}(f)}\right)-\mathcal{D}_{N}(f)\\
 & \le8N(\alpha+\alpha^{2})
\end{align*}
and we are done. 
\end{proof}
Consequently, we are able to prove Lemma \ref{lem: LLLEma} by Lemmas
\ref{lem: step1}, \ref{lem: step2} and Cauchy-Schwarz's inequality.

\subsection{\label{sub:Small-Time-Regime}Small Time Regime}

In this subsection, we provide a detailed proof of Proposition \ref{prop:(Small-time-regime)}.
The small time regime differs from the normal time regime in that
$\tilde{A}_{i}^{N}(t)$ on $t\in[0,\,\eta]$ cannot be properly controlled
if the process does not start from the neighborhood of the equilibrium.
Therefore, it is not possible to work directly with $x_{i}^{N}(\cdot)$
as in the normal time regime. Instead, we introduce an intermediate
process $z_{i}^{N}(\cdot)$ where
\begin{equation}
z_{i}^{N}(t)=x_{i}^{N}(t)+\frac{1}{N(\lambda+1)}\sum_{j:j\neq i}\nu(x_{j}^{N}(t)-x_{i}^{N}(t))\,\,;\,\,1\le i\le N\label{eq:z(t)}
\end{equation}
where $\nu(x)=x$ on $[0,\,1].$ These adjusted processes were introduced
in \cite{G1} and were turned out to be martingales with respect to
the same filtration with $x^{N}(t)$. More precisely, we can prove
that 
\begin{equation}
z_{i}^{N}(t)-z_{i}^{N}(0)=\tilde{\beta}_{i}^{N}(t)+\frac{1}{(\lambda+1)N}\widetilde{M}_{i}^{N}(t)\,\,;\,\,1\le i\le N\label{eq:zzz}
\end{equation}
where
\begin{align*}
\tilde{\beta}_{i}^{N}(t) & =\frac{N\lambda+1}{N(\lambda+1)}\beta_{i}(t)+\frac{1}{N(\lambda+1)}\sum_{k:k\neq i}\beta_{k}(t)\\
\widetilde{M}_{i}^{N}(t) & =\sum_{k:k\neq i}(M_{ki}^{N}(t)-M_{ik}^{N}(t)).
\end{align*}
For the details, see Proposition 2 of \cite{G1}.

We first develop the exponential tightness of $\left\{ z_{i}^{N}(t)\right\} _{i=1}^{N}$
as an intermediate step, which can be formulated as follows. 
\begin{prop}
\label{prop: tightness of z_i}For any $\epsilon,\,\alpha>0$, 
\[
\limsup_{\eta\rightarrow0}\limsup_{N\rightarrow\infty}\frac{1}{N}\log\mathbb{P}_{N}\left[\left|\left\{ i:\sup_{\substack{0\le t\le\eta}
}\left|z_{i}^{N}(t)-z_{i}^{N}(0)\right|\ge\epsilon\right\} \right|\ge N\alpha\right]=-\infty.
\]
\end{prop}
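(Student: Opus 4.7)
The plan is to exploit the martingale decomposition (\ref{eq:zzz}), writing $z_i^N(t)-z_i^N(0)=\tilde{\beta}_i^N(t)+\frac{1}{(\lambda+1)N}\widetilde{M}_i^N(t)$, and to bound the number of particles for which each component has large oscillation separately. By the triangle inequality, it suffices to prove the analogous super-exponential bound with $z_i^N(\cdot)-z_i^N(0)$ replaced by each of the two summands and $\epsilon$ replaced by $\epsilon/2$.

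For the Brownian part I use the identity $\tilde{\beta}_i^N(t)=\frac{\lambda}{\lambda+1}\beta_i(t)+\frac{1}{\lambda+1}\bar{\beta}_N(t)$ with $\bar{\beta}_N(t)=N^{-1}\sum_k\beta_k(t)$, a Brownian motion of variance $t/N$. A standard Gaussian estimate shows that $\mathbb{P}[\sup_{t\le\eta}|\bar{\beta}_N(t)|\ge\epsilon/4]$ is super-exponentially small in $N$; on its complement the condition $\sup_{t\le\eta}|\tilde{\beta}_i^N(t)|\ge\epsilon/2$ forces $\sup_{t\le\eta}|\beta_i(t)|\ge\epsilon'$ for a fixed $\epsilon'>0$. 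Since the $\beta_i$'s are genuinely independent Brownian motions, the resulting indicators are i.i.d.\ Bernoulli with parameter $p(\eta)\to 0$ as $\eta\to 0$, and Cram\'{e}r's theorem delivers super-exponential decay in $N$ at rate diverging as $\eta\to 0$.

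For the jump-martingale part I apply the Dol\'{e}ans exponential. Since $\widetilde{M}_i^N$ is a pure-jump martingale with unit jumps and predictable quadratic variation $\lambda N^2 A_i^N(t)$, the process
\[
\exp\!\bigl(\theta\,\widetilde{M}_i^N(t)-\psi(\theta)\lambda N^2 A_i^N(t)\bigr),\qquad \psi(\theta)\le C\theta^2\text{ for }|\theta|\le 1,
\]
is a supermartingale. Optional stopping at $\tau=\inf\{t:|\widetilde{M}_i^N(t)|\ge aN\text{ or }\lambda N^2A_i^N(t)\ge B\}$ and optimising in $\theta$ yields for each $i$ the single-particle bound
\[
\mathbb{P}\bigl[\sup_{t\le\eta}|\widetilde{M}_i^N(t)|\ge aN,\;A_i^N(\eta)\le\delta\bigr]\le 2\exp(-a^2/(4C\lambda\delta)).
\]
Separately, Proposition~\ref{prop: smal time regime-1} together with Chebyshev produces $\mathbb{P}[A^N(\eta)\ge K\sqrt\eta]\le e^{-NL}$ for any prescribed $L$, provided $K$ is large and $\eta$ is small; on the complementary good event, the pigeonhole inequality $|\{i:A_i^N(\eta)>\delta\}|\le NA^N(\eta)/\delta$ with $\delta=4K\sqrt\eta/\alpha$ confines the high-local-time particles to a set of size less than $N\alpha/4$.

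The main obstacle is promoting the conditional single-particle bound (which is $O(1)$ in $N$) into a super-exponential-in-$N$ bound on the count of ``good'' particles whose jump-martingale component exceeds $\epsilon/2$. Since the martingales $\widetilde{M}_i^N$ share the Poisson processes $M_{ki}^N$ and $M_{ik}^N$ pairwise, they are not conditionally independent, so a naive Chernoff bound is unavailable. The substitute I envisage is an exponential Chebyshev estimate on $\sum_i\mathbf{1}_{E_i}$ that exploits the smallness of the pairwise cross-covariations $\langle\widetilde{M}_i^N,\widetilde{M}_j^N\rangle=-\lambda N(A_{ij}^N+A_{ji}^N)$ together with the aggregate identity $\sum_i\langle\widetilde{M}_i^N\rangle=\lambda N^3A^N(t)$ that is controlled by Proposition~\ref{prop: smal time regime-1}. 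Since the single-particle probability itself decays as $\exp(-c/\sqrt\eta)$ for the above choice of $\delta$, a Cram\'{e}r-type estimate on a sum of weakly dependent Bernoullis with parameter tending to zero should yield the required rate diverging as $\eta\to 0$.
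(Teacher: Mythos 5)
Your decomposition via \eqref{eq:zzz} and the overall plan (separate the Brownian part from the jump-martingale part, and control the latter via Proposition~\ref{prop: smal time regime-1} applied to $A^N(\eta)$) coincide with the paper's. The Brownian estimate, while more elaborate than necessary, is fine; the paper simply observes that $\{i:\sup_{t\le\eta}|\tilde{\beta}_i^N(t)|\ge\epsilon\}$ being large forces $\sum_i\sup_{t\le\eta}|\beta_i(t)|\ge N\alpha\epsilon$ and uses independence of the $\beta_i$.

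The jump part, however, has a genuine gap, and it is exactly the one you flag yourself. You derive a single-particle tail bound for $\widetilde{M}_i^N$ conditioned on $A_i^N(\eta)\le\delta$, then want to convert it into a bound on the \emph{count} $|\{i:\sup_{t\le\eta}|\widetilde{M}_i^N(t)|\ge N\epsilon/2\}|$, and you correctly observe that the $\widetilde{M}_i^N$ are not conditionally independent (they share the processes $M_{ik}^N$ and $M_{ki}^N$). But ``a Cram\'er-type estimate on a sum of weakly dependent Bernoullis'' is an aspiration, not an argument: without an actual exponential moment bound on $\sum_i\mathbf{1}_{E_i}$, nothing has been proved, and the single-particle bound by itself only yields something like a first-moment/Markov estimate on the count, which is far from super-exponential in $N$.

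The paper circumvents this difficulty entirely, and the mechanism is worth noting because it is not a dependence estimate at all. Instead of bounding each $\widetilde{M}_i^N$ first and summing, the paper writes the indicator directly as
\[
\zeta_i^+(\eta)\le\frac{1}{N\epsilon}\left|\int_0^\eta\zeta_i^-(s)\,d\widetilde{M}_i^N(s)\right|,
\]
feeds $a\sum_i\zeta_i^+(\eta)$ into an exponential Chebyshev bound, and then, after introducing signs $\mathfrak{e}_i=\pm1$, \emph{re-indexes the resulting stochastic integral from particles to ordered pairs}:
\[
\sum_i\int_0^\eta\mathfrak{e}_i\zeta_i^-(s)\,d\widetilde{M}_i^N(s)=\sum_{i\neq k}\int_0^\eta\frac{u_{ik}(s)}{1}\,dM_{ik}^N(s),\qquad u_{ik}=\mathfrak{e}_i\zeta_i^--\mathfrak{e}_k\zeta_k^-.
\]
Because the $M_{ik}^N$ are pairwise orthogonal compensated Poisson processes, the exponential martingale for the pair-indexed sum factors cleanly, with no cross-covariance terms to estimate; the $|u_{ik}|\le2$ bound then reduces everything to $\mathbb{E}_N\exp\{CNA^N(\eta)/\epsilon^2\}$, which Proposition~\ref{prop: smal time regime-1} controls. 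In short: the dependence that blocks your approach is dissolved by choosing the right indexing of the martingale sum before exponentiating, not by estimating correlations after the fact. To repair your proof you would need to supply precisely this re-indexing step (or an equivalent device); as written the argument does not close.
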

\begin{proof}
By (\ref{eq:zzz}), it is enough to show that 
\begin{align}
\limsup_{\eta\rightarrow0}\limsup_{N\rightarrow\infty}\frac{1}{N}\log\mathbb{P}_{N}\left[\left|\left\{ i:\sup_{\substack{0\le t\le\eta}
}\left|\tilde{\beta}_{i}^{N}(t)\right|\ge\epsilon\right\} \right|\ge N\alpha\right] & =-\infty\label{eq:brw pt}\\
\limsup_{\eta\rightarrow0}\limsup_{N\rightarrow\infty}\frac{1}{N}\log\mathbb{P}_{N}\left[\left|\left\{ i:\sup_{\substack{0\le t\le\eta}
}\left|\tilde{M}_{i}^{N}(t)\right|\ge N\epsilon\right\} \right|\ge N\alpha\right] & =-\infty\label{eq:jump pt}
\end{align}
respectively. Let us first consider the (\ref{eq:brw pt}). We can
easily bound this as 
\[
\mathbb{P}_{N}\left[\left|\left\{ i:\sup_{\substack{0\le t\le\eta}
}\left|\tilde{\beta}_{i}^{N}(t)\right|\ge\epsilon\right\} \right|\ge N\alpha\right]\le\mathbb{P}_{N}\left[\sum_{i=1}^{N}\sup_{\substack{0\le t\le\eta}
}\left|\beta_{i}(t)\right|\ge N\alpha\epsilon\right].
\]
Since $\left\{ \beta_{i}(t)\right\} _{i=1}^{N}$ are independent Brownian
motions, it is easy to check that this probability is super-exponentially
small.

The next step is (\ref{eq:jump pt}). We define two adapted processes
\[
\zeta{}_{i}^{+}(s)=\mathds{1}_{\sup_{0\le t\le s}\left|\widetilde{M}_{i}^{N}(t)\right|\ge N\epsilon}\,\,\,\mbox{and}\,\,\,\zeta_{i}^{-}(s)=\mathds{1}_{\sup_{0\le t\le s}\left|\widetilde{M}_{i}^{N}(t)\right|<N\epsilon}
\]
and then we can rewrite (\ref{eq:jump pt}) as 
\begin{equation}
\limsup_{\eta\rightarrow0}\limsup_{N\rightarrow\infty}\frac{1}{N}\log\mathbb{P}_{N}\left[\sum_{i=1}^{N}\zeta{}_{i}^{+}(\eta)\ge N\alpha\right]=-\infty.\label{eq:jump pt 2}
\end{equation}
By Chebyshev's inequality it suffices to show that 
\begin{equation}
\limsup_{\eta\rightarrow0}\limsup_{N\rightarrow\infty}\frac{1}{N}\log\mathbb{E}_{N}\exp\left\{ a\sum_{i=1}^{N}\zeta{}_{i}^{+}(\eta)\right\} \le C\label{eq:hsr2}
\end{equation}
for all $a\ge0$ where $C$ is a constant does not depend on $a$.
We shall prove (\ref{eq:hsr2}) with $C=\log2$. 

First we prove 
\begin{equation}
\zeta{}_{i}^{+}(\eta)\le\frac{1}{N\epsilon}\left|\int_{0}^{\eta}\zeta_{i}^{-}(s)d\widetilde{M}_{i}^{N}(s)\right|.\label{eq: hsr1}
\end{equation}
To see this, we only need to concern about the case where $\zeta{}_{i}^{+}(\eta)=1$.
Define 
\[
\eta_{0}=\inf\left\{ t:\left|\widetilde{M}_{i}^{N}(t)\right|\ge N\epsilon\right\} 
\]
and then $\eta_{0}\le\eta$. Thus, 
\[
\left|\int_{0}^{\eta}\zeta_{i}^{-}(s)d\widetilde{M}_{i}^{N}(s)\right|=\left|\int_{0}^{\eta_{0}}1\cdot d\widetilde{M}_{i}^{N}(s)\right|=\left|\widetilde{M}_{i}^{N}(\eta_{0})\right|\ge N\epsilon
\]
due to the right-continuity of the jump process and we proved (\ref{eq: hsr1}).
We return now to (\ref{eq:hsr2}). By (\ref{eq: hsr1}), 
\begin{align}
\mathbb{E}_{N}\exp\left\{ a\sum_{i=1}^{N}\zeta{}_{i}^{+}(\eta)\right\}  & \le\mathbb{E}_{N}\exp\left\{ \frac{a}{N\epsilon}\sum_{i=1}^{N}\left|\int_{0}^{\eta}\zeta_{i}^{-}(s)d\widetilde{M}_{i}^{N}(s)\right|\right\} \nonumber \\
 & \le\sum_{\substack{\mathfrak{e}_{i}=\pm1,\,\forall i}
}\mathbb{E}_{N}\exp\left\{ \frac{a}{N\epsilon}\sum_{i=1}^{N}\int_{0}^{\eta}\mathfrak{e}_{i}\zeta_{i}^{-}(s)d\widetilde{M}_{i}^{N}(s)\right\} .\label{eq: hsr3}
\end{align}
Note that we can rearrange each summand in (\ref{eq: hsr3}) in a
way that 
\begin{align}
 & \mathbb{E}_{N}\exp\left\{ \frac{a}{N\epsilon}\sum_{i=1}^{N}\int_{0}^{\eta}\mathfrak{e}_{i}\zeta_{i}^{-}(s)d\widetilde{M}_{i}^{N}(s)\right\} \nonumber \\
 & =\mathbb{E}_{N}\exp\left\{ \frac{a}{N\epsilon}\sum_{1\le i\neq k\le N}\int_{0}^{\eta}\mathfrak{e}_{i}\zeta_{i}^{-}(s)d[M_{ik}^{N}(s)-M_{ki}^{N}(s)]\right\} \nonumber \\
 & =\mathbb{E}_{N}\exp\left\{ \sum_{1\le i\neq k\le N}\int_{0}^{\eta}\frac{au_{ik}(s)}{N\epsilon}dM_{ik}^{N}(s)\right\} \label{eq:hsr5}
\end{align}
where $u_{ik}(s)=\mathfrak{e}_{i}\zeta_{i}^{-}(s)-\mathfrak{e}_{k}\zeta_{k}^{-}(s)$.
Then, since each $M_{ik}^{N}(t)$ is the compensated Poison process
with rate $\lambda NA_{ik}^{N}(t)$, 
\begin{align*}
\mathbb{E}_{N}\exp\sum_{i,k=1}^{N}\Biggl[ & \int_{0}^{\eta}\frac{2au_{ik}(s)}{N\epsilon}dM_{ik}^{N}(s)\\
 & -\int_{0}^{\eta}\left(\exp\frac{2au_{ik}(s)}{N\epsilon}-\frac{2au_{ik}(s)}{N\epsilon}-1\right)\lambda NdA_{ik}^{N}(s)\Biggl]=1.
\end{align*}
Therefore, by Cauchy-Schwarz's inequality, (\ref{eq:hsr5}) is bounded
by
\[
\mathbb{E}_{N}\left[\exp\sum_{i,k=1}^{N}\int_{0}^{\eta}\left(\exp\frac{2au_{ik}(s)}{N\epsilon}-\frac{2au_{ik}(s)}{N\epsilon}-1\right)\lambda NdA_{ik}^{N}(s)\right]^{\frac{1}{2}}.
\]
For sufficiently large $N$, we can bound the last expression by 
\begin{align*}
 & \mathbb{E}_{N}\left[\exp\sum_{i,k=1}^{N}\int_{0}^{\eta}\left\{ \frac{2au_{ik}(s)}{N\epsilon}\right\} ^{2}\lambda NdA_{ik}^{N}(s)\right]^{\frac{1}{2}}\\
 & \le\mathbb{E}_{N}\left[\exp\sum_{i,k=1}^{N}\int_{0}^{\eta}\frac{16a^{2}\lambda}{N\epsilon^{2}}dA_{ik}^{N}(s)\right]^{\frac{1}{2}}\\
 & =\mathbb{E}_{N}\left[\exp\frac{16a^{2}\lambda}{\epsilon^{2}}NA^{N}(\eta)\right]^{\frac{1}{2}}
\end{align*}
since $\left|u_{ik}(s)\right|\le2$. These series of estimates enable
us to bound 
\[
\mathbb{E}_{N}\exp\left\{ a\sum_{i=1}^{N}\zeta{}_{i}^{+}(\eta)\right\} \le2^{N}\mathbb{E}_{N}\left[\exp\left\{ \frac{16a^{2}\lambda}{\epsilon^{2}}NA^{N}(\eta)\right\} \right]^{\frac{1}{2}}.
\]
Now the proof is completed by Proposition \ref{prop: smal time regime-1}.
\end{proof}
Now we prove the tightness of $\{x_{i}^{N}(t)\}_{N=1}^{\infty}$ by
starting from that of $\{z_{i}^{N}(t)\}_{N=1}^{\infty}$. The methodology
for this step was developed in Proposition 3 of \cite{G1} for a fixed
$i$. The situation here is slightly different but we can still burrow
the core idea. 
\begin{proof}[Proof of Proposition \ref{prop:(Small-time-regime)}]
We start by defining two stopping times $\tau_{i,\epsilon}^{+}$
and $\tau_{i,\epsilon}^{-}$ as
\begin{align*}
\tau_{i,\epsilon}^{+} & =\inf\left\{ t\,:\,x_{i}^{N}(t)-x_{i}^{N}(0)\ge\epsilon\right\} \\
\tau_{i,\epsilon}^{-} & =\inf\left\{ t\,:\,x_{i}^{N}(t)-x_{i}^{N}(0)\le-\epsilon\right\} 
\end{align*}
for each $i$ and then 
\[
\left\{ \sup_{\substack{0\le t\le\eta}
}\left|x_{i}^{N}(t)-x_{i}^{N}(0)\right|\ge\epsilon\right\} =\left\{ \tau_{i,\epsilon}^{+}\le\eta\right\} \cup\left\{ \tau_{i,\epsilon}^{-}\le\eta\right\} .
\]
Thus, it suffices to show 
\begin{equation}
\limsup_{\eta\rightarrow0}\limsup_{N\rightarrow\infty}\frac{1}{N}\log\mathbb{P}_{N}\left[\left|\left\{ i\,:\,\tau_{i,\epsilon}^{+}\le\eta\right\} \right|\ge N\alpha\right]=-\infty\label{eq:csy112}
\end{equation}
since $\tau_{i,\epsilon}^{-}$ can be handled by the exactly same
manner. Let us define
\[
u_{i}^{N}(t)=\frac{1}{N(\lambda+1)}\sum_{j=1}^{N}\nu\left(x_{j}^{N}(t)-x_{i}^{N}(t)\right)
\]
so that $z_{i}^{N}(t)=x_{i}^{N}(t)+u_{i}^{N}(t)$. Then, 
\begin{align*}
 & \left\{ \tau_{i,\epsilon}^{+}\le\eta\right\} \\
 & =\left\{ \tau_{i,\epsilon}^{+}\le\eta,\,\left|z_{i}^{N}(\tau_{i,\epsilon}^{+})-z_{i}^{N}(0)\right|\le\kappa\epsilon\right\} \cup\left\{ \tau_{i,\epsilon}^{+}\le\eta,\,\left|z_{i}^{N}(\tau_{i,\epsilon}^{+})-z_{i}^{N}(0)\right|>\kappa\epsilon\right\} \\
 & \subset\left\{ \tau_{i,\epsilon}^{+}\le\eta,\,\left|\epsilon+u_{i}^{N}(\tau_{i,\epsilon}^{+})-u_{i}^{N}(0)\right|\le\kappa\epsilon\right\} \cup\left\{ \sup_{0\le t\le\eta}\left|z_{i}^{N}(t)-z_{i}^{N}(0)\right|>\kappa\epsilon\right\} 
\end{align*}
for any $\kappa>0$. Note that the second set is super-exponentially
negligible by Proposition \ref{prop: tightness of z_i}. For the first
set, we have 
\begin{align}
 & \left\{ \tau_{i,\epsilon}^{+}\le\eta,\,\left|\epsilon+u_{i}^{N}(\tau_{i,\epsilon}^{+})-u_{i}^{N}(0)\right|\le\kappa\epsilon\right\} \nonumber \\
 & \subset\left\{ \tau_{i,\epsilon}^{+}\le\eta,\,u_{i}^{N}(\tau_{i,\epsilon}^{+})-u_{i}^{N}(0)\le(\kappa-1)\epsilon\right\} .\label{eq:alsli}
\end{align}
Take $\phi_{\epsilon}\in C^{\infty}(\mathbb{T})$ satisfying $\nu(x)\le\phi_{\epsilon}(x)\le\nu(x-\epsilon)+(1+\kappa)\epsilon$
so that\footnote{The existence of such a function is proved in Proposition 4 of \cite{G1}}
\begin{align*}
 & u_{i}^{N}(\tau_{i,\epsilon}^{+})-u_{i}^{N}(0)\\
 & =\frac{1}{N(\lambda+1)}\sum_{j=1}^{N}\left[\nu\left(x_{j}^{N}(\tau_{i,\epsilon}^{+})-x_{i}^{N}(\tau_{i,\epsilon}^{+})\right)-\nu\left(x_{j}^{N}(0)-x_{i}^{N}(0)\right)\right]\\
 & =\frac{1}{N(\lambda+1)}\sum_{j=1}^{N}\left[\nu\left(x_{j}^{N}(\tau_{i,\epsilon}^{+})-x_{i}^{N}(0)-\epsilon\right)-\nu\left(x_{j}^{N}(0)-x_{i}^{N}(0)\right)\right]\\
 & \ge\frac{1}{N(\lambda+1)}\sum_{j=1}^{N}\left[\phi_{\epsilon}\left(x_{j}^{N}(\tau_{i,\epsilon}^{+})-x_{i}^{N}(0)\right)-\phi_{\epsilon}\left(x_{j}^{N}(0)-x_{i}^{N}(0)\right)-(1+\kappa)\epsilon\right].
\end{align*}
Thus, $u_{i}^{N}(\tau_{i,\epsilon}^{+})-u_{i}^{N}(0)\le(\kappa-1)\epsilon$
implies 
\begin{align*}
 & \frac{1}{N(\lambda+1)}\sum_{j=1}^{N}\left[\phi_{\epsilon}(x_{j}^{N}(\tau_{i,\epsilon}^{+})-x_{i}^{N}(0))-\phi_{\epsilon}(x_{j}^{N}(0)-x_{i}^{N}(0))\right]\\
 & \le(\kappa-1)\epsilon+\frac{\kappa+1}{\lambda+1}\epsilon:=-\frac{\gamma}{\lambda+1}\epsilon.
\end{align*}
We choose $\kappa$ small enough so that $\gamma>0$. Then, the RHS
of (\ref{eq:alsli}) is a subset of 
\begin{equation}
\left\{ \sup_{0\le t\le\eta}\left|\frac{1}{N}\sum_{j=1}^{N}\left[\phi_{\epsilon}\left(x_{j}^{N}(t)-x_{i}^{N}(0)\right)-\phi_{\epsilon}\left(x_{j}^{N}(0)-x_{i}^{N}(0)\right)\right]\right|\ge\gamma\epsilon\right\} .\label{eq:csy312}
\end{equation}
By Ito's formula,
\begin{align*}
 & \left|\frac{1}{N}\sum_{j=1}^{N}\left[\phi_{\epsilon}\left(x_{j}^{N}(t)-x_{i}^{N}(0)\right)-\phi_{\epsilon}\left(x_{j}^{N}(0)-x_{i}^{N}(0)\right)\right]\right|\\
 & \le\left|\frac{1}{2N}\sum_{j=1}^{N}\int_{0}^{t}\phi_{\epsilon}^{''}\left(x_{j}^{N}(s)-x_{i}^{N}(0)\right)ds\right|+\left|\frac{1}{N}\sum_{j=1}^{N}\int_{0}^{t}\phi_{\epsilon}^{'}\left(x_{j}^{N}(s)-x_{i}^{N}(0)\right)d\beta_{j}(s)\right|\\
 & \le\frac{\eta}{2}\left\Vert \phi_{\epsilon}^{''}\right\Vert _{\infty}+\left|\frac{1}{N}\sum_{i=1}^{N}\int_{0}^{t}\phi_{\epsilon}^{'}\left(x_{j}^{N}(s)-x_{i}^{N}(0)\right)d\beta_{j}(s)\right|
\end{align*}
for $t\le\eta.$ Note that the local time does not appear since the
expression (\ref{eq:csy312}) is totally symmetric with the function
$\phi_{\epsilon}(\cdot-x_{i}^{N}(0))$. Thus, (\ref{eq:csy312}) is
a subset of 
\begin{equation}
\left\{ \sup_{0\le t\le\eta}\left|\frac{1}{\sqrt{N}}\sum_{j=1}^{N}\int_{0}^{t}\phi_{\epsilon}^{'}\left(x_{j}^{N}(s)-x_{i}^{N}(0)\right)d\beta_{j}(s)\right|>\frac{\gamma\epsilon}{2}\sqrt{N}\right\} \label{eq:csy341}
\end{equation}
for sufficiently small $\eta$. We now regard $\frac{1}{\sqrt{N}}\sum_{j=1}^{N}\int_{0}^{t}\phi_{\epsilon}^{'}\left(x_{j}^{N}(s)-x_{i}^{N}(0)\right)d\beta_{j}(s)$
as a time change of Brownian motion 
\[
B_{i}\left(\frac{1}{N}\sum_{j=1}^{N}\int_{0}^{t}\phi_{\epsilon}^{'}(x_{j}^{N}(s)-x_{i}^{N}(0))^{2}ds\right)
\]
where $B_{i}(\cdot)$ is a Brownian motion starting from $0$ under
$\mathbb{P}_{N}$. Since 
\[
\frac{1}{N}\sum_{j=1}^{N}\int_{0}^{t}\phi_{\epsilon}^{'}(x_{j}^{N}(s)-x_{i}^{N}(0))^{2}ds\le\left\Vert \phi_{\epsilon}^{'}\right\Vert _{\infty}^{2}\eta:=C_{\epsilon}\eta
\]
the event (\ref{eq:csy341}) is a subset of $\left\{ \sup_{0\le t\le C_{\epsilon}\eta}\left|B_{i}(t)\right|>\frac{\gamma\epsilon}{2}\sqrt{N}\right\} .$

Finally, it suffices to show 
\[
\limsup_{\eta\rightarrow0}\limsup_{N\rightarrow\infty}\frac{1}{N}\log\mathbb{P}_{N}\left[\left|\left\{ i:\sup_{0\le t\le C_{\epsilon}\eta}\left|B_{i}(t)\right|>\frac{\gamma\epsilon}{2}\sqrt{N}\right\} \right|\ge N\alpha\right]=-\infty
\]
to complete the proof. However, this is obvious since we have a trivial
bound 
\begin{align*}
 & \mathbb{P}_{N}\left[\left|\left\{ i:\sup_{0\le t\le C_{\epsilon}\eta}\left|B_{i}(t)\right|>\frac{\gamma\epsilon}{2}\sqrt{N}\right\} \right|\ge N\alpha\right]\\
 & \le\sum_{i=1}^{N}\mathbb{P}_{N}\left[\sup_{0\le t\le C_{\epsilon}\eta}\left|B_{i}(t)\right|>\frac{\gamma\epsilon}{2}\sqrt{N}\right]
\end{align*}
for large enough $N$, and then by a property of the Brownian motion,
\[
\mathbb{P}_{N}\left[\sup_{0\le t\le C_{\epsilon}\eta}\left|B_{i}(t)\right|>\frac{\gamma\epsilon}{2}\sqrt{N}\right]\le\frac{8\sqrt{C_{\epsilon}\eta}}{\gamma\epsilon\sqrt{2\pi N}}\exp\left\{ -\frac{\left(\frac{\gamma\epsilon}{2}\sqrt{N}\right)^{2}}{2C_{\epsilon}\eta}\right\} .
\]

\end{proof}

\section{Diffusion of Colors}

\subsection{Introduction\label{sub:Introduction} }

In this section, we develop the LDP for the empirical density of colors.
We recall from Section \ref{sub:Colored-Process} that the empirical
density for colors $\{\tilde{\mu}^{N}(t):0\le t\le T\}$ is defined
as (\ref{eq:emp color}) which can be regarded as a Markov process
$\widetilde{\mathbb{Q}}_{N}$ on $C([0,\,T],\,\mathscr{M}(\mathbb{T})^{m})$.
First, we state the hydrodynamical limit theory of $\{\widetilde{\mathbb{Q}}_{N}\}_{N=1}^{\infty}$. 
\begin{thm}
\label{thm: HDL collor}Suppose that Assumption 2 is satisfied with
the uncolored initial measure $\rho^{0}(x)dx$ for some bounded $\rho^{0}(\cdot)$.
Then $\{\widetilde{\mathbb{Q}}_{N}\}_{N=1}^{\infty}$ converges weakly
to $\widetilde{\mathbb{Q}}_{\infty}$ which is a Dirac mass concentrating
on the single-valued trajectory\textup{
\[
\{\tilde{\rho}(t,\,x)dx=(\rho_{1}(t,\,x)dx,\,\rho_{2}(t,\,x)dx,\,\cdots,\,\rho_{m}(t,\,x)dx)^{\dagger}:\,0\le t\le T\}
\]
}where $\tilde{\rho}(t,\,x)$ is the unique weak solution of the partial
differential equation\textup{
\begin{equation}
\frac{\partial\tilde{\rho}}{\partial t}=\frac{1}{2}\nabla\cdot\left[D(\tilde{\rho})\nabla\tilde{\rho}\right]\label{eq:diffusion of color}
\end{equation}
}with initial condition $\tilde{\rho}^{0}(x)dx$ where the diffusion
matrix $D(\tilde{\rho})$ is given by (\ref{eq:diffusion mat}). \end{thm}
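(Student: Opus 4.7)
The plan is to follow the standard three-step scheme for hydrodynamic limits: establish tightness, identify every limit point as supported on weak solutions of (\ref{eq:diffusion of color}), and prove uniqueness of those solutions.

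Tightness of $\{\widetilde{\mathbb{Q}}_N\}_{N=1}^{\infty}$ in $C([0,T],\mathscr{M}(\mathbb{T})^m)$ is an immediate by-product of the exponential tightness proved in Section~3 (Theorem \ref{thm: SUPER EXP TIGHTNESS} and the surrounding discussion): exponential tightness implies ordinary tightness, so every subsequence has a weakly convergent sub-subsequence with some limit $\widetilde{\mathbb{Q}}_{\infty}^{*}$. It then suffices to show that $\widetilde{\mathbb{Q}}_{\infty}^{*}$ is concentrated on trajectories $\tilde\rho(t,x)\,dx$ that satisfy (\ref{eq:diffusion of color}) weakly with initial data $\tilde\rho^0$; that (\ref{eq:diffusion of color}) has at most one such solution follows from standard linear parabolic theory applied color by color, since Theorem \ref{Heat} gives $\rho=\sum_c \rho_c$ as the bounded solution of the heat equation, whence the coefficients of $\mathscr{A}_\rho^*$ in (\ref{eq:single color PDE}) are smooth and bounded on $[0,T]\times\mathbb{T}$ and an $L^2$-energy estimate combined with Gr\"onwall's inequality forces uniqueness.

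For the characterisation of limit points, fix $J\in C^2(\mathbb{T})$ and a color $c$, and apply the Tanaka/Dynkin identity (\ref{eq:tanaka}) to $F(x)=\frac{1}{N}\sum_{i\in I_c^N}J(x_i)$. Although $F$ does not satisfy the domain condition $\mathfrak U_{ij}^\lambda F\equiv 0$, (\ref{eq:tanaka}) still holds for every $F\in\bar C(G_N)$. A direct computation shows $F_{ij}=F_{ji}$, so $\mathfrak U_{ij}^\lambda F=D_{ij}F$ is non-zero only when exactly one of $i,j$ lies in $I_c^N$, yielding
\begin{equation*}
\langle\mu_c^N(t),J\rangle-\langle\mu_c^N(0),J\rangle-\tfrac{1}{2}\int_0^t\langle\mu_c^N(s),J''\rangle\,ds\;=\;B_c^N(t)+M_c^N(t),
\end{equation*}
with
\begin{equation*}
B_c^N(t)=\frac{1}{N}\sum_{i\in I_c^N}\sum_{j\notin I_c^N}\int_0^t J'(x_i^N(s))\bigl[dA_{ij}^N(s)-dA_{ji}^N(s)\bigr].
\end{equation*}
The martingale $M_c^N$ is controlled via the representation (\ref{eq:martingale}): the Brownian piece contributes $O(1/N)$ uniformly in $t$, while the jump piece, compensated by $\lambda N\,dA_{ij}^N$, is controlled by the symmetric local-time bounds of Section \ref{sub:Preliminary-Estimate} (Lemmas \ref{Lemma1}--\ref{Lemma2}) after symmetrisation through Lemma \ref{lem: symmetrization lemma}, so $\sup_t|M_c^N(t)|\to 0$ in probability by Doob's inequality.

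The main obstacle is extracting the correct limit of $B_c^N$, because Theorem \ref{thm: REPLACEMENT_colored} controls only the \emph{symmetric} combination $A_{ij}^N+A_{ji}^N$ through the local densities $\rho_{\epsilon,i}^{(c')}$, whereas $B_c^N$ involves the \emph{antisymmetric} difference $A_{ij}^N-A_{ji}^N$---which is precisely the object that produces the drift $\frac{\nabla\rho}{\lambda+\rho}\rho_c$ in (\ref{eq:single color PDE}). To handle it I would follow the tagged-particle strategy of \cite{G1}: use the decomposition $x_i^N(t)=\beta_i(t)+\tilde A_i^N(t)$ from (\ref{x_i as brown+local}) together with the auxiliary martingale $z_i^N$ of Section \ref{sub:Small-Time-Regime} to convert $d\tilde A_{i,\neq c}^N=\sum_{j\notin I_c^N}(dA_{ij}^N-dA_{ji}^N)$ into the sum of an $O(1/N)$ martingale (vanishing in probability by the bounds of Section 3) and a smoothed expression of the form $\sum_{j\notin I_c^N}h_\epsilon(x_j-x_i)$ obtained by integrating by parts against the piecewise-linear kernels of Lemma \ref{lem: GREEN THM}. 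The symmetric part of this expression is then accessible to Theorem \ref{thm: REPLACEMENT_colored}, and after taking $N\to\infty$ and $\epsilon\to 0$ and combining with a standard integration by parts, the two contributions $\frac{\lambda}{\lambda+\rho}\nabla\rho_c$ and $\frac{\nabla\rho}{\lambda+\rho}\rho_c$ of (\ref{eq:single color PDE}) are recovered in the correct proportions. This yields the weak form of (\ref{eq:diffusion of color}) for every color $c$, $\widetilde{\mathbb{Q}}_{\infty}^{*}$-almost surely, and uniqueness completes the identification $\widetilde{\mathbb{Q}}_{\infty}^{*}=\widetilde{\mathbb{Q}}_\infty$.
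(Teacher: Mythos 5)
Your framework (tightness, identification, uniqueness) and your uniqueness argument are both correct, and they match the paper's stated reasoning: $\rho=\sum_c\rho_c$ solves the heat equation by Theorem~\ref{Heat}, so (\ref{eq:single color PDE}) is linear parabolic with smooth coefficients, and uniqueness is classical. Your starting point for the identification step—Tanaka's identity (\ref{eq:tanaka}) applied to $F(x)=\frac{1}{N}\sum_{i\in I_c^N}J(x_i)$—is legitimate and the resulting decomposition, with boundary term $B_c^N$ carrying the antisymmetric local time, is correctly computed. (A minor slip: since $J$ is continuous and $x_i=x_j$ on each face, $F_{ij}=F_{ji}$, so the jump part of $M_F$ in (\ref{eq:martingale}) vanishes identically; only the Brownian part of $M_c^N$ needs controlling, and its quadratic variation is $O(1/N)$, so no appeal to Lemmas~\ref{Lemma1}--\ref{Lemma2} or Lemma~\ref{lem: symmetrization lemma} is needed there.)

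The genuine gap is in your handling of $B_c^N$. The proposed conversion of $d\tilde A_{i,\neq c}^N$ via $z_i^N$ into ``an $O(1/N)$ martingale plus a smoothed expression $\sum_{j}h_\epsilon(x_j-x_i)$'' does not work as described. From (\ref{eq:z(t)}) and (\ref{eq:zzz}) one gets $d\tilde A_i^N=(dz_i^N-d\beta_i)-du_i^N$ with $u_i^N=\frac{1}{N(\lambda+1)}\sum_{j\neq i}\nu(x_j^N-x_i^N)$; the martingale part $dz_i^N-d\beta_i$ contains the piece $-\frac{N-1}{N(\lambda+1)}d\beta_i$, which is $O(1)$ per particle, not $O(1/N)$---it vanishes only after averaging over $i$. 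More seriously, $du_i^N$ is the stochastic differential of a $\bar C(G_N)$-function that is \emph{discontinuous} across the faces $F_{ij}$ (since $\nu$ jumps at $0$ on $\mathbb T$), so it reintroduces both Brownian and local-time corrections and is in no sense a density-field expression $\sum_j h_\epsilon(x_j-x_i)\,dt$; the antisymmetric local time $A_{ij}^N-A_{ji}^N$ simply cannot be represented by a kernel of that symmetric form (Theorem~\ref{thm: REPLACEMENT_colored} is restricted to the symmetric sum precisely for this reason), and Lemma~\ref{lem: GREEN THM} is a spatial Green's identity for a fixed density $f$, not a time-evolution statement. The paper avoids the difficulty altogether: Theorem~\ref{thm: HDL collor} is stated in the text to be a corollary of Theorem~\ref{thm : HDL for Driven System} with $\tilde b\equiv 0$, $\tilde\Gamma\equiv 0$, whose proof applies Ito's formula directly to $g^{(c)}(t,z_i^N(t))$. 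Because $z_i^N$ is a martingale under $\mathbb{P}_N$, there is no local-time drift at all; the local time appears only \emph{symmetrically}, inside the quadratic variation (\ref{eq: QV of z(t)}), which is exactly what the replacement lemma handles, and the change of variables $\tilde g(t,x)=\tilde f(t,G_\rho(t,x))$ with $G_\rho=F_\rho^{-1}$ (as in Lemma~\ref{lem: upppp}) then converts the limiting identity in the $z$-variable into the weak form of (\ref{eq:single color PDE}). To repair your argument you should apply Ito to a function of $z_i^N$ from the start rather than to a function of $x_i^N$.
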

\begin{rem}
An extension of this hydrodynamic limit result is obtained in \cite{IS}
which consider a system of two mechanically different types of particles.
\end{rem}
As we already observed in (\ref{eq:single color PDE}) and (\ref{eq: lln color}),
each component of equation (\ref{eq:diffusion of color}) can be written
as 
\begin{equation}
\partial_{t}\rho_{c}=\frac{1}{2}\nabla\left[\frac{\lambda}{\lambda+\rho}\nabla\rho_{c}+\frac{\nabla\rho}{\lambda+\rho}\rho_{c}\right]\,\,;\,\,c=1,\,2,\,\cdots,\,m.\label{eq:sce}
\end{equation}
where $\rho=\sum\rho_{c}$. Since $\rho$ is the solution of the heat
equation $\partial_{t}\rho=\frac{1}{2}\Delta\rho$ with the initial
condition $\rho^{0}(dx)=\sum_{c=1}^{m}\rho_{c}^{0}(dx)$, (\ref{eq:sce})
is just a usual linear parabolic equation with smooth coefficients.
Thus, the uniqueness of the solution is immediate\footnote{This uniqueness is also a direct consequence of Theorem \ref{thm: Uniqueness Theorem } }
and consequently Theorem \ref{thm: HDL collor} is a corollary of
Theorem \ref{thm : HDL for Driven System}. The final comment regarding
Theorem 4.1 is that we did not need assumptions on $f_{N}^{0}(dx)$
more than what is stated in Assumption 2 because our estimates in
Sections 2 and 3 did not impose any further conditions due to our
careful analysis on the small time regime. 

The next step is the large deviation theory for $\{\widetilde{\mathbb{Q}}_{N}\}_{N=1}^{\infty}$.
In order to concentrate on the large deviation of dynamic evolution
and simplify the arguments regarding the initial deviation, we assume
the following throughout Sections 4 and 5 in addition to Assumption
2. \\
\\
\textbf{Assumption 3. }\textit{The initial configuration of particles
is i.i.d. with a bounded probability density function $\rho^{0}(x)$
on $\mathbb{T}$. }
\begin{rem}
Reading our proof carefully reveals that the LDP is still valid under
many general initial configurations, e.g., deterministic configuration.
However, the boundedness assumption on $\rho^{0}(x)$ is essential,
especially when we establish the compactness property of the rate
function in Lemma \ref{prop: Estimates based on bound on rate function}.
\end{rem}
We now state the LDP for $\{\widetilde{\mathbb{Q}}_{N}\}_{N=1}^{\infty}$
under Assumptions 2 and 3. First of all, for each color $c$, $\mathscr{M}_{c}^{0}(\mathbb{T})\subset\mathscr{M}(\mathbb{T})$
is defined by 
\[
\mathscr{M}_{c}^{0}(\mathbb{T})=\left\{ r(x)dx:\int_{\mathbb{T}}r(x)dx=\bar{\rho}_{c},\,r(x)\ge0\right\} 
\]
where $\bar{\rho}_{c}$ is given by (\ref{eq:aver col c}). Then we
will show in Section 4.2 that the LDP rate is infinite outside $C([0,\,T],\,\mathscr{M}_{color}^{0}(\mathbb{T}))$
where 
\[
\mathscr{M}_{color}^{0}(\mathbb{T})=\prod_{c=1}^{m}\mathscr{M}_{c}^{0}(\mathbb{T}).
\]
In the domain $C([0,\,T],\,\mathscr{M}_{color}^{0}(\mathbb{T}))$,
the rate function is $I_{color}^{m}(\cdot)=I_{init}^{m}(\cdot)+I_{dyn}^{m}(\cdot)$
where the dynamic rate function $I_{dyn}^{m}(\cdot)$ is defined by
\begin{equation}
I_{dyn}^{m}(\tilde{\rho}(\cdot,\,x)dx)=\frac{1}{2}\int_{0}^{T}\left\Vert \frac{\partial\tilde{\rho}}{\partial t}-\frac{1}{2}\nabla\cdot\left[D(\tilde{\rho})\nabla\tilde{\rho}\right]\right\Vert _{-1,\,A(\tilde{\rho})}^{2}dt.\label{eq: dynamic rate function}
\end{equation}
for $\tilde{\rho}(\cdot,\,x)dx\in C([0,\,T],\,\mathscr{M}_{color}^{0}(\mathbb{T}))$
where $A(\tilde{\rho})$ is defined by (\ref{eq:A and chi}). Here,
it is necessary to clarify exactly what is meant by the RHS of (\ref{eq: dynamic rate function}).
The $H_{-1,\,A}$ norm can be explained by the variational formula
(\textit{cf.} (2.24) of \cite{QRV}) 
\begin{align}
\sup_{\phi}\Biggl\{ & \int_{\mathbb{T}}\phi^{\dagger}\tilde{\rho}(T,\,x)dx-\int_{\mathbb{T}}\phi^{\dagger}\tilde{\rho}(0,\,x)dx\label{eq: dynamic rate function _var form}\\
 & +\int_{0}^{T}\int_{\mathbb{T}}\left[-\frac{\partial\phi}{\partial t}^{\dagger}\tilde{\rho}+\frac{1}{2}\nabla\phi^{\dagger}D(\tilde{\rho})\nabla\tilde{\rho}-\frac{1}{2}\nabla\phi^{\dagger}A(\tilde{\rho})\nabla\phi\right](t,\,x)dxdt\Biggl\}\nonumber 
\end{align}
where the supremum is taken over $\phi\in C^{\infty}([0,\,T]\times\mathbb{T}^{m})$.
However, the last expression is still not well-defined as it stands
because it involves $\nabla\tilde{\rho},$ which might not exist.
Thus, our starting point should be to obtain a reasonable explanation
of (\ref{eq: dynamic rate function _var form}) and the basic properties
of this rate function, e.g., the compactness and lower semicontinuity.
Section \ref{sub:Rate-Function1} is devoted to this project. Then,
we will establish the large deviation upper and lower bounds in Sections
\ref{sub:Upper-Bound} and \ref{sub:Lower-Bound}, respectively.

\subsection{Rate Function\label{sub:Rate-Function1}}

\subsubsection{Well-definedness of Variational Formula (\ref{eq: dynamic rate function _var form})}

To define the rate function in the sense of (\ref{eq: dynamic rate function _var form}),
we need some \textit{a priori} regularity result for $\tilde{\rho}(\cdot,\,\cdot)$,
as well as energy estimates on the domain of the rate function. Recall
here that $\tilde{\rho}=(\rho_{1},\,\rho_{2},\,\cdots,\,\rho_{m})^{\dagger}$
is the $m$-dimensional vector of the density of colors and $\rho=\sum_{c=1}^{m}\rho_{c}$
denotes the total density. To begin with, let us define a set $\mathscr{D}_{color}^{m}\subset C([0,\,T],\,\mathscr{M}_{color}^{0}(\mathbb{T}))$
that consists of $\tilde{\rho}(t,\,x)dx$ which is weakly differentiable
in $x$ with the energy estimate 
\begin{equation}
\int_{0}^{T}\int_{\mathbb{T}}\left(\nabla\tilde{\rho}^{\dagger}\chi A\chi(\tilde{\rho})\nabla\tilde{\rho}\right)(t,\,x)dxdt<\infty\label{eq: aprori est}
\end{equation}
and satisfies the finite initial entropy condition 
\begin{equation}
\int_{\mathbb{T}}\rho(0,\,x)\log\rho(0,\,x)dx<\infty.\label{eq:finiteness of entropy}
\end{equation}
Note here that we can compute $\nabla\tilde{\rho}^{\dagger}\chi A\chi(\tilde{\rho})\nabla\tilde{\rho}$
explicitly as
\begin{equation}
\nabla\tilde{\rho}^{\dagger}\chi A\chi(\tilde{\rho})\nabla\tilde{\rho}=\frac{(\nabla\rho)^{2}}{\lambda+\rho}+\sum_{c=1}^{m}\frac{(\nabla\rho_{c})^{2}}{(\lambda+\rho)\rho_{c}}\label{eq: explicity form of chi A chi}
\end{equation}
and accordingly, (\ref{eq: aprori est}) is equivalent to 
\[
\begin{cases}
\hat{I}(\rho)=\int_{0}^{T}\int_{\mathbb{T}}\frac{(\nabla\rho)^{2}}{\rho}(t,\,x)dxdt<\infty & \mbox{and},\\
\hat{I}_{c}(\tilde{\rho})=\int_{0}^{T}\int_{\mathbb{T}}\frac{(\nabla\rho_{c})^{2}}{(\lambda+\rho)\rho_{c}}(t,\,x)dxdt<\infty & 1\le c\le m.
\end{cases}
\]
In particular, the finiteness of $\hat{I}(\rho)$ implied by (\ref{eq: aprori est})
implies that $\rho\in L_{2}([0,\,T]\times\mathbb{T})$ or equivalently
$\tilde{\rho}\in L_{2}([0,\,T]\times\mathbb{T}^{m})$ as follows.
\begin{lem}
\label{lem: L_2 boundness-1}Suppose that $\rho$ is weakly differentiable
and satisfies $\hat{I}(\rho)<\infty$. Then $\rho\in L_{2}(0,\,T,\,L_{\infty}(\mathbb{T}))$
and the $L_{2}$ norm is bounded by $2(\hat{I}(\rho)+T)$. In particular,
$\tilde{\rho}(\cdot,\,x)dx\in\mathscr{D}_{color}^{m}$ implies that
$\tilde{\rho}\in L_{2}([0,\,T]\times\mathbb{T}^{m})$.\end{lem}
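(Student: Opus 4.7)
The plan is to exploit the identity $\frac{(\nabla\rho)^{2}}{\rho}=4(\partial_{x}\sqrt{\rho})^{2}$, which rewrites the hypothesis as $\hat{I}(\rho)=4\int_{0}^{T}\|\partial_{x}\sqrt{\rho(t,\cdot)}\|_{L^{2}(\mathbb{T})}^{2}\,dt$, and then to apply the one-dimensional Sobolev embedding $H^{1}(\mathbb{T})\hookrightarrow L^{\infty}(\mathbb{T})$ to $g(t,x):=\sqrt{\rho(t,x)}$. The key leverage comes from the mass constraint built into $\mathscr{M}_{color}^{0}(\mathbb{T})$: since each $\rho_{c}(t,\cdot)$ has total mass $\bar{\rho}_{c}$ and $\sum_{c}\bar{\rho}_{c}=1$ (Assumption 1), we have $\int_{\mathbb{T}}\rho(t,x)\,dx=1$, i.e.\ $\|g(t,\cdot)\|_{L^{2}(\mathbb{T})}=1$ for every $t$. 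Hence the only $t$-dependent part of the $H^{1}$-norm of $g(t,\cdot)$ is the gradient contribution, whose time-integrated square is precisely $\tfrac{1}{4}\hat{I}(\rho)$.

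The main step I would carry out is the elementary pointwise bound on $\mathbb{T}$: for any $g\in H^{1}(\mathbb{T})$,
\[
|g(x)|^{2}\;\le\;\int_{\mathbb{T}}g^{2}\,dy+2\int_{\mathbb{T}}|g||g'|\,dy\;\le\;\|g\|_{L^{2}}^{2}+2\|g\|_{L^{2}}\|g'\|_{L^{2}},
\]
which follows from $g(x)^{2}-g(y)^{2}=2\int_{y}^{x}gg'\,ds$, averaging in $y$, and Cauchy--Schwarz. Specializing to $g=\sqrt{\rho(t,\cdot)}$ and using $\|g(t,\cdot)\|_{L^{2}}^{2}=1$ gives
\[
\|\rho(t,\cdot)\|_{L^{\infty}(\mathbb{T})}=\|g(t,\cdot)\|_{L^{\infty}}^{2}\;\le\;1+2\|\partial_{x}\sqrt{\rho(t,\cdot)}\|_{L^{2}(\mathbb{T})}.
\]
Squaring via $(1+2s)^{2}\le 2+8s^{2}$ and integrating in $t$,
\[
\int_{0}^{T}\|\rho(t,\cdot)\|_{L^{\infty}(\mathbb{T})}^{2}\,dt\;\le\;2T+8\int_{0}^{T}\|\partial_{x}\sqrt{\rho(t,\cdot)}\|_{L^{2}(\mathbb{T})}^{2}\,dt\;=\;2T+2\hat{I}(\rho),
\]
which yields $\|\rho\|_{L^{2}(0,T;L^{\infty}(\mathbb{T}))}\le\sqrt{2(T+\hat{I}(\rho))}$; absorbing the square root into a larger constant gives the stated bound $2(T+\hat{I}(\rho))$.

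For the "in particular" part, observe that the pointwise inequalities $0\le\rho_{c}(t,x)\le\rho(t,x)$ for every $c$ immediately yield $\|\rho_{c}\|_{L^{2}([0,T]\times\mathbb{T})}\le\|\rho\|_{L^{2}(0,T;L^{\infty}(\mathbb{T}))}\,T^{1/2}<\infty$, so summing over $c$ gives $\tilde{\rho}\in L^{2}([0,T]\times\mathbb{T}^{m})$. The only subtlety in the entire argument is justifying the chain rule $\partial_{x}\sqrt{\rho}=\partial_{x}\rho/(2\sqrt{\rho})$ for a merely weakly differentiable nonnegative $\rho$ with $\hat{I}(\rho)<\infty$; this is standard and handled by replacing $\rho$ by $\rho+\varepsilon$, applying the chain rule, and passing to the limit $\varepsilon\downarrow 0$ using the uniform energy bound $\hat{I}(\rho+\varepsilon)\le\hat{I}(\rho)$. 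Beyond this routine regularization, there is no genuine obstacle.
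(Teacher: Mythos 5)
Your proof is correct and follows essentially the same route as the paper's: both establish the pointwise bound $\|\rho(t,\cdot)\|_{L^{\infty}(\mathbb{T})}\le 1+\|\nabla\rho(t,\cdot)/\sqrt{\rho(t,\cdot)}\|_{L^{2}(\mathbb{T})}$ by integrating a first derivative across the torus and applying Cauchy--Schwarz together with the unit-mass constraint, then square and integrate in $t$ to get $\int_{0}^{T}\|\rho(t,\cdot)\|_{L^{\infty}}^{2}\,dt\le 2(\hat{I}(\rho)+T)$. The only cosmetic difference is the regularization used to justify the chain rule for a weakly differentiable $\rho$: the paper mollifies by convolution with the heat kernel and uses the Jensen-type inequality $(\nabla\rho_{\epsilon})^{2}/\rho_{\epsilon}\le\left((\nabla\rho)^{2}/\rho\right)_{\epsilon}$, whereas you shift $\rho$ by $\epsilon$; both are equally valid.
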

\begin{proof}
Let $\phi_{\epsilon}$ be the heat kernel on $\mathbb{T}$ at time
$\epsilon^{2}$ and $\rho_{\epsilon}=\rho*\phi_{\epsilon}$. Then,
\[
\rho_{\epsilon}(t,\,x)-\rho_{\epsilon}(t,\,y)=\int_{[x,y]}\nabla\rho_{\epsilon}(t,\,z)dz\le\sqrt{\int_{\mathbb{T}}\frac{(\nabla\rho_{\epsilon})^{2}}{\rho_{\epsilon}}(t,\,z)dz}
\]
by Cauchy-Schwarz's inequality. Integrating the above expression against
$y$ give us 
\begin{align*}
\sup_{x\in\mathbb{T}}\rho_{\epsilon}(t,\,x) & \le\sqrt{\int_{\mathbb{T}}\frac{(\nabla\rho_{\epsilon})^{2}}{\rho_{\epsilon}}(t,\,z)dz}+1\le\sqrt{\int_{\mathbb{T}}\left(\frac{(\nabla\rho)^{2}}{\rho}\right)_{\epsilon}(t,\,z)dz}+1\\
 & =\sqrt{\int_{\mathbb{T}}\frac{(\nabla\rho)^{2}}{\rho}(t,\,z)dz}+1.
\end{align*}
This implies $\int_{0}^{T}\sup_{x\in\mathbb{T}}\rho_{\epsilon}^{2}(t,\,x)\le2(\hat{I}(\rho)+T)$
and we can obtain the desired result by taking $\epsilon\rightarrow0$. \end{proof}
\begin{rem}
\label{RMK4.3}Henceforth, for any function $f$ on $\mathbb{T}$,
$f_{\epsilon}$ denotes $f*\phi_{\epsilon}$ where $\phi_{\epsilon}$
is the heat kernel on $\mathbb{T}$ at time $\epsilon^{2}$. 
\end{rem}
The next two lemmas prove that the domain of the rate function is
included in $\mathscr{D}_{color}^{m}$.
\begin{lem}
For any $G\in C^{0,1}([0,\,T]\times\mathbb{T})$,
\begin{equation}
\limsup_{N\rightarrow\infty}\frac{1}{N}\log\mathbb{E}_{N}\exp\left\{ \int_{0}^{T}\sum_{i=1}^{N}\left[\nabla G(t,\,x_{i}^{N}(t))-2G^{2}(t,\,x_{i}^{N}(t))\right]dt\right\} \le0\label{eq: apriori energy estimate 1}
\end{equation}
and for each color $c$,
\begin{align}
 & \limsup_{\epsilon\rightarrow0}\limsup_{N\rightarrow\infty}\label{eq: apriori energy estimate 2}\\
 & \frac{1}{N}\log\mathbb{E}_{N}\exp\left\{ \int_{0}^{T}\sum_{i\in I_{c}^{N}}\left[\nabla G(t,\,x_{i}^{N}(t))-12G^{2}(t,\,x_{i}^{N}(t))\left(1+\frac{\rho_{i,\epsilon}(x^{N}(t))}{\lambda}\right)\right]dt\right\} \nonumber \\
 & \le m\left\Vert \rho_{0}\right\Vert _{\infty}.\nonumber 
\end{align}
\end{lem}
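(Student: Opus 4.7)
Both bounds follow the Donsker-Varadhan scheme already used to prove Proposition \ref{prop: eq_normal time regime}. Feynman-Kac together with the variational characterization of the largest eigenvalue of $\mathscr{L}_{N}+V(t,\cdot)$ gives, for any time-dependent potential $V_{N}(t,\cdot)$,
\[
\frac{1}{N}\log\mathbb{E}_{N}\exp\Bigl\{\int_{0}^{T}V_{N}(t,x^{N}(t))\,dt\Bigr\}\le\int_{0}^{T}\sup_{f\in\mathscr{P}_{N}\cap\mathcal{D}(\mathscr{L}_{N})}\frac{\langle V_{N}(t,\cdot),f\rangle-\mathcal{D}_{N}(f)}{N}\,dt+\frac{1}{N}\log\|f_{N}^{0}\|_{\infty},
\]
and by Assumption 3 the last term is $\le\log\|\rho_{0}\|_{\infty}$. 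Both parts therefore reduce to bounding $\langle V_{N}(t,\cdot),f\rangle-\mathcal{D}_{N}(f)$ pointwise in $t$ and uniformly in $f\in\mathscr{P}_{N}\cap\mathcal{D}(\mathscr{L}_{N})$.

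\medskip

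For the first inequality, apply Green's formula (Lemma \ref{lem: GREEN THM}) with $\mathbf{V}(x)=\sum_{k}G(t,x_{k})e_{k}$ to rewrite $\sum_{i}\int_{G_{N}}\nabla G(t,x_{i})f(x)\,dx$. Because summing over all $i$ restores the full permutation symmetry, the boundary contribution coming from the $a$-th coordinate on a face $F_{ab}$ is cancelled in pairs by the one from the $b$-th coordinate, leaving the boundary-free identity $\sum_{i}\int\nabla G(t,x_{i})f\,dx=-\sum_{i}\int G(t,x_{i})\nabla_{i}f\,dx$. Writing $\nabla_{i}f=2\sqrt{f}\,\nabla_{i}\sqrt{f}$ and combining Cauchy-Schwarz with the bound $\sum_{i}\int(\nabla_{i}\sqrt{f})^{2}dx\le 2\mathcal{D}_{N}(f)$ coming from (\ref{eq: ROOT DIRICHLET FORM}), a sharp AM-GM yields
\[
\Bigl|\sum_{i}\int G(t,x_{i})\nabla_{i}f\,dx\Bigr|\le 2\sum_{i}\int G^{2}(t,x_{i})f\,dx+\mathcal{D}_{N}(f),
\]
so that $\langle V_{N},f\rangle-\mathcal{D}_{N}(f)\le 0$ pointwise, proving (\ref{eq: apriori energy estimate 1}) (the Feynman-Kac bound $\|e^{T(\mathscr{L}_{N}+V)}\mathbf{1}\|_{\infty}\le e^{\int\lambda_{N,t}dt}$ is pointwise in the starting state, so no initial-density correction is needed here).

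\medskip

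For the second inequality, run the same argument on $V_{c}=\sum_{i\in I_{c}^{N}}[\nabla G-12G^{2}(1+\rho_{i,\epsilon}/\lambda)]$. The integration by parts now leaves, in addition to the volume term handled by the bulk Cauchy-Schwarz above, the residual boundary piece
\[
\mathcal{B}(f)=\sum_{a\in I_{c}^{N},\,b\notin I_{c}^{N}}\int_{F_{ab}}G(t,x_{a})\bigl[f_{ab}(x)-f_{ba}(x)\bigr]\,dS_{ab}(x),
\]
because the color-restricted summation destroys the antisymmetric cancellation. Factor $f_{ab}-f_{ba}=(\sqrt{f_{ab}}-\sqrt{f_{ba}})(\sqrt{f_{ab}}+\sqrt{f_{ba}})$ and apply Cauchy-Schwarz twice: the $(\sqrt{f_{ab}}-\sqrt{f_{ba}})^{2}$ piece is absorbed into the Poisson part of $\mathcal{D}_{N}(f)$ via (\ref{eq: ROOT DIRICHLET FORM}), and the surviving weighted boundary integral $\lambda^{-1}\sum_{a,b}\int_{F_{ab}}G^{2}(t,x_{a})(f_{ab}+f_{ba})\,dS$ is converted, via a weighted version of Proposition \ref{Main Replacement Lemma} obtained by carrying the $x_{a}$-dependent factor $G^{2}(t,x_{a})$ through the Green's-formula derivation of that Proposition, into the volume form $\lambda^{-1}\sum_{i\in I_{c}^{N}}\int G^{2}(t,x_{i})\rho_{i,\epsilon}(x)f(x)\,dx$ plus an error $o_{N,\epsilon}(N)$. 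The constant $12$ in front of $G^{2}(1+\rho_{i,\epsilon}/\lambda)$ is tuned to absorb both the bulk Cauchy-Schwarz and this weighted boundary contribution simultaneously, so that again $\langle V_{c},f\rangle-\mathcal{D}_{N}(f)\le o_{N,\epsilon}(N)$ uniformly in $f$. Combining with the $\frac{1}{N}\log\|f_{N}^{0}\|_{\infty}\le\log\|\rho_{0}\|_{\infty}$ correction coming from Assumption 3, together with the color factor implicit in Lemma \ref{lem: symmetrization lemma}, yields the stated bound $m\|\rho_{0}\|_{\infty}$ on the right.

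\medskip

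\emph{Main obstacle.} The core difficulty is promoting Proposition \ref{Main Replacement Lemma} to a version carrying the weight $G^{2}(t,x_{a})$ on the boundary integral, uniformly in $f\in\mathscr{P}_{N}$ and with the replacement error vanishing in the iterated limit $N\to\infty$ first and $\epsilon\to 0$ second. One should revisit the Green's-formula proof of Proposition \ref{Main Replacement Lemma} with the test function $g_{\epsilon}(x_{i}-x_{j})$ multiplied by $G^{2}(t,x_{i})$ and track the extra error terms involving $\nabla G^{2}$; once the weighted replacement is established, the sharp-constant tuning that produces the coefficient $12$ is a routine Young-inequality computation.
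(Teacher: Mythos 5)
The proposal reaches both bounds but deviates from the paper in the first part and worries about a non-issue in the second.

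\textbf{First bound.} The paper exploits the full permutation symmetry of $\sum_i[\nabla G-2G^2](t,x_i^N(t))$: since this observable is blind to labels, the interaction is irrelevant and one can pretend the particles are independent Brownian motions, factorize into $N$ one-body Feynman--Kac expectations, and bound each by a one-dimensional variational formula that collapses into a perfect square $-\frac18\int h\,[4G+\nabla h/h]^2\le 0$. You instead stay in the $N$-body picture and estimate $\langle V_N,f\rangle-\mathcal D_N(f)$ directly via the general Green's formula on $G_N$. That route does work: the vector field $\mathbf V=\sum_kG(t,x_k)e_k$ has boundary contribution $\langle\mathbf V,e_i-e_j\rangle=G(t,x_i)-G(t,x_j)$, which vanishes \emph{termwise} on each face $F_{ij}$ (not by pairwise cancellation as you write, though the conclusion is the same), and the AM--GM you apply gives exactly the coefficient $2$. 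Two cautions: the vector field $G(t,x_k)e_k$ does \emph{not} have the form $V_i(x)=\sum_{j\ne i}h(x_j-x_i)$ required by Lemma \ref{lem: GREEN THM}, so one must invoke (\ref{eq:green}) directly, as the paper does for the analogous step in the second bound; and your claim that ``the Feynman--Kac bound $\|e^{T(\mathscr L_N+V)}\mathbf 1\|_\infty\le e^{\int\lambda_{N,t}\,dt}$ is pointwise in the starting state'' is not justified---the variational formula gives an $L^2(dx)$ spectral bound, not an $L^\infty$ semigroup bound, and in general an initial-density correction of order $\log\|\rho_0\|_\infty$ does appear. (The paper's ``$\le0$'' elides the same point; it is harmless for the application in Lemma \ref{lem: Domain of Rate function} since only finiteness is used.) Your direct $N$-body argument is otherwise a clean, genuinely different derivation of the same bound and has the minor advantage of being uniform with the second part.

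\textbf{Second bound.} Here your plan is essentially the paper's: Green's formula with $\mathbf V=\sum_{i\in I_c^N}G(t,x_i)e_i$ gives $U_1$ plus a boundary term $U_2$; $U_1$ is absorbed by AM--GM into $12\sum_{i\in I_c^N}\int G^2f+\frac16\mathcal D_N(f)$; $U_2$ is split by Cauchy--Schwarz into a jump-Dirichlet piece and $\lambda^{-1}\sum\int_{F_{ik}}G^2(f_{ik}+f_{ki})\,dS_{ik}$, which is then replaced by $\frac{12}{\lambda}\sum_i\int G^2\rho_{\epsilon,i}f$. However, the ``main obstacle'' you flag---re-deriving Proposition \ref{Main Replacement Lemma} with the weight $G^2(t,x_i)$ while tracking error terms from $\nabla G^2$---does not actually arise. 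In the proof of that proposition the vector field is $g_\epsilon(x_i-x_j)e_j$, pointing only in the $e_j$ direction, so the divergence is $\nabla_j$ and the $x_i$-dependent weight $G^2(t,x_i)$ is never differentiated; moreover on each face $F_{jk}$ or $F_{ij}$ the weight is a continuous function that attaches cleanly to the boundary integrals. The error bounds in Lemmas \ref{Lemma4} and \ref{Lemma7} then carry the uniform factor $\|G\|_\infty^2$ through unchanged. So the weighted replacement is a routine relabeling, not a new estimate; the paper simply writes ``apply Proposition \ref{Main Replacement Lemma}'' for this reason. Aside from that misplaced concern, your proof of the second part is correct and follows the paper's argument, including the symmetrization cost $m^N\|\rho_0\|_\infty^N$ coming from Assumption 3 and Lemma \ref{lem: symmetrization lemma}.
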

\begin{proof}
For (\ref{eq: apriori energy estimate 1}), the expression is symmetric
and thus we can consider the model as non-interacting case. Then,
\begin{align}
 & \log\mathbb{E}_{N}\left[\exp\left\{ \int_{0}^{T}\sum_{i=1}^{N}\left[\nabla G(t,\,x_{i}^{N}(t))-2G^{2}(t,\,x_{i}^{N}(t))\right]dt\right\} \right]\nonumber \\
 & =\sum_{i=1}^{N}\log\mathbb{E}_{N}\left[\exp\left\{ \int_{0}^{T}\left[\nabla G(t,\,x_{i}^{N}(t))-2G^{2}(t,\,x_{i}^{N}(t))\right]dt\right\} \right].\label{eq:bmtm}
\end{align}
If $\beta$ is a standard Brownian motion under $P$, then by  Feynman-Kac
formula and the variational formula for the largest eigenvalue,
\begin{align*}
 & \log\mathbb{E}^{P}\left[\exp\left\{ \int_{0}^{T}\left[\nabla G(t,\,\beta(t))-2G^{2}(t,\,\beta(t))\right]dt\right\} \right]\\
 & \le\int_{0}^{T}\sup_{h\in C^{\infty}(\mathbb{T}),\,\int_{\mathbb{T}}h(y)dy=1,\,h\ge0}\left\{ \int_{\mathbb{T}}h(y)(\nabla G-2G^{2})(t,\,y)-\frac{1}{8}\frac{\left(\nabla h(y)\right)^{2}}{h(y)}dy\right\} dt\\
 & =\int_{0}^{T}\sup_{h\in C^{\infty}(\mathbb{T}),\,\int_{\mathbb{T}}h(y)dy=1,\,h\ge0}\left\{ -\frac{1}{8}\int_{\mathbb{T}}h(y)\left[4G(t,\,y)+\frac{\nabla h(y)}{h(y)}\right]^{2}dy\right\} dt\\
 & \le0.
\end{align*}
Therefore, (\ref{eq:bmtm}) is non-positive.

For (\ref{eq: apriori energy estimate 2}), we should take the interaction
into account and this requires us to consider additional $\rho_{i,\epsilon}(x^{N}(t))$
part. In the spirit of Lemma \ref{lem: symmetrization lemma}, we
can replace $\mathbb{E}_{N}$ in (\ref{eq: apriori energy estimate 2})
by $\bar{\mathbb{E}}_{N}^{color}$. Then, the estimate with $\bar{\mathbb{E}}_{N}^{color}$
is equivalent to the one with $\mathbb{E}_{N}^{eq}$ since 
\[
\left\Vert \frac{d\mathbb{P}_{N}^{eq}}{d\bar{\mathbb{P}}_{N}^{color}}\right\Vert _{\infty}\le\left\Vert \frac{d\bar{\mathbb{P}}_{N}}{d\bar{\mathbb{P}}_{N}^{color}}\right\Vert _{\infty}\left\Vert \frac{d\mathbb{P}_{N}^{eq}}{d\bar{\mathbb{P}}_{N}}\right\Vert _{\infty}\le m^{N}\left\Vert \rho_{0}\right\Vert _{\infty}^{N}
\]
by Assumption 3. Consequently, we can substitute $\mathbb{E}_{N}$
in (\ref{eq: apriori energy estimate 2}) by $\mathbb{E}_{N}^{eq}$
and the price of this substitution is $m\left\Vert \rho_{0}\right\Vert _{\infty}$.
For the equilibrium estimate, by the standard argument as before,
\begin{align*}
\frac{1}{N}\log\mathbb{E}_{N}^{eq}\exp\left\{ \int_{0}^{T}\sum_{i\in I_{c}^{N}}\left[\nabla G(t,\,x_{i}^{N}(t))-12G^{2}(t,\,x_{i}^{N}(t))\left(1+\frac{\rho_{i,\epsilon}(x^{N}(t))}{\lambda}\right)\right]dt\right\} \\
\le\int_{0}^{T}\sup_{f\in\mathscr{P}_{N}}\Biggl\{\frac{1}{N}\sum_{i\in I_{c}^{N}}\int_{G_{N}}f(x)\left[\nabla G(t,\,x_{i})-12G^{2}(t,\,x_{i})\left(1+\frac{\rho_{i,\epsilon}(x))}{\lambda}\right)\right]dx\\
-\frac{\mathcal{D}_{N}(f)}{N}\Biggl\} & .
\end{align*}
Now we apply Green's formula (\ref{eq:green}) with the vector field
$\sum_{i\in I_{c}^{N}}G(t,\,x_{i})e_{i}$ such that 
\begin{equation}
\sum_{i\in I_{c}^{N}}\int_{G_{N}}f(x)\nabla G(t,\,x_{i})dx=U_{1}+U_{2}\label{eq:U10}
\end{equation}
where
\begin{align*}
U_{1} & =\sum_{i\in I_{c}^{N}}\int_{G_{N}}\nabla_{i}f(x)G(t,\,x_{i})dx\\
U_{2} & =\sum_{i\in I_{c}^{N}}\sum_{k:k\neq i}\int_{F_{ik}}G(t,\,x_{i})(f_{ki}(x)-f_{ik}(x))dS_{ik}(x).
\end{align*}
We can estimate $U_{1}$ as
\begin{align}
U_{1} & \le\sum_{i\in I_{c}^{N}}\int_{G_{N}}\left[12f(x)G^{2}(t,\,x_{i})+\frac{1}{48}\frac{(\nabla_{i}f(x))^{2}}{f(x)}\right]dx\label{eq:U11}\\
 & \le12\sum_{i\in I_{c}^{N}}\int_{G_{N}}f(x)G^{2}(t,\,x_{i})dx+\frac{1}{6}\mathcal{D}_{N}(f).\nonumber 
\end{align}
For $U_{2}$, we can apply Proposition \ref{Main Replacement Lemma}
such that 
\begin{align}
 & U_{2}\label{eq: U22}\\
 & \le\sum_{i\in I_{c}^{N},\,k\neq i}\int_{F_{ik}}\frac{3}{\lambda N}(\sqrt{f_{ik}}+\sqrt{f_{ki}})^{2}(x)G^{2}(t,\,x_{i})+\frac{\lambda N}{12}(\sqrt{f_{ik}}-\sqrt{f_{ki}})^{2}(x)dS_{ik}(x)\nonumber \\
 & \le\frac{6}{\lambda N}\sum_{i\in I_{c}^{N},\,k\neq i}\int_{F_{ik}}(f_{ik}+f_{ki})(x)G^{2}(t,\,x_{i})dS_{ik}(x)+\frac{1}{6}\mathcal{D}_{N}(f)\nonumber \\
 & \le\frac{12}{\lambda}\sum_{i\in I_{c}^{N}}\int_{G_{N}}f(x)\rho_{\epsilon,i}(x)G^{2}(t,\,x_{i})dx+CN\epsilon^{\frac{1}{4}}\left(1+\left(\frac{\mathcal{D}_{N}(f)}{N}\right)^{\frac{7}{8}}\right)+\frac{1}{6}\mathcal{D}_{N}(f)\nonumber 
\end{align}
where $C$ is a constant only depends on $G$. Thus, we have by (\ref{eq:U11})
and (\ref{eq: U22}) that 
\begin{align*}
 & \frac{1}{N}\sum_{i\in I_{c}^{N}}\int_{G_{N}}f(x)\left[\nabla G(t,\,x_{i})-12G^{2}(t,\,x_{i})\left(1+\frac{\rho_{i,\epsilon}(x)}{\lambda}\right)\right]dx-\frac{\mathcal{D}_{N}(f)}{N}\\
 & \le C\epsilon^{\frac{1}{4}}\left(1+\left(\frac{\mathcal{D}_{N}(f)}{N}\right)^{\frac{7}{8}}\right)-\frac{2}{3}\frac{\mathcal{D}_{N}(f)}{N}\le C'(\epsilon^{\frac{1}{4}}+\epsilon^{2}).
\end{align*}
Thus, the proof is completed. 
\end{proof}
Based on the previous lemma, we can restrict the domain of rate function
to $\mathscr{D}_{color}^{m}$. Note that $\{\widetilde{\mathbb{Q}}_{N}\}_{N=1}^{\infty}$
satisfies the LDP due to Bryc's inverse Varadhan Lemma(\textit{cf.}
Theorem 4.4.2 of \cite{DZ}) because of the exponential tightness
result of Section 3. Hence, we will temporarily denote the rate function
by $\bar{I}^{m}(\cdot)$ in the next lemma, since we do not know exact
form of the rate function at this stage. 
\begin{lem}
\label{lem: Domain of Rate function}If $\bar{I}^{m}(\tilde{\mu}(\cdot))<\infty$,
then $\tilde{\mu}(\cdot)=(\mu_{1},\,\mu_{2},\,\cdots,\,\mu_{m})^{\dagger}\in\mathscr{D}_{color}^{m}$. \end{lem}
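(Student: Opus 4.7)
The plan is to convert the super-exponential bounds (\ref{eq: apriori energy estimate 1}) and (\ref{eq: apriori energy estimate 2}) into energy inequalities via Varadhan's lemma --- available because Bryc's inverse Varadhan lemma applied with the exponential tightness of Section 3 makes $\bar{I}^{m}$ a lower-semicontinuous rate function --- and then to turn those inequalities into the existence of Lebesgue densities with the required regularity through a Riesz representation plus heat-kernel mollification argument.

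First I would address the uncolored total density $\mu = \sum_{c}\mu_{c}$. For any $G \in C^{0,1}([0,T]\times\mathbb{T})$ and $a>0$, the functional
$$F_{aG}(\tilde{\mu}) = \int_{0}^{T}\!\!\int_{\mathbb{T}} [a\nabla G - 2a^{2}G^{2}]\, d\mu(t,dx)\, dt$$
is bounded and continuous on $C([0,T],\mathscr{M}_{color}^{0}(\mathbb{T}))$ since each $\mu_{c}(t)$ has fixed mass $\bar{\rho}_{c}$; Varadhan's lemma applied to (\ref{eq: apriori energy estimate 1}) then yields $F_{aG}(\tilde{\mu}) \le \bar{I}^{m}(\tilde{\mu})$, and optimization over $a$ produces
$$\Bigl(\int_{0}^{T}\!\!\int_{\mathbb{T}} \nabla G\, d\mu(t,dx)\, dt\Bigr)^{2} \le 8\bar{I}^{m}(\tilde{\mu}) \int_{0}^{T}\!\!\int_{\mathbb{T}} G^{2}\, d\mu(t,dx)\, dt.$$
Riesz representation on $L^{2}(d\mu\otimes dt)$ then delivers $J \in L^{2}$ with $\int\!\int \nabla G\, d\mu\, dt = -\int\!\int GJ\, d\mu\, dt$ and $\|J\|_{L^{2}}^{2} \le 8\bar{I}^{m}(\tilde{\mu})$. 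Mollifying $\mu(t)$ by the heat kernel $\phi_{\epsilon}$ and using a Cauchy--Schwarz bound on the convolution $(J\, d\mu)\ast\phi_{\epsilon}$ will give the pointwise estimate $|\nabla(\mu(t)\ast\phi_{\epsilon})|^{2}/(\mu(t)\ast\phi_{\epsilon}) \le (J^{2}\, d\mu)\ast\phi_{\epsilon}$, hence a uniform-in-$\epsilon$ bound on the Dirichlet form of the mollified density. Lemma \ref{lem: L_2 boundness-1} then produces uniform $L_{2}(0,T;L_{\infty}(\mathbb{T}))$ bounds, and weak compactness identifies the limit as the Lebesgue density $\rho$ of $\mu(t)$, simultaneously delivering $\hat{I}(\rho)<\infty$.

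Next I would repeat the argument for each color using (\ref{eq: apriori energy estimate 2}). The nonlinear term rewrites as a continuous functional of $\tilde{\mu}^{N}$ via
$$\tfrac{1}{N}\sum_{i\in I_{c}^{N}} G^{2}(x_{i})\rho_{i,\epsilon}(x^{N}) = \int G^{2}(x)\bigl(\mu^{N}\ast\tfrac{1}{2\epsilon}\chi_{\epsilon}\bigr)(x)\, d\mu_{c}^{N}(dx).$$
For each fixed $\epsilon$, Varadhan's lemma applies; letting $\epsilon\to 0$ with $\mu\ast\tfrac{1}{2\epsilon}\chi_{\epsilon}\to\rho$ in $L_{2}(0,T;L_{\infty}(\mathbb{T}))$ (valid by the $L^{\infty}$ bound on $\rho$ from the previous step) and then optimizing over the scaling parameter yields
$$\Bigl(\int_{0}^{T}\!\!\int_{\mathbb{T}} \nabla G\, d\mu_{c}(t,dx)\, dt\Bigr)^{2} \le 48\bigl(\bar{I}^{m}(\tilde{\mu})+m\|\rho_{0}\|_{\infty}\bigr) \int_{0}^{T}\!\!\int_{\mathbb{T}} G^{2}\Bigl(1+\tfrac{\rho}{\lambda}\Bigr) d\mu_{c}(t,dx)\, dt.$$
The Riesz plus mollification scheme, now carried out in the weighted space $L^{2}((1+\rho/\lambda)d\mu_{c}\otimes dt)$, produces the density $\rho_{c}$ of $\mu_{c}$ and, via the explicit formula (\ref{eq: explicity form of chi A chi}), delivers $\hat{I}_{c}(\tilde{\rho})<\infty$. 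The finite initial entropy (\ref{eq:finiteness of entropy}) then follows from $I_{init}^{m}(\tilde{\rho}(0,\cdot)dx)\le\bar{I}^{m}(\tilde{\mu})<\infty$: under Assumption 3, the initial rate function is essentially the relative entropy with respect to the i.i.d.\ reference with bounded density $\rho_{0}$, so boundedness of $\log\rho_{0}$ upgrades relative-entropy finiteness to ordinary entropy finiteness of $\int\rho(0,x)\log\rho(0,x)\,dx$.

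The main technical obstacle will be the treatment of the $\rho_{i,\epsilon}$ term in (\ref{eq: apriori energy estimate 2}): because it depends on the particle configuration nonlocally, the order of limits ($N\to\infty$ before $\epsilon\to 0$) is essential, and passing to the limit $\epsilon\to 0$ on the variational side requires the already-established $L^{\infty}$ boundedness of $\rho$ to control $\mu\ast\tfrac{1}{2\epsilon}\chi_{\epsilon}$ in the correct function space; this is the sole reason the argument must be bootstrapped, handling the uncolored density first and then the colored ones.
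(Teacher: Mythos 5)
Your plan follows the same high-level route as the paper's proof: apply Varadhan's lemma to the super-exponential energy estimates, optimize the scalar multiplier (your constants $8\bar{I}^{m}$ and $48(\bar{I}^{m}+m\|\rho_{0}\|_{\infty})$ check out), invoke Riesz representation, and bootstrap from the uncolored to the colored densities, finishing with Sanov for the initial entropy. Where you diverge is in how absolute continuity enters: the paper cites Theorem 3.1 of \cite{KO} to conclude up front that $\tilde{\mu}(t)$ is absolutely continuous, and then works with the density $\tilde{\rho}$ throughout; you instead derive absolute continuity of $\mu(t)$ from the Riesz vector field $J$ via the Cauchy--Schwarz mollification bound $|\nabla(\mu(t)*\phi_{\epsilon})|^{2}/(\mu(t)*\phi_{\epsilon})\le(J^{2}\,d\mu)*\phi_{\epsilon}$ together with Lemma~\ref{lem: L_2 boundness-1} and weak $L_{2}$ compactness. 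That is sound once you localize $\int\nabla G\,d\mu=-\int GJ\,d\mu$ to a.e.\ $t$ by the usual separability argument, and it is the more self-contained version; note also that $\mu_{c}\le\mu$ then gives $\mu_{c}=\rho_{c}\,dx$ with $\rho_{c}\le\rho$ for free, so you never need a separate density argument per color.

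The one step that needs repair is your passage $\epsilon\to 0$ in the colored estimate. You assert $\mu*\frac{1}{2\epsilon}\chi_{\epsilon}\to\rho$ in $L_{2}(0,T;L_{\infty}(\mathbb{T}))$ ``by the $L^{\infty}$ bound on $\rho$'', and use this to push the limit through $\int G^{2}(\mu*\frac{1}{2\epsilon}\chi_{\epsilon})\,d\mu_{c}$. But mollification of a merely $L_{\infty}$ function need not converge in $L_{\infty}$ norm, and $\rho\in L_{2}(0,T;L_{\infty})$ does not make that convergence hold uniformly in space. The paper's own handling of this limit is lighter and works directly: from $\hat{I}(\rho)<\infty$ and Lemma~\ref{lem: L_2 boundness-1} one has $\rho\in L_{2}([0,T]\times\mathbb{T})$, hence $\rho_{c}\le\rho$ is also in $L_{2}$, and then $\big|\int G^{2}(\rho*\phi_{\epsilon}-\rho)\rho_{c}\big|\le\|G\|_{\infty}^{2}\,\|\rho*\phi_{\epsilon}-\rho\|_{L_{2}}\,\|\rho_{c}\|_{L_{2}}\to 0$. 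Substituting that Cauchy--Schwarz argument for the $L_{\infty}$ convergence claim closes the gap; the rest of your proposal is correct.
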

\begin{proof}
First of all, we know from the Theorem 3.1 of \cite{KO} that $\bar{I}^{m}(\tilde{\mu}(\cdot))<\infty$
only if $\tilde{\mu}(t)$ is absolute continuous with respect to the
Lebesgue measure for all $t$. Let us define a functional $\Xi_{G}$
on $C([0,\,T],\,\mathscr{M}(\mathbb{\mathbb{T}}))$ by 
\begin{equation}
\Xi_{G}(\tilde{\mu}(\cdot))=\int_{0}^{T}dt\int_{\mathbb{T}}\left[\nabla G(t,\,x)-2G^{2}(t,\,x)\right]\mu(t,\,dx)\label{eq: JJJY1}
\end{equation}
where $G\in C^{0,1}([0,\,T]\times\mathbb{T})$ and $\mu=\sum_{c=1}^{m}\mu_{c}$.
Then (\ref{eq: apriori energy estimate 1}) can be rewritten as 
\[
\limsup_{N\rightarrow\infty}\frac{1}{N}\log\mathbb{E}^{\widetilde{\mathbb{Q}}_{N}}\left[\exp\left\{ N\Xi_{G}(\tilde{\mu}(\cdot))\right\} \right]\le0
\]
and therefore by Varadhan's Lemma, $\Xi_{G}(\tilde{\mu}(\cdot))\le\bar{I}^{m}(\tilde{\mu}(\cdot))$.
In particular, if $\tilde{\mu}(\cdot)=\tilde{\rho}(\cdot,\,x)dx$
satisfies $\bar{I}^{m}(\tilde{\mu}(\cdot))<\infty$, then we have
\[
\int_{0}^{T}\int_{\mathbb{T}}\nabla G(t,\,x)\rho(t,\,x)dxdt\le\bar{I}^{m}(\tilde{\mu}(\cdot))+2\int_{0}^{T}\int_{\mathbb{T}}G^{2}(t,\,x)\rho(t,\,x)dxdt.
\]
and therefore 
\begin{equation}
\int_{0}^{T}\int_{\mathbb{T}}\nabla G(t,\,x)\rho(t,\,x)dxdt\le C\sqrt{\int_{0}^{T}\int_{\mathbb{T}}G^{2}(t,\,x)\rho(t,\,x)dxdt}\label{eq: Boundedness 12}
\end{equation}
for some constant $C\ge0$ which does not depend on $G$. If we define
the inner product $\left\langle \cdot\,,\,\cdot\right\rangle _{\rho}$
on $C([0,\,T]\times\mathbb{T})$ by 
\[
\left\langle F_{1},\,F_{2}\right\rangle _{\rho}=\int_{0}^{T}\int_{\mathbb{T}}F_{1}F_{2}\rho dxdt
\]
and let $L_{\rho}^{2}([0,\,T]\times\mathbb{T})$ be the Hilbert space
by taking completion and equivalent class. Then (\ref{eq: Boundedness 12})
implies that the functional $l(G)=\int_{0}^{T}\int_{\mathbb{T}}\rho\nabla Gdxdt$
is a bounded linear functional on $C^{0,1}([0,\,T]\times\mathbb{T})\subset L_{\rho}^{2}([0,\,T]\times\mathbb{T}).$
By Hahn-Banach's theorem, we can extend $l(\cdot)$ to $L_{\rho}^{2}$
and then Riesz representation theorem gives us a function $H\in L_{\rho}^{2}$
such that $l(G)=\left\langle G,\,H\right\rangle _{\rho}.$ Therefore,
$\rho$ is weakly differentiable with $\nabla\rho:=-H\rho$. Moreover,
since $H\in L_{\rho}^{2}([0,\,T]\times\mathbb{T})$ we obtain
\[
\int_{0}^{T}\int_{\mathbb{T}}H^{2}(t,\,x)\rho(t,\,x)dxdt=\int_{0}^{T}\int_{\mathbb{T}}\frac{(\nabla\rho)^{2}}{\rho}dxdt<\infty.
\]
This proves the finiteness of $\hat{I}(\rho)$. 

Likewise, we can derive from (\ref{eq: apriori energy estimate 2})
that 
\[
\limsup_{\epsilon\rightarrow0}\int_{0}^{T}\int_{\mathbb{T}}\left(\nabla G-6G^{2}\left(1+\frac{\rho*\iota_{\epsilon}}{\lambda}\right)\right)\rho_{c}dxdt\le\bar{I}^{m}(\tilde{\rho})+m||\rho_{0}||_{\infty}.
\]
Since $\rho\in L_{2}([0,\,T]\times\mathbb{T})$ by the finiteness
of $\hat{I}(\rho)$ and Lemma (\ref{lem: L_2 boundness-1}), the LHS
is 
\[
\int_{0}^{T}\int_{\mathbb{T}}\left(\nabla G-\frac{6(\lambda+\rho)G^{2}}{\lambda}\right)\rho_{c}dxdt
\]
and the RHS is independent with $G$. Therefore, we can repeat the
previous argument to prove that $\rho_{c}$ is weakly differentiable
and $\hat{I}_{c}(\tilde{\rho})<\infty$.

Finally, if $\bar{I}^{m}(\tilde{\mu})<\infty$ then by Sanov's theorem
$\int_{\mathbb{T}}\rho(0,\,x)\log\frac{\rho(0,\,x)}{\rho_{0}(x)}dx<\infty$
and this implies the finiteness of the entropy (\ref{eq:finiteness of entropy}).
This finishes the proof. 
\end{proof}
Now, we are ready to explain the dynamic rate function in the sense
of (\ref{eq: dynamic rate function _var form}) for $\tilde{\rho}(t,\,x)dx\in\mathscr{D}_{color}^{m}$.
The only part which is not well-defined in (\ref{eq: dynamic rate function _var form})
is 
\[
\int_{0}^{T}\int_{\mathbb{T}}\nabla\phi^{\dagger}D(\tilde{\rho})\nabla\tilde{\rho}dxdt.
\]
First observe that the $c$th element of $D(\tilde{\rho})\nabla\tilde{\rho}$
is $\frac{\lambda}{\lambda+\rho}\nabla\rho_{c}+\frac{\rho_{c}}{\lambda+\rho}\nabla\rho$,
and hence it is enough to show the finiteness of 
\[
\int_{0}^{T}\int_{\mathbb{T}}\left|\frac{\nabla\rho_{c}}{\lambda+\rho}\right|dxdt\text{ \,\ and\, }\int_{0}^{T}\int_{\mathbb{T}}\left|\frac{\rho_{c}}{\lambda+\rho}\nabla\rho\right|dxdt
\]
for $\tilde{\rho}(t,\,x)dx\in\mathscr{D}_{color}^{m}$. The first
one is bounded by
\[
\left[\int_{0}^{T}\int_{\mathbb{T}}\frac{(\nabla\rho_{c})^{2}}{(\lambda+\rho)\rho_{c}}dxdt\,\int_{0}^{T}\int_{\mathbb{T}}\frac{\rho_{c}}{\lambda+\rho}dxdt\right]^{\frac{1}{2}}
\]
and therefore finite for $\tilde{\rho}(\cdot,\,x)dx\in\mathscr{D}_{color}^{m}$.
The second one is bounded by
\[
\left[\int_{0}^{T}\int_{\mathbb{T}}\frac{(\nabla\rho)^{2}}{\rho}dxdt\,\int_{0}^{T}\int_{\mathbb{T}}\frac{\rho_{c}^{2}\rho}{(\lambda+\rho)^{2}}dxdt\right]^{\frac{1}{2}}
\]
which is also finite since $\frac{\rho_{c}^{2}\rho}{(\lambda+\rho)^{2}}\le\rho$.
Therefore, for $\tilde{\mu}(\cdot)=$$\tilde{\rho}(\cdot,\,x)dx\in\mathscr{D}_{color}^{m}$,
we can define $I_{dyn}^{m}(\tilde{\rho}(\cdot,\,x)dx)$ through the
variational formula (\ref{eq: dynamic rate function _var form}).
We finally set $I_{dyn}^{m}(\tilde{\mu}(\cdot))=\infty$ for $\tilde{\mu}(\cdot)\notin\mathscr{D}_{color}^{m}$. 
\begin{rem}
Henceforth, we write $\tilde{\rho}\in\mathscr{D}_{color}^{m}$ and
 $I_{dyn}^{m}(\tilde{\rho})$, instead of $\tilde{\rho}(\cdot,\,x)dx\in\mathscr{D}_{color}^{m}$
and $I_{dyn}^{m}(\tilde{\rho}(\cdot,\,x)dx)$, respectively, for simplicity. 
\end{rem}

\subsubsection{Lower Semicontinuity}

The next step is to establish the lower semicontinuity of the functional
$I_{color}^{m}(\cdot)$ or equivalently $I_{dyn}^{m}(\cdot)$. To
carry this out, we start from a compactness result. 
\begin{lem}
\label{prop: Estimates based on bound on rate function}Suppose that
$\tilde{\rho}\in\mathscr{D}_{color}^{m}$ and $I_{color}^{m}(\tilde{\rho})<\infty$.
Then, we have
\begin{align}
\int_{0}^{T}\int_{\mathbb{T}}\nabla\tilde{\rho}^{\dagger}\chi A\chi(\tilde{\rho})\nabla\tilde{\rho}\,dxdt & \le C(1+I_{color}^{m}(\tilde{\rho}))\label{eq:ESTIMATE 2}
\end{align}
for some constant $C$. \end{lem}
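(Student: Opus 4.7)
The plan is to extract the desired bound from the two \emph{a priori} energy estimates (\ref{eq: apriori energy estimate 1}) and (\ref{eq: apriori energy estimate 2}) by variational optimization. Recall from the proof of Lemma \ref{lem: Domain of Rate function} that Varadhan's Lemma applied to (\ref{eq: apriori energy estimate 1}) and (\ref{eq: apriori energy estimate 2}) gives, for every $G \in C^{0,1}([0,\,T]\times\mathbb{T})$,
\begin{align*}
\int_{0}^{T}\!\!\int_{\mathbb{T}}\bigl[\nabla G - 2G^{2}\bigr]\rho\,dxdt &\le I_{color}^{m}(\tilde\rho),\\
\limsup_{\epsilon\to 0}\int_{0}^{T}\!\!\int_{\mathbb{T}}\Bigl[\nabla G - 12G^{2}\bigl(1+\tfrac{\rho*\phi_{\epsilon}}{\lambda}\bigr)\Bigr]\rho_{c}\,dxdt &\le I_{color}^{m}(\tilde\rho) + m\|\rho_{0}\|_{\infty}.
\end{align*}
Since $\tilde\rho\in\mathscr{D}_{color}^{m}$, we know $\rho$ and each $\rho_{c}$ are weakly differentiable and $\rho\in L_{2}([0,\,T]\times\mathbb{T})$ by Lemma \ref{lem: L_2 boundness-1}; in particular $\rho*\phi_{\epsilon}\to\rho$ in $L_{1}$ so the second inequality becomes
\[
\int_{0}^{T}\!\!\int_{\mathbb{T}}\Bigl[\nabla G - 12G^{2}\tfrac{\lambda+\rho}{\lambda}\Bigr]\rho_{c}\,dxdt \le I_{color}^{m}(\tilde\rho) + m\|\rho_{0}\|_{\infty}.
\]

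The next step is to integrate $\nabla G$ by parts and then optimize in $G$. Formally, the first inequality becomes $-\int G\nabla\rho\,dxdt - 2\int G^{2}\rho\,dxdt \le I_{color}^{m}(\tilde\rho)$, and the pointwise optimizer $G^{\star}=-\tfrac{\nabla\rho}{4\rho}$ yields the bound $\tfrac{1}{8}\hat{I}(\rho)\le I_{color}^{m}(\tilde\rho)$. Similarly, the second inequality with optimizer $G^{\star}_{c}=-\tfrac{\lambda\nabla\rho_{c}}{24(\lambda+\rho)\rho_{c}}$ produces $\tfrac{\lambda}{48}\hat{I}_{c}(\tilde\rho)\le I_{color}^{m}(\tilde\rho)+m\|\rho_{0}\|_{\infty}$. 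Of course $G^{\star}$ and $G^{\star}_{c}$ are not in $C^{0,1}$, so we must plug in a regularized version such as $G_{\epsilon,\delta}=-\tfrac{\nabla\rho_{\epsilon}}{4(\rho_{\epsilon}+\delta)}$ (and the analogous mollification for the colored case), use standard commutator/mollification estimates together with the weak differentiability of $\rho,\rho_{c}$ to pass $\epsilon\to 0$, and finally let $\delta\to 0$ via monotone/Fatou-type arguments applied to $\int(\nabla\rho_{\epsilon})^{2}/(\rho_{\epsilon}+\delta)\,dxdt$. This gives honest quantitative inequalities $\hat{I}(\rho)\le 8\,I_{color}^{m}(\tilde\rho)$ and $\hat{I}_{c}(\tilde\rho)\le C\bigl(I_{color}^{m}(\tilde\rho)+\|\rho_{0}\|_{\infty}\bigr)$.

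To finish, use the explicit identity (\ref{eq: explicity form of chi A chi}) together with the trivial domination $\tfrac{(\nabla\rho)^{2}}{\lambda+\rho}\le\tfrac{(\nabla\rho)^{2}}{\rho}=\hat{I}(\rho)$ integrand, yielding
\[
\int_{0}^{T}\!\!\int_{\mathbb{T}}\nabla\tilde\rho^{\dagger}\chi A\chi(\tilde\rho)\nabla\tilde\rho\,dxdt \;\le\; \hat{I}(\rho)+\sum_{c=1}^{m}\hat{I}_{c}(\tilde\rho) \;\le\; C\bigl(1+I_{color}^{m}(\tilde\rho)\bigr),
\]
where the last step absorbs $m\|\rho_{0}\|_{\infty}$ (a finite universal constant under Assumption~3) into the additive~$1$.

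The main obstacle is step two: justifying the substitution of the singular optimizer $G^{\star}$ into a variational inequality that was proved only for smooth test functions. The cleanest route is a two-parameter regularization (mollify by the heat semigroup at scale $\epsilon$, then regularize the denominator by $\delta>0$), showing that each approximate plug-in is admissible in (\ref{eq: apriori energy estimate 1}) and (\ref{eq: apriori energy estimate 2}) and that the left-hand side converges to $\tfrac{1}{8}\hat{I}(\rho)$ (resp.\ $\tfrac{\lambda}{48}\hat{I}_{c}(\tilde\rho)$) as $\epsilon,\delta\to 0$; here boundedness of $\rho_{0}$ and the finite-initial-entropy condition~(\ref{eq:finiteness of entropy}) are used to control the mollification at $t=0$.
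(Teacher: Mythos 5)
Your approach is fundamentally different from the paper's, and it has a circularity gap in the way you identify the rate functions. In the proof of Lemma~\ref{lem: Domain of Rate function}, Varadhan's Lemma applied to the \emph{a priori} estimate (\ref{eq: apriori energy estimate 1}) yields $\Xi_{G}(\tilde{\mu})\le\bar{I}^{m}(\tilde{\mu})$ where $\bar{I}^{m}$ is the \emph{abstract} rate function guaranteed by Bryc's inverse Varadhan Lemma together with exponential tightness; at this point in the paper it is explicitly stated that ``we do not know the exact form of the rate function.'' Your argument therefore produces the bounds $\hat{I}(\rho)\le 8\,\bar{I}^{m}(\tilde\rho)$ and $\hat{I}_{c}(\tilde\rho)\le C(\bar{I}^{m}(\tilde\rho)+\|\rho_{0}\|_{\infty})$, not bounds in terms of $I_{color}^{m}=I_{init}^{m}+I_{dyn}^{m}$. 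You have silently replaced $\bar{I}^{m}$ by $I_{color}^{m}$, but the identification $\bar{I}^{m}=I_{color}^{m}$ is precisely what the LDP upper and lower bounds (Sections~\ref{sub:Upper-Bound} and~\ref{sub:Lower-Bound}) are meant to establish, and those bounds in turn rely on the lower semicontinuity of $I_{color}^{m}$ (Theorem~\ref{thm: LSC}), which is proved using the present lemma. So the argument as written is circular.

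The paper avoids this entirely by never invoking $\bar{I}^{m}$: it works directly with the variational formula (\ref{eq: dynamic rate function _var form}) defining $I_{dyn}^{m}$, computes the $\mathscr{H}_{\pm1}(A(\tilde\rho))$ norms of $G=\partial_{t}\tilde\rho-\frac12\nabla[A\chi\nabla\tilde\rho]$, $\nabla[A\chi\nabla\tilde\rho]$, and $\log\tilde\rho$ via (\ref{eq:emmrtt})--(\ref{eq:emmrst}), and then runs an entropy inequality $H_{T}-H_{0}\le -\frac14\int\nabla\tilde\rho^{\dagger}\chi A\chi\nabla\tilde\rho+I_{dyn}^{m}(\tilde\rho)$ using Cauchy--Schwarz in the $\mathscr{H}_{\pm1}$ duality. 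Bounding $H_{0}$ by $I_{init}^{m}(\tilde\rho)+\log\|\rho_{0}\|_{\infty}$ and $H_{T}$ below by a universal constant then gives (\ref{eq:ESTIMATE 2}). Your variational optimization in $G$ is the correct heuristic (your arithmetic $\tfrac18\hat{I}(\rho)$ and $\tfrac{\lambda}{48}\hat{I}_{c}(\tilde\rho)$ is fine), but to make it a proof of this lemma you would need to carry it out inside the explicit variational formula for $I_{dyn}^{m}$ (e.g.\ by taking $\phi$ to be a mollified $\log\tilde\rho$ in (\ref{eq: dynamic rate function _var form}) and controlling the boundary terms via $I_{init}^{m}$), which is essentially the entropy argument in disguise.
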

\begin{proof}
For $g\in C^{\infty}([0,\,T]\times\mathbb{T}^{m})$ with $\int_{\mathbb{T}}g(t,\,x)dx=0$
for all $t\in[0,\,T]$, we can consider a semi-norm $||g||_{\mathscr{H}_{-1}(A(\tilde{\rho}))}^{2}=\int_{0}^{T}||g||_{-1,A(\tilde{\rho})}^{2}$
and by taking completion and equivalence class, we obtain $\mathscr{H}_{-1}$
space. For $h\in C^{\infty}([0,\,T]\times\mathbb{T}^{m})$, we have
another semi-norm $||h||_{\mathscr{H}_{1}(A(\tilde{\rho}))}^{2}=\int_{0}^{T}\int_{\mathbb{T}}\nabla h^{\dagger}A(\tilde{\rho})\nabla hdxdt$
and we can obtain $\mathscr{H}_{1}$ space in a similar manner. These
two spaces are dual each other and hence, for $g\in\mathscr{H}_{-1}$
and $h\in\mathscr{H}_{1}$, the integral $\int_{0}^{T}\int_{\mathbb{T}}g(t,\,x)h(t,\,x)dxdt$
is well-defined and satisfies Cauchy-Schwarz's inequality
\[
\int_{0}^{T}\int_{\mathbb{T}}g(t,\,x)h(t,\,x)dxdt\le||g||_{\mathscr{H}_{-1}(A(\tilde{\rho}))}||h||_{\mathscr{H}_{1}(A(\tilde{\rho}))}.
\]
Now, we write 
\begin{align*}
\log\tilde{\rho} & =(\log\rho_{1},\,\log\rho_{2},\,\cdots,\,\log\rho_{m})^{\dagger}\\
G & =\partial_{t}\tilde{\rho}-\frac{1}{2}\nabla\left[A\chi(\tilde{\rho})\nabla\tilde{\rho}\right]
\end{align*}
then $G,\,\nabla\left[A\chi(\tilde{\rho})\nabla\tilde{\rho}\right]\in\mathscr{H}_{-1}$
and $\log\tilde{\rho}\in\mathscr{H}_{1}$ where the norms can be easily
computed by the variational formula given in (2.24) of \cite{QRV}
such that 
\begin{align}
||G||_{\mathscr{H}_{-1}(A(\tilde{\rho}))}^{2} & =I_{dyn}^{m}(\tilde{\rho})\label{eq:emmrtt}\\
||\nabla\left[A\chi(\tilde{\rho})\nabla\tilde{\rho}\right]||_{\mathscr{H}_{-1}(A(\tilde{\rho}))}^{2} & =\int_{0}^{T}\int_{\mathbb{T}}\nabla\tilde{\rho}^{\dagger}\chi A\chi(\tilde{\rho})\nabla\tilde{\rho}dxdt\label{eq: emmt}\\
||\log\tilde{\rho}||_{\mathscr{H}_{1}(A(\tilde{\rho}))}^{2} & =\int_{0}^{T}\int_{\mathbb{T}}\nabla\tilde{\rho}^{\dagger}\chi A\chi(\tilde{\rho})\nabla\tilde{\rho}dxdt.\label{eq:emmrst}
\end{align}
Now, let us consider the entropy functional 
\[
H_{t}(\tilde{\rho})=\sum_{c=1}^{m}\int_{\mathbb{T}}\rho_{c}(t,\,x)\log\rho_{c}(t,\,x)dx
\]
for $\tilde{\rho}\in\mathscr{D}_{color}^{m}$. Then, by (\ref{eq:emmrtt})
and (\ref{eq:emmrst}), 
\begin{align*}
H_{T}(\tilde{\rho})-H_{0}(\tilde{\rho}) & =\int_{0}^{T}\int_{\mathbb{T}}\left(\log\tilde{\rho}\right)^{\dagger}\left(\frac{1}{2}\nabla\cdot\left[A\chi(\tilde{\rho})\nabla\tilde{\rho}\right]+G\right)dxdt\\
 & =-\frac{1}{2}\int_{0}^{T}\int_{\mathbb{T}}\nabla\tilde{\rho}^{\dagger}\chi A\chi(\tilde{\rho})\nabla\tilde{\rho}+\int_{0}^{T}\int_{\mathbb{T}}\left(\log\tilde{\rho}\right)^{\dagger}Gdxdt\\
 & \le-\frac{1}{2}\int_{0}^{T}\int_{\mathbb{T}}\nabla\tilde{\rho}^{\dagger}\chi A\chi(\tilde{\rho})\nabla\tilde{\rho}+||\log\tilde{\rho}||_{\mathscr{H}_{1}(A(\tilde{\rho}))}||G||_{\mathscr{H}_{-1}(A(\tilde{\rho}))}\\
 & \le-\frac{1}{4}\int_{0}^{T}\int_{\mathbb{T}}\nabla\tilde{\rho}^{\dagger}\chi A\chi(\tilde{\rho})\nabla\tilde{\rho}+I_{dyn}^{m}(\tilde{\rho}).
\end{align*}
Note that 
\[
H_{0}(\tilde{\rho})\le\int_{\mathbb{T}}\rho(0,\,x)\log\rho(0,\,x)dx\le\int_{\mathbb{T}}\rho(0,\,x)\log\frac{\rho(0,\,x)}{\rho_{0}(x)}dx+\log||\rho_{0}||_{\infty}
\]
and $\int_{\mathbb{T}}\rho(0,\,x)\log\frac{\rho(0,\,x)}{\rho_{0}(x)}dx$
is the large deviation rate for $\mu^{N}(0)$ and therefore bounded
by $I_{init}^{m}(\tilde{\rho})$ by the contraction principle. This
proves (\ref{eq:ESTIMATE 2}).
\end{proof}
Establishing the lower semicontinuity or $I_{color}^{m}(\cdot)$ requires
a few convergence results. The following lemma implies that the weak
convergence can be combined with some energy estimates to obtain the
strong convergence. This lemma is motivated by Lemma 4.2 of \cite{QRV}
but our formulation and proof are differ slightly in that we do not
have \textit{a priori} boundedness of the density. 
\begin{lem}
\label{APProximation Lemma} Suppose that $\left\{ f_{N}(\cdot,x)dx\right\} _{N=1}^{\infty}\subset C([0,\,T],\,\mathscr{M}(\mathbb{T}))$
satisfies 
\begin{align}
f_{N}(\cdot,\,x)dx\rightharpoonup f(\cdot,\,x)dx & \,\,\,\,\,\text{weakly in }C([0,\,T],\,\mathscr{M}(\mathbb{T}))\label{eq:nu1}\\
\int_{\mathbb{T}}f_{N}(t,\,x)dx=\int_{\mathbb{T}}f(t,\,x)dx=\bar{f} & \,\,\,\,\,\text{for all \ensuremath{t}},\,N\label{eq:nu2}\\
\int_{0}^{T}\int_{\mathbb{T}}\frac{(\nabla f_{N})^{2}}{\alpha_{N}f_{N}}dxdt\le C & \,\,\,\,\,\text{for all }N\label{eq:Low Con 1}
\end{align}
where positive functions $\alpha_{N}(\cdot,\,\cdot)$ satisfies
\begin{equation}
\int_{\mathbb{T}}\alpha_{N}(t,\,x)\le M\label{eq:Low Con 2}
\end{equation}
uniformly in $t,\,N$ for some $M>0$. Then, $f_{N}\rightarrow f$
strongly in $L_{1}([0,\,T]\times\mathbb{T})$. Moreover, if $f_{N},\,f\in L_{2}([0,\,T]\times\mathbb{T})$
and $\alpha_{N}=1$ for all $N$ then $f_{N}\rightarrow f$ strongly
in $L_{2}([0,\,T]\times\mathbb{T})$.\end{lem}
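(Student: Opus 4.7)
I would prove $L_1$ convergence via mollification, then upgrade to $L_2$ in the special case. Let $f_{N,\epsilon}:=f_N*\phi_\epsilon$ with $\phi_\epsilon$ the heat kernel as in Remark \ref{RMK4.3}, and split
\[
\|f_N-f\|_{L_1([0,T]\times\mathbb{T})} \le \|f_N-f_{N,\epsilon}\|_{L_1}+\|f_{N,\epsilon}-f_\epsilon\|_{L_1}+\|f-f_\epsilon\|_{L_1}.
\]
The outer terms should vanish as $\epsilon\to 0$ (the first uniformly in $N$) via the energy bound; the middle term should vanish for each fixed $\epsilon$ as $N\to\infty$ via the weak convergence.

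The key preliminary step is a pointwise bound on $f_N$. Applying Cauchy--Schwarz to $\sqrt{f_N(t,x)}-\sqrt{f_N(t,y)} = \int_y^x \frac{\nabla f_N}{2\sqrt{\alpha_N f_N}}\sqrt{\alpha_N}\,dz$ together with $\int_\mathbb{T}\alpha_N(t,\cdot)\le M$ yields
\[
\bigl(\sqrt{f_N(t,x)}-\sqrt{f_N(t,y)}\bigr)^2 \le \tfrac{M}{4}\int_\mathbb{T}\tfrac{(\nabla f_N)^2}{\alpha_N f_N}(t,z)\,dz.
\]
Averaging in $y$ and using $\int_\mathbb{T}\sqrt{f_N}\,dy\le\sqrt{\bar f}$ delivers $f_N(t,x)\le 2\bar f+\tfrac{M}{2}\int\tfrac{(\nabla f_N)^2}{\alpha_N f_N}$, so $\int_0^T\|f_N(t,\cdot)\|_\infty\,dt$ is uniformly bounded in $N$. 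A second Cauchy--Schwarz provides
\[
\int_0^T\!\!\int|\nabla f_N|\,dxdt \le \Bigl(\!\int_0^T\!\!\int\tfrac{(\nabla f_N)^2}{\alpha_N f_N}\Bigr)^{1/2}\Bigl(\!\int_0^T\!\!\int \alpha_N f_N\Bigr)^{1/2},
\]
and $\int \alpha_N f_N\le M\|f_N(t,\cdot)\|_\infty$ combined with the previous step yields a uniform bound $B:=\sup_N\int_0^T\!\int|\nabla f_N|<\infty$.

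A Fubini argument on the torus gives $\int_\mathbb{T}|f_N(t,x+y)-f_N(t,x)|\,dx\le|y|\int_\mathbb{T}|\nabla f_N|(t,\cdot)\,dz$, so
\[
\|f_N-f_{N,\epsilon}\|_{L_1}\le \Bigl(\!\int\phi_\epsilon(y)|y|\,dy\Bigr) B \le C\epsilon B,
\]
uniformly in $N$, while $\|f-f_\epsilon\|_{L_1}\to 0$ is standard mollification in $L_1$ (note $f\in L_1$ since $\int f\,dx=\bar f$). For fixed $\epsilon>0$, the family $\{\phi_\epsilon(x-\cdot)\}_{x\in\mathbb{T}}$ is uniformly bounded and equicontinuous in $C(\mathbb{T})$, so the weak convergence of $f_N(t,\cdot)\,dx$ to $f(t,\cdot)\,dx$ (uniform in $t$) upgrades via Arzel\`a--Ascoli to $\|f_{N,\epsilon}-f_\epsilon\|_{L_\infty([0,T]\times\mathbb{T})}\to 0$. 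Sending $N\to\infty$ then $\epsilon\to 0$ in the triangle inequality produces the $L_1$ convergence.

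For the $L_2$ statement with $\alpha_N\equiv 1$, the direct (non-square-root) version of the above argument -- as in the proof of Lemma \ref{lem: L_2 boundness-1} -- gives the stronger bound $\|f_N(t,\cdot)\|_\infty\le\bar f+\sqrt{\bar f\int(\nabla f_N)^2/f_N}$, so $\int_0^T\|f_N(t,\cdot)\|_\infty^2\,dt$ is uniformly bounded. Then $\int|f_N-f|^2\,dx\le(\|f_N\|_\infty+\|f\|_\infty)\int|f_N-f|\,dx$; integrating in $t$, Cauchy--Schwarz, and the trivial bound $\int|f_N-f|\,dx\le 2\bar f$ reduce the $L_2$ claim to the already-established $L_1$ convergence. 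The main technical obstacle is the pointwise bound on $f_N$: because $\alpha_N$ is only integrable in $x$ (not pointwise bounded), one cannot directly pass to an $H^1$ bound on $\sqrt{f_N}$ and must extract the factor $M$ by integrating $\alpha_N$ along the path \emph{before} applying Cauchy--Schwarz -- this is the one place the hypothesis $\int\alpha_N\le M$ is used, and every subsequent estimate rests on it.
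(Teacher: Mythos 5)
Your three-term split and the treatment of the outer terms match the paper's in spirit (you use Arzel\`a--Ascoli where the paper uses Scheff\'e's theorem for the middle term, which is immaterial), but the way you handle the key uniform-in-$N$ estimate of $\|f_N-(f_N)_\epsilon\|_{L_1}$ is a genuinely different route. The paper works at fixed $t$ directly with the square-root difference: it bounds $\int_{\mathbb{T}}\bigl(\sqrt{f_N}-\sqrt{(f_N)_\epsilon}\bigr)^2dx$ by a Fubini rearrangement that pairs $\int\alpha_N\le M$ with $\int|y|\phi_\epsilon(y)dy=O(\epsilon)$, and then converts back to an $L_1$ estimate with one Cauchy--Schwarz against $\int f_N=\bar f$; no pointwise control of $f_N$ is ever needed for the $L_1$ statement. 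You instead globalize: you first derive the sup bound $\|f_N(t,\cdot)\|_\infty\le 2\bar f+\frac{M}{2}\int\frac{(\nabla f_N)^2}{\alpha_N f_N}(t,\cdot)$ to tame $\int\alpha_N f_N$, extract the uniform bound $\sup_N\|\nabla f_N\|_{L_1([0,T]\times\mathbb{T})}<\infty$, and finish with the elementary $\|g-g_\epsilon\|_{L_1}\le C\epsilon\|\nabla g\|_{L_1}$. Both routes spend the hypothesis $\int\alpha_N\le M$ inside essentially the same Cauchy--Schwarz; yours costs an extra intermediate step but produces a uniform $W^{1,1}$ bound and an $L_1(0,T,L_\infty(\mathbb{T}))$ bound on $f_N$ as byproducts, while the paper's argument is leaner and stays local in $t$ throughout.

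In the $L_2$ step, your reduction $\int|f_N-f|^2\le(\|f_N\|_\infty+\|f\|_\infty)\int|f_N-f|$ requires $\|f(t,\cdot)\|_\infty\in L_2(0,T)$, which the stated hypothesis $f\in L_2([0,T]\times\mathbb{T})$ does not hand you directly. It can be recovered — pass to an a.e.-convergent subsequence $f_{N_k}\to f$ of the $L_1$-convergent sequence, note $\|f(t,\cdot)\|_\infty\le\liminf_k\|f_{N_k}(t,\cdot)\|_\infty$ for a.e.\ $t$, and apply Fatou in $t$ using the uniform $L_2(0,T,L_\infty(\mathbb{T}))$ bound you already established for the $f_N$ — but this step should be spelled out. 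The paper sidesteps the issue by running the same three-term split in $L_2$ (now without square roots), so the only place $f$ itself enters is the mollification term $\|f_\epsilon-f\|_{L_2}$, for which $f\in L_2$ is exactly the hypothesis given.
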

\begin{proof}
For the first part, we first recall the notation of Remark \ref{RMK4.3}
and then it suffices to show 
\[
\lim_{\epsilon\rightarrow0}\limsup_{N\rightarrow\infty}\int_{0}^{T}\int_{\mathbb{T}}\left|f_{N}-\left(f_{N}\right)_{\epsilon}\right|+\left|\left(f_{N}\right)_{\epsilon}-f_{\epsilon}\right|+\left|f_{\epsilon}-f\right|dxdt=0.
\]
For the second term in this limit, observe first that $\left(f_{N}\right)_{\epsilon}\rightarrow f_{\epsilon}$
pointwise as $N\rightarrow\infty$ due to the weak convergence and
\[
\int_{0}^{T}\int_{\mathbb{T}}\left|\left(f_{N}\right)_{\epsilon}\right|dxdt=\int_{0}^{T}\int_{\mathbb{T}}\left|f_{\epsilon}\right|dxdt=T\bar{f}.
\]
Therefore we can apply Scheffe's Theorem to check the desired convergence.
The third term obviously tends to $0$ as $\epsilon\rightarrow0$
and therefore it is enough to show 
\begin{equation}
\lim_{\epsilon\rightarrow0}\sup_{N}\int_{0}^{T}\int_{\mathbb{T}}\left|f_{N}-\left(f_{N}\right)_{\epsilon}\right|dxdt=0.\label{eq: APOP1}
\end{equation}
By Cauchy-Schwarz's inequality we can bound 
\begin{eqnarray}
\int_{0}^{T}\int_{\mathbb{T}}\left|f_{N}-\left(f_{N}\right)_{\epsilon}\right| & \le & \sqrt{4T\bar{f}\int_{0}^{T}\int_{\mathbb{T}}\left(\sqrt{f_{N}}-\sqrt{\left(f_{N}\right)_{\epsilon}}\right)^{2}dx}.\label{eq: cskal}
\end{eqnarray}
Now we can bound $\int_{\mathbb{T}}\left(\sqrt{f_{N}}-\sqrt{\left(f_{N}\right)_{\epsilon}}\right)^{2}dx$
by
\begin{align}
 & \int_{\mathbb{T}}\int_{\mathbb{T}}\left(\sqrt{f_{N}(t,\,x)}-\sqrt{f_{N}(t,\,x+y)}\right)^{2}\phi_{\epsilon}(y)dydx\label{eq: asdfkkk}\\
 & \le\int_{\mathbb{T}}\int_{\mathbb{T}}\left(\int_{\mathbb{T}}1_{[x,\,x+y]}(z)\frac{\nabla f_{N}(t,\,z)}{2\sqrt{f_{N}(z)}}dz\right)^{2}\phi_{\epsilon}(y)dydx\nonumber \\
 & \le\frac{1}{4}\int_{\mathbb{T}}\int_{\mathbb{T}}\left(\int_{\mathbb{T}}\frac{\left|\nabla f_{N}\right|^{2}}{\alpha_{N}f_{N}}(t,\,z)dz\right)\left(\int_{\mathbb{T}}\mathds{1}_{[x,\,x+y]}(w)a_{N}(t,\,w)dw\right)\phi_{\epsilon}(y)dydx\nonumber \\
 & \le\frac{1}{4}\left(\int_{\mathbb{T}}\frac{\left|\nabla f_{N}\right|^{2}}{\alpha_{N}f_{N}}(t,\,z)dz\right)\int_{\mathbb{T}}\int_{\mathbb{T}}\int_{\mathbb{T}}\alpha_{N}(t,\,w)\mathds{1}_{[w-y,\,w]}(x)\phi_{\epsilon}(y)dxdydw\nonumber \\
 & =\frac{1}{4}\left(\int_{\mathbb{T}}\frac{\left|\nabla f_{N}\right|^{2}}{\alpha_{N}f_{N}}(t,\,z)dz\right)\left(\int_{\mathbb{T}}\alpha_{N}(t,\,w)dw\right)\left(\int_{\mathbb{T}}y\phi_{\epsilon}(y)dy\right).\nonumber 
\end{align}
By (\ref{eq:Low Con 1}), (\ref{eq:Low Con 2}) and (\ref{eq: cskal})
we obtain 
\[
\int_{0}^{T}\int_{\mathbb{T}}\left|f_{N}-\left(f_{N}\right)_{\epsilon}\right|\le\sqrt{CMT\bar{f}\int_{\mathbb{T}}y\phi_{\epsilon}(y)dy}
\]
which completes the proof of the first part. 

For the second part, the property $\alpha_{N}=1$ enable us to enhance
the calculations of (\ref{eq: asdfkkk}) in a way that
\begin{align*}
 & \int_{\mathbb{T}}\left|f_{N}-\left(f_{N}\right)_{\epsilon}\right|^{2}dxdt\\
 & \le\int_{\mathbb{T}}\int_{\mathbb{T}}\left(f_{N}(t,\,x)-f_{N}(t,\,x+y)\right)^{2}\phi_{\epsilon}(y)dydx\\
 & \le\frac{1}{4}\int_{\mathbb{T}}\int_{\mathbb{T}}\left(\int_{\mathbb{T}}1_{[x,\,x+y]}(z)\frac{\left|\nabla f_{N}(t,\,z)\right|^{2}}{f_{N}(t,\,z)}dz\right)\left(\int_{\mathbb{T}}f_{N}(t,\,w)dw\right)\phi_{\epsilon}(y)dydx\\
 & =\frac{\bar{f}}{4}\int_{\mathbb{T}}\int_{\mathbb{T}}\int_{\mathbb{T}}\frac{\left|\nabla f_{N}(t,\,z)\right|^{2}}{f_{N}(t,\,z)}1_{[z-y,\,z]}(x)\phi_{\epsilon}(y)dxdydw\\
 & =\frac{\bar{f}}{4}\int_{\mathbb{T}}\frac{\left|\nabla f_{N}(t,\,z)\right|^{2}}{f_{N}(t,\,z)}dz\int_{\mathbb{T}}y\phi_{\epsilon}(y)dy
\end{align*}
and we are done. 
\end{proof}
The following lemma is a summary of elementary convergence results
which are useful in our context. 
\begin{lem}
\label{lem: frequently used}Let $\left\{ f_{N}\right\} _{N=1}^{\infty}$,
$\left\{ g_{N}\right\} _{N=1}^{\infty}$ be sequences of functions
on $[0,\,T]\times\mathbb{T}$. 
\begin{enumerate}
\item \noindent If $f_{N}\rightarrow f$, $g_{N}\rightarrow g$ strongly
in $L_{1}$ and $\left\Vert f_{N}\right\Vert _{L_{\infty}}<C$ for
all $N$, then $f_{N}g_{N}\rightarrow fg$ strongly in $L_{1}$. 
\item \noindent If $f_{N}\rightarrow f$ strongly in $L_{2}$ and $g{}_{N}\rightharpoonup g$
weakly in $L_{2}$ then $f_{N}g_{N}\rightharpoonup fg$ weakly in
$L_{1}$.
\item \noindent Assuming that $f_{N},\,f$ are weakly differentiable and
$g_{N},\,g>0$ for all $N$. If $f_{N}\rightharpoonup f$ weakly in
$L_{1}$, $g_{N}\rightarrow g$ strongly in $L_{2}$ and $\left\{ \nabla f_{N}/g_{N}\right\} _{N=1}^{\infty}$
is uniformly bounded in $L_{2}$, then $\nabla f_{N}/g_{N}\rightharpoonup\nabla f/g$
weakly in $L_{2}$. 
\end{enumerate}
\end{lem}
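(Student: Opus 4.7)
We treat the three parts separately; parts (1) and (2) follow from the standard strong/weak dichotomy in $L_p$, while part (3) requires a duality argument to identify the weak limit of a product.

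For \emph{part (1)}, decompose $f_N g_N - fg = f_N(g_N - g) + (f_N - f)g$. The first piece has $L_1$-norm at most $C\|g_N - g\|_{L_1} \to 0$. For the second, an a.e.-convergent subsequence of $f_N$ forces $|f| \le C$ a.e., so $|f_N - f| \le 2C$ a.e.; then for each $M > 0$,
\[
\int |f_N - f|\,|g|\,dx\,dt \le M\|f_N - f\|_{L_1} + 2C\int_{\{|g|>M\}} |g|\,dx\,dt,
\]
and sending $N \to \infty$ then $M \to \infty$ (using $g \in L_1$) completes the argument.

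For \emph{part (2)}, fix $\phi \in L_\infty([0,T]\times\mathbb{T})$. Then $\phi f_N \to \phi f$ strongly in $L_2$ because $\|\phi(f_N - f)\|_{L_2} \le \|\phi\|_\infty \|f_N - f\|_{L_2}\to 0$. Pairing this with the weak $L_2$ convergence $g_N \rightharpoonup g$ yields $\int \phi f_N g_N\,dx\,dt \to \int \phi fg\,dx\,dt$, which is precisely weak convergence of $f_N g_N$ to $fg$ in $L_1$.

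For \emph{part (3)}, set $h_N := \nabla f_N/g_N$, which is bounded in $L_2$ by hypothesis. By Banach--Alaoglu, every subsequence admits a further subsequence $h_{N_k} \rightharpoonup h$ in $L_2$ for some $h \in L_2$. The key step is to identify $h$ with $\nabla f/g$. For any $\phi \in C_c^\infty((0,T)\times\mathbb{T})$, strong convergence of $g_{N_k}$ in $L_2$ implies $\phi g_{N_k} \to \phi g$ strongly in $L_2$, and so the strong--weak pairing gives
\[
\int \phi h g\,dx\,dt = \lim_{k\to\infty} \int \phi h_{N_k} g_{N_k}\,dx\,dt = \lim_{k\to\infty} \int \phi\,\nabla f_{N_k}\,dx\,dt = -\lim_{k\to\infty} \int (\nabla\phi) f_{N_k}\,dx\,dt.
\]
Since $\nabla\phi \in L_\infty$ and $f_{N_k} \rightharpoonup f$ weakly in $L_1$, the last limit equals $-\int (\nabla\phi) f\,dx\,dt = \int \phi\,\nabla f\,dx\,dt$ in the distributional sense. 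Hence $hg = \nabla f$ as distributions, which (since $g > 0$ a.e.) gives $h = \nabla f/g$ a.e. As the identified limit is independent of the chosen subsequence, the entire sequence $h_N$ converges weakly in $L_2$ to $\nabla f/g$.

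The main subtlety is in part (3): we must pass to the limit in a product $h_N g_N = \nabla f_N$ where one factor is only weakly $L_2$-bounded and the other only strongly $L_2$-convergent. Localization against a smooth cutoff $\phi$ followed by integration by parts reduces the identification to the weak $L_1$ convergence of $f_N$, which is the only information available on $f_N$ itself; the strong $L_2$ convergence of $g_N$ is what enables the passage inside the product.
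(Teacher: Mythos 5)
Your proof is correct and follows essentially the same line of reasoning as the paper: truncation plus the $L_\infty$ bound for (1), the strong $L_2$ times weak $L_2$ pairing for (2), and, for (3), extracting a weak $L_2$ subsequential limit of $\nabla f_N/g_N$ and identifying it with $\nabla f/g$ via integration by parts against a smooth test function together with the weak $L_1$ convergence of $f_N$. The only cosmetic difference is that in (3) you fold the product-pairing step into the argument directly rather than citing part (2) as the paper does.
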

\begin{proof}
(1) For any $M>0$, 
\begin{align*}
\int_{0}^{T}\int_{\mathbb{T}}\left|f_{N}g_{N}-fg\right|dxdt & \le\int_{0}^{T}\int_{\mathbb{T}}\left|f_{N}\right|\left|g_{N}-g\right|+\left|g\right|\left|f_{N}-f\right|dxdt\\
 & \le\int_{0}^{T}\int_{\mathbb{T}}C\left|g_{N}-g\right|+M\left|f_{N}-f\right|+2C\left|g\right|1_{|g|>M}dxdt
\end{align*}
then we can send $N\rightarrow\infty$ and then $M\rightarrow\infty$
to obtain the desired result. 

$ $

\noindent (2) For any bounded function $U$,
\begin{align*}
 & \left|\int_{0}^{T}\int_{\mathbb{T}}\left(Uf_{N}g_{N}-Ufg\right)dxdt\right|\\
 & \le\int_{0}^{T}\int_{\mathbb{T}}\left|U\right|\left|g_{N}\right|\left|f_{N}-f\right|dxdt+\left|\int_{0}^{T}\int_{\mathbb{T}}\left(Ufg_{N}-Ufg\right)dxdt\right|
\end{align*}
and since $\left\{ g_{N}\right\} _{N=1}^{\infty}$ is uniformly bounded
in $L_{2}$ the first term converges to $0$. $Uf$ is a $L_{2}$
function and therefore the second term goes to 0 as well. 

$ $

\noindent (3) For any subsequence of $\{\nabla f_{N}/g_{N}\}_{N=1}^{\infty}$,
we can take a further subsequence which converges weakly in $L_{2}$
to some $u$. Then it suffices to show $u=\frac{\nabla f}{g}$ almost
surely. To this end, without loss of generality, we assume $\frac{\nabla f_{N}}{g_{N}}\rightharpoonup u$
weakly in $L_{2}$ instead of its subsequence. Then, $\nabla f_{N}\rightharpoonup gu$
weakly in $L_{1}$ by (2). However, for any smooth function $v$,
\begin{align*}
\int_{0}^{T}\int_{\mathbb{T}}v(gu)dxdt & =\lim_{N\rightarrow\infty}\int_{0}^{T}\int_{\mathbb{T}}v\nabla f_{N}dxdt=\lim_{N\rightarrow\infty}-\int_{0}^{T}\int_{\mathbb{T}}f_{N}\nabla vdxdt\\
 & =-\int_{0}^{T}\int_{\mathbb{T}}f\nabla vdxdt=\int_{0}^{T}\int_{\mathbb{T}}v\nabla fdxdt
\end{align*}
and therefore we obtain $\nabla f=gu$. 
\end{proof}
Now we are ready to prove the lower semicontinuity of the rate function. 
\begin{thm}
\label{thm: LSC}The functional $I_{color}^{m}(\cdot)$ is lower semicontinuous. \end{thm}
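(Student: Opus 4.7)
The plan is to leverage the variational representation (\ref{eq: dynamic rate function _var form}): since $I_{dyn}^m(\tilde{\rho}) = \sup_\phi L(\phi, \tilde{\rho})$, where $\phi$ ranges over $C^\infty([0,T]\times\mathbb{T}^m)$ and $L(\phi,\tilde{\rho})$ denotes the functional inside the supremum, and because any pointwise supremum of lower semicontinuous functionals is itself lower semicontinuous, it suffices to verify that for each fixed $\phi$ the map $\tilde{\rho}\mapsto L(\phi,\tilde{\rho})$ is continuous along sequences $\{\tilde{\rho}_n\}$ for which $\liminf_n I_{color}^m(\tilde{\rho}_n)$ is finite. The initial-data piece $I_{init}^m$ is lower semicontinuous by Assumption 2, so once $I_{dyn}^m$ is shown to be lower semicontinuous, so is the sum $I_{color}^m$.

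Given such a sequence $\tilde{\rho}_n\to\tilde{\rho}$, I will first extract strong compactness from the energy estimates. After passing to a subsequence achieving the liminf, Lemma \ref{prop: Estimates based on bound on rate function} combined with the identity (\ref{eq: explicity form of chi A chi}) furnishes uniform bounds on $\hat{I}(\rho_n)$ and on each $\hat{I}_c(\tilde{\rho}_n)$, and Lemma \ref{lem: L_2 boundness-1} then bounds $\rho_n$ uniformly in $L_2(0,T;L_\infty(\mathbb{T}))$. I will apply Lemma \ref{APProximation Lemma} once with $\alpha_N\equiv 1$ to conclude $\rho_n\to\rho$ strongly in $L_2$, and once with $\alpha_N = \lambda+\rho_n$ (which is integrable in $x$ with total mass $\lambda+1$) to conclude $\rho_c^n\to\rho_c$ strongly in $L_1$; the pointwise estimate $(\rho_c^n-\rho_c)^2\le |\rho_c^n-\rho_c|(\rho_n+\rho)$ together with generalized dominated convergence then upgrades the latter to strong $L_2$ convergence of each $\rho_c^n$.

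With these convergences in hand I will pass to the limit in each piece of $L(\phi,\tilde{\rho}_n)$. The linear boundary terms and the bulk term $\int\partial_t\phi^\dagger\tilde{\rho}$ converge immediately from the weak convergence in $C([0,T],\mathscr{M}(\mathbb{T})^m)$. The quadratic term $\int\nabla\phi^\dagger A(\tilde{\rho})\nabla\phi$ will be handled by noting $|A_{ij}(\tilde{\rho}_n)|\le 2\rho_n$ and pointwise a.e.\ convergence, so generalized dominated convergence yields $A_{ij}(\tilde{\rho}_n)\to A_{ij}(\tilde{\rho})$ in $L_1$, against the bounded factor $\nabla\phi\nabla\phi$. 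The genuinely delicate piece is the bilinear term $\int\nabla\phi^\dagger D(\tilde{\rho})\nabla\tilde{\rho}$; expanding the $c$-th component of $D\nabla\tilde{\rho}$ as $\lambda\nabla\rho_c/(\lambda+\rho) + \rho_c\nabla\rho/(\lambda+\rho)$ splits it into two families of integrals. For the first I will apply Lemma \ref{lem: frequently used}(3) with $f_N=\rho_c^n$, $g_N=\lambda+\rho_n$, where the uniform $L_2$ bound on the ratio $\nabla\rho_c^n/(\lambda+\rho_n)$ comes from $(\lambda+\rho_n)^2\ge (\lambda+\rho_n)\rho_c^n$ and the bound on $\hat{I}_c(\tilde{\rho}_n)$; weak $L_2$ convergence of the ratio, tested against the bounded factor $\lambda\nabla\phi_c$, gives the limit. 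For the second I will apply the same lemma with $f_N=\rho_n$, $g_N=\lambda+\rho_n$, the $L_2$ bound on $\nabla\rho_n/(\lambda+\rho_n)$ arising from $(\lambda+\rho_n)^2\ge\lambda\rho_n$ and the bound on $\hat{I}(\rho_n)$; then Lemma \ref{lem: frequently used}(2) combined with strong $L_2$ convergence of $\rho_c^n\nabla\phi_c$ yields weak $L_1$ convergence of the product, hence convergence of the integral.

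The principal obstacle is precisely this bilinear term: it is neither continuous under vague convergence of $\tilde{\rho}$ alone nor linear in $\nabla\tilde{\rho}$, and the argument succeeds only because the weighted gradient bounds furnished by the rate function fit the hypotheses of Lemma \ref{lem: frequently used}(3) once one uses the pointwise comparisons $(\lambda+\rho)^2\ge (\lambda+\rho)\rho_c$ and $(\lambda+\rho)^2\ge\lambda\rho$ to translate them into ordinary $L_2$ bounds on the ratios. Once $L(\phi,\tilde{\rho}_n)\to L(\phi,\tilde{\rho})$ is established for each fixed $\phi$, taking the supremum over $\phi$ produces $I_{dyn}^m(\tilde{\rho})\le\liminf_n I_{dyn}^m(\tilde{\rho}_n)$, completing the proof.
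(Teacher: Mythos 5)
Your proposal is correct and follows the same strategy as the paper's proof: reduce to showing $\Lambda_\phi(\tilde{\rho}^{(k)})\to\Lambda_\phi(\tilde{\rho})$ for each fixed test function, use Lemma \ref{prop: Estimates based on bound on rate function} to extract uniform energy estimates, apply Lemma \ref{APProximation Lemma} to obtain strong convergence of $\rho^{(k)}$ in $L_2$ and of each $\rho_c^{(k)}$ in $L_1$, and pass to the limit in the three nontrivial pieces via Lemma \ref{lem: frequently used}. The only technical variations are cosmetic: for the quadratic term $\int\nabla\phi^\dagger A(\tilde{\rho})\nabla\phi$ you invoke Pratt's generalized dominated convergence via the pointwise bound $|A_{ij}|\le\rho_n$, whereas the paper uses the boundedness of $\tfrac{\lambda+\rho_c^{(k)}}{\lambda+\rho^{(k)}}$ and $\tfrac{\rho_c^{(k)}}{\lambda+\rho^{(k)}}$ by $1$ together with Lemma \ref{lem: frequently used}(1); and for the second piece of the bilinear term you factor $\tfrac{\rho_c^n}{\lambda+\rho_n}\nabla\rho_n$ as $\rho_c^n\cdot\tfrac{\nabla\rho_n}{\lambda+\rho_n}$ (first upgrading $\rho_c^n\to\rho_c$ to strong $L_2$), whereas the paper factors it as $\tfrac{\rho_c^{(k)}}{\sqrt{\lambda+\rho^{(k)}}}\cdot\tfrac{\nabla\rho^{(k)}}{\sqrt{\lambda+\rho^{(k)}}}$. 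Both routes rely on the same pointwise inequalities $(\lambda+\rho)^2\ge(\lambda+\rho)\rho_c$ and $(\lambda+\rho)^2\ge\lambda\rho$ to convert the rate-function energy estimates into genuine $L_2$ bounds on the ratios, and both are valid; neither buys anything the other lacks.
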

\begin{proof}
It suffices to show that if $\tilde{\rho}^{(k)}(t,\,x)dx\rightharpoonup\tilde{\rho}(t,\,x)dx$
weakly in $C([0,\,T],\,\mathscr{M}(\mathbb{T}))$ and $I_{color}^{m}(\tilde{\rho}^{(k)})\le M$
for all $k$ then $I_{color}^{m}(\tilde{\rho})\le M$. Since we already
assumed the initial LDP as in Assumption 2, it is enough to consider
the dynamic part. 

We start by considering a functional 
\begin{align}
\Lambda_{\phi}(\tilde{\rho})= & \int_{\mathbb{T}}\phi^{\dagger}\tilde{\rho}(T,\,x)dx-\int_{\mathbb{T}}\phi^{\dagger}\tilde{\rho}(0,\,x)dx\nonumber \\
 & +\int_{0}^{T}\int_{\mathbb{T}}\left[-\frac{\partial\phi}{\partial t}^{\dagger}\tilde{\rho}+\frac{1}{2}\nabla\phi^{\dagger}D(\tilde{\rho})\nabla\tilde{\rho}-\frac{1}{2}\nabla\phi^{\dagger}A(\tilde{\rho})\nabla\phi\right]dxdt\Biggl\}\label{eq: naham}
\end{align}
on $\mathscr{D}_{color}^{m}$, then 
\[
I_{dyn}^{m}(\tilde{\rho})=\sup_{\phi\in C^{\infty}([0,\,T]\times\mathbb{T})}\Lambda_{\phi}(\tilde{\rho})
\]
and hence it is enough to show $\lim_{k\rightarrow\infty}\Lambda_{\phi}(\tilde{\rho}^{(k)})=\Lambda_{\phi}(\tilde{\rho})$.
The convergences of the first three terms in (\ref{eq: naham}) are
direct from the weak convergence of $\tilde{\rho}^{(k)}$ and therefore
it suffices to show 
\begin{eqnarray}
 & D(\tilde{\rho}^{(k)})\nabla\tilde{\rho}^{(k)}\rightharpoonup D(\tilde{\rho})\nabla\tilde{\rho} & \,\,\,\,\text{weakly in \ensuremath{L_{1}([0,\,T]\times\mathbb{T})}}\label{eq: KSJJ2}\\
 & A(\tilde{\rho}^{(k)})\rightarrow A(\tilde{\rho}) & \,\,\,\,\text{strongly in \ensuremath{L_{1}([0,\,T]\times\mathbb{T}).}}\label{eq: KSJJ1}
\end{eqnarray}
First note that the uniform boundedness of $I_{color}^{m}(\tilde{\rho}^{(k)})$
and Lemma \ref{prop: Estimates based on bound on rate function} together
imply
\begin{equation}
\int_{0}^{T}\int_{\mathbb{T}}\frac{(\nabla\rho^{(k)})^{2}}{\rho^{(k)}}<M'\text{\,\,\ and\,\,}\int_{0}^{T}\int_{\mathbb{T}}\frac{\left(\nabla\rho_{c}^{(k)}\right)^{2}}{(\lambda+\rho^{(k)})\rho_{c}^{(k)}}<M'\text{\,,\,\ \ensuremath{\forall}\ensuremath{c}\,}\label{eq:unfm bd}
\end{equation}
for some $M'$. Thus, $\left\{ \rho_{c}^{(k)}\right\} _{k=1}^{\infty}$
satisfies the conditions of Lemma \ref{APProximation Lemma} with
$\alpha_{c}^{(k)}=\lambda+\rho^{(k)}$, and therefore convergence
of $\rho_{c}^{(k)}\rightarrow\rho_{c}$ is strong in $L_{1}$. Moreover,
$\left\{ \rho^{(k)}\right\} _{k=1}^{\infty}$ satisfies the conditions
of the second part of Lemma \ref{APProximation Lemma} because of
Lemma \ref{lem: L_2 boundness-1} and hence $\rho^{(k)}\rightarrow\rho$
strongly in $L_{2}$.

To show (\ref{eq: KSJJ2}), first note that the $c$th element of
$D(\tilde{\rho}^{(k)})\nabla\tilde{\rho}^{(k)}$ is $\frac{\lambda\nabla\rho_{c}^{(k)}}{\lambda+\rho^{(k)}}+\frac{\rho_{c}^{(k)}}{\lambda+\rho^{(k)}}\nabla\rho^{(k)}$
and therefore it suffices to show that 
\begin{eqnarray}
 & \frac{\nabla\rho_{c}^{(k)}}{\lambda+\rho^{(k)}}\rightharpoonup\frac{\nabla\rho_{c}}{\lambda+\rho} & \,\,\,\,\text{weakly in \ensuremath{L_{1}([0,\,T]\times\mathbb{T})}}\label{eq: KSJJ3}\\
 & \frac{\rho_{c}^{(k)}}{\lambda+\rho^{(k)}}\nabla\rho^{(k)}\rightharpoonup\frac{\rho_{c}}{\lambda+\rho}\nabla\rho & \,\,\,\,\text{weakly in \ensuremath{L_{1}([0,\,T]\times\mathbb{T})}}\label{eq:KSJJ4}
\end{eqnarray}
for each $c$. Note that (\ref{eq: KSJJ3}) follows directly from
(\ref{eq:unfm bd}) and (3) of Lemma \ref{lem: frequently used}.
For (\ref{eq:KSJJ4}), by the same argument as before, we can show
$\frac{\nabla\rho^{(k)}}{\sqrt{\lambda+\rho^{(k)}}}\rightharpoonup\frac{\nabla\rho}{\sqrt{\lambda+\rho}}$
weakly in $L_{2}$ and it is also easy to check that $\frac{\rho_{c}^{(k)}}{\sqrt{\lambda+\rho^{(k)}}}\rightarrow\frac{\rho_{c}}{\sqrt{\lambda+\rho}}$
strongly in $L_{2}$. Thus, by (2) of Lemma \ref{lem: frequently used},
we can prove (\ref{eq:KSJJ4}). 

To prove (\ref{eq: KSJJ1}), we need to show
\begin{eqnarray}
 & \frac{\lambda+\rho_{c}^{(k)}}{\lambda+\rho^{(k)}}\rho_{c}^{(k)}\rightarrow\frac{\lambda+\rho_{c}}{\lambda+\rho}\rho_{c} & \,\,\,\,\text{strongly in \ensuremath{L_{1}\left([0,\,T]\times\mathbb{T}\right)}}\label{eq:skjs}\\
 & \frac{\rho_{c}^{(k)}}{\lambda+\rho^{(k)}}\rho_{c'}^{(k)}\rightarrow\frac{\rho_{c}}{\lambda+\rho}\rho_{c'} & \,\,\,\,\text{strongly in \ensuremath{L_{1}\left([0,\,T]\times\mathbb{T}\right)}}\label{eq:sksj}
\end{eqnarray}
for each $c,\,c'$. Since $\frac{\lambda+\rho_{c}^{(k)}}{\lambda+\rho^{(k)}}\rightarrow\frac{\lambda+\rho_{c}}{\lambda+\rho}$
and $\frac{\rho_{c}^{(k)}}{\lambda+\rho^{(k)}}\rightarrow\frac{\rho_{c}}{\lambda+\rho}$
strongly in $L_{1}$ and bounded by $1$, we can prove (\ref{eq:skjs}),
(\ref{eq:sksj}) by (1) of Lemma \ref{lem: frequently used}.
\end{proof}

\subsection{Upper Bound\label{sub:Upper-Bound}}

In this section, we establish the LDP upper bound for $\bigl\{\widetilde{\mathbb{Q}}_{N}\bigr\}_{N=1}^{\infty}$
with the rate function $I_{color}^{m}(\cdot)$. The upper bound is
usually based on the exponential martingale with a mean of $1$ and
the martingale should be suitably chosen such that it can be approximated
by the density fields $\tilde{\mu}^{N}(\cdot)$. An exponential martingale
such as this can be built by first using $z_{i}^{N}(t)$ (\textit{cf.}
(\ref{eq:z(t)})) in a way such that 
\begin{equation}
\frac{1}{N}\sum_{c=1}^{m}\sum_{i\in I_{c}^{N}}\int_{0}^{T}g_{x}^{(c)}(t,\,z_{i}^{N}(t))dz_{i}^{N}(t).\label{eq: hyu3}
\end{equation}
where $\tilde{g}=(g^{(1)},\,g^{(2)},\,\cdots,\,g^{(m)})^{\dagger}\in C^{1,2}([0,\,T]\times\mathbb{T})^{m}$.
This martingale can be reinterpreted as 
\begin{align}
\frac{1}{N}\sum_{c=1}^{m}\sum_{i\in I_{c}^{N}}\Biggl[g^{(c)}(T,\,z_{i}^{N}(T))-g^{(c)}(0,\,z_{i}^{N}(0))-\int_{0}^{T}g_{t}^{(c)}(t,\,z_{i}^{N}(t))dt\label{eq: ord_martingale 1}\\
-\frac{1}{2}\int_{0}^{T}g_{xx}^{(c)}(t,\,z_{i}^{N}(t))d\left\langle z_{i}^{N},\,z_{i}^{N}\right\rangle _{t} & \Biggr]\nonumber 
\end{align}
according to Ito's formula. As we commented in Section \ref{sub:Small-Time-Regime},
we can represent $z_{i}^{N}(t)-z_{i}^{N}(0)$ as 
\begin{equation}
\frac{N\lambda+1}{N(\lambda+1)}\beta_{i}(t)+\frac{1}{N(\lambda+1)}\sum_{k:k\neq i}\beta_{k}(t)+\frac{1}{N(\lambda+1)}\sum_{k:k\neq i}(M_{ki}^{N}(t)-M_{ik}^{N}(t))\label{eq: z(t) as martingale}
\end{equation}
and therefore the quadratic variation $d\left\langle z_{i}^{N},\,z_{i}^{N}\right\rangle _{t}$
is 
\begin{equation}
\left[\frac{\lambda^{2}}{(\lambda+1)^{2}}+\frac{2\lambda+1}{N(\lambda+1)^{2}}\right]dt+\frac{\lambda}{(\lambda+1)^{2}}dA_{i}^{N}(t)\label{eq: QV of z(t)}
\end{equation}
Note that, although this expression relates to the local time, we
can replace it by the local density by using Theorem \ref{thm: REPLACEMENT_colored}.
However, even after that, we still have a problem in (\ref{eq: ord_martingale 1}).
Broadly stated, we have a nuisance term relating to $\rho_{t}$ in
the final stage that should not have appeared. The strategy for eliminating
this term is to add another martingale 
\begin{equation}
\frac{1}{N}\sum_{i=1}^{N}\int_{0}^{T}J_{x}(t,\,x_{i}^{N}(t))d\beta_{i}(t)\label{eq: hyu4}
\end{equation}
to (\ref{eq: hyu3}) with a suitably chosen $J\in C^{1,2}([0,\,T]\times\mathbb{T})$,
which also has an alternative representation 
\begin{equation}
\frac{1}{N}\sum_{i=1}^{N}\left[J(T,\,x_{i}^{N}(T))-J(0,\,x_{i}^{N}(0))-\int_{0}^{T}\left(J_{t}+\frac{1}{2}J_{xx}\right)(t,\,x_{i}^{N}(t))dt\right]\label{eq: ord_martingale 2}
\end{equation}
according to Ito's formula. 

We now start the proof of the upper bound by defining a martingale
$M_{N}(\tilde{g},\,J)$ for $\tilde{g}\in C^{1,2}([0,\,T]\times\mathbb{T}^{m})$
and $J\in C^{1,2}([0,\,T]\times\mathbb{T})$ by 
\begin{align*}
 & M_{N}(\tilde{g},\,J)\\
 & =\int_{0}^{T}\sum_{c=1}^{m}\sum_{i\in I_{c}^{N}}g_{x}^{(c)}(t,\,z_{i}^{N}(t))dz_{i}^{N}(t)+\int_{0}^{T}\sum_{i=1}^{N}J_{x}(t,\,x_{i}^{N}(t))d\beta_{i}(t)\\
 & =\int_{0}^{T}\sum_{c=1}^{m}\sum_{i\in I_{c}^{N}}\left[\frac{\lambda}{\lambda+1}g_{x}^{(c)}(t,\,z_{i}^{N}(t))+\frac{1}{\lambda+1}G_{N}(t,\,x^{N}(t))+J_{x}(t,\,x_{i}^{N}(t))\right]d\beta_{i}\\
 & \,\,\,\,\,\,\,+\sum_{\substack{1\le c_{1}<c_{2}\le m}
}\sum_{\substack{i\in I_{c_{1}}^{N},\,j\in I_{c_{2}}^{N}}
}\int_{0}^{T}\frac{\mathbf{g}_{x}^{(c_{1},\,c_{2})}(t,\,z_{i}^{N}(t))}{N(\lambda+1)}\left(dM_{ij}^{N}(t)-dM_{ji}^{N}(t)\right)
\end{align*}
by (\ref{eq: z(t) as martingale}) where
\begin{align*}
G_{N}(t,\,x^{N}(t)) & =\frac{1}{N}\sum_{c=1}^{m}\sum_{i\in I_{c}^{N}}g^{(c)}(t,\,z_{i}^{N}(t))\\
\mathbf{g}_{x}^{(c_{1},\,c_{2})}(t,\,z_{i}^{N}(t)) & =g_{x}^{(c_{1})}(t,\,z_{i}^{N}(t))-g_{x}^{(c_{2})}(t,\,z_{i}^{N}(t)).
\end{align*}
The next object to be characterized is $A_{N}(\tilde{g},\,J)$ which
must satisfy 
\begin{equation}
\mathbb{E}_{N}\exp\Bigl\{ M_{N}(\tilde{g},\,J)-A_{N}(\tilde{g},\,J)\Bigr\}=1.\label{eq: exponential martingale}
\end{equation}
We can find such an $A_{N}(\tilde{g},\,J)$ by 
\begin{align*}
 & A_{N}(\tilde{g},\,J)\\
 & =\frac{1}{2}\int_{0}^{T}\sum_{c=1}^{m}\sum_{i\in I_{c}^{N}}\left[\frac{\lambda}{\lambda+1}g_{x}^{(c)}(t,\,z_{i}^{N}(t))+\frac{1}{\lambda+1}G_{N}(t,\,x^{N}(t))+J_{x}(t,\,x_{i}^{N}(t))\right]^{2}dt\\
 & \,\,\,\,\,\,+\lambda N\sum_{\substack{1\le c_{1}<c_{2}\le m}
}\,\sum_{i\in I_{c_{1}}^{N},\,j\in I_{c_{2}}^{N}}\int_{0}^{T}U\left(\frac{\mathbf{g}_{x}^{(c_{1},\,c_{2})}(t,\,z_{i}^{N}(t))}{N(\lambda+1)}\right)\left(dA_{ij}^{N}(t)+dA_{ji}^{N}(t)\right)\\
 & =\frac{1}{2}\int_{0}^{T}\sum_{c=1}^{m}\sum_{i\in I_{c}^{N}}\left[\frac{\lambda}{\lambda+1}g_{x}^{(c)}(t,\,z_{i}^{N}(t))+\frac{1}{\lambda+1}G_{N}(t,\,x^{N}(t))+J_{x}(t,\,x_{i}^{N}(t))\right]^{2}dt\\
 & \,\,\,\,\,\,+\frac{\lambda}{2N(\lambda+1)^{2}}\sum_{\substack{1\le c_{1}<c_{2}\le m}
}\,\sum_{i\in I_{c_{1}}^{N}}\int_{0}^{T}\left(\mathbf{g}_{x}^{(c_{1},\,c_{2})}(t,\,z_{i}^{N}(t))\right)^{2}dA_{i,c_{2}}^{N}(t)+O_{N}(1)
\end{align*}
where $U(x)=e^{x}-x-1\sim\frac{x^{2}}{2}$. Note that the error term
is $O_{N}(1)$ because of (\ref{eq:Estimate of Total Local Time}). 

The next step is to approximate $M_{N}(\tilde{g},\,J)$ and $A_{N}(\tilde{g},\,J)$
by a density field of $\tilde{\mu}^{N}(\cdot)$. To carry this program
out, we define a set $\mathscr{B}_{N,\epsilon,\delta}\subset C([0,\,T],\,\mathbb{T}^{N})$
such that $x(\cdot)\in\mathscr{B}_{N,\epsilon,\delta}$ if and only
if 
\begin{equation}
\left|\int_{0}^{T}\widetilde{V}_{N,\epsilon}^{\mathbf{g}}(t,\,x(t))dt\right|<\delta\,\,\,\text{and}\,\,\,\left|\int_{0}^{T}V_{N,\epsilon}^{(c)}(t,\,x(t))dt\right|<\delta\,\,\,\text{ for }c=1,\,2,\,\cdots,\,m\label{eq: etm1}
\end{equation}
where
\begin{align*}
V_{N,\epsilon}^{(c)}(t,\,x)= & \frac{1}{N^{2}}\sum_{i=1}^{N}g_{xx}^{(c)}(t,\,z_{i})\sum_{j:j\neq i}\left[\frac{1}{2\epsilon}\chi_{\epsilon}(x_{j}-x_{i})-\left(\delta^{+}(x_{j}-x_{i})+\delta^{+}(x_{i}-x_{j})\right)\right]\\
\widetilde{V}_{N,\epsilon}^{\mathbf{g}}(t,\,x)= & \frac{1}{N^{2}}\sum_{\substack{1\le c_{1}<c_{2}\le m}
}\,\sum_{i\in I_{c_{1}}^{N},\,j\in I_{c_{2}}^{N}}\Biggl\{\left(\mathbf{g}_{x}^{(c_{1},\,c_{2})}(t,\,z_{i})\right)^{2}\\
 & \,\,\,\,\,\,\,\,\,\,\,\,\,\,\,\,\,\,\,\,\,\,\,\,\,\,\,\,\,\,\,\,\,\,\,\,\,\,\,\,\,\,\,\,\,\,\,\,\,\times\left[\frac{1}{2\epsilon}\chi_{\epsilon}(x_{j}-x_{i})-\left(\delta^{+}(x_{j}-x_{i})+\delta^{+}(x_{i}-x_{j})\right)\right]\Biggl\}.
\end{align*}
Recall from Theorem \ref{thm: REPLACEMENT_colored} that 
\begin{equation}
\limsup_{\epsilon\rightarrow0}\limsup_{N\rightarrow\infty}\frac{1}{N}\mathbb{P}_{N}\left[\mathscr{B}_{N,\epsilon,\delta}^{c}\right]=-\infty.\label{eq:SUPPEP}
\end{equation}
for any $\delta>0$. Now, we can approximate $M_{N}(\tilde{g},\,J)$
and $A_{N}(\tilde{g},\,J)$ by the density field for $x^{N}(\cdot)\in\mathscr{B}_{N,\epsilon,\delta}$.
More precisely, (\ref{eq: ord_martingale 1}) and (\ref{eq: ord_martingale 2})
imply 
\begin{equation}
M_{N}(\tilde{g},\,J)=N\left[\Phi_{\epsilon,\tilde{g},J}(\tilde{\mu}^{N}(\cdot))+O(\delta)\right]\label{eq:exponet 1}
\end{equation}
for $x^{N}(\cdot)\in\mathscr{B}_{N,\epsilon,\delta}$ where the functional
$\Phi_{\epsilon,\tilde{g},J}(\cdot)$ on $C([0,\,T],\,\mathscr{M}(\mathbb{T})^{m})$
is defined as 
\begin{align}
\Phi_{\epsilon,\tilde{g},J}(\tilde{\pi}_{\cdot})= & \left\langle \tilde{\pi}_{T},\,\tilde{g}(T,\,F_{\pi}(T,\,x))\right\rangle -\left\langle \tilde{\pi}_{0},\,\tilde{g}(0,\,F_{\pi}(0,\,x))\right\rangle \nonumber \\
 & -\int_{0}^{T}\left\langle \tilde{\pi}_{t},\,\left(\tilde{g_{t}}+\frac{\lambda(\lambda+\left(\pi_{t}*\iota_{\epsilon}\right)(x)}{2(\lambda+1)^{2}}\tilde{g}_{xx}\right)(t,\,F_{\pi}(t,\,x))\right\rangle dt\nonumber \\
 & +\left\langle \pi_{T},\,J(T,\,x)\right\rangle -\left\langle \pi_{0},\,J(0,\,x)\right\rangle -\int_{0}^{T}\left\langle \pi_{t},\,\left(J_{t}+\frac{1}{2}J_{xx}\right)\left(t,\,x\right)\right\rangle dt\label{eq: APPPP1}
\end{align}
for $\tilde{\pi}_{\cdot}=(\pi_{\cdot}^{1},\,\pi_{\cdot}^{2},\,\cdots,\,\pi_{\cdot}^{m})^{\dagger}$,
$\pi_{t}=\sum_{c=1}^{m}\pi_{t}^{c}$ and 
\[
F_{\pi}(t,\,x)=x+\frac{1}{\lambda+1}\left\langle \pi_{t}(dy),\,\nu(y-x)\right\rangle 
\]
Note that we used (\ref{eq: etm1}) to replace the local time by the
local density $\pi_{t}*\iota_{\epsilon}$. Similarly, we can obtain\textit{
}
\begin{equation}
A_{N}(\tilde{g},\,J)=N\left[\Psi_{\epsilon,\tilde{g},J}(\tilde{\mu}^{N}(\cdot))+O(\delta)+O\left(\frac{1}{N}\right)\right]\label{eq:exponet 2}
\end{equation}
for $x^{N}(\cdot)\in\mathscr{B}_{N,\epsilon,\delta}$ where
\begin{align*}
\Psi_{\epsilon,\tilde{g},J}(\tilde{\pi}_{\cdot})= & \,\frac{\lambda^{2}}{2(\lambda+1)^{2}}\int_{0}^{T}\left\langle \tilde{\pi}_{t},\,\tilde{g}_{x}^{2}\left(t,\,F_{\pi}(t,\,x)\right)\right\rangle dt\\
 & +\frac{\lambda}{\lambda+1}\int_{0}^{T}\left\langle \tilde{\pi}_{t},\,\left(J_{x}(t,\,x)+K_{\tilde{\pi},\tilde{g}}(t)\right)\tilde{g}_{x}\left(t,\,F_{\pi}(t,\,x)\right)\right\rangle dt\\
 & +\frac{1}{2}\int_{0}^{T}\left\langle \tilde{\pi}_{t},\,\left(J_{x}(t,\,x)+K_{\tilde{\pi},\tilde{g}}(t)\right)^{2}\right\rangle dt\\
 & +\frac{\lambda}{2(\lambda+1)^{2}}\int_{0}^{T}\left\langle \tilde{\pi}_{t},\,\left(\pi_{t}*\iota_{\epsilon}\right)(x)\,\tilde{g}_{x}^{2}(t,\,F_{\pi}(t,\,x))\right\rangle dt\\
 & -\frac{\lambda}{2(\lambda+1)^{2}}\int_{0}^{T}\left\langle \tilde{\pi}_{t},\,L_{\tilde{\pi},\tilde{g},\epsilon}(t,\,x)\,\tilde{g}_{x}(t,\,F_{\pi}(t,\,x))\right\rangle dt
\end{align*}
with
\begin{align}
\tilde{g}_{x}^{2}(t,\,x)= & \left(\tilde{g}_{x}^{(1)}(t,\,x)^{2},\,\tilde{g}_{x}^{(2)}(t,\,x)^{2},\,\cdots,\,\tilde{g}_{x}^{(m)}(t,\,x)^{2}\right)^{\dagger}\nonumber \\
K_{\tilde{\pi},\tilde{g}}(t)= & \frac{1}{\lambda+1}\left\langle \tilde{\pi}_{t},\,\tilde{g}_{x}(t,\,F_{\pi}(t,\,x))\right\rangle \label{eq:APPPP2}\\
L_{\tilde{\pi},\tilde{g},\epsilon}(t,\,x)= & \left(\tilde{\pi}_{t}*\iota_{\epsilon}\right)(t,\,x)\cdot\tilde{g}_{x}\left(t,\,F_{\pi}(t,\,x)\right).\nonumber 
\end{align}
We can combine (\ref{eq: exponential martingale}), (\ref{eq:exponet 1})
and (\ref{eq:exponet 2}) so that 
\begin{align}
 & \frac{1}{N}\log\mathbb{E}_{N}\left[\exp\left\{ N\left[\Phi_{\epsilon,\tilde{g},J}(\tilde{\mu}^{N}(\cdot))-\Psi_{\epsilon,\tilde{g},J}(\tilde{\mu}^{N}(\cdot))\right]\right\} \cdot\mathds{1}_{\mathscr{B}_{N,\epsilon,\delta}}\right]\label{eq:exponent 3}\\
 & =O(\delta)+O\left(\frac{1}{N}\right)\nonumber 
\end{align}

Now we are ready to establish the large deviation upper bound for
compact sets by the standard method (e.g., Chapter 10 of \cite{KL}).
For any open set $\mathscr{O}\subset C([0,\,T],\,\mathscr{M}(\mathbb{T})^{m})$,
\begin{align*}
 & \limsup_{N\rightarrow\infty}\frac{1}{N}\log\widetilde{\mathbb{Q}}_{N}\left[\mathscr{O}\right]\\
 & \le\max\biggl\{\limsup_{N\rightarrow\infty}\frac{1}{N}\log\mathbb{P}_{N}\left[\left\{ \tilde{\mu}^{N}(\cdot)\in\mathscr{O}\right\} \cap\mathscr{B}_{N,\epsilon,\delta}\right],\,\limsup_{N\rightarrow\infty}\frac{1}{N}\log\mathbb{P}_{N}\left[\mathscr{B}_{N,\epsilon,\delta}^{c}\right]\biggr\}
\end{align*}
and by Chebyshev's inequality with (\ref{eq:exponent 3}), 
\begin{align*}
 & \frac{1}{N}\log\mathbb{P}_{N}\left[\left\{ \tilde{\mu}^{N}\in\mathscr{O}\right\} \cap\mathscr{B}_{N,\epsilon,\delta}\right]\\
 & \le\frac{1}{N}\log\mathbb{E}_{N}\left[\exp\left\{ N\left[\Phi_{\epsilon,\tilde{g},J}(\tilde{\mu}^{N}(\cdot))-\Psi_{\epsilon,\tilde{g},J}(\tilde{\mu}^{N}(\cdot))\right]\right\} \cdot\ensuremath{\mathds{1}}_{\mathscr{B}_{N,\epsilon,\delta}}\right]\\
 & \,\,\,\,\,\,-\inf_{\tilde{\pi}_{\cdot}\in\mathscr{O}}\left\{ \Phi_{\epsilon,\tilde{g},J}(\tilde{\pi}_{\cdot})-\Psi_{\epsilon,\tilde{g},J}(\tilde{\pi}_{\cdot})\right\} \\
 & =-\inf_{\tilde{\pi}_{\cdot}\in\mathscr{O}}\left\{ \Phi_{\epsilon,\tilde{g},J}(\tilde{\pi}_{\cdot})-\Psi_{\epsilon,\tilde{g},J}(\tilde{\pi}_{\cdot})\right\} +O(\delta)+O\left(\frac{1}{N}\right)
\end{align*}
Hence, we obtain 
\[
\limsup_{N\rightarrow\infty}\frac{1}{N}\log\widetilde{\mathbb{Q}}_{N}\left[\mathscr{O}\right]\le\inf_{\epsilon,\delta,\tilde{g},J}\sup_{\tilde{\pi}_{\cdot}\in\mathscr{O}}\Omega_{\epsilon,\delta,N,\tilde{g},J}(\tilde{\pi}_{\cdot})
\]
where $\Omega_{\epsilon,\delta,N,\tilde{g},J}(\tilde{\pi}_{\cdot})$
is defined as 
\[
\max\left\{ -\left(\Phi_{\epsilon,\tilde{g},J}(\tilde{\pi}_{\cdot})-\Psi_{\epsilon,\tilde{g},J}(\tilde{\pi}_{\cdot})\right)+O(\delta),\,\limsup_{N\rightarrow\infty}\frac{1}{N}\log\mathbb{P}_{N}\left[\mathscr{B}_{N,\epsilon,\delta}^{c}\right]\right\} .
\]
Then, by the Minimax lemma (\textit{cf.} Lemma 3.2 of Appendix 2 of
\cite{KL}), we have 
\begin{equation}
\limsup_{N\rightarrow\infty}\frac{1}{N}\log\widetilde{\mathbb{Q}}_{N}\left[\mathscr{K}\right]\le\sup_{\tilde{\pi}_{\cdot}\in\mathscr{K}}\inf_{\epsilon,\delta,\tilde{g},J}\Omega_{\epsilon,\delta,N,\tilde{g},J}(\tilde{\pi}_{\cdot})\label{eq: expona}
\end{equation}
for all compact sets $\mathscr{K}\subset C([0,\,T],\,\mathscr{M}(\mathbb{T})^{m})$.
Notice that, by Lemma \ref{lem: Domain of Rate function}, 
\[
\limsup_{N\rightarrow\infty}\frac{1}{N}\log\widetilde{\mathbb{Q}}_{N}\left[\mathscr{K}\right]=\limsup_{N\rightarrow\infty}\frac{1}{N}\log\widetilde{\mathbb{Q}}_{N}\left[\mathscr{K}\cap\mathscr{D}_{color}^{m}\right]
\]
and therefore we can replace $\sup_{\tilde{\pi}_{\cdot}\in\mathscr{K}}$
in (\ref{eq: expona}) by $\sup_{\tilde{\rho}(\cdot,\,x)dx\in\mathscr{\mathscr{K\cap}D}_{color}^{m}}$.
Moreover, for $\tilde{\rho}\in\mathscr{D}_{color}^{m}$, we have\footnote{The precise form is $\Phi_{\epsilon,\tilde{g},J}(\tilde{\rho}(\cdot,\,x)dx)$
and so on. } 
\[
\Phi_{\epsilon,\tilde{g},J}(\tilde{\rho})-\Psi_{\epsilon,\tilde{g},J}(\tilde{\rho})=\Phi_{\tilde{g},J}(\tilde{\rho})-\Psi_{\tilde{g},J}(\tilde{\rho})+o_{\epsilon}(1)
\]
where $\Phi_{\tilde{g},J}(\tilde{\rho})$ and $\Psi_{\tilde{g},J}(\tilde{\rho})$
are derived from $\Phi_{\epsilon,\tilde{g},J}(\tilde{\rho})$ and
$\Psi_{\epsilon,\tilde{g},J}(\tilde{\rho})$ respectively, by replacing
$\rho*\iota_{\epsilon}$ and $\tilde{\rho}*\iota_{\epsilon}$ by $\rho$
and $\tilde{\rho}$. Thus we can rewrite (\ref{eq: expona}) as 
\begin{align*}
\limsup_{N\rightarrow\infty}\frac{1}{N}\log\widetilde{\mathbb{Q}}_{N}\left[\mathscr{K}\right]\\
\le\sup_{\tilde{\rho}\in\mathscr{\mathscr{K\cap}D}_{color}^{m}}\inf_{\epsilon,\delta,\tilde{g},J}\max\biggl\{ & -\left(\Phi_{\tilde{g},J}(\tilde{\rho})-\Psi_{\tilde{g},J}(\tilde{\rho})+o_{\epsilon}(1)+O(\delta)\right),\\
 & \,\,\,\,\,\,\,\,\,\,\,\,\,\,\,\,\,\,\,\,\,\,\,\limsup_{N\rightarrow\infty}\frac{1}{N}\log\mathbb{P}_{N}\left[\mathscr{B}_{N,\epsilon,\delta}^{c}\right]\biggr\}.
\end{align*}
Now letting $\epsilon\rightarrow0$ and then $\delta\rightarrow0$
so that we obtain, 
\begin{equation}
\limsup_{N\rightarrow\infty}\frac{1}{N}\log\widetilde{\mathbb{Q}}_{N}\left[\mathscr{K}\right]\le-\inf_{\tilde{\rho}\in\mathscr{\mathscr{K\cap}D}_{color}^{m}}\left[\sup_{\tilde{g},J}\Bigl\{\Phi_{\tilde{g},J}(\tilde{\rho})-\Psi_{\tilde{g},J}(\tilde{\rho})\Bigr\}\right].\label{eq: step 11}
\end{equation}
Consequently, it suffices to prove the following lemma. 
\begin{lem}
\label{lem: upppp}For each $\tilde{\rho}\in\mathscr{D}_{color}^{m}$,
\begin{equation}
\sup_{\substack{\tilde{g}\in C^{1,2}([0,\,T],\,\mathbb{T})^{m}\\
J\in C^{1,2}([0,\,T],\,\mathbb{T})
}
}\left\{ \Phi_{\tilde{g},J}(\tilde{\rho})-\Psi_{\tilde{g},J}(\tilde{\rho})\right\} \ge I_{dyn}^{m}(\tilde{\rho}).\label{eq: objs}
\end{equation}
\end{lem}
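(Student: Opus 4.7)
The goal is to show that the variational definition of $I_{dyn}^{m}(\tilde\rho)$ in (\ref{eq: dynamic rate function _var form}) is dominated by the exponential-martingale functional $\Phi_{\tilde g, J}-\Psi_{\tilde g, J}$. The plan is, for each smooth test function $\phi \in C^{\infty}([0,T]\times\mathbb{T}^{m})$ appearing in (\ref{eq: dynamic rate function _var form}), to exhibit a pair $(\tilde g, J) \in C^{1,2}\times C^{1,2}$ such that $\Phi_{\tilde g, J}(\tilde\rho) - \Psi_{\tilde g, J}(\tilde\rho)$ equals the integrand of the $\phi$-variational formula. Taking the supremum over $\phi$ then yields (\ref{eq: objs}).

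The key observation enabling this construction is that $F_{\rho}(t,\cdot)$ is a diffeomorphism of $\mathbb{T}$. A direct differentiation using the fact that $\rho$ has total mass $1$ gives $\partial_{x}F_{\rho}(t,x) = \frac{\lambda+\rho(t,x)}{\lambda+1} > 0$, so one may invert and set
\[
\tilde g^{(c)}(t,y) \;:=\; \phi^{(c)}(t, F_{\rho}^{-1}(t,y)), \qquad c = 1,\ldots,m,
\]
which forces $\tilde g^{(c)}(t,F_{\rho}(t,x)) = \phi^{(c)}(t,x)$. The function $J$ is then chosen to absorb the residual spatial mean arising from the term $K_{\tilde\rho, \tilde g}(t)$ in (\ref{eq:APPPP2}); concretely, one can take $J$ solving $\partial_{x}J(t,x) = -K_{\tilde\rho,\tilde g}(t)$ with a suitable periodic normalization on $\mathbb{T}$.

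The proof then proceeds in two steps. First, a smoothing reduction: approximate a general $\tilde\rho \in \mathscr{D}_{color}^{m}$ by a sequence of smooth densities bounded below by a positive constant (using the energy bound from Lemma \ref{prop: Estimates based on bound on rate function} together with the strong-convergence result in Lemma \ref{APProximation Lemma}); by the lower semicontinuity of $I_{dyn}^{m}$ (Theorem \ref{thm: LSC}) and continuity of $\Phi_{\tilde g, J}-\Psi_{\tilde g, J}$ in $\tilde\rho$, it suffices to prove the inequality for such regular $\tilde\rho$, for which $F_{\rho}$ is a genuine smooth diffeomorphism. Second, with the ansatz above, one verifies by direct computation that: (i) the boundary and $-\tilde g_{t}$ terms in $\Phi$ match $\int\phi^{\dagger}\tilde\rho|_{0}^{T} - \int_{0}^{T}\!\!\int\phi_{t}^{\dagger}\tilde\rho$ after accounting for $\partial_{t}F_{\rho}$ via the chain rule; (ii) the $\tilde g_{xx}$ and $J_{xx}$ terms, combined through the identity $\nabla\phi^{(c)} = \tilde g_{x}^{(c)}(t,F_{\rho})\cdot\frac{\lambda+\rho}{\lambda+1} + J_{x}$, reproduce $\frac{1}{2}\int\!\!\int\nabla\phi^{\dagger}D(\tilde\rho)\nabla\tilde\rho$ using the explicit form (\ref{eq:diffusion mat}); and (iii) the quadratic part $\Psi_{\tilde g, J}$ reproduces $\frac{1}{2}\int\!\!\int\nabla\phi^{\dagger}A(\tilde\rho)\nabla\phi$ using the form (\ref{eq:A and chi}).

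The main obstacle is the algebraic matching in step (iii). The decomposition $A_{ij}(\tilde\rho) = \frac{\delta_{ij}\lambda\rho_{j} + \rho_{i}\rho_{j}}{\lambda+\rho}$ splits into a diagonal "tagged-particle noise" part (matching the $\frac{\lambda^{2}}{(\lambda+1)^{2}}\tilde g_{x}^{2}$-term in $\Psi$ via the Jacobian $\partial_{x}F_{\rho}=(\lambda+\rho)/(\lambda+1)$) and a rank-one "bulk-density noise" part (matching the $(J_{x}+K_{\tilde\rho,\tilde g})^{2}$-term in $\Psi$). This structural correspondence, which is the large-deviation manifestation of the asymptotic independence of tagged particles plus the heat-equation evolution of the total density from \cite{G1}, is precisely what secures the identity; the coupling term $\tilde g_{x}(J_{x}+K)$ in $\Psi$ reproduces the cross-term in $\nabla\phi^{\dagger}A\nabla\phi$. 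Once this identity is established for smooth $\tilde\rho$, the first step completes the proof.
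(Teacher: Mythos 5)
Your overall strategy coincides with the paper's: choose $\tilde g$ so that $\tilde g(t,F_\rho(t,x)) = \tilde f(t,x)$ for a prescribed test function $\tilde f$ (the paper writes $\tilde f$ where you write $\phi$), then pick $J$ so that the remaining pieces assemble into $\Lambda_{\tilde f}(\tilde\rho)$, and handle general $\tilde\rho\in\mathscr{D}_{color}^m$ by mollification. However, there are two concrete gaps.

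The main one is your choice of $J$. You propose $\partial_x J(t,x) = -K_{\tilde\rho,\tilde g}(t)$. Since $K_{\tilde\rho,\tilde g}(t)$ depends only on $t$, this forces $J(t,x) = -K_{\tilde\rho,\tilde g}(t)\,x + (\text{const in }x)$, which is not a well-defined function on $\mathbb{T}$ unless $K\equiv0$; no ``periodic normalization'' exists. Worse, even formally, it gives the wrong quadratic form. With $J_x+K_{\tilde\rho,\tilde g}\equiv0$ the cross and square terms in $\Psi_{\tilde g,J}$ vanish, and only the $\frac{\lambda^2}{2(\lambda+1)^2}\tilde g_x^2$-term and the $\rho*\iota_\epsilon$-terms survive; the resulting expression is $\frac{\lambda}{2(\lambda+\rho)}\sum_c\rho_c(f_x^{(c)})^2 - \frac{\lambda}{2(\lambda+\rho)^2}\bigl(\sum_c\rho_c f_x^{(c)}\bigr)^2$, which is not $\frac{1}{2}\nabla\tilde f^\dagger A(\tilde\rho)\nabla\tilde f$ (note the negative sign on the rank-one part). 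Your own description of step (iii) is internally inconsistent: you invoke the $(J_x+K_{\tilde\rho,\tilde g})^2$-term to match the rank-one piece $\frac{\rho_i\rho_j}{\lambda+\rho}$ of $A(\tilde\rho)$, but with your $J$ that term is identically zero. The correct $J$ is the paper's convolution $J(t,x)=\int_{\mathbb{T}}\frac{\sum_c\rho_c f_x^{(c)}}{\lambda+\rho}(t,y)\,\nu(y-x)\,dy$, which yields $J_x + K_{\tilde\rho,\tilde g} = \frac{\sum_c\rho_c f_x^{(c)}}{\lambda+\rho}(t,x)$ and plays two roles you have conflated: in $\Psi$ it produces the rank-one piece of $A$, and in $\Phi$ (via the term $\int J\rho\,|_0^T - \int\!\!\int[J_t+\frac12 J_{xx}]\rho$) it cancels the nuisance term coming from $\partial_t G_\rho$, a cancellation your $J$ cannot perform. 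Also note that the chain-rule identity you write in (ii), $\nabla\phi^{(c)} = \tilde g_x^{(c)}(t,F_\rho)\cdot\frac{\lambda+\rho}{\lambda+1} + J_x$, is wrong: the correct identity is simply $\nabla\phi^{(c)} = \tilde g_x^{(c)}(t,F_\rho)\cdot\frac{\lambda+\rho}{\lambda+1}$; $J_x$ enters separately, acting on the uncolored density.

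The second gap is the limiting argument. Lower semicontinuity of $I_{dyn}^m$ goes the wrong way for what you need. If you prove the identity for mollified densities $\tilde\rho^{(k)}$, you obtain $\sup_{\tilde g,J}\{\Phi-\Psi\}(\tilde\rho^{(k)}) \ge I_{dyn}^m(\tilde\rho^{(k)})$, and LSC gives $\liminf_k I_{dyn}^m(\tilde\rho^{(k)}) \ge I_{dyn}^m(\tilde\rho)$; but the supremum on the left is itself only lower semicontinuous in $\tilde\rho$, so you cannot pass to $\sup\{\Phi-\Psi\}(\tilde\rho)$ on the left. The paper's route avoids this: for fixed $\tilde f$, it constructs $(\tilde g_\epsilon, J_\epsilon)$ from $\tilde\rho_\epsilon$ but \emph{evaluates $\Phi_{\tilde g_\epsilon,J_\epsilon}$ and $\Psi_{\tilde g_\epsilon,J_\epsilon}$ at the unsmoothed $\tilde\rho$}, showing the discrepancies $\Phi_{\tilde g_\epsilon,J_\epsilon}(\tilde\rho_\epsilon)-\Phi_{\tilde g_\epsilon,J_\epsilon}(\tilde\rho)$ and $\Psi_{\tilde g_\epsilon,J_\epsilon}(\tilde\rho_\epsilon)-\Psi_{\tilde g_\epsilon,J_\epsilon}(\tilde\rho)$ are $o_\epsilon(1)$ via the Lipschitz bound $|G_{\rho_\epsilon}(t,F_\rho(t,x))-x| \le \frac{1}{\lambda}\|\rho_\epsilon(t,\cdot)-\rho(t,\cdot)\|_{L_1}$. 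This gives $\sup_{\tilde g,J}\{\Phi-\Psi\}(\tilde\rho) \ge \Lambda_{\tilde f}(\tilde\rho)$ for every $\tilde f$, after which one takes the supremum over $\tilde f$.
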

\begin{proof}
We first assume that $\tilde{\rho}\in C^{1,2}([0,T]\times\mathbb{T})^{m}$.
In this case, 
\[
F_{\rho}(t,\,x)=x+\frac{1}{\lambda+1}\int_{\mathbb{T}}\nu(y-x)\rho(t,\,y)dy
\]
satisfies $\nabla F_{\rho}=\frac{\lambda+\rho}{\lambda+1}$ (\textit{cf.}
Proposition 5 of \cite{G1}) and hence invertible for each $t.$ Let
$G_{\rho}(t,\,x)$ be its inverse and then derivatives of $G_{\rho}$
are given by 
\begin{align}
\partial_{x}G_{\rho}(t,\,x) & =\frac{\lambda+1}{\lambda+\rho(t,\,x)}\label{eq:gt1}\\
\partial_{xx}G_{\rho}(t,\,x) & =-\frac{(\lambda+1)^{2}}{(\lambda+\rho(t,\,x))^{3}}\rho_{x}(t,\,x)\label{eq:gt2}\\
\partial_{t}G_{\rho}(t,\,x) & =-\frac{1}{\lambda+\rho(t,\,x)}\int_{\mathbb{T}}\nu(y-x)\rho_{t}(t,\,y)dy.\label{eq: gt}
\end{align}
For given $\tilde{f}=(f^{(1)},\,f^{(2)},\,\cdots,\,f^{(m)})^{\dagger}\in C^{1,2}([0,T]\times\mathbb{T})^{m}$,
we take corresponding $\tilde{g}$ and $J$ by 
\begin{align}
\tilde{g}(t,\,x)= & \tilde{f}(t,\,G_{\rho}(t,\,x))\label{eq: gtil}\\
J(t,\,x)= & \int_{\mathbb{T}}\frac{\sum_{c=1}^{m}\rho_{c}(t,\,y)f_{x}^{(c)}(t,\,y)}{\lambda+\rho(t,\,y)}\nu(y-x)dy.\label{eq:jtil}
\end{align}
Under these choices, we will show that 
\begin{equation}
\Phi_{\tilde{g},J}(\tilde{\rho})-\Psi_{\tilde{g},J}(\tilde{\rho})=\Lambda_{\tilde{f}}(\tilde{\rho})\label{eq:hus}
\end{equation}
where $\Lambda_{\tilde{f}}(\tilde{\rho})$ is defined in (\ref{eq: naham}).

We first compute $\Phi_{\tilde{g},J}(\tilde{\rho})$. The main trick
is to rewrite $\partial_{t}G_{\rho}(t,\,x)$ in (\ref{eq: gt}) as
\[
-\frac{1}{\lambda+\rho(t,\,x)}\int_{\mathbb{T}}\nu(y-x)\left(\rho_{t}-\frac{1}{2}\rho_{xx}\right)(t,\,y)dy-\frac{\rho_{x}(t,\,x)}{2(\lambda+\rho(t,\,x))}
\]
and then we obtain 
\begin{align}
\Phi_{\tilde{g},J} & (\tilde{\rho})\label{eq: hus1}\\
= & \int_{\mathbb{T}}\tilde{f}^{\dagger}\tilde{\rho}(T,\,x)dx-\int_{\mathbb{T}}\tilde{f}^{\dagger}\tilde{\rho}(0,\,x)dx\nonumber \\
 & -\int_{0}^{T}\int_{\mathbb{T}}\left[\tilde{f}_{t}-\frac{\rho_{x}}{2(\lambda+\rho)}\tilde{f}_{x}+\frac{\lambda}{2(\lambda+\rho)}\tilde{f}_{xx}\right]\tilde{\rho}(t,\,x)dxdt\nonumber \\
 & +\int_{0}^{T}\int_{\mathbb{T}}\left[\frac{1}{\lambda+\rho}\int_{\mathbb{T}}\nu(y-x)\left(\rho_{t}-\frac{1}{2}\rho_{xx}\right)(t,\,y)dy\right]\tilde{f}_{x}^{\dagger}\tilde{\rho}(t,\,x)dxdt\nonumber \\
 & +\int_{\mathbb{T}}J\cdot\rho(T,\,x)dx-\int_{\mathbb{T}}J\cdot\rho(0,\,x)dx-\int_{0}^{T}\int_{\mathbb{T}}\left[J_{t}+\frac{1}{2}J_{xx}\right]\rho(t,\,x)dxdt.\nonumber 
\end{align}
With our choice of $J$, the third and fourth lines cancel each other
by the integration by part. 

To simplify $\Psi_{\tilde{g},J}(\tilde{\rho})$, we start from an
observation that $J$ satisfies
\[
J_{x}(t,\,x)=-K_{\tilde{\rho},\tilde{g}}(t)+\frac{\sum_{c=1}^{m}f_{x}^{(c)}\rho}{\lambda+\rho}(t,\,x)
\]
where $K_{\tilde{\rho},\tilde{g}}(t)$ is defined in (\ref{eq:APPPP2}).
This enable us to compute 
\begin{align}
\Psi_{\tilde{g},J}(\tilde{\rho})\label{eq:hus2}\\
=\int_{0}^{T}\int_{\mathbb{T}} & \Biggl[\frac{\lambda^{2}}{2(\lambda+\rho)^{2}}\sum_{c=1}^{m}\left(f_{x}^{(c)}\right)^{2}\rho_{c}+\frac{\lambda}{(\lambda+\rho)^{2}}\left(\sum_{c=1}^{m}f_{x}^{(c)}\rho_{c}\right)^{2}\nonumber \\
 & \,\,\,+\frac{\rho}{2(\lambda+\rho)^{2}}\left(\sum_{c=1}^{m}f_{x}^{(c)}\rho_{c}\right)^{2}+\frac{\lambda}{2(\lambda+\rho)^{2}}\sum_{c=1}^{m}\left(f_{x}^{(c)}\right)^{2}\rho\rho_{c}\nonumber \\
 & \,\,\,-\frac{\lambda}{2(\lambda+\rho)^{2}}\left(\sum_{c=1}^{m}f_{x}^{(c)}\rho_{c}\right)^{2}\Biggr]dxdt\nonumber \\
=\int_{0}^{T}\int_{\mathbb{T}} & \frac{1}{2}\nabla\tilde{f}^{\dagger}A(\tilde{\rho})\nabla\tilde{f}dxdt.\nonumber 
\end{align}
Now, (\ref{eq: hus1}) and (\ref{eq:hus2}) complete the proof of
(\ref{eq:hus}).

For general $\tilde{\rho}\in\mathscr{D}_{color}^{m}$, we will approximate
$\tilde{\rho}$ by $\tilde{\rho}_{\epsilon}$. For given $\tilde{f}\in C^{1,2}$,
we can take $\tilde{g}_{\epsilon}$ and $J_{\epsilon}$ as (\ref{eq: gtil})
and (\ref{eq:jtil}) which correspond to $\tilde{\rho}_{\epsilon}$
instead of $\tilde{\rho}$. Then, by the previous step, we have
\[
\Phi_{\tilde{g}_{\epsilon},J_{\epsilon}}(\tilde{\rho}_{\epsilon})-\Psi_{\tilde{g}_{\epsilon},J_{\epsilon}}(\tilde{\rho}_{\epsilon})=\Lambda_{\tilde{f}}(\tilde{\rho}_{\epsilon}).
\]
Note that $\Phi_{\tilde{g}_{\epsilon},J_{\epsilon}}(\tilde{\rho}_{\epsilon})-\Phi_{\tilde{g}_{\epsilon},J_{\epsilon}}(\tilde{\rho})$
and $\Psi_{\tilde{g}_{\epsilon},J_{\epsilon}}(\tilde{\rho}_{\epsilon})-\Psi_{\tilde{g}_{\epsilon},J_{\epsilon}}(\tilde{\rho})$
are $o_{\epsilon}(1)$ since 
\begin{align*}
\left|G_{\rho_{\epsilon}}(t,\,F_{\rho}(t,\,x))-x\right| & =\left|G_{\rho_{\epsilon}}(t,\,F_{\rho}(t,\,x))-G_{\rho_{\epsilon}}(t,\,F_{\rho_{\epsilon}}(t,\,x))\right|\\
 & \le\frac{\lambda+1}{\lambda}\left|F_{\rho}(t,\,x)-F_{\rho_{\epsilon}}(t,\,x)\right|\\
 & \le\frac{1}{\lambda}||\rho_{\epsilon}(t,\,\cdot)-\rho(t,\,\cdot)||_{L_{1}}
\end{align*}
for each $t$ because $\partial_{x}G_{\rho_{\epsilon}}=\frac{\lambda+1}{\lambda+\rho_{\epsilon}}<\frac{\lambda+1}{\lambda}$.
Moreover, we have $\Lambda_{\tilde{f}}(\tilde{\rho}_{\epsilon})\rightarrow\Lambda_{\tilde{f}}(\tilde{\rho})$
as $\epsilon\rightarrow0$ as in the proof of Theorem \ref{thm: LSC}
and hence 
\[
\sup_{\substack{\tilde{g}\in C^{1,2}([0,\,T],\,\mathbb{T})^{m}\\
J\in C^{1,2}([0,\,T],\,\mathbb{T})
}
}\left\{ \Phi_{\tilde{g},J}(\tilde{\rho})-\Psi_{\tilde{g},J}(\tilde{\rho})\right\} \ge\Lambda_{\tilde{f}}(\tilde{\rho}).
\]
holds for each $\tilde{f}\in C^{1,2}$ and $\tilde{\rho}\in\mathscr{D}_{color}^{m}$.
Consequently, we can complete the proof by taking supremum over $\tilde{f}$,
\end{proof}
Heretofore, we have established the large deviation upper bound for
compact sets with the rate function $I_{dyn}^{m}(\cdot)$, but we
can easily improve this result to the rate function $I_{color}^{m}(\cdot)$
where the argument may depend on the initial configuration. Moreover,
since we have the exponential tightness by Theorem \ref{thm: SUPER EXP TIGHTNESS},
the upper bound also holds for closed sets.
\begin{thm}
Under Assumptions 2 and 3, $\{\widetilde{\mathbb{Q}}_{N}\}_{N=1}^{\infty}$
satisfies the large deviation upper bound with the good rate function
$I_{color}^{m}(\cdot)$ and scale $N$. More precisely, for every
closed set $\mathscr{C}\in C([0,\,T],\,\mathscr{M}(\mathbb{T})^{m})$,
we have
\[
\limsup_{N\rightarrow\infty}\frac{1}{N}\log\widetilde{\mathbb{Q}}_{N}\left[\mathscr{C}\right]\le-\inf_{\tilde{\rho}(\cdot,\,x)dx\in\mathscr{C}}I_{color}^{m}(\tilde{\rho}).
\]

\end{thm}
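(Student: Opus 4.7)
The plan combines three ingredients that are already essentially in place in the excerpt: the compact-set upper bound with the dynamic rate $I_{dyn}^{m}$ delivered by the martingale argument culminating in (\ref{eq: step 11}) and Lemma \ref{lem: upppp}; the initial LDP supplied by Assumption 2 (which under Assumption 3 is the Sanov-type rate $I_{init}^{m}(\tilde{\nu}) = \sum_{c}\int \rho_{c}\log(\rho_{c}/(\bar\rho_{c}\rho^{0}))\,dx$ relative to the i.i.d.\ initial density $\rho^{0}$); and the exponential tightness from Theorem \ref{thm: SUPER EXP TIGHTNESS}. The goal is to weave the first two together and then use the third to pass from compact to closed sets.

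To incorporate the initial rate into the compact-set bound, I would enlarge the exponential martingale $\exp\{M_{N}(\tilde{g},J)-A_{N}(\tilde{g},J)\}$ by an additional purely-initial factor $\exp\{N\langle\tilde{\mu}^{N}(0),\tilde{H}\rangle\}$ for arbitrary $\tilde{H}\in C(\mathbb{T})^{m}$. Since this factor is $\mathscr{F}_{0}$-measurable and independent of the dynamic noise, the entire argument of Section 4.3, including the replacement of local times by local densities via Theorem \ref{thm: REPLACEMENT_colored} and the minimax step, goes through unchanged; the only modification is that $\Phi_{\tilde{g},J}(\tilde{\rho})-\Psi_{\tilde{g},J}(\tilde{\rho})$ is replaced by $\Phi_{\tilde{g},J}(\tilde{\rho})-\Psi_{\tilde{g},J}(\tilde{\rho})+\langle\tilde{\rho}(0,\cdot),\tilde{H}\rangle$, while on the expectation side one picks up $N^{-1}\log\mathbb{E}_{N}\exp\{N\langle\tilde{\mu}^{N}(0),\tilde{H}\rangle\}$. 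By Varadhan's lemma applied to the initial LDP of Assumption 2, this last quantity converges to $\sup_{\tilde{\nu}}\{\langle\tilde{\nu},\tilde{H}\rangle - I_{init}^{m}(\tilde{\nu})\}$, and its Legendre transform in $\tilde{H}$ (which couples only to $\tilde{\rho}(0,\cdot)$) recovers $I_{init}^{m}(\tilde{\rho}(0,\cdot))$. Supremizing separately over $(\tilde{g},J)$ via Lemma \ref{lem: upppp} produces $I_{dyn}^{m}(\tilde{\rho})$, so their sum yields $I_{color}^{m}(\tilde{\rho})$ as the total infimum in the compact-set bound.

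Finally, to pass from compact to closed sets, the exponential tightness of $\{\widetilde{\mathbb{Q}}_{N}\}$ deduced from Theorem \ref{thm: SUPER EXP TIGHTNESS} produces, for every $L>0$, a compact $\mathscr{K}_{L}\subset C([0,T],\mathscr{M}(\mathbb{T})^{m})$ with $\widetilde{\mathbb{Q}}_{N}[\mathscr{K}_{L}^{c}]\le e^{-NL}$ for $N$ large; splitting $\widetilde{\mathbb{Q}}_{N}[\mathscr{C}]\le \widetilde{\mathbb{Q}}_{N}[\mathscr{C}\cap\mathscr{K}_{L}] + e^{-NL}$, applying the compact-set bound, and letting $L\to\infty$ gives the desired closed-set bound. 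Goodness of $I_{color}^{m}$ follows from its lower semicontinuity (Theorem \ref{thm: LSC}), together with compactness of sublevel sets, which is inherited from exponential tightness via the standard argument. The main obstacle is the joint minimax step: one must verify that the two Legendre duals truly decouple cleanly into $I_{init}^{m}(\tilde{\rho}(0,\cdot)) + I_{dyn}^{m}(\tilde{\rho})$ rather than interacting through the boundary term $-\langle\tilde{\pi}_{0},\tilde{g}(0,F_{\pi}(0,x))\rangle$ already present in $\Phi_{\tilde{g},J}$. This decoupling is ensured by the fact that $\tilde{H}$ is an independent parameter and that the class of admissible $(\tilde{g},J)$ includes those vanishing at $t=0$, so the two supremizations can be performed in sequence and the resulting rate function is indeed the sum, completing the argument.
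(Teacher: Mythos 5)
Your proof is correct and follows essentially the same route as the paper: the compact-set bound with $I_{dyn}^{m}$ via the exponential martingale and the minimax lemma, upgraded to $I_{color}^{m}$ by tilting with an $\mathscr{F}_{0}$-measurable factor $\exp\{N\langle\tilde{\mu}^{N}(0),\tilde{H}\rangle\}$ together with Varadhan's lemma and Legendre duality against the initial LDP, and then the passage from compact to closed sets via exponential tightness (with goodness from Theorem~\ref{thm: LSC}). The paper compresses the initial-rate upgrade into the single remark that the compact bound ``can easily be improved'' to $I_{color}^{m}$, and your explicit tilting-plus-Varadhan argument (including the observation that the $\tilde{H}$ and $(\tilde{g},J)$ optimizations are additively separable, so the supremizations decouple) is the standard way to make that step precise.
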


\subsection{Lower Bound\label{sub:Lower-Bound}}

\subsubsection{Perturbed Process }

In general, the large deviation lower bound for the interacting particle
system can be derived by observing the limit behavior of a suitably
perturbed system. Stating this succinctly, the lower bound can be
obtained by computing the relative entropy of such a perturbed process
with respect to the original process. Thus, we should start by carefully
defining the perturbations. 

Basically, we perturb our system in two ways. First, if the color
of particle $x_{i}^{N}(\cdot)$ is $c,$ then we add the drift $b_{c}(t,\,x_{i}^{N}(\cdot))$
to this particle. Note that the drift function depends on the color
of the particle. Second, we change the jump rate between different
colors. In the original process, we have a jump process $M_{ij}^{N}(t)$
along the local time $A_{ij}^{N}(t)$, which is the Poisson jump process
with a constant intensity $\lambda N$. We will also change this jump
rate to $\lambda N+\gamma_{c_{1},c_{2}}(t,\,x_{i}^{N}(t))$ if the
color of particles $x_{i}^{N}(\cdot)$ and $x_{j}^{N}(\cdot)$ are
$c_{i}$ and $c_{j}$, respectively. Then our perturbations can be
summarized by the $m$-dimensional vector $\tilde{b}$ and an $m\times m$
matrix $\tilde{\Gamma}$, where
\begin{align*}
\tilde{b}(t,\,x) & =\left(b_{1}(t,\,x),\,b_{2}(t,\,x),\,\cdots,\,b_{m}(t,\,x)\right)^{\dagger}\\
\tilde{\Gamma}(t,\,x) & =\left\{ \gamma_{c_{1},c_{2}}(t,\,x):1\le c_{1},\,c_{2}\le m\right\} 
\end{align*}
both of which should satisfy the following conditions:
\begin{enumerate}
\item $\tilde{b}$ and $\tilde{\Gamma}$ are smooth. 
\item $\tilde{b}(t,\,x)\equiv0$ and $\tilde{\Gamma}(t,\,x)\equiv0$ for
$t\in[0,\,\eta]$ for some $\eta>0$.
\item $\tilde{\Gamma}$ is skew-symmetric : $\gamma_{c_{1},c_{2}}=-\gamma_{c_{2},c_{1}}$
and $\gamma_{c,c}=0$. \end{enumerate}
\begin{rem}
The third condition is not artificial in that changing $\gamma_{c_{1},c_{2}}$
and $\gamma_{c_{2},c_{1}}$ by the same amount or the presence of
$\gamma_{c,\,c}$ does not affect the dynamic of $\tilde{\mu}^{N}(\cdot)$;
thus, we can assume the skew-symmetry of $\tilde{\Gamma}$ without
loss of generality. 
\end{rem}
Let $\mathscr{P}_{0}$ be the set of all $(\tilde{b},\,\tilde{\Gamma})$
which satisfies all of these conditions. For each $(\tilde{b},\,\tilde{\Gamma})\in\mathscr{P}_{0}$,
a canonical way to describe the perturbed process is the martingale
formulation. Indeed, we can understand this process by the measure
$\mathbb{P}_{N}^{\tilde{b},\tilde{\Gamma}}$ on $C([0,\,T],\,\mathbb{T}^{N})$
such that for any $f\in\bar{C}(G_{N})$,
\begin{align}
M_{f}^{\tilde{b},\tilde{\Gamma}}(t)= & f(x^{N}(t))-f(x^{N}(0))-\frac{1}{2}\int_{0}^{t}\Delta f(x^{N}(s))ds\label{eq:tanaka for driven system}\\
 & -\sum_{c=1}^{m}\sum_{i\in I_{c}^{N}}\int_{0}^{t}b_{c}(t,\,x_{i}^{N}(s))\nabla_{i}f(x^{N}(s))ds\nonumber \\
 & -\sum_{\substack{1\le c_{1},\,c_{2}\le m\\
i\in I_{c_{1}}^{N},\,j\in I_{c_{2}}^{N}
}
}\int_{0}^{t}\mathfrak{U}_{ij}^{\lambda,\tilde{\Gamma}}f(x^{N}(s))dA_{ij}^{N}(s)\nonumber 
\end{align}
where 
\[
\mathfrak{U}_{ij}^{\lambda,\tilde{\Gamma}}f(x)=D_{ij}f(x)-(\lambda N+\gamma_{c_{1},c_{2}})(f_{ij}(x)-f_{ji}(x))
\]
is a martingale with respect to the original filtration. The martingale
$M_{f}^{\tilde{b},\tilde{\Gamma}}(t)$ also can be represented as
(\ref{eq:martingale}). Remark here that the rigorous existence and
uniqueness of this perturbed process are due to Girsanov's Theorem. 

The perturbed process $\mathbb{P}_{N}^{\tilde{b},\tilde{\Gamma}}$
is not too far from the original process $\mathbb{P}_{N}$ in the
following sense. 
\begin{lem}
\label{lem: GIRSANOV}For each $(\tilde{b},\,\tilde{\Gamma})\in\mathscr{P}_{0}$
and $1\le p\le2$, 
\[
\frac{1}{N}\log\mathbb{E}_{N}\left[\left(\frac{d\mathbb{P}_{N}^{\tilde{b},\tilde{\Gamma}}}{d\mathbb{P}_{N}}\right)^{p}\right]\le C+O\left(\frac{1}{N}\right)
\]
where the constant $C$ could possibly depend only on $p,\,\tilde{b},\,\tilde{\Gamma}$.\end{lem}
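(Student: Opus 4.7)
The plan is to apply Girsanov's theorem to write $Z_N:=d\mathbb{P}_N^{\tilde{b},\tilde{\Gamma}}/d\mathbb{P}_N$ explicitly, decompose $Z_N^p$ as a mean-one Dol\'eans exponential times a correction factor, and then bound the correction using Cauchy--Schwarz together with the exponential moment estimate (\ref{eq:Estimate of Total Local Time}) for the total local time.

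The Girsanov formula yields $Z_N=Z_N^{\text{drift}}\cdot Z_N^{\text{jump}}$, where
\[
Z_N^{\text{drift}}=\exp\Bigl(\sum_{c}\sum_{i\in I_c^N}\Bigl[\int_0^T b_c(t,x_i^N)d\beta_i-\tfrac12\int_0^T b_c^2(t,x_i^N)dt\Bigr]\Bigr)
\]
and $Z_N^{\text{jump}}=\prod_{i\neq j}\exp\bigl(\int_0^T\log(1+\gamma_{c_i,c_j}/(\lambda N))dJ_{ij}^N-\int_0^T\gamma_{c_i,c_j}dA_{ij}^N\bigr)$. Raising each factor to the $p$-th power and reorganizing as Dol\'eans exponential times remainder, I would obtain $Z_N^p=\mathcal{M}^{\text{drift}}_p\cdot\mathcal{M}^{\text{jump}}_p\cdot R_N$, with $\mathcal{M}^{\text{drift}}_p,\mathcal{M}^{\text{jump}}_p$ the mean-one martingales whose integrands are $p\,b_c$ and $p\log(1+\gamma_{c_i,c_j}/(\lambda N))$ respectively, and
\[
R_N=\exp\Bigl(\tfrac{p^2-p}{2}\sum_{c,i}\int_0^T b_c^2\,dt+\sum_{i\neq j}\int_0^T r_{ij}(t)\,dA_{ij}^N\Bigr),
\]
where $r_{ij}(t):=\lambda N[(1+\gamma_{c_i,c_j}/(\lambda N))^p-1]-p\gamma_{c_i,c_j}$. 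A Taylor expansion of $(1+x)^p$ about $x=0$ gives $|r_{ij}|\le C_{p,\tilde{\Gamma}}/N$ uniformly in $(t,x)$, so $R_N\le\exp(C_1 N+C_2 NA^N(T))$ pointwise, with $A^N(T)$ the averaged local time (\ref{eq:Loc2}) and constants depending only on $p,\tilde{b},\tilde{\Gamma},\lambda,T$.

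The crucial decoupling is that both $Z_N^{\text{drift}}$ and $A^N(T)$ are measurable with respect to the labeled position trajectories alone: the identity $d\beta_i=dx_i^N-d\tilde{A}_i^N$ (Tanaka) rewrites $\int b_c(t,x_i^N)d\beta_i$ as an integral against the semimartingale $x_i^N(\cdot)$ plus local-time terms, while $A^N(T)$ is symmetric in the particle indices and hence a functional of the configuration. Conditionally on these positions, the counting processes $\{J_{ij}^N\}$ are independent Poissons with compensators $\lambda N A_{ij}^N$, so $\mathcal{M}^{\text{jump}}_p$ is the associated Dol\'eans exponential and has conditional mean one. Therefore
\[
\mathbb{E}_N[Z_N^p]\le e^{C_1 N}\,\mathbb{E}_N\bigl[\mathcal{M}^{\text{drift}}_p\,\mathcal{M}^{\text{jump}}_p\,e^{C_2 NA^N(T)}\bigr]=e^{C_1 N}\,\mathbb{E}_N\bigl[\mathcal{M}^{\text{drift}}_p\,e^{C_2 NA^N(T)}\bigr].
\]
By Cauchy--Schwarz and the continuous-case identity $\mathcal{E}(X)^2=\mathcal{E}(2X)\exp(\langle X\rangle)$ with $\langle X\rangle\le p^2\|\tilde{b}\|_\infty^2 TN$, one has $\mathbb{E}_N[(\mathcal{M}^{\text{drift}}_p)^2]\le e^{C_3 N}$; the extension of (\ref{eq:Estimate of Total Local Time}) from $[0,\eta]$ to $[0,T]$ (the argument there needs only $|g'|\le 1/2$ and works verbatim for any fixed time) yields $\mathbb{E}_N[\exp(2C_2 NA^N(T))]\le e^{C_4 N}$. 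Combining gives $\mathbb{E}_N[Z_N^p]\le e^{CN}$, whence $\tfrac1N\log\mathbb{E}_N[Z_N^p]\le C+O(1/N)$ with $O(1/N)$ absorbing the finite-$N$ Taylor remainders and $(N-1)/N$ factors.

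The main obstacle I anticipate is justifying the decoupling rigorously: confirming the measurability of $Z_N^{\text{drift}}$ with respect to the $\sigma$-algebra of labeled position paths (which requires a careful Tanaka-type integration by parts in $d\beta_i$) and checking that conditioning on these positions leaves the jump point processes as independent Poissons with the claimed compensators. Once these structural facts are in place, the remaining ingredients --- the Taylor bound on $r_{ij}$, the $L^2$-moment of $\mathcal{M}^{\text{drift}}_p$ for bounded $b_c$, and the exponential moment of $NA^N(T)$ supplied by the proof of Proposition \ref{prop: smal time regime-1} --- are all routine.
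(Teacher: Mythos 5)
Your overall plan --- Girsanov decomposition $Z_N=Z_N^{\text{drift}}Z_N^{\text{jump}}$, extraction of Dol\'eans exponentials, Taylor expansion of the jump compensator to bound the remainder by $\exp\{C_1 N+C_2 NA^N(T)\}$, and control via (\ref{eq:Estimate of Total Local Time}) --- matches the paper. The gap lies in the ``crucial decoupling'' step, where you assert that, conditionally on the labeled position trajectories $\{x_i^N(\cdot)\}$, the counting processes $\{J_{ij}^N\}$ are independent Poissons and hence $\mathcal{M}^{\text{jump}}_p$ has conditional mean one. This is false: in this model the boundary condition $\mathfrak{U}_{ij}^{\lambda}f=D_{ij}f-\lambda N(f_{ij}-f_{ji})$ says the process either reflects off $\{x_i=x_j\}$ or crosses to the other side, the crossings \emph{are} the jump events, and they are visible from the ordering of $x_i^N$ and $x_j^N$ after each collision. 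So the labeled paths determine $\{J_{ij}^N\}$, conditioning on them makes $\mathcal{M}^{\text{jump}}_p$ deterministic (not conditionally mean one), and the factorization $\mathbb{E}_N[\mathcal{M}^{\text{drift}}_p\mathcal{M}^{\text{jump}}_p e^{C_2 NA^N(T)}]=\mathbb{E}_N[\mathcal{M}^{\text{drift}}_p e^{C_2 NA^N(T)}]$ fails. Conditioning on the unordered positions instead does not help, since $\mathcal{M}^{\text{drift}}_p$ (which sees the colors and the labeled $\beta_i$'s) is then no longer measurable with respect to the conditioning.

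The fix is to apply Cauchy--Schwarz \emph{before} extracting any Dol\'eans exponential, keeping the drift and jump factors apart: $\mathbb{E}_N[Z_N^p]\le\mathbb{E}_N[(Z_N^{\text{drift}})^{2p}]^{1/2}\,\mathbb{E}_N[(Z_N^{\text{jump}})^{2p}]^{1/2}$. The drift factor is handled exactly as you propose via $\mathcal{E}(X)^2=\mathcal{E}(2X)e^{\langle X\rangle}$, giving $e^{CN}$. For the jump factor write $(Z_N^{\text{jump}})^{2p}=\mathcal{M}^{\text{jump}}_{2p}\,e^{R}$ with $R\le CNA^N(T)$ by your Taylor bound; the measure change induced by $\mathcal{M}^{\text{jump}}_{2p}$ alters only the jump rates, so for permutation-symmetric test functions (for which $f_{ij}=f_{ji}$ and the jump part of the boundary operator vanishes) the martingale problem is unchanged, and hence the symmetric functional $A^N(T)$ has the same law under the tilted and original measures. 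Therefore $\mathbb{E}_N[(Z_N^{\text{jump}})^{2p}]\le\mathbb{E}_N[e^{CNA^N(T)}]\le e^{C'N}$ by (\ref{eq:Estimate of Total Local Time}), and the lemma follows. Alternatively, tilt by the full $\mathcal{M}_p=\mathcal{M}^{\text{drift}}_p\mathcal{M}^{\text{jump}}_p$, but then (\ref{eq:Estimate of Total Local Time}) must be re-derived under the bounded extra drift, since the drift change does alter the law of $A^N(T)$.
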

\begin{proof}
By Girsanov's Theorem, 
\begin{equation}
\frac{d\mathbb{P}_{N}^{\tilde{b},\tilde{\Gamma}}}{d\mathbb{P}_{N}}=\exp\left\{ \sum_{c=1}^{m}\sum_{i\in I_{N}^{c}}U_{i}^{(c)}+\sum_{\substack{1\le c_{1},\,c_{2}\le m,\,i\in I_{c_{1}}^{N},\,j\in I_{c_{2}}^{N}}
}V_{ij}^{(c_{1},\,c_{2})}\right\} \label{eq: GIRSANOV}
\end{equation}
where 
\begin{align*}
U_{i}^{(c)}= & \int_{0}^{T}b_{c}(t,\,x_{i}^{N}(t))d\beta_{i}(t)-\frac{1}{2}\int_{0}^{T}b_{c}^{2}(t,\,x_{i}^{N}(t))dt\\
V_{ij}^{(c_{1},\,c_{2})}= & \int_{0}^{T}\log\left(1+\frac{\gamma_{c_{1},c_{2}}(t,\,x_{i}^{N}(t))}{\lambda N}\right)\left[dM_{ij}^{N}(t)+\lambda NdA_{ij}^{N}(t)\right]\\
 & -\int_{0}^{T}\gamma_{c_{1},c_{2}}(t,\,x_{i}^{N}(t))dA_{ij}^{N}(t)
\end{align*}
under $\mathbb{P}_{N}$. Since $(\tilde{b},\,\tilde{\Gamma})\in\mathscr{P}_{0}$,
we obtain 
\[
\mathbb{E}_{N}\left[\left(\frac{d\mathbb{P}_{N}^{\tilde{b},\tilde{\Gamma}}}{d\mathbb{P}_{N}}\right)^{p}\right]\le e^{CN\left(p(p-1)+O\left(\frac{1}{N}\right)\right)}\mathbb{E}_{N}\exp\left\{ Cp(p-1)NA^{N}(T)\right\} 
\]
for some constant $C$ only depending on $\tilde{b}$ and $\tilde{\Gamma}$.
The last expectation can be controlled by (\ref{eq:Estimate of Total Local Time})
and we are done. 
\end{proof}
An important implication of this lemma is the following corollary. 
\begin{cor}
\label{cor: replacemtne lemma for driven system}Theorems \ref{thm: REPLACEMENT_colored}
and \ref{thm: SUPER EXP TIGHTNESS} are still valid under $\mathbb{P}_{N}^{\tilde{b},\tilde{\Gamma}}$
instead of $\mathbb{P}_{N}$ for any $(\tilde{b},\,\tilde{\Gamma})\in\mathscr{P}_{0}$.
\end{cor}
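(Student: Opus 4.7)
The plan is to deduce the corollary directly from Lemma \ref{lem: GIRSANOV} via an elementary change-of-measure argument based on Hölder's inequality. Both Theorem \ref{thm: REPLACEMENT_colored} and Theorem \ref{thm: SUPER EXP TIGHTNESS} assert that certain events $\mathbf{E}_N \subset C([0,T],\mathbb{T}^N)$ are super-exponentially negligible under $\mathbb{P}_N$, in the sense that $\frac{1}{N}\log \mathbb{P}_N[\mathbf{E}_N]\to -\infty$ after appropriate iterated limits in $\epsilon,\delta,\eta$. The goal is to upgrade each such statement to one with $\mathbb{P}_N^{\tilde{b},\tilde{\Gamma}}$ in place of $\mathbb{P}_N$.

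First I would fix conjugate exponents $p,q \in (1,2]$ with $\frac{1}{p}+\frac{1}{q}=1$ and write, for any event $\mathbf{E}_N$,
\[
\mathbb{P}_N^{\tilde{b},\tilde{\Gamma}}[\mathbf{E}_N] \;=\; \mathbb{E}_N\!\left[\frac{d\mathbb{P}_N^{\tilde{b},\tilde{\Gamma}}}{d\mathbb{P}_N}\,\mathbf{1}_{\mathbf{E}_N}\right] \;\le\; \mathbb{E}_N\!\left[\left(\frac{d\mathbb{P}_N^{\tilde{b},\tilde{\Gamma}}}{d\mathbb{P}_N}\right)^{p}\right]^{1/p}\!\mathbb{P}_N[\mathbf{E}_N]^{1/q}.
\]
Taking logarithms and dividing by $N$ gives
\[
\tfrac{1}{N}\log \mathbb{P}_N^{\tilde{b},\tilde{\Gamma}}[\mathbf{E}_N] \;\le\; \tfrac{1}{pN}\log \mathbb{E}_N\!\left[\left(\tfrac{d\mathbb{P}_N^{\tilde{b},\tilde{\Gamma}}}{d\mathbb{P}_N}\right)^{p}\right] \;+\; \tfrac{1}{qN}\log \mathbb{P}_N[\mathbf{E}_N].
\]
By Lemma \ref{lem: GIRSANOV} the first term on the right is bounded by a finite constant $C/p$ (up to an $O(1/N)$ correction), independent of $N$. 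If $\frac{1}{N}\log \mathbb{P}_N[\mathbf{E}_N]\to -\infty$, then the second term tends to $-\infty$, and hence the left-hand side does as well.

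Next I would apply this bound to the concrete events appearing in the two theorems. For Theorem \ref{thm: REPLACEMENT_colored}, take $\mathbf{E}_N = \mathbf{C}_N^{c_1,c_2}(t_1,t_2;\epsilon,\delta)$; the double limit $\limsup_{\epsilon\to 0}\limsup_{N\to\infty}$ commutes with the argument above because the upper bound splits as a sum of an $N$-independent term and the original logarithm divided by $q$. For Theorem \ref{thm: SUPER EXP TIGHTNESS}, take $\mathbf{E}_N$ to be the set that $|\{i:\sup_{|s-t|\le \delta}|x_i^N(t)-x_i^N(s)|\ge\epsilon\}|\ge N\alpha$; the same inequality yields the triple limit in $\delta,\eta,N$ after dividing by $q$.

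There is essentially no obstacle beyond invoking Lemma \ref{lem: GIRSANOV}: the whole point of that lemma is precisely that the $p$th moment of the Radon–Nikodym derivative grows only exponentially in $N$, which is dominated by the super-exponential smallness of the events in question. The only mild care needed is that $\mathscr{P}_0$ requires $(\tilde{b},\tilde{\Gamma})$ to vanish on $[0,\eta]$ for some $\eta>0$, so that the total local time $A^N(T)$ appears in the proof of Lemma \ref{lem: GIRSANOV} only with a finite constant in front, keeping its moment generating function bounded by (\ref{eq:Estimate of Total Local Time}); this is automatic from the hypothesis $(\tilde{b},\tilde{\Gamma})\in\mathscr{P}_0$ and does not affect the corollary.
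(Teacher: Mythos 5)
Your proof is correct and is exactly the implicit argument the paper intends: the corollary is stated as an "important implication" of Lemma \ref{lem: GIRSANOV}, and the H\"older (or simply Cauchy--Schwarz, $p=q=2$) change-of-measure bound you write out is the standard way to convert the $p$th-moment estimate on $d\mathbb{P}_{N}^{\tilde{b},\tilde{\Gamma}}/d\mathbb{P}_{N}$ into transfer of super-exponential negligibility. Nothing to add.
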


\subsubsection{Limit Theory of Perturbed Process\label{sub:Hydrodynamic-Limit-for}}

Let $\widetilde{\mathbb{Q}}_{N}^{\tilde{b},\tilde{\Gamma}}$ be the
probability measure on $C([0,\,T],$$\mathbb{\mathscr{M}}(\mathbb{T})^{m})$
induced by $\tilde{\mu}^{N}(\cdot)$ under the process $\mathbb{P}_{N}^{\tilde{b},\tilde{\Gamma}}$.
Then, $\widetilde{\mathbb{Q}}_{N}^{\tilde{b},\tilde{\Gamma}},\,N\in\mathbb{N}$
is a tight sequence because of Corollary \ref{cor: replacemtne lemma for driven system}.
Now, we can characterize all limit points of this sequence as the
solution of a certain quasi-linear PDE. 
\begin{thm}
\label{thm : HDL for Driven System} Suppose that $(\tilde{b},\,\tilde{\Gamma})\in\mathscr{P}_{0}$
and $\tilde{\mu}^{N}(0)\rightharpoonup\tilde{\gamma}(x)dx$ weakly
as $N\rightarrow\infty$. Then, the support of any weak limit of $\widetilde{\mathbb{Q}}_{N}^{\tilde{b},\tilde{\Gamma}},\,N\in\mathbb{N}$
is concentrated on the set of $\tilde{\rho}(t,\,x)\in\mathscr{D}_{color}^{m}$
which is the weak solution of 
\begin{equation}
\frac{\partial\tilde{\rho}}{\partial t}=\frac{1}{2}\nabla\cdot\left[D(\tilde{\rho})\nabla\tilde{\rho}\right]-\nabla\cdot\left[A(\tilde{\rho})\left(\tilde{b}-\frac{1}{\lambda}\tilde{\Gamma}\tilde{\rho}\right)\right]\label{eq: HDL DRIVEN EQUATION}
\end{equation}
with initial condition $\tilde{\gamma}(x)$.
\end{thm}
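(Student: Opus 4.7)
The plan is to adapt the density-field martingale computation used for the upper bound in Section 4.3 to the perturbed measure $\mathbb{P}_N^{\tilde b,\tilde\Gamma}$ directly, rather than in the exponential form. For smooth test vectors $\tilde f=(f^{(1)},\dots,f^{(m)})^\dagger$, I would track
\[
X_N^{(c)}(t)=\frac{1}{N}\sum_{i\in I_c^N}g^{(c)}(t,z_i^N(t)),
\]
where $g^{(c)}(t,x)=f^{(c)}(t,G_\rho(t,x))$ and $G_\rho$ is the inverse of $F_\rho$ from Lemma~4.18. Under the perturbed Tanaka formula (4.16), each $z_i^N$ acquires a drift consisting of $\tfrac{N\lambda+1}{N(\lambda+1)}b_c(t,x_i^N)+\tfrac{1}{N(\lambda+1)}\sum_{j\neq i}b_{c_j}(t,x_j^N)$, while the jump martingales $M_{ij}^N-M_{ji}^N$ are compensated at the modified rate $\lambda N+\gamma_{c_i,c_j}(t,x_i^N)$. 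Applying It\^o's formula to $X_N^{(c)}(T)-X_N^{(c)}(0)$ writes this increment as an explicit bounded-variation functional of $\tilde\mu^N(\cdot)$ plus a martingale whose quadratic variation is $O(1/N)$, hence negligible in the limit.

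The next step is to invoke Corollary~4.9 to apply the replacement lemma under $\mathbb{P}_N^{\tilde b,\tilde\Gamma}$: every local-time differential $dA_{i,c'}^N(t)$ in both the quadratic variation of $z_i^N$ and in the $\tilde\Gamma$-compensator can be replaced, super-exponentially, by $\rho_{\epsilon,i}^{(c')}(x^N(t))\,dt$, and then by $\rho_{c'}(t,x_i^N(t))\,dt$ after $\epsilon\to 0$ via standard mollification. Exponential tightness (again Corollary~4.9, coupled with Theorem~3.1) yields a subsequential weak limit $\widetilde{\mathbb Q}_\infty^{\tilde b,\tilde\Gamma}$. Passing to that limit and then changing variables through the identities $\partial_xG_\rho=(\lambda+1)/(\lambda+\rho)$ and $\partial_tG_\rho$ from (4.38)--(4.40) converts every $z$-integral into an $x$-integral, and an integration by parts produces the weak form of (4.17) tested against $\tilde f$.

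The regularity claim $\tilde\rho\in\mathscr{D}_{color}^m$ follows by the same route as Lemma~4.4: the apriori energy estimates (4.10)--(4.11) remain valid under $\mathbb{P}_N^{\tilde b,\tilde\Gamma}$ because Lemma~4.8 controls the Radon--Nikodym derivative in $L^p$ for $p\in[1,2]$, so an application of Varadhan's lemma to the driven measure again produces the $H_{-1,A(\tilde\rho)}$-type bounds that define $\mathscr{D}_{color}^m$. Matching the initial condition $\tilde\gamma$ is automatic from the continuity of the projection $\pi_0$ and the hypothesis $\tilde\mu^N(0)\rightharpoonup\tilde\gamma(x)dx$.

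The main obstacle is the bookkeeping in the $\tilde\Gamma$-contribution. The interaction piece of $dz_i^N$ picks up, after summing $j$ across colors, a drift
\[
\frac{1}{N(\lambda+1)}\sum_{c'}\sum_{j\in I_{c'}^N}\gamma_{c,c'}(t,x_i^N)\bigl(dA_{ij}^N+dA_{ji}^N\bigr),
\]
which via the replacement lemma tends to $\frac{1}{\lambda+1}\sum_{c'}\gamma_{c,c'}(t,x_i^N)\,\rho_{c'}(t,x_i^N)\,dt$. After the $F_\rho$ change of variables and integration by parts, this precisely contributes the $\nabla\cdot[A(\tilde\rho)\tilde\Gamma\tilde\rho/\lambda]$ term in (4.17), but verifying that the prefactors reconstitute $A(\tilde\rho)$ as defined in (1.14) requires exactly the algebraic identities exploited in the computation (4.49)--(4.51), namely the decomposition $D=A\chi$ combined with $\nabla F_\rho=(\lambda+\rho)/(\lambda+1)$. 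This algebra, together with the parallel identification of the symmetric $\tilde b$-drift contribution as $-\nabla\cdot[A(\tilde\rho)\tilde b]$, is where the real computational work lies; the probabilistic inputs (tightness, replacement lemma, martingale convergence) are all already in place.
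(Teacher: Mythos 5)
Your proposal follows the paper's own route quite closely: propagate $\tilde{g}(t,z_i^N(t))$ by It\^{o}, replace local times with local densities via Corollary \ref{cor: replacemtne lemma for driven system}, pass to a weak limit, then undo the $F_\rho$--change of variables with $\tilde{g}=\tilde{f}\circ G_\rho$, as in Lemma \ref{lem: upppp}. But there is a genuine gap, and one order-of-operations issue.

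The gap is in the time derivative of $G_\rho$. Formula (\ref{eq: gt}) reduces $\partial_t G_\rho$ to an integral against $\rho_t(t,y)$. To make that integral explicit you must know what PDE the \emph{uncolored} density $\rho$ satisfies under the perturbed dynamics, and this is no longer the heat equation: the color-dependent drifts $b_c$ do not cancel when you sum over colors, so $\rho$ instead solves the drifted equation $\partial_t\rho=\tfrac12\Delta\rho-\nabla(\tilde b\cdot\tilde\rho)$ (this is exactly what Lemma \ref{lem: driven rho} supplies). After one integration by parts this produces the extra piece
\[
\partial_t G_\rho = -\frac{\rho_x}{2(\lambda+\rho)}+\frac{\tilde b\cdot\tilde\rho - \int_{\mathbb T}\tilde b\cdot\tilde\rho\,dy}{\lambda+\rho},
\]
and it is precisely this second term, combined with the direct drift contributions to $dz_i^N$, that reconstitutes the $-\nabla\cdot[A(\tilde\rho)\tilde b]$ term in (\ref{eq: HDL DRIVEN EQUATION}). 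Your proposal references the unperturbed identities (\ref{eq:gt1})--(\ref{eq: gt}) and the algebra of (\ref{eq: hus1})--(\ref{eq:hus2}), but that algebra was engineered for the case where $\rho_t$ and $\tfrac12\Delta\rho$ agree and cancel against the auxiliary $J$-term. Under $\mathbb{P}_N^{\tilde b,\tilde\Gamma}$ they do not, and your plan as stated has no mechanism to produce the $\tilde b$-part of the limiting equation; you need the analogue of Lemma \ref{lem: driven rho} as a separate ingredient before substituting into $\partial_tG_\rho$.

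The secondary issue is logical order. You set $g^{(c)}(t,x)=f^{(c)}(t,G_\rho(t,x))$ at the prelimit level, but $\rho$ is the (as yet unknown) limiting density of a weak limit point. The correct procedure, which the paper follows, is to run the It\^{o}/replacement/tightness argument with a \emph{generic} smooth $\tilde g$, pass to the weak limit first to obtain a family of integral identities indexed by $\tilde g$, and only then specialize $\tilde g=\tilde f\circ G_\rho$ (or $G_{\rho_\epsilon}$ with $\epsilon\to 0$ for non-smooth $\tilde\rho$, as in the last paragraph of Lemma \ref{lem: upppp}). As written, your test function depends on the very object the theorem is trying to characterize. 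You should also note that the $O(1/N)$ claim for the quadratic variation of the martingale $\Theta_2$ relies on $\mathbb{E}_N^{\tilde b,\tilde\Gamma}[A^N(T)]\le C$, which the paper establishes by a Tanaka-type bound under the perturbed measure (equation (\ref{eq:bdd of dr})); invoking Corollary \ref{cor: replacemtne lemma for driven system} alone does not supply this $L^1$ control of the average local time.
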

Let $\widetilde{\mathbb{Q}}_{\infty}^{\tilde{b},\tilde{\Gamma}}$
be a weak limit of $\widetilde{\mathbb{Q}}_{N}^{\tilde{b},\tilde{\Gamma}},\,N\in\mathbb{N}$.
Then $\widetilde{\mathbb{Q}}_{\infty}^{\tilde{b},\tilde{\Gamma}}$
is concentrated on $\mathscr{D}_{color}^{m}$ due to Lemmas \ref{lem: Domain of Rate function}
and \ref{lem: GIRSANOV}. We start by studying the limit of the uncolored
empirical density $\mu^{N}(\cdot)$ which is no more the solution
of the heat equation. 
\begin{lem}
\label{lem: driven rho} Let $\tilde{\rho}(\cdot,\,x)dx$ be any weak
limit point of \textup{$\left\{ \tilde{\mu}^{N}(\cdot)\right\} _{N=1}^{\infty}$}.
Then, $\rho=\sum_{c=1}^{m}\rho_{c}$ satisfies 
\begin{equation}
\frac{\partial\rho}{\partial t}=\frac{1}{2}\Delta\rho-\nabla(\tilde{b}\cdot\tilde{\rho})\label{eq:hysyr}
\end{equation}
in a weak sense. \end{lem}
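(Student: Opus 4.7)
The plan is to test the martingale formulation \eqref{eq:tanaka for driven system} against the symmetric test function $f(x)=\frac{1}{N}\sum_{i=1}^{N}J(x_{i})$ for an arbitrary $J\in C^{2}(\mathbb{T})$, extract the bulk dynamics, and show the residual martingale vanishes in probability. The key structural point is that $f$ only depends on particles through their positions (not labels), so on any face $F_{ij}$ one has $f_{ij}(x)=f_{ji}(x)$ and $D_{ij}f(x)=\frac{1}{N}(J'(x_{i})-J'(x_{j}))=0$ because $x_{i}=x_{j}$ there. Consequently $\mathfrak{U}_{ij}^{\lambda,\tilde{\Gamma}}f\equiv 0$ on $\partial G_{N}$, so the entire label-switching mechanism (including the skew-symmetric perturbation $\tilde{\Gamma}$) contributes nothing to the evolution of $\langle \mu^{N}(t),J\rangle$.

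Concretely, plugging $f$ into \eqref{eq:tanaka for driven system}, the bulk terms collapse to
\[
\langle \mu^{N}(t),J\rangle - \langle \mu^{N}(0),J\rangle - \frac{1}{2}\int_{0}^{t}\langle \mu^{N}(s),J''\rangle\,ds - \int_{0}^{t}\langle \tilde{\mu}^{N}(s),\tilde{b}(s,\cdot)J'\rangle\,ds = M^{\tilde{b},\tilde{\Gamma}}_{f}(t),
\]
where the penultimate integrand means $\sum_{c=1}^{m}b_{c}(s,x)J'(x)$ integrated against $\mu_{c}^{N}$, since particles in $I_{c}^{N}$ receive the drift $b_{c}$. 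Using the representation \eqref{eq:martingale}, the jump part of $M^{\tilde{b},\tilde{\Gamma}}_{f}$ vanishes for the same reason ($f_{ij}-f_{ji}=0$ on the diagonal), leaving only the Brownian piece $\frac{1}{N}\sum_{i}\int_{0}^{t}J'(x_{i}^{N}(s))\,d\beta_{i}(s)$ (after Girsanov, this Brownian motion is adjusted by the drift $b_{c}$, but only the quadratic variation matters). The quadratic variation is bounded by $\frac{T\|J'\|_{\infty}^{2}}{N}$, so Doob's inequality together with Lemma \ref{lem: GIRSANOV} yields $\sup_{0\le t\le T}|M^{\tilde{b},\tilde{\Gamma}}_{f}(t)|\to 0$ in $\mathbb{P}_{N}^{\tilde{b},\tilde{\Gamma}}$-probability.

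Next I would pass to the limit along the convergent subsequence $\widetilde{\mathbb{Q}}_{N}^{\tilde{b},\tilde{\Gamma}}\to \widetilde{\mathbb{Q}}_{\infty}^{\tilde{b},\tilde{\Gamma}}$. Since $J,J',J''\in C(\mathbb{T})$ and $\tilde{b}$ is smooth, the continuous mapping theorem applied to the map $\tilde{\pi}_{\cdot}\mapsto \langle \pi_{t},J\rangle -\langle \pi_{0},J\rangle-\frac{1}{2}\int_{0}^{t}\langle \pi_{s},J''\rangle\,ds-\int_{0}^{t}\langle \tilde{\pi}_{s},\tilde{b}J'\rangle\,ds$ identifies the limit: for $\widetilde{\mathbb{Q}}_{\infty}^{\tilde{b},\tilde{\Gamma}}$-a.e. trajectory $\tilde{\rho}(\cdot,x)dx$,
\[
\langle \rho(t),J\rangle - \langle \rho(0),J\rangle = \frac{1}{2}\int_{0}^{t}\langle \rho(s),J''\rangle\,ds + \int_{0}^{t}\langle \tilde{\rho}(s),\tilde{b}(s,\cdot)J'\rangle\,ds,
\]
which is precisely the weak formulation of \eqref{eq:hysyr}.

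No step is really the ``hard part'' here: the argument is the standard Dynkin/Ito identification of limit dynamics, and the only observation that genuinely uses the structure of the perturbation is that $\tilde{\Gamma}$ acts only on labels and is invisible to label-symmetric observables. The regularity needed to make weak-limit manipulations rigorous (e.g.\ that $\tilde{\rho}\in \mathscr{D}_{color}^{m}$ so that $\tilde{b}J'$ can be paired with $\tilde{\rho}$) is already guaranteed by Lemma \ref{lem: Domain of Rate function} combined with Lemma \ref{lem: GIRSANOV}, which transfers the exponential estimates of Sections 2--3 from $\mathbb{P}_{N}$ to $\mathbb{P}_{N}^{\tilde{b},\tilde{\Gamma}}$ via Corollary \ref{cor: replacemtne lemma for driven system}.
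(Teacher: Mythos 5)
Your proof is correct and follows the same route as the paper: test against the label-symmetric observable $\frac{1}{N}\sum_i J(x_i)$, observe that both the Neumann-jump boundary terms and the jump part of the martingale vanish because $f_{ij}=f_{ji}$ and $D_{ij}f=0$ on $\partial G_N$, kill the residual Brownian martingale by its $O(1/N)$ quadratic variation, and pass to the weak limit along the convergent subsequence. The paper states this more compactly (``by Ito's formula \ldots the last term is negligible''); you simply make explicit the cancellation that the paper leaves implicit.
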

\begin{proof}
By the Ito's formula, 
\begin{align*}
 & \frac{1}{N}\sum_{i=1}^{N}f(T,\,x_{i}^{N}(T))-\frac{1}{N}\sum_{i=1}^{N}f(0,\,x_{i}^{N}(0))\\
 & =\frac{1}{N}\int_{0}^{T}\sum_{i=1}^{N}\left\{ f_{t}+b_{c(i)}f_{x}+\frac{1}{2}f_{xx}\right\} (t,\,x_{i}^{N}(t))dt+\frac{1}{N}\int_{0}^{T}\sum_{i=1}^{N}f_{x}(t,\,x_{i}^{N}(t))d\beta_{i}(t)
\end{align*}
where $c(i)$ is the color of particle $x_{i}^{N}(\cdot)$. Then (\ref{eq:hysyr})
is straightforward since the last term is negligible.
\end{proof}

\begin{proof}[\textit{Proof of Theorem \ref{thm : HDL for Driven System}}]
The main machinery is again $z_{i}^{N}(t)$ in (\ref{eq:z(t)}).
However, we should be careful since $z_{i}^{N}(t)$ is not a martingale
under $\mathbb{P}_{N}^{\tilde{b},\tilde{\Gamma}}$ but instead satisfies
\begin{align}
dz_{i}^{N}(t)= & \,d\mathscr{M}_{i}^{N}(t)+\frac{1}{N(\lambda+1)}\sum_{c=1}^{m}\sum_{j\in I_{c}^{N}}b_{c}(t,\,x_{j}^{N}(t))dt\label{eq: z_i (t) for driven system}\\
 & \,+\frac{\lambda}{\lambda+1}b_{c_{0}}(t,\,x_{i}^{N}(t))dt+\frac{1}{(\lambda+1)}\sum_{c=1}^{m}\gamma_{c_{0},c}(t,\,x_{i}^{N}(t))dA_{i,c}^{N}(t)\nonumber 
\end{align}
where $c_{0}$ is the color of the particle $x_{i}^{N}(t)$ and $\mathscr{M}_{i}^{N}(t)$
is the martingale given by (\ref{eq: z(t) as martingale}) which was
just $z_{i}^{N}(t)$ under $\mathbb{P}_{N}$. For given 
\[
\tilde{g}=(g^{(1)},\,g^{(2)},\,\cdots,\,g^{(m)})^{\dagger}\in C^{1,2}([0,\,T]\times\mathbb{T}^{m})
\]
we can apply Ito's formula such that 
\begin{equation}
\frac{1}{N}\sum_{c=1}^{m}\sum_{i\in I_{c}^{N}}g^{(c)}(T,\,z_{i}^{N}(T))-\frac{1}{N}\sum_{c=1}^{m}\sum_{i\in I_{c}^{N}}g^{(c)}(0,\,z_{i}^{N}(0))=\Theta_{1}+\Theta_{2}+\Theta_{3}+\Theta_{4}\label{eq:mach}
\end{equation}
where
\begin{align*}
\Theta_{1} & =\int_{0}^{T}\frac{1}{N}\sum_{c=1}^{m}\sum_{i\in I_{c}^{N}}g_{t}^{(c)}(t,\,z_{i}^{N}(t))dt\\
\Theta_{2} & =\int_{0}^{T}\frac{1}{N}\sum_{c=1}^{m}\sum_{i\in I_{c}^{N}}g_{x}^{(c)}(t,\,z_{i}^{N}(t))d\mathscr{M}_{i}^{N}(t)\\
\Theta_{3} & =\int_{0}^{T}\frac{1}{2N}\sum_{c=1}^{m}\sum_{i\in I_{c}^{N}}g_{xx}^{(c)}(t,\,z_{i}^{N}(t))d\left\langle \mathscr{M}_{i}^{N},\mathscr{M}_{i}^{N}\right\rangle {}_{t}\\
\Theta_{4} & =\int_{0}^{T}\frac{1}{N}\sum_{c=1}^{m}\sum_{i\in I_{c}^{N}}g_{x}^{(c)}(t,\,z_{i}^{N}(t))\left[\mathbf{a}_{i}(x^{N}(t))dt+\sum_{k=1}^{m}\frac{\gamma_{c,k}(t,\,x_{i}^{N}(t))}{\lambda+1}dA_{i,k}^{N}(t)\right]
\end{align*}
where
\[
\mathbf{a}_{i}(x^{N}(t))=\frac{1}{N(\lambda+1)}\sum_{k=1}^{m}\sum_{j\in I_{k}^{N}}b_{k}(t,\,x_{j}^{N}(t))dt+\frac{\lambda}{\lambda+1}b_{c}(t,\,x_{i}^{N}(t))dt.
\]
We first claim that $\Theta_{2}$ is negligible since the order of
the quadratic variation is $O(1/N)$. In the formula (\ref{eq: QV of z(t)})
for $\mathscr{M}_{i}^{N}(t)$, the Brownian part is easy to compute.
For the quadratic variation of the Poisson part, we only need to check
\begin{equation}
\mathbb{E}_{N}^{\tilde{b},\tilde{\Gamma}}\left[A^{N}(T)\right]=\mathbb{E}_{N}^{\tilde{b},\tilde{\Gamma}}\left[\frac{1}{N^{2}}\sum_{i\neq j}A_{ij}^{N}(T)\right]\le C\label{eq:bdd of dr}
\end{equation}
for some $C$ where $\mathbb{E}_{N}^{\tilde{b},\tilde{\Gamma}}$ denotes
the expectation with respect to $\mathbb{P}_{N}^{\tilde{b},\tilde{\Gamma}}$.
To prove (\ref{eq:bdd of dr}), let us define $R_{N}(x)=\frac{1}{N}\sum_{i\neq j}g(x_{i}-x_{j})$
for $x\in\mathbb{\mathbb{T}}^{N}$ where $g(z)=\frac{z(1-z)}{2}\in C(\mathbb{T})$.
Then, by Tanaka's formula (\ref{eq:tanaka for driven system}), 
\begin{align*}
 & R_{N}(x^{N}(T))-R_{N}(x^{N}(0))+\frac{T(N-1)}{2}-\frac{2}{N}\sum_{i\neq j}A_{ij}^{N}(T)\\
 & -\frac{1}{N}\sum_{i\neq j}\int_{0}^{T}b_{c(i)}(t,\,x_{i}^{N}(t))g'(x_{i}^{N}(t)-x_{j}^{N}(t))dt\\
 & =\frac{1}{N}\sum_{i=1}^{N}\int_{0}^{N}\left[\sum_{j:j\neq i}g'(x_{i}^{N}(t)-x_{j}^{N}(t))\right]d\beta_{i}(t)
\end{align*}
and we can check (\ref{eq:bdd of dr}) by simply taking the expectation. 

Now we substitute $d\left\langle \mathscr{M}_{i}^{N},\mathscr{M}_{i}^{N}\right\rangle _{t}$
in $\Theta_{3}$ by (\ref{eq: QV of z(t)}) and then apply the replacement
lemma for the perturbed process (Corollary \ref{cor: replacemtne lemma for driven system})
to mollify the local times in $\Theta_{3}$ and $\Theta_{4}$ by local
densities. By doing so, we obtain 
\begin{align*}
 & \limsup_{\epsilon\rightarrow0}\limsup_{N\rightarrow\infty}\\
 & \mathbb{P}_{N}^{\tilde{b},\tilde{\Gamma}}\Biggl[\Biggl|\frac{1}{N}\sum_{c=1}^{m}\sum_{i\in I_{c}^{N}}\Biggl\{ g^{(c)}(T,\,z_{i}^{N}(T))-\frac{1}{N}\sum_{c=1}^{m}\sum_{i\in I_{c}^{N}}g^{(c)}(0,\,z_{i}^{N}(0))\\
 & -\int_{0}^{T}\left(g_{t}^{(c)}+\frac{\lambda(\lambda+\rho_{\epsilon,i}(x^{N}(t)))}{2(\lambda+1)^{2}}g_{xx}^{(c)}+\mathbf{j}_{i}(x^{N}(t))g_{x}^{(c)}\right)(t,\,z_{i}^{N}(t))dt\Biggr\}\Biggr|>\delta\Biggr]\\
 & =0
\end{align*}
where 
\[
\mathbf{j}_{i}(x^{N}(t))=\mathbf{a}_{i}(x^{N}(t))+\sum_{k=1}^{m}\frac{\gamma_{c,k}(t,\,x_{i}^{N}(t))}{\lambda+1}\rho_{\epsilon,i}^{(k)}(x^{N}(t)).
\]
Now we represent all the terms as a function of the density fields
of $\tilde{\mu}_{N}(\cdot)$ and then send $N\rightarrow\infty$ along
the subsequence of $\mathbb{N}$ along which $\widetilde{\mathbb{Q}}_{N}^{\tilde{b},\tilde{\Gamma}}\rightharpoonup\widetilde{\mathbb{Q}}_{\infty}^{\tilde{b},\tilde{\Gamma}}$
weakly. Then, 
\begin{equation}
\limsup_{\epsilon\rightarrow0}\widetilde{\mathbb{Q}}_{\infty}^{\tilde{b},\tilde{\Gamma}}\left[\tilde{\rho}:\left|\mathfrak{I}_{\tilde{g},\tilde{\rho}}(T)-\mathfrak{I}_{\tilde{g},\tilde{\rho}}(0)-\int_{0}^{T}\mathfrak{K}_{\tilde{g},\tilde{\rho},\epsilon}(t)dt\right|>\delta\right]=0\label{eq: CKNA}
\end{equation}
where 
\begin{align*}
\mathfrak{I}_{\tilde{g},\tilde{\rho}}(t)= & \left\langle \tilde{\rho}(t,\,x)dx,\,\tilde{g}(t,\,F_{\rho}(t,\,x))\right\rangle \\
\mathfrak{K}_{\tilde{g},\tilde{\rho},\epsilon}(t)= & \Biggl\langle\tilde{\rho}(t,\,x)dx,\,\Biggl(\tilde{g}_{t}+\frac{\lambda\left(\lambda+\rho*\iota_{\epsilon}(x)\right)}{2(\lambda+1)^{2}}\tilde{g}_{xx}\Biggr)\left(t,\,F_{\rho}(t,\,x)\right)+\tilde{\mathbf{k}}_{\tilde{\rho},\epsilon}(t,\,x)\Biggr\rangle.
\end{align*}
Here, $\tilde{\mathbf{k}}_{\tilde{\rho},\epsilon}=(\mathbf{k}_{\tilde{\rho},\epsilon}^{(1)},\,\mathbf{k}_{\tilde{\rho},\epsilon}^{(2)},\,\cdots,\,\mathbf{k}_{\tilde{\rho},\epsilon}^{(m)})^{\dagger}$
is defined by
\begin{align*}
 & \mathbf{k}_{\tilde{\rho},\epsilon}^{(c)}(t,\,x)\\
 & =\frac{g_{x}^{(c)}(t,\,F_{\rho}(t,\,x))}{\lambda+1}\Biggl[\int_{\mathbb{T}}\tilde{b}(t,\,y)^{\dagger}\tilde{\rho}(t,\,y)dy+\lambda b_{c}(t,\,x)+\sum_{k=1}^{m}\gamma_{c,k}(t,\,x)\rho_{k}*\iota_{\epsilon}(x)\Biggl]
\end{align*}
for $c=1,\,2,\,\cdots,\,m$.

The final step is to substitute $\tilde{g}(t,\,x)=\tilde{f}(t,\,G_{\rho}(t,\,x))$
where $G_{\rho}=F_{\rho}^{-1}$ is the function defined in Lemma \ref{lem: upppp}.
Of course, this is possible only for $\tilde{\rho}$ is regular enough.
However for general $\tilde{\rho}\in\mathscr{D}_{color}^{m}$, we
can use $\tilde{g}_{\epsilon}(t,\,x)=\tilde{f}(t,\,G_{\rho_{\epsilon}}(t,\,x))$
instead and then send $\epsilon\rightarrow0$ at the final stage to
obtain the desired result as in Lemma \ref{lem: upppp}. We will not
repeat this procedure here. 

For $\tilde{\rho}\in C^{1,2}$, we can compute various derivatives
of $\tilde{g}$ in terms of those of $\tilde{f}$ by using (\ref{eq:gt1}),
(\ref{eq:gt2}) and (\ref{eq: gt}). Furthermore, we can explicitly
compute (\ref{eq: gt}) by using Lemma \ref{lem: driven rho} in a
way that
\begin{align*}
\frac{\partial}{\partial t}G_{\rho}(t,\,x)= & -\frac{1}{\lambda+\rho(t,\,x)}\int_{\mathbb{T}}\nu(y-x)\rho_{t}(t,\,y)dy\\
= & -\frac{1}{\lambda+\rho(t,\,x)}\int_{\mathbb{T}}v(y-x)\left\{ \frac{1}{2}\Delta\rho(t,\,y)-\nabla\left[\tilde{b}(t,\,y)\cdot\tilde{\rho}(t,\,y)\right]\right\} dy\\
= & -\frac{\rho_{x}(t,\,x)}{2(\lambda+\rho(t,\,x))}+\frac{\tilde{b}\cdot\tilde{\rho}(t,\,x)-\int_{\mathbb{T}}\tilde{b}(t,\,y)\cdot\tilde{\rho}(t,\,y)dy}{(\lambda+\rho)}
\end{align*}
where we integrated by part at the last equality. By letting $\epsilon\rightarrow0$
at (\ref{eq: CKNA}), we obtain 
\[
\widetilde{\mathbb{Q}}_{\infty}^{\tilde{b},\tilde{\Gamma}}\left[\left\{ \tilde{\rho}(t,\,x)dx:\int_{\mathbb{T}}\tilde{f}\cdot\tilde{\rho}(T,\,x)dx-\int_{\mathbb{T}}\tilde{f}\cdot\tilde{\rho}(0,\,x)dx-\mbox{\ensuremath{\mathbf{H}}}_{1}-\mbox{\ensuremath{\mathbf{H}}}_{2}=0\right\} \right]=1
\]
where 
\begin{align*}
\mbox{\ensuremath{\mathbf{H}}}_{1} & =\int_{0}^{T}\int_{\mathbb{T}}\Biggl(\tilde{f}_{t}+\frac{\lambda}{2(\lambda+\rho)}\tilde{f}_{xx}-\frac{\rho_{x}(\lambda+2\rho)}{2(\lambda+\rho)^{2}}\tilde{f}_{x}\Biggr)(t,\,x)\cdot\tilde{\rho}(t,\,x)dx\\
\mbox{\ensuremath{\mathbf{H}}}_{2} & =\sum_{c=1}^{m}\int_{0}^{T}\int_{\mathbb{T}}\left[\frac{\lambda b_{c}+\sum_{k=1}^{m}\rho_{k}b_{k}}{\lambda+\rho}+\sum_{k=1}^{m}\frac{\gamma_{c,k}\rho_{k}}{\lambda+\rho}\right]\rho_{c}f_{x}^{(c)}(t,\,x)dx
\end{align*}
By performing the integration by part, we can rewrite $\mathbf{H}_{2}$
as 
\begin{equation}
-\int_{0}^{T}\int_{\mathbb{T}}\tilde{f}^{\dagger}\nabla\cdot\left[A(\tilde{\rho})\left(\tilde{b}-\frac{1}{\lambda}\tilde{\Gamma}\tilde{\rho}\right)\right]dx.\label{eq:hdlt}
\end{equation}
Note that we used the skew-symmetry of $\tilde{\Gamma}$ here. This
completes the proof, since $\mbox{\ensuremath{\mathbf{H}}}_{1}$ and
$\mbox{\ensuremath{\mathbf{H}}}_{2}$ correspond to $\frac{1}{2}\nabla\cdot\left[D(\tilde{\rho})\nabla\tilde{\rho}\right]$
and $-\nabla\cdot\left[A(\tilde{\rho})\left(\tilde{b}-\frac{1}{\lambda}\tilde{\Gamma}\tilde{\rho}\right)\right]$
respectively in (\ref{eq: HDL DRIVEN EQUATION}).
\end{proof}

\subsubsection{Uniqueness and Approximation Procedure\label{sub:Uniqueness}}

The lower bound computation, based on the limit theory of the perturbed
system presented in the previous subsection, also requires the uniqueness
of PDE (\ref{eq: HDL DRIVEN EQUATION}). Let $\mathscr{D}_{0}^{m}$
consist of $\tilde{\rho}$ satisfying $I_{color}^{m}(\tilde{\rho})<\infty$
then $\mathscr{D}_{0}^{m}\subset\mathscr{D}_{color}^{m}$ by Lemma
\ref{lem: Domain of Rate function}. If we can prove the uniqueness
of (\ref{eq: HDL DRIVEN EQUATION}) for the class of $\mathscr{D}_{0}^{m}$,
then we can directly compute the lower bound. Of course, the uniqueness
of a quasi-linear PDE such as (\ref{eq: HDL DRIVEN EQUATION}) whose
diffusion coefficient is not elliptic is hard to achieve at the desired
level of generality. Instead, we establish a somewhat narrower uniqueness
result, which should entail an additional approximation theorem. Thus,
Theorem \ref{thm: Uniqueness Theorem } gives the uniqueness result
and Theorem \ref{thm: apploxmatn} provides the corresponding approximation
procedure. We remark here that our methodology in the current subsection
originates from and is similar to the methodology described in Sections
5 and 6 of \cite{QRV}; hence, some details, especially related to
the approximation procedure, are common to all of theses sections
and will be omitted. 

Let a subclass $\mathscr{E}_{0}^{m}$ of $\mathscr{D}_{0}^{m}$ be
the collection of $\tilde{\rho}$ which is smooth on $(0,\,T]\times\mathbb{T}$,
solves (\ref{eq: HDL DRIVEN EQUATION}) for some $(\tilde{b},\,\tilde{\Gamma})\in\mathscr{P}_{0}$
and satisfies 
\begin{align}
\min_{1\le c\le m}\,\inf_{(t,\,x)\in[\eta,\,T]\times\mathbb{T}}\rho_{c}(t,\,x)>\epsilon & \,\,\,\,\text{for\,\ some\ }\epsilon>0\label{eq: bdddd1}
\end{align}
where $\eta$ comes from the second condition of $\mathscr{P}_{0}$.
Then, we can state the uniqueness theorem as following theorem. 
\begin{thm}
\label{thm: Uniqueness Theorem }Suppose that $\tilde{u}\in\mathscr{E}_{0}^{m}$
is a solution of (\ref{eq: HDL DRIVEN EQUATION}) for \textup{$(\tilde{b},\,\tilde{\Gamma})\in\mathscr{P}_{0}$}
with initial condition $\gamma_{0}(x)$ which satisfies (\ref{eq:finiteness of entropy}).
If $\tilde{v}\in\mathscr{D}_{0}^{m}$ is another solution of the same
equation with the same initial condition, then $\tilde{v}=\tilde{u}$.
\end{thm}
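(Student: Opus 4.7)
The plan is to establish uniqueness by a relative entropy argument that exploits the smoothness and strict positivity of the reference solution $\tilde{u}\in\mathscr{E}_0^m$. The key algebraic identity I will rely on is that, since $D(\tilde{\rho})=A(\tilde{\rho})\chi(\tilde{\rho})$ and $\chi(\tilde{\rho})\nabla\tilde{\rho}=\nabla\log\tilde{\rho}$ component-wise, the PDE (\ref{eq: HDL DRIVEN EQUATION}) can be rewritten in gradient-flow form
\[
\partial_t\tilde{\rho}=\nabla\cdot\Big[A(\tilde{\rho})\Big(\tfrac12\nabla\log\tilde{\rho}-\tilde{b}+\tfrac{1}{\lambda}\tilde{\Gamma}\tilde{\rho}\Big)\Big],
\]
which exhibits the entropy $h(\tilde{\rho})=\sum_c\rho_c\log\rho_c$ as the natural Lyapunov functional.

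First I would reduce to the interval $[\eta,T]$. On $[0,\eta]$ the drifts $\tilde{b}$ and $\tilde{\Gamma}$ vanish by the definition of $\mathscr{P}_0$, so both $\tilde{u}$ and $\tilde{v}$ are weak solutions of $\partial_t\tilde{\rho}=\tfrac12\nabla\cdot[D(\tilde{\rho})\nabla\tilde{\rho}]$ starting from $\gamma_0$. Summing over colors and using $\sum_c[D(\tilde{\rho})\nabla\tilde{\rho}]_c=\nabla\rho$, one sees that $u=\sum_c u_c$ and $v=\sum_c v_c$ both solve the heat equation $\partial_t\rho=\tfrac12\Delta\rho$ with initial data $\gamma_0$; uniqueness for the heat equation forces $u=v$ on $[0,\eta]$. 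Plugging this back, each $v_c$ then solves the linear parabolic equation (\ref{eq:single color PDE}) with smooth coefficients determined by the common $\rho$, so $\tilde{v}=\tilde{u}$ on $[0,\eta]$ by standard $L^2$ energy estimates.

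The central step is to run, on $[\eta,T]$, the relative entropy
\[
\mathcal{H}(t)=\sum_{c=1}^m\int_{\mathbb{T}}\Big[v_c\log(v_c/u_c)-v_c+u_c\Big](t,x)\,dx,
\]
which is nonnegative and vanishes at $t=\eta$ by the first step. Differentiating formally in $t$, inserting the weak equation for $\tilde{v}$ tested against the smooth admissible function $\log(\tilde{v}/\tilde{u})$, and using the classical PDE for $\tilde{u}$, the leading-order contribution comes out as the dissipative quadratic form
\[
-\tfrac12\int_{\mathbb{T}}\big(\nabla\log\tilde{v}-\nabla\log\tilde{u}\big)^{\dagger}A(\tilde{v})\big(\nabla\log\tilde{v}-\nabla\log\tilde{u}\big)\,dx\le 0.
\]
The remainder consists of cross-terms of the schematic form $(A(\tilde{v})-A(\tilde{u}))\nabla\log\tilde{u}$ and of drift contributions involving $\tilde{b}$ and $\tilde{\Gamma}$. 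Using that $\tilde{u}$ and $\nabla\log\tilde{u}$ are bounded (by smoothness and the lower bound (\ref{eq: bdddd1})), and that $A$ is Lipschitz in $\tilde{\rho}$ at fixed $\rho$-mass, these remainders are controlled by an expression of the form $C\mathcal{H}(t)+\tfrac14\int(\nabla\log\tilde{v}-\nabla\log\tilde{u})^{\dagger}A(\tilde{v})(\nabla\log\tilde{v}-\nabla\log\tilde{u})\,dx$ via Cauchy--Schwarz, which is absorbed into the dissipation. Gronwall's inequality then forces $\mathcal{H}\equiv 0$ on $[\eta,T]$, and hence $\tilde{v}=\tilde{u}$.

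The hard part will be making the above differentiation of $\mathcal{H}(t)$ rigorous, since $\tilde{v}$ lies only in $\mathscr{D}_0^m$ and the test function $\log(\tilde{v}/\tilde{u})$ is not a priori admissible. The strategy, following the mollification procedure of Sections~5 and 6 of \cite{QRV}, is to replace $\log(\tilde{v}/\tilde{u})$ by $\log((\tilde{v}*\phi_{\varepsilon}+\delta)/\tilde{u})$ with $\phi_\varepsilon$ as in Remark~\ref{RMK4.3}, perform the computation on the mollified level where all quantities are smooth, and then pass to the limit $\varepsilon\downarrow 0$ and $\delta\downarrow 0$. The energy estimate (\ref{eq: aprori est})---which forces $\int\!\!\int\tfrac{(\nabla\rho_c)^2}{(\lambda+\rho)\rho_c}<\infty$ and $\int\!\!\int\tfrac{(\nabla\rho)^2}{\rho}<\infty$---and the finite initial entropy (\ref{eq:finiteness of entropy}) are precisely what is needed to justify the limits (via Lemma~\ref{APProximation Lemma} and Lemma~\ref{lem: frequently used}), while the uniform lower bound $u_c\ge\epsilon$ prevents degeneracy of $\log\tilde{u}$. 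With these approximations, the remaining manipulations are parallel to those in \cite{QRV} and will be omitted as announced.
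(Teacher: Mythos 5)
Your overall strategy (relative entropy along a Gronwall loop, after reducing to $[\eta,T]$) is the same as the paper's, and the reduction to $[\eta,T]$ and the choice of Lyapunov functional are correct. But there is a genuine gap: you never establish, nor flag the need for, the $L^\infty$ bound on the \emph{unknown} solution $\tilde v$, and without it the Gronwall loop does not close. Concretely, after the Cauchy--Schwarz absorption the leftover cross terms involve quantities like $\int (\nabla\log\tilde u)^\dagger (A(\tilde v)-A(\tilde u)) \,A(\tilde v)^{-1}\,(A(\tilde v)-A(\tilde u))\nabla\log\tilde u\,dx$. The Lipschitz property of $A$ gives $|A(\tilde v)-A(\tilde u)|\lesssim\sum_c|v_c-u_c|$, and then you must dominate $\int\sum_c(v_c-u_c)^2\,\cdot(\text{weights from }A(\tilde v)^{-1})\,dx$ by $C\mathcal H(t)$. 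This fails in two ways if $\tilde v$ is unbounded: (i) the Pinsker-type lower bound $\mathcal H(t)\gtrsim\int\sum_c\frac{(v_c-u_c)^2}{v_c+u_c}$ only yields $\int\sum_c(v_c-u_c)^2\lesssim M\,\mathcal H(t)$ after you know $v_c\le M$; and (ii) $A(\tilde v)^{-1}$ has operator norm of order $(\lambda+v)/(\lambda v_c)$, which is uncontrolled as $v\to\infty$. Boundedness of $\tilde u$ and of $\nabla\log\tilde u$ alone (which you do invoke) is not enough.

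The paper resolves exactly this by first proving Lemma~\ref{lem: boundedness lemma}: it observes that the uncolored sum $v=\sum_c v_c$ solves a scalar equation $v_t=\tfrac12\Delta v+\nabla(bv)$ with $b\in L_\infty$, and then derives a Gaussian-type bound on the corresponding transition kernel (via a Brownian bridge estimate) to conclude $v\in L_\infty([\eta,T]\times\mathbb T)$, hence each $v_c\in L_\infty$. Only then does the relative-entropy/Gronwall argument go through, with the constant in $H(t)\ge\frac{1}{4M}K(t)$ and the bound $\mathbf S(\tilde v)^{-1}\le\frac{M(\lambda+M)}{\lambda}I_m$ both using this $M=\|v\|_\infty$. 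Your proposal needs this $L^\infty$ step inserted before the entropy computation; as written, the assertion that the remainders are ``controlled by $C\mathcal H(t)+\tfrac14\,(\text{dissipation})$'' is unjustified.
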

Since the diffusion matrix is not symmetric, the usual technique based
on the propagation of the Sobolev norm of $\tilde{v}-\tilde{u}$ is
not available here. Instead, we examine the relative entropy of $\tilde{v}$
with respect to $\tilde{u}$ which requires $\tilde{v}\in L_{\infty}([\eta,\,T]\times\mathbb{T}^{m})$.
This boundedness does not automatically follow from the membership
of $\mathscr{D}_{0}^{m}$ and therefore, we require an independent
argument to demonstrate this.

Let $v=\sum_{c=1}^{m}v_{c}$ where $\tilde{v}=(v_{1},\,v_{2},\,\cdots,\,v_{m})^{\dagger}$,
then it is enough to show $v\in L_{\infty}([\eta,\,T]\times\mathbb{T})$.
First note that $v$ is the solution of the heat equation in $[0,\,\eta]$
and therefore $v(\eta,\,\cdot)$ is a bounded function. In $[\eta,\,T],$
we can add each coordinates of (\ref{eq: HDL DRIVEN EQUATION}) to
obtain the equation for $v$:
\begin{equation}
v_{t}=\frac{1}{2}\Delta v-\nabla\left(\sum_{c=1}^{m}b_{c}v_{c}\right)=\frac{1}{2}\Delta v+\nabla(bv)\label{eq: driven_total}
\end{equation}
where 
\[
b=\frac{1}{v}\sum_{c=1}^{m}b_{c}v_{c}\in L_{\infty}([\eta,\,T]\times\mathbb{T}).
\]
Therefore, we obtain $v\in L_{\infty}([\eta,\,T]\times\mathbb{T})$
from the following lemma. 
\begin{lem}
\label{lem: boundedness lemma}Suppose that $w$ is the weak solution
of 
\begin{equation}
\frac{\partial w}{\partial t}=\frac{1}{2}\Delta w+\nabla(bw)\label{driven PDE}
\end{equation}
with the bounded non-negative initial condition $w_{0}(x)$. If $b\in L_{\infty}([0,\,T]\times\mathbb{T})$,
then $w\in L_{\infty}([0,\,T]\times\mathbb{T})$.\end{lem}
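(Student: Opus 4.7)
The plan is to invoke Duhamel's formula for the heat semigroup $S_t = e^{t\Delta/2}$ on $\mathbb{T}$ and combine it with a singular Gronwall inequality. Writing the equation in the form $\partial_t w - \tfrac{1}{2}\Delta w = \nabla(bw)$, Duhamel gives
\[
w(t,\cdot) = S_t w_0 + \int_0^t S_{t-s}\bigl(\nabla(b(s,\cdot)w(s,\cdot))\bigr)\,ds.
\]
Two standard heat kernel estimates on $\mathbb{T}$ drive the argument: the $L^\infty$-contractivity $\|S_t f\|_\infty \le \|f\|_\infty$, and the gradient smoothing estimate $\|S_t\nabla f\|_\infty = \|\nabla S_t f\|_\infty \le C t^{-1/2}\|f\|_\infty$, which follows from $\int_{\mathbb{T}}|\nabla p_t(z)|\,dz \le C t^{-1/2}$ for the torus heat kernel $p_t$.

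Setting $g(t) := \|w(t,\cdot)\|_\infty$ and substituting the two bounds into the Duhamel identity yields
\[
g(t) \le \|w_0\|_\infty + C\|b\|_\infty \int_0^t (t-s)^{-1/2}\,g(s)\,ds,
\]
and Henry's singular Gronwall lemma then delivers $g(t) \le C_T \|w_0\|_\infty$ on $[0,T]$, giving the claimed $L^\infty$ bound.

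The main obstacle is justifying the Duhamel representation at the $L^\infty$ level when $w$ is only known to be a weak solution. The plan is to handle this by a standard mollification: approximate $b$ by smooth $b_n$ with $\|b_n\|_\infty \le \|b\|_\infty$, solve $\partial_t w_n = \tfrac{1}{2}\Delta w_n + \nabla(b_n w_n)$ classically with initial datum $w_0$, apply the Duhamel/Gronwall argument verbatim to get $\|w_n\|_\infty \le C_T\|w_0\|_\infty$ uniformly in $n$, and then pass to the weak limit. Uniqueness of weak solutions in this class is routine linear parabolic theory since $b \in L^\infty$ and the PDE is in divergence form, so the limit agrees with the given $w$.

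As a cleaner probabilistic alternative, note that (\ref{driven PDE}) is precisely the Fokker-Planck equation for the SDE $dX_t = -b(t,X_t)\,dt + dB_t$; thus $w(t,\cdot)$ is the density at time $t$ of $X_t$ when $X_0$ has density $w_0$, so $w(t,y) = \int_{\mathbb{T}} p_b(t,x,y)w_0(x)\,dx$, and since $-b$ is bounded the Girsanov Radon-Nikodym derivative with respect to the Brownian motion measure has uniformly bounded $L^p$ moments on $[0,T]$, which combined with the explicit bound on the Brownian transition density on $\mathbb{T}$ again yields $\|w(t,\cdot)\|_\infty \le C_T\|w_0\|_\infty$.
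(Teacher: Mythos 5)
Your primary argument (Duhamel plus singular Gronwall) is correct and genuinely different from the paper's proof. The paper works probabilistically: it writes $w(t,\cdot)$ as an integral of the transition kernel $p(0,x;t,y)$ of the drifted diffusion against $w_0$, uses Girsanov to factor $p$ as the heat kernel $q$ times a Brownian-bridge expectation of an exponential functional, and then estimates that bridge expectation directly (splitting off the martingale-exponential part and handling the $\int \hat b(s,Z_s)\frac{y-Z_s}{t-s}\,ds$ term via the explicit bridge representation and Jensen) to obtain the pointwise Gaussian bound $p(0,x;t,y)\le C_1 e^{C_2|x-y|}q(0,x;t,y)$. Your route instead uses the mild formulation and the smoothing estimate $\|\nabla S_\tau f\|_\infty\le C\tau^{-1/2}\|f\|_\infty$ together with Henry's inequality; this is shorter and more elementary, and avoids kernel estimates entirely. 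What you pay for is that the singular Gronwall inequality needs an a priori locally bounded $g(t)=\|w(t,\cdot)\|_\infty$ to apply, which you correctly resolve by mollifying $b$, running the argument on smooth solutions $w_n$, and invoking uniqueness for the linear divergence-form equation to identify the limit with $w$; the paper's kernel bound avoids this bootstrapping but requires the more delicate bridge computation. Both are valid; yours is arguably cleaner for this specific statement, while the paper's yields the stronger Gaussian upper bound on the kernel itself (which, however, is not used elsewhere).

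On your ``cleaner probabilistic alternative'': as sketched it does not close. The bound $E^Q[Z_t^p]^{1/p}\le e^{c(p-1)\|b\|_\infty^2 T}$ together with the $L_\infty$ bound on the Brownian marginal $w_0 * q_t$ gives, via H\"older, only $\|w(t,\cdot)\|_p\le\|w_0\|_\infty^{1-1/p}e^{c(p-1)\|b\|_\infty^2 T}$, and the right-hand side blows up as $p\to\infty$, so you cannot pass to $L_\infty$. The step that is actually needed is precisely a pointwise bound on the \emph{conditional} expectation $E[Z_t\mid W_0=x, W_t=y]$, i.e.\ the Brownian-bridge estimate, which is the content of the paper's proof; unconditional $L^p$ control of the Radon--Nikodym derivative alone is not sufficient. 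So the second route, pursued honestly, collapses into the paper's argument rather than offering a shortcut.
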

\begin{proof}
We first extend the equation to $\mathbb{R}$. More precisely, we
periodically extend $b$ to $\mathbb{R}$ and call it $\hat{b}$ and
then, consider the equation 
\begin{equation}
\frac{\partial\hat{w}}{\partial t}=\frac{1}{2}\Delta\hat{w}+\nabla\left[\hat{b}\hat{w}\right]\label{eq:extended driven PDE}
\end{equation}
where the initial condition is $w_{0}(x)$ for $0\le x\le1$ and $0$
otherwise. To analyze (\ref{eq:extended driven PDE}), let us consider
the diffusion 
\[
dX_{t}=dW_{t}-\hat{b}(t,\,X_{t})dt
\]
on $\mathbb{R}$ where $W_{t}$ is standard Brownian motion under
the Wiener measure $P$. Note that the existence and uniqueness of
$X_{t}$ are guaranteed by Girsanov's Theorem. Then (\ref{eq:extended driven PDE})
is the forward equation for $X_{t}$ and therefore $\hat{w}(\cdot,\,\cdot)$
can be represented as 
\[
\hat{w}(t,\,y)=\int_{0}^{1}p(0,\,x;\,t,\,y)w_{0}(x)dx
\]
where the $p(0,\,x;\,t,\,y)$ is the transition kernel of $X_{t}.$
To compute this kernel, we assume that the Brownian motion under $P$
starts from $x$ and then consider a probability measure $Q$ on $C([0,\,T],\,\mathbb{R})$
defined by 
\[
\frac{dQ}{dP}=\exp\left\{ \int_{0}^{T}\hat{b}(s,\,X_{s})dW_{s}-\frac{1}{2}\int_{0}^{T}\hat{b}^{2}(s,\,X_{s})ds\right\} 
\]
so that $X_{t}$ is a Brownian motion starting from $x$ under $Q$.
Then, 
\[
P(X_{t}\in[y,\,y+dy])=\mathbb{E}^{Q}\left[\mathds{1}_{X_{t}\in[y,\,y+dy]}e^{-\int_{0}^{t}\hat{b}(s,\,X_{s})dX_{s}-\frac{1}{2}\int_{0}^{t}\hat{b}^{2}(s,\,X_{s})ds}\right]
\]
and therefore the kernel can be written as 
\begin{align}
 & p(0,\,x;\,t,\,y)\label{eq:kernel_l}\\
 & =q(0,\,x;\,t,\,y)\mathbb{E}^{Q}\exp\left\{ -\int_{0}^{t}\hat{b}(s,\,Z_{s}^{x,y})dZ_{s}^{x,y}-\frac{1}{2}\int_{0}^{t}\hat{b}^{2}(s,\,Z_{s}^{x,y})ds\right\} .\nonumber 
\end{align}
where 
\[
q(0,\,x;t,\,y)=\frac{1}{\sqrt{2\pi t}}\exp\frac{-(y-x)^{2}}{2t}
\]
is the standard heat kernel and $\left\{ Z_{s}^{x,y},\,\mathscr{F}_{s}\right\} _{s\le t}$
is the 1D Brownian bridge connecting $x$ at time $0$ and $y$ at
time $t$ under $Q$. 

Our aim is to estimate the kernel $p$ by using (\ref{eq:kernel_l}).
Observe that $Z_{s}^{x,y}$ satisfies $dZ_{s}^{x,y}=\frac{y-Z_{s}^{x,y}}{t-s}ds+dW_{s}$
where $\{W_{s}\}_{s\le t}$ is a Brownian motion under $Q$. Therefore,
we have
\begin{equation}
\mathbb{E}^{Q}\exp\left\{ -\int_{0}^{t}\hat{b}(s,\,Z_{s}^{x,y})dZ_{s}^{x,y}-\frac{1}{2}\int_{0}^{t}\hat{b}^{2}(s,\,Z_{s}^{x,y})ds\right\} \le A_{1}^{\frac{1}{2}}A_{2}^{\frac{1}{2}}\label{eq:uuta}
\end{equation}
where
\begin{align*}
A_{1} & =\mathbb{E}^{Q}\exp\left\{ -2\int_{0}^{t}\hat{b}(s,\,Z_{s}^{x,y})dW_{s}-\int_{0}^{t}\hat{b}^{2}(s,\,Z_{s}^{x,y})ds\right\} \\
A_{2} & =\mathbb{E}^{Q}\exp\left\{ -2\int_{0}^{t}\hat{b}(s,\,Z_{s}^{x,y})\frac{y-Z_{s}^{x,y}}{t-s}ds\right\} 
\end{align*}
It is easy to see that $A_{1}$ is bounded by $\exp\{T||b||_{\infty}^{2}\}$.
For $A_{2},$ note that $Z_{s}^{x,y}$ has an alternative expression
$Z_{s}^{x,y}=\frac{x(t-s)+ys}{t}+\left(t-s\right)\overline{W}_{\frac{s}{t(t-s)}}$
where $\left\{ \overline{W}_{s}\right\} _{s\ge0}$ is another Brownian
motion and hence, we can bound $A_{2}$ as
\begin{equation}
A_{2}\le e^{C|x-y|}\mathbb{E}^{Q}\exp\left\{ C\int_{0}^{t}\left|\overline{W}_{\frac{s}{t(t-s)}}\right|ds\right\} .\label{eq:uu1}
\end{equation}
where $C$ could possibly depend on $b$ only. Now we have to estimate
the expectation in (\ref{eq:uu1}). By Jensen's inequality, 
\begin{align}
 & \mathbb{E}^{Q}\exp\left\{ C\int_{0}^{t}\left|\overline{W}_{\frac{s}{t(t-s)}}\right|ds\right\} \label{eq:uu2}\\
 & =\mathbb{E}^{Q}\exp\left\{ \int_{0}^{t}\frac{1}{2\sqrt{t}\sqrt{t-s}}2C\sqrt{t}\sqrt{t-s}\left|\overline{W}_{\frac{s}{t(t-s)}}\right|ds\right\} \nonumber \\
 & \le\int_{0}^{t}\frac{1}{2\sqrt{t}\sqrt{t-s}}\mathbb{E}^{Q}\exp\left\{ 2C\sqrt{t}\sqrt{t-s}\left|\overline{W}_{\frac{s}{t(t-s)}}\right|\right\} ds\nonumber \\
 & \le2e^{2C^{2}T}.\nonumber 
\end{align}
By (\ref{eq:uuta}), (\ref{eq:uu1}) and (\ref{eq:uu2}), we obtain
an estimate for the kernel $p$ as 
\[
p(0,\,x;\,t,\,y)\le C_{1}e^{C_{2}|x-y|}q(0,\,x;\,t,\,y)
\]
where constant $C_{1}$, $C_{2}$ only depend on $b,\,T$. This kernel
estimates implies the uniform boundedness of $w$.
\end{proof}

\begin{proof}[Proof of Theorem \ref{thm: Uniqueness Theorem }]
Since the equation is linear parabolic with smooth coefficients on
$[0,\,\eta]$, the uniqueness is automatic at there. Thus, it suffices
to establish the uniqueness on $[\eta,\,T]\times\mathbb{T}$. By Lemma
\ref{lem: boundedness lemma}, we know that not only $u$, $v$ but
also $u_{c}$, $v_{c}$ for all $c$ are uniformly bounded by some
number $M>0$ in this region. We can define the relative entropy at
time $t$ such a manner that  
\[
H(t)=\int_{\mathbb{T}}\sum_{c=1}^{m}v_{c}(t,\,x)\log\frac{v_{c}(t,\,x)}{u_{c}(t,\,x)}dx
\]
then by the elementary property of the relative entropy, 
\begin{equation}
H(t)\ge\int_{\mathbb{T}}\sum_{c=1}^{m}\left\{ \sqrt{v_{c}(t,\,x)}-\sqrt{u_{c}(t,\,x)}\right\} ^{2}dx\ge\frac{1}{4M}K(t)\label{eq:bount1}
\end{equation}
where $K(t)=\int_{\mathbb{T}}\sum_{c=1}^{m}\left\{ v_{c}(t,\,x)-u_{c}(t,\,x)\right\} ^{2}dx$.
Note that $H(\eta)=0$ and therefore we can compute $H(t)$ as 
\begin{align}
H(t) & =\int_{\eta}^{t}\int_{\mathbb{T}}\partial_{t}\left[\sum_{c=1}^{m}v_{c}(s,\,x)\log\frac{v_{c}(s,\,x)}{u_{c}(s,\,x)}\right]dxds\label{eq: Entt1}\\
 & =\int_{\eta}^{t}\int_{\mathbb{T}}\left[\log\frac{\tilde{v}}{\tilde{u}}\right]^{\dagger}\partial_{t}\tilde{v}-\left(\frac{\tilde{v}}{\tilde{u}}\right)^{\dagger}\partial_{t}\tilde{u}dxds\nonumber 
\end{align}
where
\[
\log\frac{\tilde{v}}{\tilde{u}}=\left(\log\frac{v_{1}}{u_{1}},\,\log\frac{v_{2}}{u_{2}},\,\cdots,\,\log\frac{v_{m}}{u_{m}}\right)^{\dagger}\,\,\,\text{and}\,\,\,\frac{\tilde{v}}{\tilde{u}}=\left(\frac{v_{1}}{u_{1}},\,\frac{v_{2}}{u_{2}},\,\cdots,\,\frac{v_{m}}{u_{m}}\right).^{\dagger}
\]
Now, we replace $\partial_{t}\tilde{u}$ and $\partial_{t}\tilde{v}$
by the RHS of (\ref{eq: HDL DRIVEN EQUATION}) and then apply integration
by part. At this point, the only object that we cannot control is
$\nabla\tilde{v}$ and therefore we should simplify the result to
the following form:
\[
\int_{\eta}^{t}\int_{\mathbb{T}}-\left\Vert A\nabla\tilde{v}+B\right\Vert ^{2}+Cdxds.
\]
If we carry out such a computation, then the result is given by 
\begin{align}
 & -\frac{1}{2}\int_{\eta}^{t}\int_{\mathbb{T}}\left\Vert \mathbf{S}(\tilde{v})^{\frac{1}{2}}\nabla\tilde{v}+\mathbf{S}(\tilde{v})^{-\frac{1}{2}}\left[\mathbf{U^{-}}\tilde{b}-\mathbf{G}-\frac{1}{2}\mathbf{U}^{+}\chi(\tilde{u})\nabla\tilde{u}\right]\right\Vert ^{2}dxds\label{eq: entt2}\\
 & +\frac{1}{2}\int_{\eta}^{t}\int_{\mathbb{T}}\left\Vert \mathbf{S}(\tilde{v})^{-\frac{1}{2}}\left[\mathbf{U}^{-}\tilde{b}-\mathbf{G}+\frac{1}{2}\mathbf{U}^{-}\chi(\tilde{u})\nabla\tilde{u}\right]\right\Vert ^{2}dxds\nonumber 
\end{align}
where
\begin{align*}
\mathbf{U^{\pm}} & =\chi(\tilde{v})A(\tilde{v})\pm\chi(\tilde{u})A(\tilde{u})\\
\mathbf{S}(\tilde{v}) & =\chi(\tilde{v})A(\tilde{v})\chi(\tilde{v})\\
\mathbf{G} & =\frac{1}{\lambda}\left[\chi(\tilde{v})A(\tilde{v})\tilde{\Gamma}\tilde{v}-\chi(\tilde{u})A(\tilde{u})\tilde{\Gamma}\tilde{u}\right].
\end{align*}
Now, we will ignore the first term in (\ref{eq: entt2}). For the
second term, note first that each elements of $\mathbf{U^{-}}$ and
$\mathbf{G}$ are bounded by $C\sum_{c=1}^{m}|u_{c}-v_{c}|$ for some
constant $C$. Moreover $\left|\chi(\tilde{u})\nabla\tilde{u}\right|$
is uniformly bounded by (\ref{eq: bdddd1}) and $\mathbf{S}(\tilde{v})^{-1}\le\frac{M(\lambda+M)}{\lambda}I_{m}$
where $I_{m}$ is $m\times m$ identity matrix. Thus (\ref{eq: entt2})
is bounded by $C\int_{\eta}^{t}K(s)ds$ for some constant $C$. Thus,
the uniqueness follows from Grownall's Lemma. 
\end{proof}
Since our uniqueness theorem is not for the class of $\mathscr{D}_{0}^{m}$
but instead for $\mathscr{E}_{0}^{m}$, we need an additional approximation
procedure. Since the rate function is lower semicontinuous by Theorem
\ref{thm: LSC}, it is enough to establish the following theorem. 
\begin{thm}
\label{thm: apploxmatn}For each $\tilde{\rho}\in\mathscr{D}_{0}^{m}$,
we can find a sequence $\bigl\{\tilde{\rho}^{(k)}\bigr\}_{k=1}^{\infty}\subset\mathscr{E}_{0}^{m}$
such that $\tilde{\rho}^{(k)}(0,\,x)=\tilde{\rho}(0,\,x)$ for all
$k$, \textup{$\tilde{\rho}^{(k)}\rightharpoonup\tilde{\rho}$} weakly
and
\[
\limsup_{k\rightarrow\infty}I_{dyn}^{m}(\tilde{\rho}^{(k)})\le I_{dyn}^{m}(\tilde{\rho}).
\]

\end{thm}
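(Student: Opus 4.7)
The plan is to realize each $\tilde\rho^{(k)}$ as the density profile produced by an explicit pair $(\tilde b^{(k)},\tilde\Gamma^{(k)}) \in \mathscr{P}_0$, obtained from $\tilde\rho$ through three successive regularizations: (i) a \emph{heat-flow cutoff} on a shrinking initial slab $[0,\eta_k]$, replacing $\tilde\rho$ there by the solution $\tilde h$ of the heat equation with initial data $\tilde\rho(0,\cdot)$, smoothly interpolated across $[\eta_k, 2\eta_k]$, which forces the drift to vanish on $[0,\eta_k]$; (ii) a \emph{spatial mollification} by the periodic heat kernel $\phi_{\epsilon_k}$ on $[2\eta_k,T]\times\mathbb T$, yielding smoothness; (iii) an additive perturbation $\tilde\rho^{(k)} := (1-\delta_k)(\cdot) + \delta_k \tilde h$, whose maximum principle applied to $\tilde h$ gives a uniform strictly-positive lower bound on each component on $[\eta_k, T]$. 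Since $\tilde h(0,\cdot) = \tilde\rho(0,\cdot)$ and each $\rho_c^0$ has total mass $\bar\rho_c$, the construction preserves the initial profile and the color-wise masses, so $\tilde\rho^{(k)} \rightharpoonup \tilde\rho$ weakly as $\eta_k,\epsilon_k,\delta_k \downarrow 0$.

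Given the smooth, strictly positive $\tilde\rho^{(k)}$ on $[\eta_k, T]\times \mathbb T$, I would set $\tilde\Gamma^{(k)} \equiv 0$ and solve
\[
-\nabla \cdot\bigl[A(\tilde\rho^{(k)})\,\tilde b^{(k)}\bigr] \;=\; \partial_t \tilde\rho^{(k)} - \tfrac{1}{2}\nabla\cdot\bigl[D(\tilde\rho^{(k)})\nabla\tilde\rho^{(k)}\bigr]
\]
pointwise in $t$. By step (iii), $A(\tilde\rho^{(k)})$ is smooth, symmetric and strictly positive definite on $[\eta_k, T]\times \mathbb T$, and the right-hand side has vanishing mean in $x$ by conservation of mass, so standard elliptic theory on the torus produces a unique smooth mean-zero $\tilde b^{(k)}$. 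By construction $\tilde b^{(k)} \equiv 0$ on $[0,\eta_k]$, whence $(\tilde b^{(k)},0) \in \mathscr{P}_0$ and $\tilde\rho^{(k)} \in \mathscr{E}_0^m$.

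For the rate estimate I would exploit the dual identity
\[
I_{dyn}^m(\tilde\rho^{(k)}) \;=\; \tfrac{1}{2}\int_0^T\!\!\int_{\mathbb T} \tilde b^{(k)\dagger} A(\tilde\rho^{(k)})\tilde b^{(k)}\, dx\,dt,
\]
which follows from (\ref{eq: dynamic rate function _var form}) once $\tilde\rho^{(k)}$ solves (\ref{eq: HDL DRIVEN EQUATION}) with $\tilde\Gamma^{(k)} = 0$. Using the strong $L_1$-convergence of $A(\tilde\rho^{(k)}) \to A(\tilde\rho)$ and the weak $L_1$-convergence of $D(\tilde\rho^{(k)})\nabla\tilde\rho^{(k)} \to D(\tilde\rho)\nabla\tilde\rho$ (already established in the proof of Theorem \ref{thm: LSC} via Lemmas \ref{APProximation Lemma} and \ref{lem: frequently used}), I would show that the functional $\Lambda_\phi$ of (\ref{eq: naham}) evaluated at $\tilde\rho^{(k)}$ converges uniformly on compact families of test functions $\phi$ to its value at $\tilde\rho$; a variational argument then yields $\limsup_k I_{dyn}^m(\tilde\rho^{(k)}) \le I_{dyn}^m(\tilde\rho)$.

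The hardest step will be the heat-flow cutoff (i): because the dynamic rate density of $\tilde\rho$ may concentrate near $t=0$, replacing $\tilde\rho$ by $\tilde h$ on $[0,\eta_k]$ and matching back across $[\eta_k, 2\eta_k]$ generates a transient whose rate-function cost must be controlled without an a priori $L_\infty$ bound on $\tilde\rho$ near $t=0$. The way to overcome this is to invoke the entropy identity used in Lemma \ref{prop: Estimates based on bound on rate function}, which provides the finiteness of $H_0(\tilde\rho)$ and of the total energy $\int_0^T\!\int_{\mathbb T} \nabla\tilde\rho^\dagger \chi A\chi(\tilde\rho)\nabla\tilde\rho\, dx\,dt$; combined with the monotonicity of relative entropy along the heat semigroup, the added cost on $[\eta_k, 2\eta_k]$ tends to $0$ as $\eta_k \downarrow 0$. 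This is the only step that departs from routine manipulations, and it closely parallels the analogous estimate in Section 6 of \cite{QRV}.
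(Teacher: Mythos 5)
The high-level skeleton of your construction (heat-flow cutoff near $t=0$, mollification, mixing toward a strictly positive profile) matches the paper's, which factors the approximation through intermediate classes $\mathscr{E}_0^m\subset\mathscr{E}_1^m\subset\mathscr{E}_2^m\subset\mathscr{D}_0^m$. However, the key estimate -- showing $\limsup_k I_{dyn}^m(\tilde\rho^{(k)})\le I_{dyn}^m(\tilde\rho)$ -- is not delivered by your final paragraph, and the argument as stated would fail. The variational formula (\ref{eq: dynamic rate function _var form}) expresses $I_{dyn}^m$ as a supremum over test functions $\phi$; convergence $\Lambda_\phi(\tilde\rho^{(k)})\to\Lambda_\phi(\tilde\rho)$ for each fixed $\phi$ (uniformly on compacts or not) yields $\liminf_k I_{dyn}^m(\tilde\rho^{(k)})\ge I_{dyn}^m(\tilde\rho)$ -- lower semicontinuity, which Theorem \ref{thm: LSC} already provides -- but \emph{not} the reverse inequality you need. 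Moreover, the convergences you cite from the proof of Theorem \ref{thm: LSC} (via Lemmas \ref{APProximation Lemma} and \ref{lem: frequently used}) rely on the a priori energy bound of Lemma \ref{prop: Estimates based on bound on rate function}, which in turn presupposes a uniform bound on $I_{color}^m(\tilde\rho^{(k)})$ -- precisely what is to be established. The argument is therefore circular.

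The paper avoids this circularity by two devices you have not reproduced. First, for the positivity step, the mixing is with $\hat\rho=(\bar\rho_1\rho,\ldots,\bar\rho_m\rho)^\dagger$, which has the \emph{same total density} $\rho$ as $\tilde\rho$; the crucial observation (after (\ref{eq:tltse})) is that $I_{dyn}^m(\cdot\,;[\eta_2,T])$ \emph{is} convex when restricted to $D_\rho=\{\tilde R:\sum_c R_c=\rho\}$, so the increment is bounded linearly in $1/k$ by (\ref{eq: tats}). Your additive perturbation mixes the already-mollified profile with $\tilde h$; after step (ii) the total densities no longer agree, so the convexity available along $D_\rho$ does not apply and nothing in the proposal controls the resulting cost. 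Second, for the mollification step, the paper does not try to pass to the limit in the variational formula; it uses the uniform integrability result Lemma \ref{lem: customized Lemma} (replacing Lemma 6.6 of \cite{QRV}) to bound the energy terms directly, together with Lemmas 6.5, 6.7 and 6.8 of \cite{QRV}. That lemma is specific to the present diffusion matrix (through the factor $(\lambda+\rho)$) and is the genuinely new analytic ingredient; it does not follow from anything established earlier. Finally, a smaller point: $\tilde h$ in your step (i) must be the solution of the colored equation $\partial_t\tilde R=\tfrac12\nabla\cdot[D(\tilde R)\nabla\tilde R]$, not the componentwise heat equation, or the requirement defining $\mathscr{E}_2^m$ (and hence the vanishing of the rate density on $[0,\eta_k]$) fails.
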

In general, this procedure is not difficult if the rate function is
convex. Unfortunately, within the context of our work, the rate function
is not convex and requires careful analysis. For this purpose, we
adopted the general method suggested in \cite{QRV}, where comprehensive
details can be found. Therefore, we only outline the whole procedure
here; and additionally highlight selected points that do not directly
follow from their result, due to the difference between our model
and the SSEP. 

Our strategy is to divide the approximation into three steps as $\mathscr{E}_{0}^{m}\subset\mathscr{E}_{1}^{m}\subset\mathscr{E}_{2}^{m}\subset\mathscr{D}_{0}^{m}$,
where the two intermediate classes $\mathscr{E}_{1}^{m}$ and $\mathscr{E}_{2}^{m}$
are explained now. The subclass $\mathscr{E}_{2}^{m}$ consists of
$\tilde{\rho}\in\mathscr{D}_{0}^{m}$ that satisfies $\frac{\partial\tilde{\rho}}{\partial t}=\frac{1}{2}\nabla\left[D(\tilde{\rho})\nabla\rho\right]$
for $t\in[0,\,\eta]$ for some $\eta>0$. The membership of $\mathscr{E}_{1}^{m}$
additionally requires that for some $\alpha>0$, $\rho_{c}(t,\,x)\ge\alpha\rho(t,\,x)$
holds for all $x\in\mathbb{T},\,1\le c\le m$ and $t\ge\eta'$ for
some $0<\eta'<\eta$. 

The first step is to approximate $\mathscr{D}_{0}^{m}$ by $\mathscr{E}_{2}^{m}$
and which is Theorem 6.2 of \cite{QRV}. The strategy is to estimate
$\tilde{\rho}\in\mathscr{D}_{0}^{m}$ by $\tilde{\rho}^{(\eta)}\in\mathscr{E}_{2}^{m}$
defined by 
\[
\tilde{\rho}^{(\eta)}(t,\,x)=\begin{cases}
\tilde{R}(t,\,x) & \text{ for }0\le t\le\eta\\
\tilde{R}(2\eta-t,\,x) & \text{ for }\eta\le t\le2\eta\\
\tilde{\rho}(t-2\eta,\,x) & \text{ for }2\eta\le t\le T.
\end{cases}
\]
where $\tilde{R}$ is the solution of $\partial_{t}\tilde{R}=\frac{1}{2}\nabla\cdot\left[D(\tilde{R})\nabla\tilde{R}\right]$
with initial condition $\tilde{\rho}(0,\,x)$. We refer the proof
in \cite{QRV}. 

The second step is to approximate $\mathscr{E}_{2}^{m}$ by $\mathscr{E}_{1}^{m}$
and this step corresponds to the Theorem 6.3 of \cite{QRV}. For this
step, we first select a smooth increasing function $e:[0,\,T]\rightarrow\mathbb{R}$
satisfying $e\equiv0$ on $[0,\,\eta_{1}]$ and $e\equiv1$ on $[\eta_{2},\,T]$
for some $0<\eta_{1}<\eta_{2}<\eta$. Then we can approximate $\tilde{\rho}\in\mathscr{E}_{2}^{m}$
by 
\[
\rho_{c}^{(k)}(t,\,x)=\left(1-\frac{e(t)}{k}\right)\rho_{c}(t,\,x)+\frac{e(t)\bar{\rho}_{c}}{k}\rho(t,\,x)\in\mathscr{E}_{1}^{m}.
\]
One can find a proof of this step in \cite{QRV} as well but we present
a little bit simpler one. 

Let us define 
\[
I_{dyn}^{m}(\tilde{R\,};[\eta_{2},\,T])=\frac{1}{2}\int_{\eta_{2}}^{T}\left\Vert \frac{\partial\tilde{R}}{\partial t}-\frac{1}{2}\nabla\cdot\left[D(\tilde{R})\nabla\tilde{R}\right]\right\Vert _{-1,\,A(\tilde{R})}^{2}dt
\]
and then it suffices to show 
\begin{equation}
\limsup_{k\rightarrow\infty}I_{dyn}^{m}\left(\tilde{\rho}^{(k)};[\eta_{2},\,T]\right)\le I_{dyn}^{m}\left(\tilde{\rho};[\eta_{2},\,T]\right)\label{eq:tltse}
\end{equation}
since we can choose $\eta_{1}$ to arbitrarily close number to $\eta_{2}$.
It is easy to see that the rate function $I_{dyn}^{m}(\cdot\,;[\eta_{2},\,T])$
is convex on the set
\[
D_{\rho}=\left\{ \tilde{R}\in\mathscr{D}_{0}^{m}:\sum_{c=1}^{m}R_{c}(t,\,x)=\rho(t,\,x)\,\,\,\forall(t,\,x)\in[0,\,T]\times\mathbb{T}\right\} .
\]
Since $\tilde{\rho}^{(k)}(t,\,x)=\left(1-\frac{1}{k}\right)\tilde{\rho}(t,\,x)+\frac{1}{k}\hat{\rho}(t,\,x)$
on $t\ge\eta_{2}$ where
\[
\hat{\rho}(t,\,x)=(\bar{\rho}_{1}\rho(t,\,x),\,\bar{\rho}_{2}\rho(t,\,x),\,\cdots,\,\bar{\rho}_{m}\rho(t,\,x))^{\dagger}\in D_{\rho}
\]
we have
\begin{equation}
I_{dyn}^{m}\left(\tilde{\rho}^{(k)};[\eta_{2},\,T]\right)\le\left(1-\frac{1}{k}\right)I_{dyn}^{m}\left(\tilde{\rho};[\eta_{2},\,T]\right)+\frac{1}{k}I_{dyn}^{m}\left(\hat{\rho};[\eta_{2},\,T]\right)\label{eq: tats}
\end{equation}
due to convexity. We can easily check that 
\[
I_{dyn}^{m}\left(\hat{\rho};[\eta_{2},\,T]\right)=\int_{\eta_{2}}^{T}\left\Vert \rho_{t}-\frac{1}{2}\Delta\rho\right\Vert _{-1,\rho}^{2}dt<\infty
\]
and therefore (\ref{eq:tltse}) directly follows from (\ref{eq: tats}). 

The last step is to approximate $\mathscr{E}_{1}^{m}$ by $\mathscr{E}_{0}^{m}$.
In \cite{QRV}, this step has been carried out by Theorem 6.4, which
consists of Lemmas 6.5, 6.6, 6.7 and 6.8. In particular, Lemmas 6.5,
6.7 and 6.8 are quite robust and we can apply their arguments directly
to our model as well. It would therefore suffice to show that a similar
to Lemma 6.6 of \cite{QRV} is valid for our model. This is verified
by the following lemma.
\begin{lem}
\label{lem: customized Lemma}Suppose that $r$ and $\rho$ are non-negative
weakly differentiable functions on $\mathbb{T}$ satisfying
\begin{equation}
\int_{\mathbb{T}}\frac{\left|\nabla\rho\right|^{2}}{\rho}dx<\infty\,\,\,\text{ and }\,\,\,\int_{\mathbb{T}}\frac{\left|\nabla r\right|^{2}}{(\lambda+\rho)r}dx<\infty\label{eq: bound KC2}
\end{equation}
and $r\le\rho$. Then, $\left\{ \frac{\left|\nabla r_{\epsilon}\right|^{2}}{(\lambda+\rho_{\epsilon})r_{\epsilon}}\right\} _{\epsilon>0}$
is a uniformly integrable family on $\mathbb{T}$. \end{lem}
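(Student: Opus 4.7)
The plan is to dominate the mollified quantity pointwise by the convolution of a fixed $L_1$ function with $\phi_\epsilon$, and then invoke the classical uniform integrability of mollifications. A preliminary ingredient is the bound $\rho\in L_\infty(\mathbb{T})$, which is not stated but follows from the hypothesis together with $\rho\in L_1(\mathbb{T})$ (automatic in the application since $\rho$ is a total-density profile): writing $\rho(x)-\rho(y)=\int_y^x\nabla\rho(z)\,dz$, splitting $|\nabla\rho|=\frac{|\nabla\rho|}{\sqrt{\rho}}\sqrt{\rho}$, applying Cauchy--Schwarz, and averaging in $y$—exactly as in the proof of Lemma~\ref{lem: L_2 boundness-1}—yields
\[
\sup_{x\in\mathbb{T}}\rho(x)\le\int_{\mathbb{T}}\rho\,dy+\sqrt{\int_{\mathbb{T}}\frac{|\nabla\rho|^{2}}{\rho}\,dy}\cdot\sqrt{\int_{\mathbb{T}}\rho\,dy}<\infty.
\]

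Next I set $g(y):=\nabla r(y)/\sqrt{(\lambda+\rho(y))r(y)}$, which lies in $L_2(\mathbb{T})$ by (\ref{eq: bound KC2}). Since $\phi_\epsilon(x-\cdot)$ is a probability density on $\mathbb{T}$, Cauchy--Schwarz applied to $\nabla r_\epsilon(x)=\int g(y)\sqrt{(\lambda+\rho(y))r(y)}\,\phi_\epsilon(x-y)\,dy$ gives
\[
|\nabla r_\epsilon(x)|^2\le\bigl(g^2\ast\phi_\epsilon\bigr)(x)\cdot\bigl(((\lambda+\rho)r)\ast\phi_\epsilon\bigr)(x).
\]
The second factor is then dominated via the $L_\infty$ bound on $\rho$:
\[
((\lambda+\rho)r)\ast\phi_\epsilon=\lambda r_\epsilon+(\rho r)\ast\phi_\epsilon\le(\lambda+\|\rho\|_\infty)r_\epsilon\le\frac{\lambda+\|\rho\|_\infty}{\lambda}(\lambda+\rho_\epsilon)r_\epsilon,
\]
where the last step just uses $\lambda\le\lambda+\rho_\epsilon$. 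Setting $C:=(\lambda+\|\rho\|_\infty)/\lambda$, I obtain the pointwise domination
\[
\frac{|\nabla r_\epsilon|^2}{(\lambda+\rho_\epsilon)r_\epsilon}\le C\,(g^2\ast\phi_\epsilon).
\]

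Finally, $\{g^2\ast\phi_\epsilon\}_{\epsilon>0}$ is a uniformly integrable family on $\mathbb{T}$: for any measurable $E\subset\mathbb{T}$, Fubini gives $\int_E(g^2\ast\phi_\epsilon)\,dx=\int g^2(y)(\phi_\epsilon\ast\mathbf{1}_E)(y)\,dy$, and together with $\|\phi_\epsilon\ast\mathbf{1}_E\|_\infty\le1$ and $\int\phi_\epsilon\ast\mathbf{1}_E=|E|$ the truncation $g^2=g^2\mathbf{1}_{\{g^2\le N\}}+g^2\mathbf{1}_{\{g^2>N\}}$ yields
\[
\int_E g^2\ast\phi_\epsilon\,dx\le N|E|+\int_{\{g^2>N\}}g^2\,dy
\]
uniformly in $\epsilon$, which is arbitrarily small upon first choosing $N$ large and then $|E|$ small. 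Being dominated by a uniformly integrable family, $\{|\nabla r_\epsilon|^2/((\lambda+\rho_\epsilon)r_\epsilon)\}_{\epsilon>0}$ is uniformly integrable. The only delicate step is the $L_\infty$ control on $\rho$, which is genuinely one-dimensional (equivalently $\sqrt{\rho}\in H^1(\mathbb{T})\hookrightarrow L_\infty(\mathbb{T})$); once this is in hand, the remainder is Cauchy--Schwarz for convolutions and the standard uniform integrability of mollifications.
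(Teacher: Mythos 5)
Your proof is correct, and it takes a genuinely different and considerably simpler route than the paper's. The paper inserts $\pm(\lambda+\rho(x+y))^{-1/2}$ inside the mollifying integral and splits $\nabla r_{\epsilon}/\sqrt{(\lambda+\rho_{\epsilon})r_{\epsilon}}$ into two pieces: $A_{1}$, rewritten as $r_{\epsilon}^{-1}\bigl[\bigl(\nabla(r/\sqrt{\lambda+\rho})\bigr)_{\epsilon}\bigr]^{2}$ and dominated by $\bigl[2|\nabla r|^{2}/((\lambda+\rho)r)+|\nabla\rho|^{2}/(2\lambda\rho)\bigr]_{\epsilon}$, and $A_{2}$, a fluctuation term that after a Cauchy--Schwarz with the weight $(\nabla\phi_{\epsilon})^{2}/\phi_{\epsilon}$ and two further splittings is dispatched by citing Lemma 6.5 of \cite{QRV} together with $\sqrt{\rho}\in H^{1}(\mathbb{T})$. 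Your observation is that the hypothesis $\int_{\mathbb{T}}|\nabla\rho|^{2}/\rho<\infty$ already forces $\rho\in L^{\infty}(\mathbb{T})$ (one-dimensional embedding $\sqrt{\rho}\in H^{1}\hookrightarrow L^{\infty}$, or the averaging argument you repeat from Lemma \ref{lem: L_2 boundness-1}), after which a single Cauchy--Schwarz for the convolution, together with the elementary bound $((\lambda+\rho)r)\ast\phi_{\epsilon}\le\frac{\lambda+\|\rho\|_{\infty}}{\lambda}(\lambda+\rho_{\epsilon})r_{\epsilon}$, yields the pointwise domination by $C\,(g^{2}\ast\phi_{\epsilon})$ with $g^{2}=|\nabla r|^{2}/((\lambda+\rho)r)\in L^{1}$, whose mollifications are trivially uniformly integrable --- self-contained and without reliance on QRV's Lemma 6.5. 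The tradeoff is in the constant: your prefactor $(\lambda+\|\rho\|_{\infty})/\lambda$ is finite for each fixed time slice but grows like $\sqrt{\int_{\mathbb{T}}|\nabla\rho|^{2}/\rho}$, which under the global energy bound is only in $L^{2}(dt)$; so your domination is a \emph{product} of two time-dependent quantities that is not a priori in $L^{1}(dt)$. The paper's more technical decomposition produces a domination whose spatial integral is a fixed linear combination of $\int|\nabla r|^{2}/((\lambda+\rho)r)$ and $\int|\nabla\rho|^{2}/\rho$, which integrates trivially in time and hence plugs directly into the QRV approximation machinery (Lemmas 6.7--6.8 there). For the lemma exactly as stated --- fixed $t$, uniform integrability on $\mathbb{T}$ --- both arguments are valid and yours is the cleaner one.
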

\begin{proof}
Notice that $\frac{\left|\nabla r_{\epsilon}(x)\right|^{2}}{(\lambda+\rho_{\epsilon}(x))r_{\epsilon}(x)}\le2\left(A_{1}+A_{2}\right)$
where
\begin{align*}
A_{1} & =\frac{1}{r_{\epsilon}}\left[\int_{\mathbb{T}}\frac{r(x+y)}{\sqrt{\lambda+\rho(x+y)}}\nabla\phi_{\epsilon}(y)dy\right]^{2}\\
A_{2} & =\frac{1}{r_{\epsilon}}\left[\int_{\mathbb{T}}\left(\frac{1}{\sqrt{\lambda+\rho_{\epsilon}(x)}}-\frac{1}{\sqrt{\lambda+\rho(x+y)}}\right)r(x+y)\nabla\phi_{\epsilon}(y)dy\right]^{2}.
\end{align*}
We can bound $A_{1}$ as 
\[
A_{1}=\frac{1}{r_{\epsilon}}\left(\nabla\frac{r}{\sqrt{\lambda+\rho}}\right)_{\epsilon}^{2}\le\left[\frac{1}{r}\left(\nabla\frac{r}{\sqrt{\lambda+\rho}}\right)^{2}\right]_{\epsilon}\le\left[2\frac{\left|\nabla r\right|^{2}}{(\lambda+\rho)r}+\frac{\left|\nabla\rho\right|^{2}}{2\lambda\rho}\right]_{\epsilon}
\]
and hence this part is uniformly integrable by (\ref{eq: bound KC2}). 

By applying Cauchy-Schwarz's inequality to $A_{2}$, we obtain 
\begin{align*}
A_{2} & \le\int_{\mathbb{T}}\left(\frac{1}{\sqrt{\lambda+\rho_{\epsilon}(x)}}-\frac{1}{\sqrt{\lambda+\rho(x+y)}}\right)^{2}r(x+y)\frac{\left(\nabla\phi_{\epsilon}(y)\right)^{2}}{\phi_{\epsilon}(y)}dy\\
 & \le\frac{1}{\lambda}\int_{\mathbb{T}}\left(\sqrt{\rho_{\epsilon}(x)}-\sqrt{\rho(x+y)}\right)^{2}\frac{\left(\nabla\phi_{\epsilon}(y)\right)^{2}}{\phi_{\epsilon}(y)}dy.
\end{align*}
Therefore, $A_{2}\le\frac{2}{\lambda}(B_{1}+B_{2})$ where 
\begin{align*}
B_{1} & =\int_{\mathbb{T}}\left(\sqrt{\rho_{\epsilon}(x)}-\left(\sqrt{\rho(x)}\right)_{\epsilon}\right)^{2}\frac{\left(\nabla\phi_{\epsilon}(y)\right)^{2}}{\phi_{\epsilon}(y)}dy\\
B_{2} & =\int_{\mathbb{T}}\left(\left(\sqrt{\rho(x)}\right)_{\epsilon}-\sqrt{\rho(x+y)}\right)^{2}\frac{\left(\nabla\phi_{\epsilon}(y)\right)^{2}}{\phi_{\epsilon}(y)}dy.
\end{align*}
Since $\int_{\mathbb{T}}\frac{\left(\nabla\phi_{\epsilon}(y)\right)^{2}}{\phi_{\epsilon}(y)}dy=\frac{C}{\epsilon^{2}}$
for some constant $C$, we can bound $B_{1}$ and $B_{2}$ as
\begin{align}
B_{1} & \le\frac{C}{\epsilon^{2}}\int_{\mathbb{T}}\int_{\mathbb{T}}\left(\sqrt{\rho(x+z)}-\sqrt{\rho(x+w)}\right)^{2}\phi_{\epsilon}(z)\phi_{\epsilon}(w)dzdw\label{eq: uii1}\\
B_{2} & \le\int_{\mathbb{T}}\int_{\mathbb{T}}\left(\sqrt{\rho(x+z)}-\sqrt{\rho(x+y)}\right)^{2}\phi_{\epsilon}(z)\frac{\left(\nabla\phi_{\epsilon}(y)\right)^{2}}{\phi_{\epsilon}(y)}dydz\label{eq:uii2}
\end{align}
respectively. Since $\sqrt{\rho}\in H^{1}(\mathbb{T})$ by (\ref{eq: bound KC2}),
we can conclude that RHSs of (\ref{eq: uii1}) and (\ref{eq:uii2})
are uniformly integrable by Lemma 6.5 of \cite{QRV}
\end{proof}

\subsubsection{Proof of Lower Bound\label{sub:Proof-of-Lower}}

Now we are ready to establish the large deviation lower bound for
$\bigl\{\widetilde{\mathbb{Q}}_{N}\bigr\}_{N=1}^{\infty}$. 
\begin{thm}
Under Assumptions 2 and 3, $\bigl\{\widetilde{\mathbb{Q}}_{N}\bigr\}_{N=1}^{\infty}$
satisfies the large deviation lower bound with the rate function $I_{color}^{m}(\cdot)$.
In other words, for any $\tilde{\rho}\in\mathscr{D}_{0}^{m}$ and
its neighborhood $\mathscr{O}$, we have 
\begin{equation}
-I_{color}^{m}(\tilde{\rho})\le\liminf_{N\rightarrow\infty}\frac{1}{N}\log\widetilde{\mathbb{Q}}_{N}\left[\mathscr{O}\right].\label{eq: lower bddd}
\end{equation}
\end{thm}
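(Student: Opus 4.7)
The plan is the standard change-of-measure lower bound: construct a perturbed dynamics whose hydrodynamic limit is $\tilde{\rho}$ and whose relative entropy with respect to $\mathbb{P}_N$ matches $N \cdot I_{color}^m(\tilde{\rho})$ to leading order. Because $I_{color}^m$ is lower semicontinuous (Theorem \ref{thm: LSC}) and Theorem \ref{thm: apploxmatn} provides an approximating sequence $\tilde{\rho}^{(k)} \in \mathscr{E}_0^m$ with $\tilde{\rho}^{(k)} \rightharpoonup \tilde{\rho}$ and $\limsup_k I_{color}^m(\tilde{\rho}^{(k)}) \le I_{color}^m(\tilde{\rho})$, it suffices to establish (\ref{eq: lower bddd}) for $\tilde{\rho} \in \mathscr{E}_0^m$: for general $\tilde{\rho} \in \mathscr{D}_0^m$ and open $\mathscr{O} \ni \tilde{\rho}$, eventually $\tilde{\rho}^{(k)} \in \mathscr{O}$, and the $\tilde{\rho}^{(k)}$-bound yields the desired estimate for $\tilde{\rho}$ after taking $k \to \infty$.

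Fix $\tilde{\rho} \in \mathscr{E}_0^m$. By definition $\tilde{\rho}$ solves (\ref{eq: HDL DRIVEN EQUATION}) for some admissible pair, but I must \emph{re-select} the perturbation so that its Girsanov cost equals $I_{dyn}^m(\tilde{\rho})$. The variational formula (\ref{eq: dynamic rate function _var form}) together with the Riesz representation argument from Lemma \ref{prop: Estimates based on bound on rate function} produces a smooth $\tilde{f}$ satisfying
$$-\nabla \cdot [A(\tilde{\rho}) \nabla \tilde{f}] = \partial_t \tilde{\rho} - \tfrac{1}{2}\nabla \cdot [D(\tilde{\rho}) \nabla \tilde{\rho}], \qquad I_{dyn}^m(\tilde{\rho}) = \tfrac{1}{2} \int_0^T \!\! \int_{\mathbb{T}} \nabla \tilde{f}^\dagger A(\tilde{\rho}) \nabla \tilde{f}\, dxdt.$$
I then pick $(\tilde{b}, \tilde{\Gamma}) \in \mathscr{P}_0$ (with a smooth cutoff near $t = 0$) subject to the admissibility constraint $\tilde{b} - \lambda^{-1} \tilde{\Gamma} \tilde{\rho} = \nabla \tilde{f}$, chosen to minimise the quadratic Girsanov cost $\tfrac{1}{2}\int \sum_c b_c^2 \rho_c + \tfrac{1}{4\lambda} \int \sum_{c_1 \neq c_2} \gamma_{c_1, c_2}^2 \rho_{c_1} \rho_{c_2}$ under that constraint. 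The explicit solution of this constrained quadratic minimisation — an Euler--Lagrange computation whose structure is precisely the Onsager reciprocity factorisation $D = A \chi$ recorded after (\ref{eq:A and chi}) — delivers the value $\tfrac{1}{2} \int \nabla \tilde{f}^\dagger A(\tilde{\rho}) \nabla \tilde{f} = I_{dyn}^m(\tilde{\rho})$.

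With this calibration, Theorem \ref{thm : HDL for Driven System} combined with the uniqueness result (Theorem \ref{thm: Uniqueness Theorem }) yields $\widetilde{\mathbb{Q}}_N^{\tilde{b}, \tilde{\Gamma}} \Rightarrow \delta_{\tilde{\rho}}$, so $\widetilde{\mathbb{Q}}_N^{\tilde{b}, \tilde{\Gamma}}[\mathscr{O}] \to 1$ for every open neighborhood $\mathscr{O}$ of $\tilde{\rho}$. The standard convex entropy inequality
$$\widetilde{\mathbb{Q}}_N[\mathscr{O}] \ge \widetilde{\mathbb{Q}}_N^{\tilde{b}, \tilde{\Gamma}}[\mathscr{O}] \exp\left\{-\frac{H_N + e^{-1}}{\widetilde{\mathbb{Q}}_N^{\tilde{b}, \tilde{\Gamma}}[\mathscr{O}]}\right\}, \qquad H_N := H(\mathbb{P}_N^{\tilde{b}, \tilde{\Gamma}} \,|\, \mathbb{P}_N),$$
reduces the problem to $\limsup_N N^{-1} H_N \le I_{color}^m(\tilde{\rho})$. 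Taking expectation of the Girsanov log-density (\ref{eq: GIRSANOV}) under $\mathbb{P}_N^{\tilde{b}, \tilde{\Gamma}}$, the stochastic-integral parts are martingales and vanish; the drift part produces $\tfrac{1}{2}\sum_c \mathbb{E}_N^{\tilde{b}, \tilde{\Gamma}} \int_0^T \sum_{i \in I_c^N} b_c^2(t, x_i^N(t))\,dt$, while the Taylor expansion $\log(1 + \gamma/\lambda N) = \gamma/\lambda N - \gamma^2/(2\lambda^2 N^2) + O(N^{-3})$ reduces the jump part to $\tfrac{1}{2\lambda N}\sum_{c_1 \neq c_2}\sum_{i \in I_{c_1}, j \in I_{c_2}} \int_0^T \gamma_{c_1, c_2}^2\, dA_{ij}^N$. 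Corollary \ref{cor: replacemtne lemma for driven system} converts the local-time integrals into density integrals, and Theorem \ref{thm : HDL for Driven System} passes to the limit to give exactly the sum calibrated above. The initial relative entropy contributes $I_{init}^m(\tilde{\rho}(0,\cdot)dx)$ via the i.i.d.\ assumption of Assumption 3 and Sanov's theorem, producing the full rate $I_{color}^m(\tilde{\rho})$.

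The main obstacle is the calibration in the second paragraph, namely producing an admissible pair $(\tilde{b}, \tilde{\Gamma})$ for which the Girsanov cost coincides with $\tfrac{1}{2}\int \nabla \tilde{f}^\dagger A(\tilde{\rho}) \nabla \tilde{f}$; this is a genuinely nontrivial constrained quadratic minimisation because the drift relative entropy contains the weights $\rho_c$ while the jump relative entropy contains the products $\rho_{c_1}\rho_{c_2}$, and matching the dual quadratic form $A(\tilde{\rho})$ relies on the specific algebraic structure of $D = A\chi$. A secondary technical point is justifying that the Taylor expansion of the jump part can be integrated termwise and that the replacement lemma applies under the perturbed measure; both are handled by the $L^p$-control of $d\mathbb{P}_N^{\tilde{b}, \tilde{\Gamma}}/d\mathbb{P}_N$ (Lemma \ref{lem: GIRSANOV}) together with the uniform bound (\ref{eq:Estimate of Total Local Time}) on the total local time.
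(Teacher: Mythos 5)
Your proposal follows essentially the same route as the paper: reduce to $\mathscr{E}_0^m$ via the approximation theorem, produce a smooth potential $U$ with $I_{dyn}^m(\tilde{\rho})=\frac{1}{2}\int\nabla U^\dagger A(\tilde{\rho})\nabla U$, consider perturbations $(\tilde{b},\tilde{\Gamma})$ constrained by $\tilde{b}-\lambda^{-1}\tilde{\Gamma}\tilde{\rho}=\nabla U$, combine the perturbed hydrodynamic limit (Theorem \ref{thm : HDL for Driven System}) with uniqueness (Theorem \ref{thm: Uniqueness Theorem }) to obtain $\widetilde{\mathbb{Q}}_N^{\tilde{b},\tilde{\Gamma}}[\mathscr{O}]\to 1$, and then compute $\limsup_N N^{-1}H(\mathbb{P}_N^{\tilde{b},\tilde{\Gamma}}|\mathbb{P}_N)$ via Girsanov, the replacement lemma, and a constrained quadratic optimisation whose minimum value is $I_{dyn}^m(\tilde{\rho})$ thanks to the factorisation $D=A\chi$. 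The only minor imprecision is attributing the existence of the smooth $U$ to a Riesz argument from Lemma \ref{prop: Estimates based on bound on rate function}; for $\tilde{\rho}\in\mathscr{E}_0^m$ this follows directly from the smoothness, the uniform lower bound (\ref{eq: bdddd1}), and the fact that $\tilde{\rho}$ already solves a driven equation, but this does not affect the validity of the argument.
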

\begin{proof}
Thanks to Theorem \ref{thm: apploxmatn}, it suffices to prove (\ref{eq: lower bddd})
for $\tilde{\rho}\in\mathscr{E}_{0}^{m}$. For such a $\tilde{\rho},$
we can find a smooth function $U(t,\,x)$ on $[0,\,T]\times\mathbb{T}$
satisfying 
\[
\frac{\partial\tilde{\rho}}{\partial t}=\frac{1}{2}\nabla\cdot\left[D(\tilde{\rho})\nabla\tilde{\rho}\right]-\nabla\left[A(\tilde{\rho})\nabla U\right]
\]
and $I_{dyn}^{m}(\tilde{\rho})=\frac{1}{2}\int_{0}^{T}\int_{\mathbb{T}}\nabla U^{\dagger}A(\tilde{\rho})\nabla U$.

We first assume that $\tilde{\rho}(0,\,x)=\tilde{\rho}_{0}(x)$ so
that $I_{color}^{m}(\tilde{\rho})=I_{dyn}^{m}(\tilde{\rho})$. We
define $\Sigma_{\tilde{\rho},U}$ by 
\[
\Sigma_{\tilde{\rho},U}=\left\{ (\tilde{b},\,\tilde{\Gamma})\in\mathscr{P}_{0}:\tilde{b}-\frac{1}{\lambda}\tilde{\Gamma}\tilde{\rho}=\nabla U\right\} .
\]
Then, by Theorems \ref{thm : HDL for Driven System} and \ref{thm: Uniqueness Theorem },
we have 
\begin{equation}
\lim_{N\rightarrow\infty}\mathbb{P}_{N}^{\tilde{b},\tilde{\Gamma}}(\tilde{\mu}^{N}(\cdot)\in\mathscr{\mathscr{O}})=1.\label{eq: llnn}
\end{equation}
for each $(\tilde{b},\,\tilde{\Gamma})\in\Sigma_{\tilde{\rho},U}$.
Then we can estimate $\frac{1}{N}\log\widetilde{\mathbb{Q}}_{N}\left[\mathscr{O}\right]$
such that 
\begin{equation}
\liminf_{N\rightarrow\infty}\frac{1}{N}\log\widetilde{\mathbb{Q}}_{N}\left[\mathscr{O}\right]\ge-\inf_{(\tilde{b},\,\tilde{\Gamma})\in\Sigma_{\tilde{\rho},U}}\limsup_{N\rightarrow\infty}\mathbb{E}_{N}^{\tilde{b},\tilde{\Gamma}}\left[\frac{1}{N}\log\frac{d\mathbb{P}_{N}^{\tilde{b},\tilde{\Gamma}}}{d\mathbb{P}_{N}}\right].\label{eq: MINN}
\end{equation}
by the standard argument, e.g., Chapter 10.5 of \cite{KL}. 

Now, we compute the RHS of (\ref{eq: MINN}). The first step is to
recall Girsanov's formula (\ref{eq: GIRSANOV}) to deduce 
\begin{equation}
\frac{1}{N}\log\frac{d\mathbb{P}_{N}^{\tilde{b},\tilde{\Gamma}}}{d\mathbb{P}_{N}}=\frac{1}{N}\sum_{c=1}^{m}\sum_{i\in I_{N}^{c}}U_{i}^{(c)}+\frac{1}{N}\sum_{\substack{\substack{1\le c_{1},\,c_{2}\le m\\
i\in I_{N}^{c_{1}},\,j\in I_{N}^{c_{2}}
}
}
}V_{ij}^{(c_{1},\,c_{2})}\label{eq:sejin}
\end{equation}
where
\begin{align*}
U_{i}^{(c)} & =\int_{0}^{T}b_{c}(t,\,x_{i}^{N}(t))\left[dx_{i}^{N}(t)-d\tilde{A}_{i}^{N}(t)\right]-\frac{1}{2}\int_{0}^{T}b_{c}^{2}(t,\,x_{i}^{N}(t))dt\\
V_{ij}^{(c_{1},\,c_{2})} & =\int_{0}^{T}\log\left(1+\frac{\gamma_{c_{1},c_{2}}(t,\,x_{i}^{N}(t))}{\lambda N}\right)dJ_{ij}^{N}(t)-\int_{0}^{T}\gamma_{c_{1},c_{2}}(t,\,x_{i}^{N}(t))dA_{ij}^{N}(t)
\end{align*}
and $J_{ij}^{N}(t)$ is the jump process related with the martingale
$M_{ij}^{N}(t)$. In particular, under $\mathbb{P}_{N}^{\tilde{b},\tilde{\Gamma}}$,
\begin{align*}
dx_{i}^{N}(t) & =d\beta_{i}(t)+d\tilde{A}_{i}^{N}(t)+b_{c_{1}}(t,\,x_{i}^{N}(t))\\
dJ_{ij}^{N}(t) & =dM{}_{ij}^{N}(t)+\left(\lambda N+\gamma_{c_{1},c_{2}}(t,\,x_{i}^{N}(t))\right)dA_{ij}^{N}(t)
\end{align*}
where $c_{1}$ and $c_{2}$ are colors of particles $x_{i}^{N}(\cdot)$
and $x_{j}^{N}(\cdot)$, respectively. Therefore, (\ref{eq:sejin})
can be rewritten as 
\begin{align}
 & \frac{1}{2N}\sum_{c=1}^{m}\sum_{i\in I_{N}^{c}}\int_{0}^{T}b_{c}^{2}(t,\,x_{i}^{N}(t))dt\label{eq: khsd}\\
 & +\frac{1}{2\lambda N}\sum_{\substack{c_{1}<c_{2},\,i\in I_{N}^{c_{1}}}
}\int_{0}^{T}\gamma_{c_{1},c_{2}}^{2}(t,\,x_{i}^{N}(t))dA_{i,c_{2}}^{N}(t)+O\left(\frac{1}{N}\right).\nonumber 
\end{align}
To use the replacement lemma, we define a set $\mathscr{B}_{N}(\epsilon,\,\delta)\subset C([0,\,T],\,\mathbb{T}^{N})$
such that $x(\cdot)\in\mathscr{B}_{N}(\epsilon,\,\delta)$ if and
only if $\left|\int_{0}^{T}\mathbf{V}_{N,\epsilon}^{\tilde{\Gamma}}(t,\,x(t))dt\right|<\delta$
where 
\begin{align*}
 & \mathbf{V}_{N,\epsilon}^{\tilde{\Gamma}}(t,\,x)\\
 & =\frac{1}{N^{2}}\sum_{\substack{1\le c_{1},\,c_{2}\le m\\
i\in I_{c_{1}}^{N},\,j\in I_{c_{2}}^{N}
}
}\gamma_{c_{1},c_{2}}^{2}(t,\,x_{i})\left[\frac{1}{2\epsilon}\chi_{\epsilon}(x_{j}-x_{i})-\left(\delta^{+}(x_{j}-x_{i})+\delta^{+}(x_{i}-x_{j})\right)\right].
\end{align*}
Then, by Corollary \ref{cor: replacemtne lemma for driven system},
$\mathscr{B}_{N}(\epsilon,\,\delta)^{c}$ is super-exponentially negligible
and hence 
\begin{equation}
\limsup_{N\rightarrow\infty}\mathbb{E}_{N}^{\tilde{b},\tilde{\Gamma}}\left[\frac{1}{N}\log\frac{d\mathbb{P}_{N}^{\tilde{b},\tilde{\Gamma}}}{d\mathbb{P}_{N}}\right]=\limsup_{\epsilon\rightarrow0}\limsup_{N\rightarrow\infty}\mathbb{E}_{N}^{\tilde{b},\tilde{\Gamma}}\left[\mathds{1}_{\mathscr{B}_{N}(\epsilon,\,\delta)}\frac{1}{N}\log\frac{d\mathbb{P}_{N}^{\tilde{b},\tilde{\Gamma}}}{d\mathbb{P}_{N}}\right].\label{eq: khst}
\end{equation}
On $\mathscr{B}_{N}(\epsilon,\,\delta)$, we can approximate (\ref{eq: khsd})
by 
\begin{align}
 & \frac{1}{2N}\sum_{c=1}^{m}\sum_{i\in I_{N}^{c}}\int_{0}^{T}b_{c}^{2}(t,\,x_{i}^{N}(t))dt\label{khss}\\
 & +\frac{1}{2\lambda N}\sum_{c_{1}<c_{2},\,i\in I_{N}^{c_{1}}}\int_{0}^{T}\gamma_{c_{1},c_{2}}^{2}(t,\,x_{i}^{N}(t))\rho_{i,\epsilon}^{(c_{2})}(x_{i}^{N}(t))dt+O(\delta)+O\left(\frac{1}{N}\right).\nonumber 
\end{align}
Consequently, we can conclude from (\ref{eq: khst}) and (\ref{khss})
that 
\begin{align}
 & \limsup_{N\rightarrow\infty}\mathbb{E}_{N}^{\tilde{b},\tilde{\Gamma}}\left[\frac{1}{N}\log\frac{d\mathbb{P}_{N}^{\tilde{b},\tilde{\Gamma}}}{d\mathbb{P}_{N}}\right]\label{eq:sjin}\\
 & =\frac{1}{2}\int_{0}^{T}\int_{\mathbb{T}}\left\{ \sum_{c=1}^{m}b_{c}^{2}\rho_{c}+\frac{1}{\lambda}\sum_{c_{1}<c_{2}}\gamma_{c_{1},c_{2}}^{2}\rho_{c_{1}}\rho_{c_{2}}(t,\,x)\right\} dxdt\nonumber 
\end{align}
since $\tilde{\mu}^{N}(t)\rightharpoonup\tilde{\rho}(t,\,x)dx$ by
Theorem \ref{thm : HDL for Driven System} and \ref{thm: Uniqueness Theorem }. 

To complete the calculation of the RHS of (\ref{eq: MINN}), we optimize
(\ref{eq:sjin}) over $(\tilde{b},\,\tilde{\Gamma})\in\Sigma_{\tilde{\rho},U}$.
This can be done by the Lagrange multiplier method and the optimizer
turns out to be 
\begin{align*}
\bar{b}_{c} & =\frac{\lambda}{\lambda+\rho}\nabla U_{c}+\frac{1}{\lambda+\rho}\sum_{k=1}^{m}\rho_{k}\nabla U_{k}\\
\bar{\gamma}_{c_{1},c_{2}} & =\frac{\lambda}{\lambda+\rho}(\nabla U_{c_{1}}-\nabla U_{c_{2}}).
\end{align*}
With these optimizers, the RHS of (\ref{eq:sjin}) becomes $\frac{1}{2}\int_{0}^{T}\int_{\mathbb{T}}\nabla U^{\dagger}A(\tilde{\rho})\nabla U=I_{dyn}^{m}(\tilde{\rho})$. 

By following this approach, we completed the proof when $\tilde{\rho}(0,\,x)=\tilde{\rho}^{0}(x)$.
The case for the general initial condition is also easy to obtain
by the same argument by tilting the initial configuration appropriately. 
\end{proof}
We conclude this section by summarizing the results that were obtained
for the LDP for the empirical density of colors.
\begin{thm}
\label{thm: Large deviation for Color} Under Assumptions 2 and 3,
$\bigl\{\widetilde{\mathbb{Q}}_{N}\bigr\}_{N=1}^{\infty}$ satisfies
the LDP with the good rate function $I_{color}^{m}(\cdot)$ and scale
$N$. In other words, for any measurable set $A\subset C([0,\,T],\,\mathscr{M}(\mathbb{T})^{m})$,
we have
\[
-\inf_{\tilde{\pi}\in A^{o}}I_{color}^{m}(\tilde{\pi}_{\cdot})\le\liminf_{N\rightarrow\infty}\frac{1}{N}\log\widetilde{\mathbb{Q}}_{N}(A)\le\limsup_{N\rightarrow\infty}\frac{1}{N}\log\widetilde{\mathbb{Q}}_{N}(A)\le-\inf_{\tilde{\pi}\in\bar{A}}I_{color}^{m}(\tilde{\pi}_{\cdot}).
\]

\end{thm}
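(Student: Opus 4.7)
The statement is essentially a packaging of the upper and lower bounds already established in Sections \ref{sub:Upper-Bound} and \ref{sub:Lower-Bound}, combined with the lower semicontinuity and exponential tightness results proved earlier. The plan is to simply verify that these ingredients fit together to produce the full LDP in the form stated.

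For the upper bound, given a measurable set $A\subset C([0,\,T],\,\mathscr{M}(\mathbb{T})^{m})$, the inclusion $A\subset\bar{A}$ combined with the closed-set upper bound (the final theorem of Section \ref{sub:Upper-Bound}) immediately yields
\[
\limsup_{N\rightarrow\infty}\frac{1}{N}\log\widetilde{\mathbb{Q}}_{N}(A)\le\limsup_{N\rightarrow\infty}\frac{1}{N}\log\widetilde{\mathbb{Q}}_{N}(\bar{A})\le-\inf_{\tilde{\pi}\in\bar{A}}I_{color}^{m}(\tilde{\pi}_{\cdot}).
\]
No additional work is needed here.

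For the lower bound, fix $\tilde{\pi}\in A^{o}$. If $I_{color}^{m}(\tilde{\pi}_{\cdot})=\infty$ the bound is trivial, so assume $\tilde{\pi}\in\mathscr{D}_{0}^{m}$. Since $A^{o}$ is open, it contains an open neighborhood $\mathscr{O}$ of $\tilde{\pi}$, and then the lower bound theorem of Section \ref{sub:Lower-Bound} gives
\[
\liminf_{N\rightarrow\infty}\frac{1}{N}\log\widetilde{\mathbb{Q}}_{N}(A)\ge\liminf_{N\rightarrow\infty}\frac{1}{N}\log\widetilde{\mathbb{Q}}_{N}(\mathscr{O})\ge-I_{color}^{m}(\tilde{\pi}_{\cdot}).
\]
Taking the supremum over $\tilde{\pi}\in A^{o}$ yields the desired lower bound.

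It remains to verify that $I_{color}^{m}(\cdot)$ is a \emph{good} rate function, i.e., has compact level sets. Lower semicontinuity was established in Theorem \ref{thm: LSC}, so each level set $\{I_{color}^{m}\le\ell\}$ is closed. To upgrade closedness to compactness I would invoke the exponential tightness of $\{\widetilde{\mathbb{Q}}_{N}\}_{N=1}^{\infty}$ proved in Section 3: together with the upper bound, exponential tightness forces the level sets of the rate function to be contained in compact subsets of $C([0,\,T],\,\mathscr{M}(\mathbb{T})^{m})$ (this is the standard argument, e.g., Lemma 1.2.18 of \cite{DZ}). Since no nontrivial new analysis is required beyond what has already been assembled, there is no real obstacle here --- the statement is a formal consequence of the three pillars (upper bound for closed sets, lower bound for open neighborhoods at every point of finite rate, and exponential tightness).
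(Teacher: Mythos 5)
Your proposal is correct and matches the paper's (implicit) reasoning: the theorem is stated in the paper without a separate proof precisely because it is a formal packaging of the closed-set upper bound at the end of Section 4.3, the lower bound of Section 4.4.4, and the exponential tightness of Section 3 together with the lower semicontinuity of Theorem \ref{thm: LSC}, exactly as you assemble them.
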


\section{Empirical Process }

\subsection{Propagation of Chaos}

We start by explaining the relationship between the propagation of
chaos, which is the LLN of the empirical process, and the LLN of the
empirical density of colors in a more general set up. 

Consider the empirical process $R_{N}=\frac{1}{N}\sum_{i=1}^{N}\delta_{x_{i}^{N}(\cdot)}$
that induces a probability measure $P_{N}$ on $\mathscr{M}_{1}(C([0,\,T],\,\mathbb{T}))$.
The limit theory for $\bigl\{ P_{N}\bigr\}_{N=1}^{\infty}$ can be
obtained by verifying the tightness and identifying the unique limit
point. The tightness can be demonstrated by the general technique
introduced in \cite{R}. Regarding the identification of the limit
point, the limit theory of the empirical density of colors plays a
significant role. Suppose that the limiting particle density $\rho(t,\,x)$
is the unique solution of a certain parabolic equation $\partial_{t}\rho=\mathscr{L}\rho$
with the initial condition $\rho^{0}(x)$ under Assumption 1. Furthermore,
assume that if we color the particles by an arbitrary number of colors
such that Assumption 2 holds, then the limiting particle density of
each color $c$ denoted by $\rho_{c}$ evolves as the unique solution
of the parabolic PDE $\partial_{t}\rho_{c}=\mathscr{A}_{\rho}^{*}\rho_{c}$
with the initial condition $\rho_{c}^{0}(dx)$, where $\mathscr{A_{\rho}}$
is a time-inhomogeneous generator that could possibly depend on $\rho(t,\,x)$.
\begin{rem}
For our model, $\mathscr{L}=\frac{1}{2}\Delta$ and $\mathscr{A}_{\rho}$
is given by (\ref{eq: th generator}). 
\end{rem}
Under these assumptions, we can compute the limit of finite dimensional
marginal densities of the empirical process. For instance, we can
calculate the limiting joint density 
\begin{equation}
\lim_{N\rightarrow\infty}\mathbb{E}_{N}\left[\frac{\left|\left\{ i:x_{i}^{N}(0)\in A,\,x_{i}^{N}(t)\in B\right\} \right|}{N}\right]=\lim_{N\rightarrow\infty}\mathbb{E}_{N}\left[\sum_{i=1}^{N}\mathds{1}_{A}(x_{i}^{N}(0))\mathds{1}_{B}(x_{i}^{N}(t))\right]\label{eq: toawl}
\end{equation}
in the following manner: we color the particle $x_{i}^{N}(\cdot)$
by color $1$ if $x_{i}^{N}(0)\in A$ and by color $2$ otherwise.
If $\mu^{N}(0)\rightharpoonup\rho^{0}(dx)$, then $\mu_{1}^{N}(0)\rightharpoonup\mathds{1}_{A}(x)\rho^{0}(dx)$
and therefore, we can compute the limiting particle density $\rho_{1}(t,\,\cdot)$
of color $1$ at time $t$ by the solution of $\partial_{t}\rho_{1}=\mathscr{A}_{\rho}^{*}\rho_{1}$
with the initial condition $\mathds{1}_{A}(x)\rho^{0}(dx)$. Therefore
(\ref{eq: toawl}) can be computed as $\int_{B}\rho_{1}(t,\,x)dx$.
We can use the same method to compute the joint distribution for any
finite number of times. (see \cite{R,S} for details.)

Therefore, any limit points of $\{P_{N}\}_{N=1}^{\infty}$ should
be the diffusion process with the generator $\mathscr{A}_{\rho}$.
Consequently, we can establish the limit theory of $\{P_{N}\}_{N=1}^{\infty}$
as soon as the uniqueness and existence of such a diffusion process
with starting measure $\rho^{0}(dx)$ are valid. This general theory
can be applied to our model if the initial limiting particle density
is bounded. 
\begin{thm}
\label{thm: propagation of chaos}Suppose that $\mu^{N}(0)\rightharpoonup\rho^{0}(x)dx$
weakly for a bounded function $\rho^{0}(x)$ on $\mathbb{T}$ and
let $\rho(t,\,x)$ be the solution of the heat equation with initial
condition $\rho^{0}(x)$. Then $P_{N}\rightharpoonup\delta_{P}$ weakly
where $P$ is the unique diffusion process on $\mathscr{M}_{1}(C([0,\,T],\,\mathbb{T}))$
with the time-inhomogeneous generator $\mathscr{A}_{\rho}$ defined
by (\ref{eq: th generator}).\end{thm}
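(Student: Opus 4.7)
The strategy is the one sketched by the author just before the theorem: verify tightness of $\{P_N\}_{N=1}^\infty$, identify all finite-dimensional marginals of any weak limit via an iterative coloring argument that invokes Theorem \ref{thm: HDL collor}, and finally check that the candidate diffusion $P$ with generator $\mathscr{A}_\rho$ exists and is unique. I would proceed as follows.

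First, I would establish tightness of $\{P_N\}_{N=1}^\infty$ as a sequence of probability measures on $\mathscr{M}_1(C([0,T],\mathbb{T}))$. By standard criteria (e.g.\ Proposition 2.2 of \cite{R}), this reduces to tightness of the one-particle marginal together with a uniform modulus-of-continuity estimate of the form
\[
\lim_{\delta\to 0}\limsup_{N\to\infty}\mathbb{E}_N\left[\frac{1}{N}\sum_{i=1}^N \mathds{1}_{\{\sup_{|s-t|\le\delta}|x_i^N(t)-x_i^N(s)|\ge\epsilon\}}\right]=0
\]
for each $\epsilon>0$. This is an immediate consequence of Theorem \ref{thm: SUPER EXP TIGHTNESS} after a trivial application of Markov's inequality, since super-exponential smallness of the probability of the bad set implies its expectation tends to zero.

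Second, I would characterize the unique limit point. Let $P_\infty$ be any weak limit of a subsequence. To determine $P_\infty$, it suffices to compute the finite-dimensional distributions: for $0\le t_1<\cdots<t_k\le T$ and bounded continuous $F_1,\ldots,F_k$ on $\mathbb{T}$, one must identify
\[
\lim_{N\to\infty}\mathbb{E}_N\left[\frac{1}{N}\sum_{i=1}^N \prod_{\ell=1}^k F_\ell(x_i^N(t_\ell))\right].
\]
I would do this by induction on $k$ using the coloring device. For $k=1$ the limit is $\int_{\mathbb{T}} F_1(x)\rho(t_1,x)\,dx$ by Theorem \ref{Heat}. For the inductive step, partition $\mathbb{T}$ by level sets of the accumulated test functions up to time $t_{k-1}$ to create finitely many colors whose initial density profiles are $\rho_c^0(x)dx$ for appropriate $\rho_c^0$ bounded by $\rho^0$. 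By Theorem \ref{thm: HDL collor}, the density $\rho_c(t_k,\cdot)$ of each color at time $t_k$ solves $\partial_t\rho_c=\mathscr{A}_\rho^*\rho_c$ with that initial condition, which is precisely $P_t^{t_k-t_{k-1}}\rho_c^0$ expressed through the generator $\mathscr{A}_\rho$. Passing to the limit $N\to\infty$ and then refining the partition shows that the finite-dimensional marginals of $P_\infty$ agree with those of the diffusion $P$ on $\mathbb{T}$ driven by $\mathscr{A}_\rho$ and started from $\rho^0(x)dx$.

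Third, I would verify existence and uniqueness of the candidate $P$. Since $\rho^0\in L^\infty(\mathbb{T})$, the heat equation gives $\rho\in C^\infty((0,T]\times\mathbb{T})$ with $\rho$ uniformly bounded on $[0,T]\times\mathbb{T}$, so the coefficients
\[
a_\rho(t,x)=\frac{\lambda}{\lambda+\rho(t,x)},\qquad b_\rho(t,x)=-\frac{(2\lambda+\rho)\nabla\rho}{2(\lambda+\rho)^2}(t,x)
\]
of $\mathscr{A}_\rho$ are smooth on $(0,T]\times\mathbb{T}$ with $a_\rho$ uniformly elliptic (bounded below by $\lambda/(\lambda+\|\rho\|_\infty)>0$). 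Standard theory of time-inhomogeneous diffusions thus yields a unique law solving the associated martingale problem on any time interval $[\eta,T]$ for $\eta>0$, and uniqueness with the prescribed starting measure $\rho^0(x)dx$ follows by letting $\eta\downarrow 0$ once one shows the one-dimensional time marginal at $\eta$ is exactly $\rho(\eta,\cdot)dx$; this last fact has already been pinned down by the inductive marginal computation above with $k=1$.

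The main obstacle is the behavior of $\mathscr{A}_\rho$ near $t=0$: the drift $b_\rho$ involves $\nabla\rho$, which is not a priori bounded up to $t=0$ even under the boundedness hypothesis on $\rho^0$, so some care is needed to ensure the martingale problem admits a unique solution all the way down to the initial time rather than only on $[\eta,T]$. I expect this to be resolved by a two-step argument: establish uniqueness on each $[\eta,T]$ by classical parabolic theory and then use the explicit identification of the marginal $\rho(\eta,\cdot)dx$ at time $\eta$ (itself the image of $\rho^0$ under the semigroup already characterized for smaller times by the colouring argument) to glue the pieces into a single consistent law on $[0,T]$. Once this is done, the coincidence of finite-dimensional marginals of $P_\infty$ and $P$ together with uniqueness of $P$ upgrades the subsequential convergence to full weak convergence $P_N\rightharpoonup\delta_P$.
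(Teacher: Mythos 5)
Your proposal follows essentially the same three-step structure as the paper's proof: tightness from Theorem \ref{thm: SUPER EXP TIGHTNESS}, identification of finite-dimensional marginals via the coloring device together with Theorem \ref{thm: HDL collor}, and uniqueness of the limit diffusion with generator $\mathscr{A}_\rho$ started from $\rho^0(x)dx$. The only deviation is in the last step, where the paper simply cites Theorem 4 of \cite{G1} for existence and uniqueness, whereas you reconstruct that argument directly; your sketch is sound in outline, correctly isolating the singularity of the drift $\nabla\rho/(\lambda+\rho)^2$ at $t=0$ as the only genuine difficulty and handling it by proving uniqueness on each $[\eta,T]$ (where the coefficients are smooth and uniformly elliptic because $\rho^0\in L^\infty$) and then letting $\eta\downarrow 0$ using path continuity.
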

\begin{proof}
The tightness of $\bigl\{ P_{N}\bigr\}_{N=1}^{\infty}$ is a consequence
of Theorem \ref{thm: SUPER EXP TIGHTNESS} and the limit theory for
the empirical density of colors is presented by Theorem \ref{thm: HDL collor}.
The uniqueness result for the diffusion with the generator $\mathscr{A}_{\rho}$
and the starting density $\rho^{0}(x)$ which is bounded can be found
in Theorem 4 of \cite{G1}.
\end{proof}
Even though we have suggested a proof of Theorem \ref{thm: propagation of chaos}
by using the empirical density of colors as an intermediate tool,
this result was already established in \cite{G1} in a different way.
The stronger result in \cite{G1} showed the diffusive scaling limit
of one tagged particle to be the diffusion with the generator $\mathscr{A}_{\rho}$
and also showed that any two tagged particles are asymptotically independent.
Of course, these results imply the propagation of chaos for our model. 
\begin{rem}
For the general starting measure $\rho^{0}(dx)$, our methodology
is still valid for the tightness and the identification of the limit
point step. However, the uniqueness of the diffusion process with
the generator $\mathscr{A}_{\rho}$ causes a problem. If $\rho^{0}(dx)$
is a singular measure, then the uniqueness generally does not hold.
However, \cite{G2} suggested a way to circumvent this pathological
phenomenon by, roughly speaking, appropriately decomposing each mass
at a point into a left and right mass. We were also able to extend
our result to this regime. 
\end{rem}
The remaining part of this article is devoted to explaining the LDP
corresponding to Theorem \ref{thm: propagation of chaos} under Assumption
3. A methodology for the SSEP for $d\ge2$ has been developed in \cite{QRV}\footnote{The original result was valid only for $d\ge3$ but extended to $d=2$
in \cite{LOV}.} and relies on the LDP for the empirical density of colors and Dawson-G\"artner's
projective limit theory. The robustness of their method is such that
we can almost apply it directly to our model. The only thing that
has to be checked for our model is a certain class of martingale problems

\subsection{Martingale Problem\label{sub:Martingale-Problem}}

When we define the rate function $\mathscr{I}(Q)$ for the LDP of
the empirical process in the next subsection, what we need is the
perturbed diffusions with the generator $\mathscr{A}_{\rho}+b\nabla$
for an appropriate class of $b$. The existence and uniqueness of
such diffusions are not trivial and should be proven independently.
In this subsection, we carry this out with the help of the results
in \cite{QV}. 

Suppose that $\rho(t,\,x)$ is weakly continuous in time, weakly differentiable
in space and also satisfies
\begin{equation}
\int_{\mathbb{T}}\rho(0,\,x)\log\rho(0,\,x)dx<\infty\,\,\,\,\,\text{and \,\,\,\ }\int_{0}^{T}\int_{\mathbb{T}}\frac{(\nabla\rho)^{2}}{\rho}dxdt<\infty.\label{eq: DV0}
\end{equation}
Then we define a class $\mathscr{B}_{\rho}$ consisting of measurable
functions $b(t,\,x)$ on $[0,\,T]\times\mathbb{T}$ such that
\begin{equation}
\frac{\partial\rho}{\partial t}=\frac{1}{2}\Delta\rho-\nabla(b\rho)\,\,\,\,\,\text{and \,\,\,\ }\int_{0}^{T}\int_{\mathbb{T}}b^{2}\rho dxdt<\infty\label{eq: DV1}
\end{equation}
where the first equation is weak sense. 

For measurable function $c(t,\,x)$ on $[0,\,T]\times\mathbb{T}$,
we define the generator $\mathscr{A}_{\rho,c}$ by $\mathscr{A}_{\rho,c}=\mathscr{A}_{\rho}+c\nabla$
so that
\begin{equation}
\mathscr{A}_{\rho,c}=\frac{\lambda}{2(\lambda+\rho)}\Delta+\left(-\frac{(2\lambda+\rho)\nabla\rho}{2(\lambda+\rho)^{2}}+c\right)\nabla.\label{eq: DV3}
\end{equation}
Then (\ref{eq: DV1}) implies that $\rho$ satisfies $\partial_{t}\rho=\mathscr{A}_{\rho,b}^{*}\rho$
for each $b\in\mathscr{B}_{\rho}$. Basically, we want to build a
unique diffusion process with generator $\mathscr{A}_{\rho,b}$ for
$b\in\mathscr{B}_{\rho}$ with marginal density $\rho$ to define
the rate function of empirical process. However, the coefficients
of the generator $\mathscr{A}_{\rho,b}$ only have limited regularities
and therefore the existence and uniqueness in the spirit of Stroock
and Varadhan is not valid here. Although there are some results on
general coefficients (e.g., \cite{K}), these usually assume uniform
ellipticity for the generator. In our case, the diffusion coefficient
is $\frac{\lambda}{2(\lambda+\rho)}$, which may not be uniformly
elliptic since $\rho$ can be unbounded in general. For the SSEP,
Quastel and Varadhan \cite{QV} solved this difficulty by limiting
the sense of the martingale problem in a suitable fashion. They considered
the solution of the martingale problem not to start from a specific
point $x$ but from some initial distribution $p_{0}(x)$. By doing
so, they achieved a proper existence and uniqueness result in this
context. Of course, we shall follow their approach and the main result
can be stated as follows. 
\begin{thm}
\label{thm: Uniqueness and Ex of MP}Suppose that $\rho$ satisfies
(\ref{eq: DV0}) and $\mathscr{B}_{\rho}\neq\phi$. 
\begin{enumerate}
\item For each $b\in\mathscr{B}_{\rho}$, there exists the unique diffusion
process $P^{b}$ on $\mathbb{T}$ with the generator $\mathscr{A}_{\rho,b}$
with the marginal density $\rho(t,\,x)$ at each time $t\in[0,\,T]$. 
\item For each measurable function $R^{0}$ on $\mathbb{T}$ satisfying
$0\le R^{0}(\cdot)\le C\rho(0,\,\cdot)$, there exist unique diffusion
$P_{R^{0}}^{b}$ with the generator $\mathscr{A}_{\rho,b}$ and the
marginal density $R(t,\,x)$ which is the unique solution of $\partial_{t}R=\mathscr{A}_{\rho,b}^{*}R$
with initial condition $R^{0}$ and satisfies $0\le R\le C\rho$ on
$[0,\,T]\times\mathbb{T}$.
\end{enumerate}
\end{thm}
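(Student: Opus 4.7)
The plan is to adapt the methodology of Quastel and Varadhan \cite{QV}, where the principal difficulty is that the diffusion coefficient $\tfrac{\lambda}{2(\lambda+\rho)}$ is not uniformly elliptic (it degenerates as $\rho\to\infty$) and the drift is merely measurable, so neither the Stroock--Varadhan theory nor the general theory of \cite{K} applies directly. The strategy is to construct and identify the process only within the class of solutions whose time-$t$ marginals are absolutely continuous with respect to $\rho(t,x)\,dx$ (respectively $C\rho(t,x)\,dx$ in part (2)), which is precisely the restriction that makes the problem well posed.

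For existence in part (1), I would mollify. Let $\rho^\epsilon=\rho\ast\phi_\epsilon$ and choose a smooth $b^\epsilon$ satisfying $\partial_t\rho^\epsilon=\tfrac12\Delta\rho^\epsilon-\nabla(b^\epsilon\rho^\epsilon)$ in an appropriate approximate sense, with $\int_0^T\!\!\int_{\mathbb T}(b^\epsilon)^2\rho^\epsilon\,dx\,dt$ bounded uniformly in $\epsilon$. The SDE associated with $\mathscr A_{\rho^\epsilon,b^\epsilon}$ has smooth coefficients that are uniformly elliptic on compacta, so a unique strong solution $P^{b,\epsilon}$ with initial law $\rho(0,x)\,dx$ exists by classical theory. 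Tightness of $\{P^{b,\epsilon}\}_{\epsilon>0}$ on $C([0,T],\mathbb T)$ follows from the uniform energy bound together with $\hat I(\rho)<\infty$, and any subsequential weak limit $P^b$ solves the martingale problem for $\mathscr A_{\rho,b}$ with marginal $\rho(t,x)\,dx$. The main care here is to verify that the test martingales pass to the limit despite the irregularity of $\mathscr A_{\rho,b}$, which is handled by Dirichlet-form convergence in $L^2(\rho\,dx\,dt)$.

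The hardest step is uniqueness. Given two solutions $P^b$ and $Q$, both with time marginals $\rho(t,x)\,dx$, I would compare them through a relative entropy argument: set $\Phi(t)=H(Q_{[0,t]}\mid P^b_{[0,t]})$ and use the explicit Girsanov density between the two together with $\int b^2\rho\,dx\,dt<\infty$ to derive a Gronwall-type inequality of the form $\Phi(t)\le C\int_0^t\Phi(s)\,ds$, forcing $\Phi\equiv 0$. The crucial point is to show that the candidate Girsanov density is a genuine (not merely local) martingale, which uses the $L^2(\rho\,dx\,dt)$ control on $b$ combined with the assumed domination of the marginals of $Q$ by $\rho$; it is precisely this absolute-continuity restriction that bypasses the need for uniform ellipticity and makes the Quastel--Varadhan mechanism work in our setting.

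For part (2), given $R^0$ with $0\le R^0\le C\rho(0,\cdot)$, define $h$ as the bounded weak solution of the forward equation $\partial_t(h\rho)=\mathscr A_{\rho,b}^*(h\rho)$ with initial value $h(0,\cdot)=R^0/\rho(0,\cdot)$ (or equivalently let $R=h\rho$). A maximum-principle argument applied to both $R$ and $C\rho-R$, using that both solve a forward equation compatible with the same reference density $\rho$, yields $0\le R\le C\rho$ on $[0,T]\times\mathbb T$. The process $P^b_{R^0}$ is then obtained by tilting $P^b$ via the nonnegative bounded martingale $h(t,X_t)$: it satisfies the martingale problem for $\mathscr A_{\rho,b}$, has marginal density $R(t,x)$, and inherits uniqueness from part (1) by running the same relative entropy argument inside the class of path measures whose marginals are dominated by $C\rho$.
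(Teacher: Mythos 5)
Your route is structurally different from the paper's, and the difference is not cosmetic: the paper makes no attempt at a direct stochastic-process construction here. It reduces the theorem to the analytic Lemma~\ref{lem: Uniqueness}, which is a substitute for Theorem~3.12 of \cite{QV} adapted to densities $\rho$ that are not \emph{a priori} bounded, and then simply cites Section~5 of \cite{QV} for the passage from the forward-equation well-posedness to the martingale-problem existence/uniqueness. The real content of the paper's argument is entirely at the PDE level: mollify $\rho$ and $b$ as $\rho_\epsilon=\rho*\phi_\epsilon$, $b^\epsilon=(b\rho)_\epsilon/\rho_\epsilon$, invoke \cite{QV} for the smoothed equation, and then pass to the limit using the energy estimate (\ref{eq: approximated energy}) and the convergence toolkit of Lemma~\ref{lem: frequently used}; uniqueness of the forward equation is proved by showing that $t\mapsto\int_{\mathbb{T}}(u-v)^2/\rho\,dx$ is non-increasing.

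Your proposal has a genuine gap in the uniqueness step. You compare two martingale-problem solutions $Q$ and $P^b$ with the \emph{same} generator $\mathscr{A}_{\rho,b}$ and the \emph{same} marginals via an ``explicit Girsanov density between the two,'' but no such Girsanov density is available: a Girsanov tilt relates diffusions with \emph{different} drifts, and establishing that two solutions with the same generator must agree is precisely the well-posedness question you are trying to prove, not a tool you can invoke. The Gronwall inequality $\Phi(t)\le C\int_0^t\Phi(s)\,ds$ is asserted but the mechanism that would produce the factor $\Phi(s)$ in the bound on $\Phi'(t)$ is not identified, and in the degenerate, low-regularity setting it is not clear it exists. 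In the paper, uniqueness is not a path-measure relative-entropy argument at all; it is forward-PDE uniqueness (the $\int(u-v)^2/\rho$ functional), fed into the duality machinery of \cite{QV}. Two further soft spots: in part~(1), your identification step (``Dirichlet-form convergence in $L^2(\rho\,dx\,dt)$'') is exactly where the hard work lives when $\rho$ is unbounded — the paper needs the careful weak-$L^1$/weak-$L^2$ convergence arguments (\ref{eq:1est})--(\ref{eq:3est}) to pass to the limit in the nonlinear terms; and in part~(2), the comparison $0\le R\le C\rho$ is claimed by ``a maximum-principle argument,'' but the generator has merely measurable drift and degenerate diffusion, so a classical maximum principle does not apply — the bound is obtained in the paper through the mollified approximation inherited from Theorem~3.12 of \cite{QV}, not by a direct comparison argument. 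The overall shape of your construction (mollify, pass to the limit, tilt by $h(t,X_t)$ for general $R^0$) is in the right spirit, but without a correct uniqueness mechanism and without the substitute energy/comparison lemma, it does not close.
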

The proofs are identical to those in Section 5 of \cite{QV}. The
only obstacle when we apply the argument of \cite{QV} is the fact
that our model possibly has an unbounded density $\rho(t,\,x)$ whereas
the SSEP has an $a\,priori$ bound $1$. We can overcome this by proving
the following lemma as a substitute to Theorem 3.12 in \cite{QV}. 
\begin{lem}
\label{lem: Uniqueness} Suppose that $\rho$ satisfies (\ref{eq: DV0})
and $\mathscr{B}_{\rho}\neq\phi$. For each measurable function $R^{0}$
on $\mathbb{T}$ satisfying $0\le R^{0}(\cdot)\le C\rho(0,\,\cdot)$
for some constant $C$, there exists the unique weak solution $R(t,\,x)$
of the forward equation
\begin{equation}
\frac{\partial R}{\partial t}=\mathscr{A}_{\rho,b}^{*}R\label{eq: FRD EQN}
\end{equation}
with initial condition $R^{0}$ and satisfying $0\le R\le C\rho$
on $[0,\,T]\times\mathbb{T}$ for some constant $C$. Moreover, $R$
also satisfies the energy estimate 
\begin{equation}
\int_{0}^{T}\int_{\mathbb{T}}\frac{(\nabla R)^{2}}{(\lambda+\rho)R}dxdt\le C_{1}+C_{2}\int_{0}^{T}\int_{\mathbb{T}}\frac{\left|\nabla\rho\right|^{2}}{\rho}dxdt\label{eq: ENE UNI-1}
\end{equation}
for some constants $C_{1}$, $C_{2}$.\end{lem}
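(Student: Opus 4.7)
I will follow the strategy of Section 5 of~\cite{QV}, the principal new difficulty being the lack of an \emph{a priori} $L^\infty$ bound on $\rho$ (in the SSEP, $\rho\le 1$ is automatic). By linearity of the equation it suffices to treat the case $C=1$, i.e.\ $0\le R^{0}\le\rho(0,\cdot)$. The plan is to mollify $\rho$ and $b$ in space--time so that $\rho_\delta$, $b_\delta$ are smooth and continue to satisfy $\partial_{t}\rho_\delta=\tfrac{1}{2}\Delta\rho_\delta-\nabla(b_\delta\rho_\delta)$, to smooth $R^{0}$ to $R^{0}_\delta$ with $0\le R^{0}_\delta\le\rho_\delta(0,\cdot)$, and to invoke classical parabolic theory to obtain a unique smooth solution $R_\delta$ of the forward equation (the coefficients are smooth and the diffusion $\lambda/(2(\lambda+\rho_\delta))$ is uniformly positive on $[0,T]\times\mathbb{T}$). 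A direct computation gives $\mathscr{A}_{\rho}^{*}\rho=\tfrac{1}{2}\Delta\rho$, so $\rho_\delta$ itself solves the same linear equation $\partial_{t}w=\mathscr{A}_{\rho_\delta,b_\delta}^{*}w$ as $R_\delta$; applying the maximum principle to $R_\delta$ and to $\rho_\delta-R_\delta$ then gives $0\le R_\delta\le\rho_\delta$.

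\textbf{Energy estimate via the two-color framework.} The key observation, and the place where the SSEP argument must be modified, is that the pair $\widetilde{\rho}_\delta:=(R_\delta,\,\rho_\delta-R_\delta)$, equipped with the drift pair $\tilde{b}=(b_\delta,b_\delta)$ and $\tilde{\Gamma}=0$, solves the two-color driven PDE~(\ref{eq: HDL DRIVEN EQUATION}) with total density $\rho_\delta$, as one checks directly from~(\ref{eq:diffusion mat}) and~(\ref{eq:A and chi}), and
\begin{equation*}
I_{dyn}^{2}(\widetilde{\rho}_\delta)\;=\;\frac{1}{2}\int_{0}^{T}\!\!\int_{\mathbb{T}}b_\delta^{2}\rho_\delta\,dx\,dt,
\end{equation*}
uniformly bounded by~(\ref{eq: DV1}). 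Applying Lemma~\ref{prop: Estimates based on bound on rate function} to $\widetilde{\rho}_\delta$ and using the explicit formula~(\ref{eq: explicity form of chi A chi}), the left-hand side of that bound contains $\int_{0}^{T}\!\!\int_{\mathbb{T}}(\nabla R_\delta)^{2}/((\lambda+\rho_\delta)R_\delta)\,dx\,dt$ as one summand, while $I_{init}^{2}(\widetilde{\rho}_\delta(0,\cdot))$ is controlled by the relative entropy of $R^{0}_\delta$ with respect to $\rho_\delta(0,\cdot)$ via $R^{0}_\delta\le\rho_\delta(0,\cdot)$ and~(\ref{eq: DV0}). This yields the uniform analogue of~(\ref{eq: ENE UNI-1}) for $R_\delta$. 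The main obstacle is the rigorous handling of the zeros of $R_\delta$ and $\rho_\delta-R_\delta$ where the integrand of the entropy computation is $0/0$; this is treated as in~\cite{QV} by an additional infinitesimal regularization and then passing to the limit.

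\textbf{Passing to the limit and uniqueness.} The bound $0\le R_\delta\le\rho_\delta$ and the uniform energy estimate make $\{R_\delta\}$ relatively compact; weak subsequential convergence combined with Lemma~\ref{APProximation Lemma} applied with $\alpha_\delta:=\lambda+\rho_\delta$ upgrades to strong $L^{1}$ convergence, and yields in the limit a function $R$ with $0\le R\le\rho$, the energy estimate~(\ref{eq: ENE UNI-1}), and $\partial_{t}R=\mathscr{A}_{\rho,b}^{*}R$ in the weak sense. For uniqueness, if $R_{1},R_{2}$ are two such solutions, then $W:=R_{1}-R_{2}$ solves the same linear equation with vanishing initial datum and $|W|\le 2\rho$; applying the relative-entropy/Gronwall argument used in the proof of Theorem~\ref{thm: Uniqueness Theorem } to the pair of non-negative solutions $(R_{1},R_{2})$ (which now \emph{do} satisfy $0\le R_{i}\le\rho$, playing the role of the uniform boundedness needed in that proof) then yields $W\equiv 0$. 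For the general constant $C>1$ in the statement, apply the above to $R/C$ in place of $R$ and rescale.
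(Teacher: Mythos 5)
Your proposal takes a genuinely different route from the paper. The paper mollifies $\rho$ in space to $\rho_\epsilon$, sets $b^\epsilon=(b\rho)_\epsilon/\rho_\epsilon$, and then invokes Theorem~3.12 of \cite{QV} wholesale for the existence, the pointwise bound $0\le R^\epsilon\le C\rho_\epsilon$, and the energy estimate of the mollified problem (the hypotheses there are met because $\rho_\epsilon$ is bounded, hence the diffusion coefficient is uniformly elliptic); it then passes to the limit using Lemmas \ref{lem: L_2 boundness-1}--\ref{lem: frequently used} and finally cites Theorem 4.1 of \cite{QV} for the energy estimate (\ref{eq: ENE UNI-1}). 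You instead obtain the mollified solution from classical divergence-form parabolic theory with the maximum principle, and — this is the interesting departure — derive the energy estimate by observing that $(R_\delta,\,\rho_\delta-R_\delta)$ with drift $(b_\delta,b_\delta)$ and $\tilde{\Gamma}\equiv 0$ solves the two-color system (\ref{eq: HDL DRIVEN EQUATION}), and then feeding this into Lemma \ref{prop: Estimates based on bound on rate function}. The identification is correct (one checks that $\sum_k A(\tilde\rho)_{ck}b=b\rho_c$, so each component reduces to $\partial_t R=\mathscr{A}_{\rho,b}^*R$), and $I_{dyn}^2(\tilde\rho_\delta)\le\frac12\int\!\!\int b_\delta^2\rho_\delta\le\frac12\int\!\!\int b^2\rho$ follows from the variational formula and (\ref{eq:fu2}). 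This route is attractive because it makes the single-species energy estimate an instance of the same entropy-dissipation mechanism used for the colored system, rather than an appeal to a model-specific result in \cite{QV}. One imprecision: Lemma \ref{prop: Estimates based on bound on rate function} is phrased in terms of $I_{color}^m=I_{init}^m+I_{dyn}^m$ where $I_{init}^m$ is the Sanov rate tied to Assumption~3, whereas your initial datum $(R^0_\delta,\rho^0_\delta-R^0_\delta)$ has no particle system behind it. What you actually need is the analytic inequality inside that proof, namely $\tfrac14\int\!\!\int\nabla\tilde\rho^\dagger\chi A\chi\nabla\tilde\rho\le H_0(\tilde\rho_\delta)+I_{dyn}^2(\tilde\rho_\delta)+C$, with both right-hand quantities controlled uniformly via $0\le R^0_\delta\le C\rho^0_\delta$, (\ref{eq: DV0}), and the bound on $\int\!\!\int b_\delta^2\rho_\delta$. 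That is a matter of phrasing, not substance (and as you note, an additional regularization $R^0_{\delta,\eta}=(1-2\eta)R^0_\delta+\eta\rho_\delta(0,\cdot)$ is required to make both components strictly positive before the entropy manipulations are legitimate).

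The uniqueness step has a genuine gap. You defer to the relative-entropy/Gr\"onwall argument of Theorem \ref{thm: Uniqueness Theorem }, but that argument operates under the hypothesis $\tilde u\in\mathscr{E}_0^m$, which includes the lower bound (\ref{eq: bdddd1}) on the densities; without it the relative entropy $\int R_1\log(R_1/R_2)$ is not even well defined on the zero set of $R_2$, and the matrix manipulations that produce the perfect-square decomposition (\ref{eq: entt2}) require $\chi(\tilde v)$ to be finite. Your candidate solutions $R_1,R_2$ only satisfy the upper bound $0\le R_i\le C\rho$. The paper instead computes the evolution of the $\chi^2$-type quantity $\int(R_1-R_2)^2/\rho$, whose time derivative comes out after integration by parts as $-\int\frac{\lambda}{\rho^3(\lambda+\rho)}\bigl[(R_1-R_2)\nabla\rho-\rho\nabla(R_1-R_2)\bigr]^2\le 0$, and the vanishing initial value forces $R_1\equiv R_2$; here only the upper bound $|R_1-R_2|\le 2C\rho$ is needed (to make the quantity finite). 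Replacing your citation of Theorem \ref{thm: Uniqueness Theorem } with this direct scalar computation closes the gap and completes the proposal.
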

\begin{proof}
For each $b\in\mathscr{B_{\rho}}$, we define $b^{\epsilon}=\frac{(b\rho)_{\epsilon}}{\rho_{\epsilon}}$.
Then, it is easy to see that $b^{\epsilon}\in\mathscr{B}_{\rho_{\epsilon}}$
and $\rho_{\epsilon}$ is the unique weak solution of $\partial_{t}\rho_{\epsilon}=\mathscr{A}_{\rho_{\epsilon},b^{\epsilon}}^{*}\rho_{\epsilon}$.
Note that we can write $\mathscr{A}_{\rho_{\epsilon},b^{\epsilon}}^{*}$
explicitly as
\begin{equation}
\mathscr{A}_{\rho_{\epsilon},b^{\epsilon}}^{*}u=\nabla\left[\frac{\lambda}{2(\lambda+\rho_{\epsilon})}\nabla u+\left(\frac{\nabla\rho_{\epsilon}}{2(\lambda+\rho_{\epsilon})}-b^{\epsilon}\right)u\right]\label{eq:5}
\end{equation}
and it is easy to check that this generator satisfies the conditions
of Theorem 3.12 in \cite{QV}, namely,
\begin{align*}
\int_{0}^{T}\int_{\mathbb{T}}\frac{(\nabla\rho_{\epsilon})^{2}}{\rho_{\epsilon}}\times\frac{\lambda}{2(\lambda+\rho_{\epsilon})}dxdt & <\infty\\
\int_{0}^{T}\int_{\mathbb{T}}\left[\frac{\nabla\rho_{\epsilon}}{2(\lambda+\rho_{\epsilon})}-b^{\epsilon}\right]^{2}\frac{2(\lambda+\rho_{\epsilon})}{\lambda}\rho_{\epsilon}dxdt & <\infty
\end{align*}
since we have 
\begin{align}
\frac{(\nabla\rho_{\epsilon})^{2}}{\rho_{\epsilon}} & \le\left(\frac{(\nabla\rho)^{2}}{\rho}\right)_{\epsilon}\label{eq:fu1}\\
(b^{\epsilon})^{2}\rho_{\epsilon} & =\frac{(b\rho)_{\epsilon}^{2}}{\rho_{\epsilon}}\le(b^{2}\rho)_{\epsilon}\label{eq:fu2}
\end{align}
and $\rho_{\epsilon}$ is uniformly bounded by some constant $M_{\epsilon}$.
Therefore, we can apply Theorem 3.12 of \cite{QV} such that there
exists a unique solution $R^{\epsilon}$ of
\begin{equation}
\frac{\partial R^{\epsilon}}{\partial t}=\mathscr{A}_{\rho_{\epsilon},b_{\epsilon}}^{*}R^{\epsilon}\label{eq: approximated forward}
\end{equation}
with the initial condition $R_{\epsilon}^{0}(x)$ that satisfies $0\le R^{\epsilon}\le C\rho_{\epsilon}$
on $[0,\,T]\times\mathbb{T}$ as well as the energy estimate
\begin{equation}
\int_{0}^{T}\int_{\mathbb{T}}\frac{\left(\nabla R^{\epsilon}\right)^{2}}{(\lambda+\rho_{\epsilon})R^{\epsilon}}<C_{1}+C_{2}\int_{0}^{T}\int_{\mathbb{T}}\frac{(\nabla\rho)^{2}}{\rho}dxdt\label{eq: approximated energy}
\end{equation}
for some constant $C_{1},\,C_{2}$. This energy estimate can be derived
from (3.26) of \cite{QV} and (\ref{eq:fu1}). 

Our aim is to send $\epsilon$ to $0$ in (\ref{eq: approximated forward})
in a proper way. To this end, let us first prove that 
\[
\left\{ \frac{R^{\epsilon}}{\sqrt{\rho_{\epsilon}}}\right\} _{\epsilon>0},\,\,\left\{ \frac{\nabla R^{\epsilon}}{\lambda+\rho_{\epsilon}}\right\} _{\epsilon>0}\mbox{\, and \,}\left\{ R^{\epsilon}\right\} _{\epsilon>0}
\]
are uniformly bounded in $L_{2}([0,\,T]\times\mathbb{T})$, respectively.
The boundedness of the first of these terms is obvious and that of
the second term follows directly from (\ref{eq: approximated energy}).
For the last term, since $\rho\in L_{2}$ by (\ref{eq: DV0}), 
\[
\int_{0}^{T}\int_{\mathbb{T}}\left(R^{\epsilon}\right)^{2}dxdt\le C^{2}\int_{0}^{T}\int_{\mathbb{T}}\rho_{\epsilon}^{2}dxdt\le C^{2}\int_{0}^{T}\int_{\mathbb{T}}\rho^{2}dxdt
\]
by Lemma \ref{lem: L_2 boundness-1}. Therefore, we can take a subsequence
$\left\{ \epsilon_{k}\right\} _{k=1}^{\infty}$ which converges to
$0$ and also satisfies 
\[
R^{\epsilon_{k}}\rightharpoonup R,\,\,\frac{\nabla R^{\epsilon_{k}}}{\lambda+\rho_{\epsilon_{k}}}\rightharpoonup U\,\,\,\mbox{and }\,\,\frac{R^{\epsilon_{k}}}{\sqrt{\rho_{\epsilon_{k}}}}\rightharpoonup V
\]
weakly in $L_{2}$ for some $R,\,U$ and $V$, respectively. 

We now claim that 
\[
U=\frac{\nabla R}{\lambda+\rho}\,\,\,\mbox{and }\,\,V=\frac{R}{\sqrt{\rho}}.
\]
For $U$, we know that $\lambda+\rho_{\epsilon_{k}}\rightarrow\lambda+\rho$
strong in $L_{2}$ by Lemma \ref{APProximation Lemma} and $\frac{\nabla R^{\epsilon_{k}}}{\lambda+\rho_{\epsilon_{k}}}$
is uniformly bounded in $L_{2}$ by (\ref{eq: approximated energy}).
Therefore by (3) of Lemma \ref{lem: frequently used} we can verify
that $U=\frac{\nabla R}{\lambda+\rho}$. For $V$, by (2) of Lemma
\ref{lem: frequently used}, we have $\frac{R^{\epsilon_{k}}}{\sqrt{\rho_{\epsilon_{k}}}}\cdot\sqrt{\rho_{\epsilon_{k}}}\rightharpoonup V\cdot\sqrt{\rho}$
weakly in $L_{1}$ and therefore $V\sqrt{\rho}=R$ or equivalently
$V=\frac{R}{\sqrt{\rho}}$.

These weak convergences in $L_{2}$ imply that
\begin{align}
\frac{\nabla R^{\epsilon_{k}}}{\lambda+\rho_{\epsilon_{k}}} & \rightharpoonup\frac{\nabla R}{\lambda+\rho}\label{eq:1est}\\
\frac{\nabla\rho_{\epsilon_{k}}}{\lambda+\rho_{\epsilon_{k}}}R^{\epsilon_{k}} & \rightharpoonup\frac{\nabla\rho}{\lambda+\rho}R\label{eq:2est}\\
b^{\epsilon_{k}}R^{\epsilon_{k}} & \rightharpoonup bR\label{eq:3est}
\end{align}
weakly in $L_{1}$ also by Lemma \ref{lem: frequently used}. More
precisely, (\ref{eq:1est}) is derived directly from our definition
of $\left\{ \epsilon_{k}\right\} _{k=1}^{\infty}$ and (\ref{eq:2est})
holds because $\frac{\nabla\rho_{\epsilon_{k}}}{\lambda+\rho_{\epsilon_{k}}}\rightarrow\frac{\nabla\rho}{\lambda+\rho}$
strongly in $L_{2}$, due to the uniform integrability of the form
of (\ref{eq:fu1}). Similarly, (\ref{eq:3est}) is obtained as a consequence
of (2) of Lemma \ref{lem: frequently used} since we have $b^{\epsilon_{k}}\sqrt{\rho_{\epsilon_{k}}}\rightarrow b\sqrt{\rho}$
strongly in $L_{2}$ by (\ref{eq:fu2}) and $\frac{R^{\epsilon_{k}}}{\sqrt{\rho_{\epsilon_{k}}}}\rightharpoonup\frac{R}{\sqrt{\rho}}$
weakly in $L_{2}$ as we observed before. Now, (\ref{eq:1est}), (\ref{eq:2est})
and (\ref{eq:3est}) allow us to take the limit in (\ref{eq: approximated forward})
along the sequence $\left\{ \epsilon_{k}\right\} _{k=1}^{\infty}$
and by doing so we obtain $\frac{\partial R}{\partial t}=\mathscr{A}_{\rho,b}^{*}R$.
Consequently, we proved the existence.

The energy estimate (\ref{eq: ENE UNI-1}) can be obtained by repeating
the argument of Theorem 4.1 in \cite{QV}. Although this theorem requires
the $L_{\infty}$ boundedness of $R$, our bound $R\in L_{2}(0,\,T,\,L_{\infty}(\mathbb{T}))$
turns out to be sufficient for applying their argument to our specific
diffusion coefficient $\frac{\lambda}{\lambda+\rho}$. 

Finally, let us consider the uniqueness issue. Suppose that $u,\,v$
are two solutions then we consider the evolution of $\frac{(u-v)^{2}}{\rho}$,
which is a well-defined function since $0\le u,\,v\le C\rho$, such
a manner that 
\begin{align*}
\int_{\mathbb{T}}\frac{\left(u-v\right)^{2}}{\rho} & (s,\,x)dx-\int_{\mathbb{T}}\frac{\left(u-v\right)^{2}}{\rho}(0,\,x)dx\\
=\int_{0}^{s}\int_{\mathbb{T}} & -\frac{\left(u-v\right)^{2}}{\rho^{2}}\nabla\left(\frac{1}{2}\nabla\rho-b\rho\right)\\
 & +2\frac{u-v}{\rho}\nabla\left[\frac{\lambda}{2(\lambda+\rho)}\nabla(u-v)+\left(\frac{\nabla\rho}{2(\lambda+\rho)}-b\right)(u-v)\right]dxdt\\
=\int_{0}^{s}\int_{\mathbb{T}} & \nabla\left(\frac{\left(u-v\right)^{2}}{\rho^{2}}\right)\left(\frac{1}{2}\nabla\rho-b\rho\right)dxdt\\
 & -2\nabla\left(\frac{u-v}{\rho}\right)\left[\frac{\lambda}{2(\lambda+\rho)}\nabla(u-v)+\left(\frac{\nabla\rho}{2(\lambda+\rho)}-b\right)(u-v)\right]dxdt\\
=\int_{0}^{s}\int_{\mathbb{T}} & -\frac{\lambda}{\rho^{3}(\lambda+\rho)}\left[(u-v)\nabla\rho-\rho\nabla(u-v)\right]^{2}dxdt.
\end{align*}
This computation guarantees the uniqueness. 
\end{proof}

\subsection{\label{sub:Large-Deviation-Principle}Large Deviation Theory of Empirical
Process }

We start by defining the rate function for empirical process. 
\begin{defn}[Rate function for empirical process]
\label{Def}Let $Q\in\mathscr{M}_{1}(C([0,\,T],\,\mathbb{T}))$ has
the marginal density $q(t,\,x)$ which satisfies (\ref{eq: DV0}),
$\mathscr{B}_{q}\neq\phi$ and $H\left[Q\left|P^{b}\right.\right]<\infty$
for some $b\in\mathscr{B}_{q}$ where the diffusion $P^{b}$ is the
one defined in Theorem \ref{thm: Uniqueness and Ex of MP}. Then,
we can find\footnote{Alternative way to define $b_{Q}$ is the unique minimizer of the
relative entropy $H\left[Q\left|P^{b}\right.\right]$ over $b\in\mathscr{B}_{q}$.} $b_{Q}\in\mathscr{B}_{q}$ such that the corresponding diffusion
process $P^{b_{Q}}$ with marginal density $q(t,\,x)$ satisfies
\[
E^{Q}\left[\int_{0}^{T}\phi(t,\,x(t))dx(t)\right]=E^{P^{b_{Q}}}\left[\int_{0}^{T}\phi(t,\,x(t))dx(t)\right]
\]
for any smooth $\phi$ (\textit{cf.} Theorem 7.3 of \cite{QRV}).
Then, the dynamic rate function $\mathscr{I}_{dyn}(Q)$ is defined
by 
\begin{equation}
\mathscr{I}_{dyn}(Q)=H\left[Q\left|P^{b_{Q}}\right.\right]+\frac{1}{2}\int_{0}^{T}\int_{\mathbb{T}}b_{Q}^{2}qdxdt.\label{eq: The final rate function}
\end{equation}
For all the other cases, $\mathscr{I}_{dyn}(Q)$ is defined to be
infinite. In addition, due to Assumption 3, we define the Sanov-type
initial rate function $\mathscr{I}_{init}(Q)$ by
\[
\mathscr{I}_{init}(Q)=\int_{\mathbb{T}}q(0,\,x)\log\frac{q(0,\,x)}{\rho^{0}(x)}dx.
\]
Finally, the full rate function is defined by 
\[
\mathscr{I}(Q)=\mathscr{I}_{dyn}(Q)+\mathscr{I}_{init}(Q).
\]
The functional $\mathscr{I}(\cdot)$ defined in this manner is lower
semicontinuous and has compact level sets(\textit{cf. }Theorem 7.4
of \cite{QRV}). 
\end{defn}
Now, we can state the LDP for the empirical process in a concrete
form. The following theorem can be proven by the general method presented
in Sections 7 and 9 of \cite{QRV}, which relies on the LDP for colored
system and Dawson-G\"artner's projective limit theorem (\textit{cf.}
Theorem 4.6.1 of \cite{DZ}).
\begin{thm}
Under Assumption 3, $\{P_{N}\}_{N=1}^{\infty}$ satisfies the LDP
with the good rate function $\mathscr{I}(\cdot)$ defined in Definition
\ref{Def}, and scale $N$. In other words, for each measurable set
$A\subset\mathscr{M}_{1}(C([0,\,T],\,\mathbb{T}))$, 
\[
-\inf_{Q\in A^{o}}\mathscr{I}(Q)\le\liminf_{N\rightarrow\infty}\frac{1}{N}\log P_{N}(A)\le\limsup_{N\rightarrow\infty}\frac{1}{N}\log P_{N}(A)\le-\inf_{Q\in\bar{A}}\mathscr{I}(Q)
\]
where the topology is the usual topology of weak convergence for measures. 
\end{thm}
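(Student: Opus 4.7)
The plan is to deduce the LDP for $\{P_N\}_{N=1}^\infty$ from Theorem \ref{thm: Large deviation for Color} (the LDP for the empirical density of colors) by combining Dawson-G\"artner's projective limit theorem with the martingale problem characterization of the perturbed diffusions $P^b$ developed in Section \ref{sub:Martingale-Problem}. Exponential tightness of $\{P_N\}_{N=1}^\infty$ on $\mathscr{M}_1(C([0,\,T],\,\mathbb{T}))$ is not an obstacle: it is inherited from Theorem \ref{thm: SUPER EXP TIGHTNESS}, since a super-exponential bound on the fraction of particles whose trajectories have a large modulus of continuity furnishes compact sets in $\mathscr{M}_1(C([0,\,T],\,\mathbb{T}))$ carrying all but super-exponentially small $P_N$-mass. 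Given the exponential tightness, it suffices to establish the LDP on compact sets, which reduces to verifying the weak LDP through finite-dimensional projections.

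First I would introduce the projective structure. For each finite family of disjoint measurable sets $\mathscr{C}=\{C_1,\dots,C_m\}$ partitioning $C([0,\,T],\,\mathbb{T})$ whose boundaries have zero mass under the limiting law, define the coloring $I_c^N=\{i:x_i^N(\cdot)\in C_c\}$ and the induced empirical density of colors $\tilde{\mu}^{N,\mathscr{C}}(\cdot)$ with law $\widetilde{\mathbb{Q}}_N^{\mathscr{C}}$. The maps $\pi_{\mathscr{C}}:Q\mapsto(Q(C_1),\dots,Q(C_m))$ and their path-valued refinements (obtained by composing with the marginals at finitely many times $0\le t_1<\cdots<t_k\le T$ of each color) form a cofinal family in the projective system that generates the topology on $\mathscr{M}_1(C([0,\,T],\,\mathbb{T}))$. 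Theorem \ref{thm: Large deviation for Color} applied to the coloring $\mathscr{C}$ and its refinements (noting that Assumption 2 is automatic because $\rho^0$ is bounded and each $\int_{C_c}dP$ gives the required limiting masses $\bar{\rho}_c$) yields the LDP for each of these finite-dimensional projections with rate function obtained by contraction from $I_{color}^m(\cdot)$. Dawson-G\"artner's theorem (Theorem 4.6.1 of \cite{DZ}) then furnishes the LDP for $\{P_N\}$ with rate function
\[
\widetilde{\mathscr{I}}(Q)=\sup_{\mathscr{C}}\inf\bigl\{I_{color}^m(\tilde{\rho}):\tilde{\rho}\text{ compatible with }Q\circ\pi_{\mathscr{C}}^{-1}\bigr\}.
\]

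The main obstacle, and the technical heart of the argument, is the identification $\widetilde{\mathscr{I}}(Q)=\mathscr{I}(Q)$. For the upper bound direction (evaluating the supremum), I would use that any $Q$ with finite $\widetilde{\mathscr{I}}(Q)$ has, for every $\mathscr{C}$, densities $\rho_c^{\mathscr{C}}$ satisfying the a priori energy bound of Lemma \ref{prop: Estimates based on bound on rate function}; passing to ever finer partitions gives enough regularity to see that $Q$ has a marginal density $q$ satisfying (\ref{eq: DV0}) with $\mathscr{B}_q\neq\emptyset$, and that the supremum over $\mathscr{C}$ dominates both the entropy $H[Q|P^{b_Q}]$ and $\tfrac12\int b_Q^2 q\,dx\,dt$ of Definition \ref{Def}. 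For the lower bound, given $b\in\mathscr{B}_q$ and the perturbed diffusion $P^b$ from Theorem \ref{thm: Uniqueness and Ex of MP}, I would realize $P^b$ as the hydrodynamic limit of perturbed particle systems $\mathbb{P}_N^{\tilde{b},\tilde{\Gamma}}$ with $(\tilde{b},\tilde{\Gamma})\in\mathscr{P}_0$ chosen, for each coloring $\mathscr{C}$, so that color-conditioned marginals of $P^b$ match the hydrodynamic solution of (\ref{eq: HDL DRIVEN EQUATION}); the lower bound computation of Section \ref{sub:Proof-of-Lower} then shows that each finite-dimensional marginal of $Q$ is attained at rate $H[Q|P^{b_Q}]+\tfrac12\int b_Q^2 q$.

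Finally, combining with the Sanov rate $\mathscr{I}_{init}$ (which appears through Assumption 3 and is already captured by the $I_{init}^m$ contribution at the colored level via the contraction principle), the rate function becomes $\mathscr{I}(Q)=\mathscr{I}_{init}(Q)+\mathscr{I}_{dyn}(Q)$, and lower semicontinuity plus goodness follows from Theorem 7.4 of \cite{QRV} applied in our setting using the uniqueness results of Section \ref{sub:Martingale-Problem}. The delicate step that most demands care, and where our model genuinely departs from SSEP, is the passage from the color-level variational identity to the martingale problem formulation: this relies crucially on Theorem \ref{thm: Uniqueness and Ex of MP} and Lemma \ref{lem: Uniqueness} (which handle the non-elliptic, possibly unbounded coefficient $\lambda/(\lambda+\rho)$) to guarantee that $P^{b_Q}$ exists, is unique, and has marginal $q$, so that the entropy $H[Q|P^{b_Q}]$ is well defined and matches the supremum over finite colorings.
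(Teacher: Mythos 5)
Your high-level plan is the same as the paper's, which is precisely a pointer to the projective-limit method of Sections 7 and 9 of \cite{QRV} built on the colored LDP (Theorem \ref{thm: Large deviation for Color}), Dawson-G\"artner's theorem, and the martingale-problem results of Section \ref{sub:Martingale-Problem} -- the last of which you correctly single out as the model-specific ingredient.

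However, there is a concrete gap in the coloring step of your fleshing-out. The coloring $I_c^N=\{i:x_i^N(\cdot)\in C_c\}$ for a partition $\{C_c\}$ of $C([0,T],\mathbb{T})$ is $\mathscr{F}_T$-measurable rather than fixed in advance, so it cannot feed into Theorem \ref{thm: Large deviation for Color}: Assumption 2 presupposes an a priori partition $\{I_c^N\}$ of $\{1,\dots,N\}$, and the superexponential estimates of Sections 2 and 3 (hence the colored LDP) are established only for colorings that make $\tilde{\mu}^N(\cdot)$ an $\{\mathscr{F}_t\}$-adapted Markov process. In \cite{QRV} the projective system instead uses \emph{iterated} recoloring at a finite increasing sequence of times $0=s_0<s_1<\cdots<s_n\le T$: one colors by a spatial partition at $s_0$, tracks the colored densities on $[s_0,s_1]$, recolors at $s_1$ by (old color, cell at $s_1$), and so on; the colored LDP applies on each subinterval conditionally on the recoloring, the dynamical rates add across subintervals by the Markov property, and the Dawson-G\"artner supremum runs over all such refinements of time and space. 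Your ``path-valued refinements'' gesture at this, but as written the invocation of Theorem \ref{thm: Large deviation for Color} is not well posed; supplying the iterated-recoloring argument is the actual content of Sections 7 and 9 of \cite{QRV}. The identification of the resulting rate function with $\mathscr{I}(Q)$ via $H[Q\,|\,P^{b_Q}]$, $\frac{1}{2}\int b_Q^2 q\,dx\,dt$, and Theorem \ref{thm: Uniqueness and Ex of MP} together with Lemma \ref{lem: Uniqueness} is described correctly.
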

\textbf{Acknowledgment.} The author would like to thank the advisor
Professor S. R. Srinivasa Varadhan for introducing this problem and
sharing his unlimited knowledge and insight through numerous discussions.

\end{document}